\begin{document}
\pdfoutput=1
\theoremstyle{plain}
\newtheorem{thm}{Theorem}[section]
\newtheorem*{thm1}{Theorem 1}
\newtheorem*{thm1.1}{Theorem 1.1}
\newtheorem*{thmM}{Main Theorem}
\newtheorem*{thmA}{Theorem A}
\newtheorem*{thm2}{Theorem 2}
\newtheorem{lemma}[thm]{Lemma}
\newtheorem{lem}[thm]{Lemma}
\newtheorem{cor}[thm]{Corollary}
\newtheorem{pro}[thm]{Proposition}
\newtheorem{propose}[thm]{Proposition}
\newtheorem{variant}[thm]{Variant}
\theoremstyle{definition}
\newtheorem{notations}[thm]{Notations}
\newtheorem{rem}[thm]{Remark}
\newtheorem{rmk}[thm]{Remark}
\newtheorem{rmks}[thm]{Remarks}
\newtheorem{defi}[thm]{Definition}
\newtheorem{exe}[thm]{Example}
\newtheorem{claim}[thm]{Claim}
\newtheorem{ass}[thm]{Assumption}
\newtheorem{prodefi}[thm]{Proposition-Definition}
\newtheorem{que}[thm]{Question}
\newtheorem{con}[thm]{Conjecture}
\newtheorem{exa}[thm]{Example}
\newtheorem*{assa}{Assumption A}
\newtheorem*{algstate}{Algebraic form of Theorem \ref{thmstattrainv}}

\newtheorem*{dmlcon}{Dynamical Mordell-Lang Conjecture}
\newtheorem*{condml}{Dynamical Mordell-Lang Conjecture}
\newtheorem*{congb}{Geometric Bogomolov Conjecture}
\newtheorem*{facts}{Facts}

\newtheorem*{pdd}{P(d)}
\newtheorem*{bfd}{BF(d)}

\newtheorem*{probreal}{Realization problems}
\numberwithin{equation}{section}
\newcounter{elno}                
\def\points{\list
{\hss\llap{\upshape{(\roman{elno})}}}{\usecounter{elno}}}
\let\endpoints=\endlist
\newcommand{\SH}{\rm SH}
\newcommand{\Cov}{\rm Cov}
\newcommand{\Tan}{\rm Tan}
\newcommand{\res}{\rm res}
\newcommand{\Om}{\Omega}
\newcommand{\om}{\omega}
\newcommand{\La}{\Lambda}
\newcommand{\la}{\lambda}
\newcommand{\mc}{\mathcal}
\newcommand{\mb}{\mathbb}
\newcommand{\surj}{\twoheadrightarrow}
\newcommand{\inj}{\hookrightarrow}
\newcommand{\zar}{{\rm zar}}
\newcommand{\Exc}{{\rm Exc}}
\newcommand{\an}{{\rm an}}
\newcommand{\red}{{\rm red}}
\newcommand{\codim}{{\rm codim}}
\newcommand{\Supp}{{\rm Supp}}
\newcommand{\rank}{{\rm rank}}
\newcommand{\Ker}{{\rm Ker \ }}
\newcommand{\Pic}{{\rm Pic}}
\newcommand{\Der}{{\rm Der}}
\newcommand{\Div}{{\rm Div}}
\newcommand{\Hom}{{\rm Hom}}
\newcommand{\im}{{\rm im}}
\newcommand{\Spec}{{\rm Spec \,}}
\newcommand{\Nef}{{\rm Nef \,}}
\newcommand{\Frac}{{\rm Frac \,}}
\newcommand{\Sing}{{\rm Sing}}
\newcommand{\sing}{{\rm sing}}
\newcommand{\reg}{{\rm reg}}
\newcommand{\Char}{{\rm char\,}}
\newcommand{\Tr}{{\rm Tr}}
\newcommand{\ord}{{\rm ord}}
\newcommand{\id}{{\rm id}}
\newcommand{\NE}{{\rm NE}}
\newcommand{\Gal}{{\rm Gal}}
\newcommand{\Min}{{\rm Min \ }}
\newcommand{\Max}{{\rm Max \ }}
\newcommand{\Alb}{{\rm Alb}\,}
\newcommand{\Aff}{{\rm Aff}\,}
\newcommand{\GL}{{\rm GL}\,}        
\newcommand{\PGL}{{\rm PGL}\,}
\newcommand{\Bir}{{\rm Bir}}
\newcommand{\Aut}{{\rm Aut}}
\newcommand{\End}{{\rm End}}
\newcommand{\Per}{{\rm Per}\,}
\newcommand{\ie}{{\it i.e.\/},\ }
\newcommand{\niso}{\not\cong}
\newcommand{\nin}{\not\in}
\newcommand{\soplus}[1]{\stackrel{#1}{\oplus}}
\newcommand{\by}[1]{\stackrel{#1}{\rightarrow}}
\newcommand{\longby}[1]{\stackrel{#1}{\longrightarrow}}
\newcommand{\vlongby}[1]{\stackrel{#1}{\mbox{\large{$\longrightarrow$}}}}
\newcommand{\ldownarrow}{\mbox{\Large{\Large{$\downarrow$}}}}
\newcommand{\lsearrow}{\mbox{\Large{$\searrow$}}}
\renewcommand{\d}{\stackrel{\mbox{\scriptsize{$\bullet$}}}{}}
\newcommand{\dlog}{{\rm dlog}\,}    
\newcommand{\longto}{\longrightarrow}
\newcommand{\vlongto}{\mbox{{\Large{$\longto$}}}}
\newcommand{\limdir}[1]{{\displaystyle{\mathop{\rm lim}_{\buildrel\longrightarrow\over{#1}}}}\,}
\newcommand{\liminv}[1]{{\displaystyle{\mathop{\rm lim}_{\buildrel\longleftarrow\over{#1}}}}\,}
\newcommand{\norm}[1]{\mbox{$\parallel{#1}\parallel$}}
\newcommand{\boxtensor}{{\Box\kern-9.03pt\raise1.42pt\hbox{$\times$}}}
\newcommand{\into}{\hookrightarrow}
\newcommand{\image}{{\rm image}\,}
\newcommand{\Lie}{{\rm Lie}\,}      
\newcommand{\CM}{\rm CM}
\newcommand{\sext}{\mbox{${\mathcal E}xt\,$}}  
\newcommand{\shom}{\mbox{${\mathcal H}om\,$}}  
\newcommand{\coker}{{\rm coker}\,}  
\newcommand{\sm}{{\rm sm}}
\newcommand{\pgcd}{\text{pgcd}}
\newcommand{\trd}{\text{tr.d.}}
\newcommand{\tensor}{\otimes}
\newcommand{\hotimes}{\hat{\otimes}}
\newcommand{\prop}{{\rm prop}}
\newcommand{\CH}{{\rm CH}}
\newcommand{\tr}{{\rm tr}}
\newcommand{\e}{\rm SH}

\renewcommand{\iff}{\mbox{ $\Longleftrightarrow$ }}
\newcommand{\supp}{{\rm supp}\,}
\newcommand{\ext}[1]{\stackrel{#1}{\wedge}}
\newcommand{\onto}{\mbox{$\,\>>>\hspace{-.5cm}\to\hspace{.15cm}$}}
\newcommand{\propsubset}
{\mbox{$\textstyle{
\subseteq_{\kern-5pt\raise-1pt\hbox{\mbox{\tiny{$/$}}}}}$}}
\newcommand{\sA}{{\mathcal A}}
\newcommand{\sB}{{\mathcal B}}
\newcommand{\sC}{{\mathcal C}}
\newcommand{\sD}{{\mathcal D}}
\newcommand{\sE}{{\mathcal E}}
\newcommand{\sF}{{\mathcal F}}
\newcommand{\sG}{{\mathcal G}}
\newcommand{\sH}{{\mathcal H}}
\newcommand{\sI}{{\mathcal I}}
\newcommand{\sJ}{{\mathcal J}}
\newcommand{\sK}{{\mathcal K}}
\newcommand{\sL}{{\mathcal L}}
\newcommand{\sM}{{\mathcal M}}
\newcommand{\sN}{{\mathcal N}}
\newcommand{\sO}{{\mathcal O}}
\newcommand{\sP}{{\mathcal P}}
\newcommand{\sQ}{{\mathcal Q}}
\newcommand{\sR}{{\mathcal R}}
\newcommand{\sS}{{\mathcal S}}
\newcommand{\sT}{{\mathcal T}}
\newcommand{\sU}{{\mathcal U}}
\newcommand{\sV}{{\mathcal V}}
\newcommand{\sW}{{\mathcal W}}
\newcommand{\sX}{{\mathcal X}}
\newcommand{\sY}{{\mathcal Y}}
\newcommand{\sZ}{{\mathcal Z}}
\newcommand{\A}{{\mathbb A}}
\newcommand{\B}{{\mathbb B}}
\newcommand{\C}{{\mathbb C}}
\newcommand{\D}{{\mathbb D}}
\newcommand{\E}{{\mathbb E}}
\newcommand{\F}{{\mathbb F}}
\newcommand{\G}{{\mathbb G}}
\newcommand{\HH}{{\mathbb H}}
\newcommand{\LL}{{\mathbb L}}
\newcommand{\J}{{\mathbb J}}
\newcommand{\M}{{\mathbb M}}
\newcommand{\N}{{\mathbb N}}
\renewcommand{\P}{{\mathbb P}}
\newcommand{\Q}{{\mathbb Q}}
\newcommand{\R}{{\mathbb R}}
\newcommand{\T}{{\mathbb T}}
\newcommand{\U}{{\mathbb U}}
\newcommand{\V}{{\mathbb V}}
\newcommand{\W}{{\mathbb W}}
\newcommand{\X}{{\mathbb X}}
\newcommand{\Y}{{\mathbb Y}}
\newcommand{\Z}{{\mathbb Z}}
\newcommand{\bk}{{\mathbf{k}}}

\newcommand{\bp}{{\mathbf{p}}}
\newcommand{\ep}{\varepsilon}
\newcommand{\bbk}{{\overline{\mathbf{k}}}}
\newcommand{\Fix}{\mathrm{Fix}}

\newcommand{\tor}{{\mathrm{tor}}}
\renewcommand{\div}{{\mathrm{div}}}

\newcommand{\trdeg}{{\mathrm{trdeg}}}
\newcommand{\Stab}{{\mathrm{Stab}}}

\newcommand{\OK}{{\overline{K}}}
\newcommand{\ok}{{\overline{k}}}

\newcommand{\cf}{{\color{red} [c.f. ?]}}
\newcommand{\jy}{\color{red} jy:}

\title[]{Homoclinic orbits, multiplier spectrum and rigidity theorems in complex dynamics}

\author{Zhuchao Ji}

\address{Institute for Theoretical Sciences, Westlake University, Hangzhou 310030, China}

\email{jizhuchao@westlake.edu.cn}

\author{Junyi Xie}


\address{Beijing International Center for Mathematical Research, Peking University, Beijing 100871, China}

\email{xiejunyi@bicmr.pku.edu.cn}


\date{\today}

\bibliographystyle{alpha}

\begin{abstract}
The aims of this paper are answering several conjectures and questions about multiplier spectrum  of rational maps and giving new proofs of several  rigidity theorems in complex dynamics, by combining tools from complex and non-archimedean dynamics. 
\par A remarkable theorem due to McMullen asserts that aside from the flexible Latt\`es family, the multiplier spectrum of periodic points determines the conjugacy class of rational maps up to finitely many choices. The proof relies on Thurston's rigidity theorem for post-critically finite maps, in where Teichm\"{u}ller theory is an essential tool.  We will give a new proof of McMullen's theorem (hence a new proof of Thurston's theorem) without using quasiconformal maps or Teichm\"{u}ller theory.
\par We show that aside from the flexible Latt\`es family, the length spectrum of periodic points determines the conjugacy class of rational maps up to finitely many choices.  This generalize the aforementioned McMullen's theorem. We will also prove a rigidity theorem for marked length spectrum. Similar ideas  also yield a simple proof of a rigidity theorem due to Zdunik. 
\par We show that a rational map is exceptional if and only if one of the following holds (i) the multipliers of periodic points are contained in the integer ring of an imaginary quadratic field;  (ii) all but finitely many periodic points have the same Lyapunov exponent.  This solves two conjectures of Milnor. 
\end{abstract}
\maketitle
\tableofcontents

\section{Introduction}
\subsection{Exceptional endomorphisms}
Let $f:\P^1\to \P^1$ be an endomorphism over $\C$ of degree at least $2$. It is called \emph{Latt\`es} if it is semi-conjugate to an endomorphism on an elliptic curve.
Further it is called \emph{flexible Latt\`es} if it is semi-conjugate to the multiplication by an integer  $n$ on an elliptic curve for some $|n|\geq 2.$ 
We say that $f$ is of \emph{monomial type} if it semi-is conjugate to the map $z\mapsto z^n$ on $\P^1$ for some $|n|\geq 2.$ 
We call $f$ \emph{exceptional} if it is Latt\`es or of monomial type. An endomorphism $f$ is exceptional if and only if some iterate  $f^n$ is exceptional.
 Exceptional endomorphisms are considered as the exceptional examples in complex dynamics.

\medskip

In this paper we will prove a criterion for an endomorphism being exceptional via the information of a \emph{homoclinic orbit} of $f$.
See Theorem \ref{thmadjconmultexcep} for the precise statement and  see Section \ref{sectionhomoclinic} for the definition and basic properties of   homoclinic orbits.
Since every $f$ has plenty of homoclinic orbits, the above 
criterion turns out to be very useful.   A direct consequence is the following characterization of exceptional endomorphisms by the linearity of a \emph{conformal expending repeller}(CER). 

\begin{thm}\label{linearcer}
Let $f:\P^1\to \P^1$ be an endomorphism over $\C$. Assume that $f$ has a linear CER which is not a finite set, then $f$ is exceptional. 
\end{thm}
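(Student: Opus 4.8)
The plan is to reduce Theorem \ref{linearcer} to the homoclinic-orbit criterion for exceptionality (Theorem \ref{thmadjconmultexcep}) by exhibiting, inside any infinite linear CER, a homoclinic orbit whose associated multiplier data is forced to be "linear" in the required sense. Recall that a CER $X$ is a compact $f$-invariant (in the sense of $f(X)=X$, with $f$ expanding on a neighborhood) subset of $\P^1$ on which $f$ acts as an open expanding map, and "linear" means that, up to a conformal change of coordinates near each point, the dynamics is locally affine — equivalently, the transition maps of the canonical Hölder structure are locally affine, or the multipliers satisfy the rigidity encoded in Theorem \ref{thmadjconmultexcep}. First I would observe that since $X$ is infinite and $f|_X$ is expanding, the Julia set $J(f)$ meets $X$ (indeed $X \subseteq J(f)$ by the standard argument: an infinite expanding repeller is contained in the Julia set), so $f$ is not postcritically finite in the trivial ways, and $X$ carries repelling periodic points that are dense in $X$.

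The key steps, in order. (1) Pick a repelling fixed point (or periodic point, then pass to an iterate) $p \in X$; expandingness of $f$ on $X$ guarantees such points exist and are dense. (2) Produce a homoclinic orbit at $p$ lying in $X$: because $f|_X$ is an open expanding map on an infinite set, the local unstable set of $p$ within $X$ (a neighborhood of $p$ in $X$, which is a Cantor set or an interval/arc depending on the structure) and the grand orbit of $p$ interact so that some point $q \neq p$ in $X$ satisfies $f^n(q) = p$ for some $n$ while $q$ lies on the "local unstable manifold" of $p$ — i.e. $q$ is a homoclinic point. This uses the mixing/transitivity of $f$ on the Julia set together with $f(X) = X$. (3) Now invoke linearity of the CER: the canonical conformal structure on $X$ in which $f$ is locally affine pins down the multiplier of $p$ and, more importantly, the "transition function" along the homoclinic orbit $q \to f(q) \to \cdots \to f^n(q) = p$ to be an affine map. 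This is exactly the hypothesis feeding into Theorem \ref{thmadjconmultexcep}: the homoclinic orbit has affine (rather than merely holomorphic) holonomy. (4) Apply Theorem \ref{thmadjconmultexcep} to conclude $f$ is exceptional.

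The main obstacle I expect is step (2)–(3): making precise the sense in which linearity of the CER forces the holonomy/transition data of a homoclinic orbit to be affine, and ensuring that the homoclinic orbit can be chosen to lie entirely within $X$ (so that the linear structure actually applies to it). One must be careful that the linear coordinate charts on $X$ are only defined on $X$, not on an open subset of $\P^1$, so "affine" has to be interpreted via the Hölder/conformal structure on the repeller, and one needs the homoclinic point $q$ and all of $f(q),\dots,f^n(q)$ to sit in $X$; this is where $f(X)=X$ and the density of preimages of $p$ in $X$ are essential. A secondary technical point is the reduction to a genuine fixed point: replacing $f$ by an iterate $f^m$ changes neither the CER $X$ nor the property of being exceptional (since $f$ is exceptional iff $f^m$ is, as noted in the excerpt), so we may freely assume $p$ is fixed and even that $f$ is "generic" enough on $X$ for the homoclinic construction to run.
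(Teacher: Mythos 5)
Your overall route is the same as the paper's: locate a repelling periodic point $o$ in the infinite CER $K$, pass to an iterate so that $f(K)=K$ and $f(o)=o$, build a homoclinic orbit of $o$ lying entirely in $K$ (preimages of $o$ are dense in $K$ by topological exactness, and the orbit stays in $K$ because $K=\cap_{n\geq 0}f^{-n}(V)$), and then use linearity to invoke Theorem \ref{thmadjconmultexcep}. The gap is at your step (3), which is exactly the step you flag as the main obstacle and leave unresolved. The hypothesis of Theorem \ref{thmadjconmultexcep} is not ``affine holonomy along the homoclinic orbit''; it is a statement about the \emph{adjoint sequence} of periodic points $q_i$ attached to the homoclinic orbit: one must show $df^i(q_i)=C\la^i$ for all large $i$, where $\la=df(o)$ and $C\in\C^\ast$ is independent of $i$. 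Your proposal never constructs the adjoint sequence and never computes a single multiplier, so the hypothesis of the criterion is not verified. Knowing that the transition data along the points $o_i$ is affine says nothing, by itself, about the multipliers of the nearby periodic points $q_i$, which are the objects the criterion tests and which need not lie on the homoclinic orbit (nor even in $K$).

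Moreover, the technical worry you raise --- that the linear charts are ``only defined on $X$'', so affine must be read through a H\"older/conformal structure on the repeller --- is a misreading of Definition \ref{deflinearcer}(ii): a linear CER carries an atlas of holomorphic injections $\phi_i:V_i\to\C$ defined on \emph{open} sets $V_i$ covering $K$, with all $\phi_i\circ\phi_j^{-1}$ and $\phi_i\circ f\circ\phi_j^{-1}$ affine. This openness is precisely what closes the gap in the paper: after shrinking the linearization domain $U$ at $o$, each component $U_i$ of $f^{-i}(U)$ containing $o_i$ lies in a single chart $V_{j(i)}$, with $U_i\subset V_{j(0)}$ for $i\geq l$ (where $l$ is chosen so that $f^l(o_l)=o$ and $o_l\neq o$); for $n$ large the adjoint periodic point $q_n$ lies in $U_n$ and its forward orbit passes through $U_{n-1},\dots,U_0$, so its multiplier is a product of derivatives of affine maps, giving $df^n(q_n)=\bigl(\prod_{i=0}^{l-1}\la_i\bigr)\la^{n-l}$, i.e.\ $C\la^n$ with $C$ constant. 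Only after this computation does Theorem \ref{thmadjconmultexcep} apply. Your remaining side claims (existence of a repelling periodic point in $K$, density of preimages, the homoclinic orbit staying in $K$, and the harmless passage to an iterate since exceptionality is iterate-invariant) are correct and coincide with the paper's argument.
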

 CER is an impotent concept  in complex dynamics introduced by Sullivan  \cite{sullivan1986quasiconformal}. See Section \ref{subsectioncer} for its definition and basic properties.

\subsection{Rigidity of stable algebraic families}
For $d\geq 1,$ let $\text{Rat}_d(\C)$ be the space of degree $d$ endomorphisms on $\P^1(\C)$.  
It is a smooth quasi-projective variety of dimension $2d+1$ \cite{Silverman2012}. 
Let $FL_d(\C)\subseteq \text{Rat}_d(\C)$ be the locus of flexible Latt\`es maps, which is Zariski closed in $\text{Rat}_d(\C)$.
The group $\PGL_2(\C)= \Aut(\P^1(\C))$ acts on $\text{Rat}_d(\C)$ by conjugacy. The geometric quotient 
$$\sM_d(\C):=\text{Rat}_d(\C)/\PGL_2(\C)$$ is the (coarse) \emph{moduli space} of endomorphisms  of degree $d$ \cite{Silverman2012}.
The moduli space $\sM_d(\C)=\Spec (\sO(\text{Rat}_d(\C)))^{\PGL_2(\C)}$ is an affine variety of dimension $2d-2$ \cite[Theorem 4.36(c)]{Silverman2007}.
Let $\Psi: \text{Rat}_d(\C)\to \sM_d(\C)$ be the quotient morphism.

\medskip

An \emph{irreducible algebraic family} $f_{\La}$ (of degree $d$ endomorphisms) is an algebraic endomorphism $f_{\La}: \P^1_{\La}\to \P^1_{\La}$ over an irreducible variety $\La$, such that for every $t\in \La(\C)$, the restriction $f_t$ of $f_{\La}$ above $t$ has degree $d.$
Giving an algebraic family over $\La$ is the same as giving an algebraic morphism $\La\to \text{Rat}_d.$
A family $f_{\La}$ is called \emph{isotrivial} if $\Psi(\La)$ is a single point.

\medskip

For every $f\in \text{Rat}_d(\C)$ and $n\geq 1$, $f^n$ has exactly $N_n:=d^n+1$ fixed points counted with multiplicity. 
Their multipliers define a point  $s_n(f)\in \C^{N_n}/S_{N_n}$\footnote{Via the symmetric polynomials, we have $\C^{N_n}/S_{N_n}\simeq \C^{N_n}.$}, where $S_{N_n}$ is the symmetric group which acts on $\C^{N_n}$ by permuting the coordinates.
The \emph{multiplier spectrum} of $f$ is the sequence $s_n(f), n\geq 1.$ An irreducible algebraic  family  is called \emph{stable} if  the multiplier spectrum of $f_t$ does not depend on $t\in \La(\C)$.\footnote{Stability has several equivalent definitions and can be defined for more general families \cite[Chapter 4]{mcmullen2016complex}.} 

\medskip

In 1987, McMullen \cite{McMullen1987} established the following remarkable characterization of stable  irreducible algebraic families:
\begin{thm}[McMullen]\label{thmmcmullen}
	Let $f_{\La}$ be a non-isotrivial stable irreducible algebraic family of degree $d\geq 2$, then  $f_t\in FL(\C)$ for every $t\in \La(\C)$. 
\end{thm}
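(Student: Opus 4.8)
The plan is to reduce McMullen's theorem to the homoclinic criterion for being exceptional (Theorem~\ref{thmadjconmultexcep}, or its conformal-expanding-repeller form Theorem~\ref{linearcer}) by means of holomorphic motions, and then to rule out the monomial and rigid Latt\`es cases by a dimension count in $\sM_d(\C)$.

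\emph{Reductions.} As being flexible Latt\`es is a $\PGL_2(\C)$-conjugacy invariant, $FL_d(\C)$ is Zariski closed and $\PGL_2(\C)$-invariant, and $\sM_d(\C)$ is a geometric quotient, it suffices to show that the morphism $\phi\colon\La\to\sM_d(\C)$ attached to $f_{\La}$ has image inside $\Psi(FL_d(\C))$. Cutting $\La$ by generic hyperplanes and normalizing, I may assume $\dim\La=1$ and $\phi$ non-constant. Since the family is stable, for any $t_0,t_1\in\La(\C)$ the restrictions $f_{t_0}|_{J_{t_0}}$ and $f_{t_1}|_{J_{t_1}}$ are conjugate via the holomorphic motion of the stable family (\cite[Chapter~4]{mcmullen2016complex}, together with the work of Ma\~n\'e--Sad--Sullivan and of Lyubich): there is a homeomorphism $h\colon J_{t_0}\to J_{t_1}$ with $h\circ f_{t_0}=f_{t_1}\circ h$, and by constancy of the multiplier spectrum $h$ preserves the multiplier of every periodic point.

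\emph{Applying the homoclinic criterion.} Fix $t_1$ with $\phi(t_1)\ne\phi(t_0)$, so $f_{t_0}\not\sim f_{t_1}$. After replacing $f_{\La}$ by an iterate, pick a repelling fixed point $\alpha_0$ of $f_{t_0}$ with multiplier $\la$, $|\la|>1$, and put $\beta_0:=h(\alpha_0)$, a repelling fixed point of $f_{t_1}$ with the same multiplier $\la$. Let $\psi_0,\psi_1\colon\C\to\P^1$ be the Poincar\'e linearizers at $\alpha_0$ and $\beta_0$. Along any backward orbit $\{x_n\}$ in $J_{t_0}$ converging to $\alpha_0$ through the fixed branch of $f_{t_0}^{-1}$ one has $\psi_0^{-1}(x_n)=\la^{-n}\psi_0^{-1}(x_0)$ and, applying $h$, $\psi_1^{-1}(h(x_n))=\la^{-n}\psi_1^{-1}(h(x_0))$; hence $h$ carries homoclinic orbits of $f_{t_0}$ at $\alpha_0$ to homoclinic orbits of $f_{t_1}$ at $\beta_0$ by an affine scaling $w\mapsto c\,w$ in linearizing coordinates, and it intertwines the germs of $f_{t_0}$ and $f_{t_1}$ along them. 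If $f_{t_0}$ has no homoclinic orbit at $\alpha_0$ it is already exceptional (Section~\ref{sectionhomoclinic}). Otherwise we are exactly in the situation handled by Theorem~\ref{thmadjconmultexcep}: a dynamical conjugacy of germs along homoclinic orbits, compatible with the multipliers. Its conclusion is that either $h$ is the restriction of a M\"obius transformation --- impossible, since then $f_{t_0}\sim f_{t_1}$ as $J_{t_0}$ is Zariski dense --- or $f_{t_0}$ (and $f_{t_1}$) is exceptional; equivalently, one packages the homoclinic orbit into a non-finite conformal expanding repeller and uses the non-triviality of $h$ to identify its conformal model with $w\mapsto\la w$, so that Theorem~\ref{linearcer} applies. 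Hence $f_{t_0}$ is exceptional, and running this at any point of $\La$ (using that $\phi$ is non-constant) shows every $f_t$ is exceptional. This is the crux of the argument: extracting from the holomorphic motion a conjugacy of homoclinic germs of exactly the form the criterion requires --- equivalently, a genuinely non-finite linear CER --- and obtaining ``exceptional'' rather than merely ``$f_{t_0}\sim f_{t_1}$''.

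\emph{Eliminating monomial and rigid Latt\`es maps.} Monomial-type maps of degree $d$ form finitely many $\PGL_2(\C)$-conjugacy classes (all conjugate to $z\mapsto z^{\pm d}$), and the Latt\`es maps of degree $d$ that are not flexible --- the complex-multiplication ones and those with extra automorphisms at $j\in\{0,1728\}$ --- have their underlying elliptic curve confined to a finite set, so they too form finitely many classes in degree $d$. Since $\La$ is irreducible and $\phi$ non-constant, $\phi$ is not constant on any nonempty Zariski-open subset of $\La$, so $\phi(\La)$ is not contained in these finite sets; therefore $\phi^{-1}(\Psi(FL_d(\C)))$ is nonempty, and being closed in $\La$ (as $\Psi(FL_d(\C))$ is closed in $\sM_d(\C)$) it must be all of $\La$ --- on its open complement every $f_t$ would be exceptional but not flexible Latt\`es, forcing $\phi$ constant there and hence everywhere, contradicting non-isotriviality. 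Thus $f_t\in FL(\C)$ for every $t\in\La(\C)$.
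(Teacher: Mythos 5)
There is a genuine gap at what you yourself call the crux. Theorem~\ref{thmadjconmultexcep} does not take as input ``a dynamical conjugacy of germs along homoclinic orbits, compatible with the multipliers,'' and its conclusion is not a dichotomy ``$h$ is M\"obius or $f_{t_0}$ is exceptional.'' Its hypothesis is a statement about a \emph{single} map: the multipliers of the adjoint periodic points $q_i$ of a homoclinic orbit must be exactly a geometric progression $df^i(q_i)=C\la^i$ for large $i$. A topological conjugacy $h\colon J_{t_0}\to J_{t_1}$ between two \emph{non-conjugate} maps that preserves every multiplier tells you nothing of the sort about $f_{t_0}$: it only says the multiplier data of $f_{t_0}$ and $f_{t_1}$ agree, and gives no reason why $df_{t_0}^{\,i}(q_i)$ should be of the form $C\la^i$. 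Equivalently, in your CER reformulation, Theorem~\ref{linearcer} needs the repeller to be \emph{linear}, and ``the non-triviality of $h$'' is no argument for linearity; that is exactly the hard point. (Two smaller symptoms of the same confusion: every repelling fixed point has homoclinic orbits by Lemma~\ref{lemexisthomo}, so the alternative ``no homoclinic orbit, hence exceptional'' is vacuous and backwards; and your claim that $h$ acts as an affine scaling in linearizing coordinates presupposes that $h$ is conformal near the fixed point, which is not known for a holomorphic-motion conjugacy.)

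The dichotomy you want --- a multiplier-preserving conjugacy on the Julia set either extends to a M\"obius map or forces exceptionality --- is precisely Theorem~\ref{markedmultiplier} (marked multiplier rigidity), and in the paper that theorem is not a corollary of the homoclinic criterion alone: it goes through Theorem~\ref{markedlength}, Sullivan's rigidity theorem for CERs (Theorem~\ref{cerrigidity}) and Eremenko--van Strien. The paper's second proof of McMullen's theorem is essentially your outline with that heavy input supplied (via the $\la$-lemma plus Theorem~\ref{markedmultiplier}). Its first, quasiconformal-free proof supplies the missing geometric progression differently: it degenerates the family at a puncture, uses Berkovich dynamics (Theorem~\ref{rescaling}) to produce a rescaling limit $z\mapsto z^m$, for which the adjoint multipliers literally equal $m^i$, and then transports the homoclinic orbit and its adjoint sequence to nearby fibers by Jonsson's expanding-set motion (Lemma~\ref{motion}), where stability of the multiplier spectrum keeps the multipliers equal to $m^i$ so that Theorem~\ref{thmadjconmultexcep} applies. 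Your proposal contains neither mechanism, so the step from the stable family to ``$f_{t_0}$ is exceptional'' is unproved. The final elimination of monomial and rigid Latt\`es classes is fine but not where the difficulty lies.
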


McMullen's proof relies on the following Thurston's rigidity theorem for \emph{post-critically finite} (PCF) maps \cite{Douady1993}, in where Teichm\"{u}ller theory is essentially used. An endomorphism $f$ of degree $d\geq 2$ is called PCF if the critical orbits of $f$ is a finite set. 

\begin{thm}[Thurston]\label{thmthurston}
Let $f_{\La}$ be a non-isotrivial  irreducible algebraic family of PCF maps, then  $f_t\in FL(\C)$ for every $t\in \La(\C)$.
\end{thm}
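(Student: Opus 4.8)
The plan is to reduce the theorem to the rigidity of an individual post-critically finite (PCF) rational map and to prove that rigidity via Thurston's pullback map on Teichm\"uller space, following \cite{Douady1993}. We may assume $\Lambda$ is a smooth affine curve: general hyperplane sections give such curves through every point of $\Lambda$, and since $FL_d(\C)$ is Zariski closed it is enough to treat the restricted families. The \emph{portrait} of a PCF map --- its critical points with local degrees, together with the combinatorics of the finite postcritical orbits --- is a constructible invariant along $\Lambda$, hence constant on a Zariski-dense open $\Lambda^\circ\subseteq\Lambda$; moreover over $\Lambda^\circ$ the critical points and their orbits move holomorphically without collision, so for $t_0,t_1\in\Lambda^\circ$ the maps $f_{t_0},f_{t_1}$ are \emph{Thurston equivalent}, i.e.\ conjugate as postcritically-marked branched covers of $S^2$ by an isotopy. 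It therefore suffices to prove: two Thurston-equivalent PCF rational maps of degree $d\ge2$ are M\"obius conjugate, unless their common portrait is that of a flexible Latt\`es map. Granting this, $\Psi|_{\Lambda^\circ}$ is constant whenever the generic portrait is non-Latt\`es, contradicting non-isotriviality; so the generic portrait is flexible Latt\`es, hence $f_t\in FL_d(\C)$ for $t\in\Lambda^\circ$, and then for all $t$ by closedness of $FL_d(\C)$.

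For the rigidity statement, fix a PCF map $f$ with postcritical set $P_f$; we may assume $|P_f|\ge3$, since $|P_f|\le2$ forces $f$ to be conjugate to $z\mapsto z^{\pm d}$, which is M\"obius-rigid. On the Teichm\"uller space $\sT=\sT(\P^1,P_f)$ of the marked sphere, $f$ determines the pullback map $\sigma_f:\sT\to\sT$, $\tau\mapsto f^*\tau$, well defined because $P_f\subseteq f^{-1}(P_f)$. It is holomorphic and a weak contraction for the Teichm\"uller metric: it factors as the isometric embedding $\tau\mapsto f^*\tau$ into $\sT(\P^1,f^{-1}(P_f))$, followed by the forgetful, hence distance-nonincreasing, projection back to $\sT(\P^1,P_f)$. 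The standard complex structure is a fixed point of $\sigma_f$ (as $f$ is holomorphic), and, up to the action of the pure mapping class group, PCF rational maps Thurston-equivalent to $f$ correspond exactly to fixed points of $\sigma_f$ in $\sT$. Hence it is enough to show $\sigma_f$ has a \emph{unique} fixed point.

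The main obstacle --- and the only genuinely hard point --- is the contraction dichotomy: $\sigma_f$ is \emph{uniformly} contracting on Teichm\"uller balls, and so has a unique fixed point, unless $f$ is a flexible Latt\`es map. One analyzes this through the coderivative of $\sigma_f$, the Thurston linear operator $q\mapsto f_*q$ (pushforward) on the space of integrable meromorphic quadratic differentials with at worst simple poles along $P_f$; one has $\|f_*q\|_1\le\|q\|_1$, and the task is to exclude spectral radius $1$, equivalently to exclude a nonzero $q$ with $f_*q=\lambda q$, $|\lambda|=1$. Such an eigendifferential would produce an $f$-invariant line field on a positive-measure subset of the Julia set, which by the no-invariant-line-fields theorem occurs only for flexible Latt\`es maps \cite{mcmullen2016complex}; conversely those maps carry a one-parameter deformation, accounting for the eigenvalue $1$. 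This is exactly the step where quadratic differentials and Teichm\"uller theory are indispensable in the classical argument.

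Finally, I would expect a Teichm\"uller-free route, in the spirit of this paper, to proceed instead through the function field $K=\C(\Lambda)$: the map $f$ over $K$ is PCF and non-isotrivial, so it has bad reduction at some place $v$ (otherwise the induced morphism to the affine variety $\sM_d$ would extend over the projective closure of $\Lambda$ and hence be constant). Over the non-archimedean completion at $v$, bad reduction forces an infinite Julia set carrying an expanding repeller on which $f$ acts, and being PCF should constrain that repeller to be ``linear'', whence $f$ is exceptional by the non-archimedean analogue of Theorem~\ref{linearcer}; since the reduction at $v$ is bad, $f$ must in fact be flexible Latt\`es. Alternatively, arguing along these lines that the multiplier spectrum is constant along $\Lambda$ --- the delicate point being the repelling cycles, the superattracting ones contributing multiplier $0$ in any PCF family --- would let one invoke Theorem~\ref{thmmcmullen} directly.
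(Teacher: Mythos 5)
Your overall architecture --- reduce to rigidity of a single PCF map via constancy of the critical portrait on a Zariski-dense open subset of $\La$ and Thurston equivalence along paths, then run the pullback map $\sigma_f$ on Teichm\"uller space --- is the classical Douady--Hubbard argument, and the reduction step is fine modulo the countability argument showing that some fixed critical-orbit relation holds identically on $\La$. The genuine gap is in the step you yourself single out as the hard one. You exclude a unimodular eigenvector $q$ of the pushforward operator $f_*$ by claiming it yields an $f$-invariant line field on a positive-measure subset of $\sJ(f)$ and then invoking the no-invariant-line-fields theorem. First, the unrestricted no-invariant-line-fields statement is an open conjecture; only special cases (including the critically finite one) are theorems. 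Second, and decisively: for every PCF map with a periodic critical point the Julia set has Lebesgue measure zero, so the line field attached to $q$ lives almost everywhere in the Fatou set, where invariant line fields genuinely exist (the radial field near a superattracting point $z\mapsto z^k$ is invariant), and no contradiction results --- yet uniqueness of the fixed point of $\sigma_f$ has to be proved for exactly these hyperbolic PCF maps as well. The classical proof closes the equality case $\|f_*q\|_1=\|q\|_1$ differently: equality forces $f^*(f_*q)=d\,q$, and a count of zeros and poles (Riemann--Hurwitz) shows $\#P_f=4$ with orbifold signature $(2,2,2,2)$, i.e.\ $f$ is Latt\`es, with no reference to the measure of the Julia set. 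Relatedly, the Euclidean-orbifold maps other than $(2,2,2,2)$ (power maps, Chebyshev, rigid Latt\`es) need a separate elementary treatment, since the strict-contraction dichotomy is established for hyperbolic orbifolds; folding them silently into ``unless flexible Latt\`es'' leaves that case unaddressed.

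Even once repaired, note that this is precisely the Teichm\"uller-theoretic proof the paper is at pains to avoid: in the paper, Theorem \ref{thmthurston} is deduced in one line from Theorem \ref{thmmcmullen}, whose new proof (rescaling limits on the Berkovich projective line combined with the homoclinic-orbit criterion, Theorem \ref{thmadjconmultexcep}) is the actual content. The deduction rests on the observation that an irreducible algebraic family of PCF maps is automatically stable in the paper's sense: every cycle of a PCF map is superattracting or repelling, so each multiplier is an algebraic function on the curve of marked periodic points over $\La$ whose values avoid the punctured closed unit disk, and a nonconstant such function on an irreducible curve would have cofinite image in $\C$; hence the multiplier spectrum is constant along $\La$. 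Your closing paragraph gropes toward this: your last sentence (constancy of the multiplier spectrum plus Theorem \ref{thmmcmullen}) is in fact the paper's argument, and the ``delicate point'' about repelling cycles is handled by the elementary observation just stated, not by non-archimedean linear repellers or an analogue of Theorem \ref{linearcer}.
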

	
In this paper, we will give a new proof of McMullen's theorem without using  quasiconformal maps or Teichm\"{u}ller theory.
Since an irreducible algebraic family of PCF maps is automatically stable, this leads to a new proof of Theorem \ref{thmthurston} without using quasiconformal maps or Teichm\"{u}ller theory. 
Except Theorem \ref{thmadjconmultexcep} whose proof relies on some basic complex analysis,
our proof of Theorem \ref{thmmcmullen} only requires some basic knowledges in Berkovich dynamics and hyperbolic dynamics.
We explain our strategy of the proof as follows: 

\medskip
Cutting by hypersurfaces, one may reduce to the case that $\La$ is a smooth affine curve. Let $W$ be the smooth projective compactification of $\La$ and let
$B:=W\setminus \La$. For every $o\in B$, our family induces a non-archimedean dynamical system on the Berkovich projective line (see Section \ref{sectionberko} for details),
which encodes the asymptotic behavior of $f_t$ when $t\to o.$ Since $f_{\La}$ is  non-isotrivial and stable, via the study of non-archimedian dynamics, we show that there is one point $o\in B$ such that when $t\to o$, $f_t$ ``degenerates" to a map taking form $z\mapsto z^m$ in a suitable coordinate, where $2\leq m\leq d-1.$ 
The above degeneration $z\mapsto z^m$ is called a \emph{rescaling limit} of $f_{\La}$ at $o$, in the sense of Kiwi \cite{kiwi2015rescaling}, see Definition \ref{defiresclim}. On the central fiber, it is easy to find a homoclinic orbit satisfying the condition in our exceptional criterion Theorem \ref{thmadjconmultexcep}.  Using an argument in hyperbolic dynamics \cite{jonsson1998holomorphic} (see Lemma \ref{motion}), 
we can deform such homoclinic orbit to nearby fibers preserving the required condition. By Theorem \ref{thmadjconmultexcep}, $f_t$ is exceptional for all $t$ sufficiently close to $o$.
We conclude the proof by the fact that  exceptional endomorphisms that are not flexible Latt\`es are isolated in the moduli space $\sM_d(\C)$.

\subsection{Length spectrum as moduli}
According to the Noetheriality of the Zariski topology on $\text{Rat}_d$,  McMullen's rigidity theorem can be reformulated as follows:
\begin{thm}[Multiplier spectrum as moduli=Theorem \ref{thmmcmullen}]\label{thmmcmmulti}
Aside from the flexible Latt\`es family, the multiplier spectrum determines the conjugacy class of endomorphisms in $\text{Rat}_d(\C)$, $d\geq 2$, up to finitely many choices.
\end{thm}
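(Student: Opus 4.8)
The plan is to deduce this from McMullen's Theorem~\ref{thmmcmullen} by a formal argument whose only ingredients are the Noetherianity of the Zariski topology and Bertini's theorem; no dynamics beyond Theorem~\ref{thmmcmullen} enters. First I would use that for every $n\ge1$ the assignment $f\mapsto s_n(f)$ is a morphism of varieties $s_n\colon\text{Rat}_d\to\C^{N_n}/S_{N_n}\simeq\C^{N_n}$, the elementary symmetric functions of the multipliers at the fixed points of $f^n$ being regular functions of the coefficients of $f$. On $\text{Rat}_d\times\text{Rat}_d$ the closed subsets $V_N:=\bigcap_{n=1}^{N}(s_n\times s_n)^{-1}(\Delta_n)$, with $\Delta_n\subseteq\C^{N_n}\times\C^{N_n}$ the diagonal, form a descending chain, so by Noetherianity it stabilizes: $V_N=V_{N_0}$ for some $N_0$ and all $N\ge N_0$. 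Hence two maps have the same multiplier spectrum exactly when they satisfy the finitely many equations $s_n(f)=s_n(g)$ for $1\le n\le N_0$; in particular, for fixed $f$ the $\PGL_2$-invariant locus
\[
F_f:=\{\,g\in\text{Rat}_d(\C):s_n(g)=s_n(f)\text{ for all }n\ge1\,\}=\bigcap_{n=1}^{N_0}s_n^{-1}(s_n(f))
\]
is a closed subvariety of $\text{Rat}_d$ with finitely many irreducible components $F^{(1)},\dots,F^{(r)}$.

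The key point, and the only place Theorem~\ref{thmmcmullen} is used, is the dichotomy: for each $i$, either $F^{(i)}\subseteq FL_d(\C)$ or $\Psi(F^{(i)})$ is finite. To prove it, suppose $F^{(i)}\not\subseteq FL_d(\C)$ while $\Psi(F^{(i)})$ is infinite, i.e.\ $\dim\overline{\Psi(F^{(i)})}\ge1$. Fixing an embedding $\text{Rat}_d\hookrightarrow\P^M$ and intersecting $F^{(i)}$ with $\dim F^{(i)}-1$ general hyperplanes, Bertini's theorem together with a standard dimension count (a general hyperplane section of an irreducible variety is irreducible, is not contained in the proper closed subset $F^{(i)}\cap FL_d(\C)$, and is not contracted by $\Psi$, since $\dim\overline{\Psi(\cdot)}$ drops by at most one under a general hyperplane section) produces an irreducible curve $C\subseteq F^{(i)}$ with $C\not\subseteq FL_d(\C)$ and $\Psi|_C$ non-constant. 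Its normalization gives a morphism $\widetilde C\to\text{Rat}_d$, hence an irreducible algebraic family $f_{\widetilde C}$ of degree $d$; it is non-isotrivial because $\Psi|_C$ is non-constant, and stable because each $s_n$ is constant on $C\subseteq F_f$ (with value $s_n(f)$). Theorem~\ref{thmmcmullen} then forces $f_t\in FL(\C)$ for every $t$, that is $C\subseteq FL_d(\C)$, contradicting $C\not\subseteq FL_d(\C)$.

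Granting the dichotomy, the components of $F_f$ contained in $FL_d(\C)$ contribute nothing to $F_f\setminus FL_d(\C)$, so $\Psi(F_f\setminus FL_d(\C))$ is a finite union of finite sets and therefore finite; and $\Psi(F_f\setminus FL_d(\C))$ is precisely the set of conjugacy classes in $\text{Rat}_d(\C)$ of maps that are not flexible Latt\`es and have the same multiplier spectrum as $f$, which is the assertion. (The flexible Latt\`es family must be excluded because all its members share one multiplier spectrum while sweeping out a positive-dimensional locus in $\sM_d(\C)$.) The converse implication, which makes the word ``reformulation'' legitimate, is equally formal: a non-isotrivial stable irreducible algebraic family has constant multiplier spectrum and infinite image in $\sM_d(\C)$, so were one of its members not flexible Latt\`es, the dense open sublocus of members that are not flexible Latt\`es would produce infinitely many conjugacy classes of maps that are not flexible Latt\`es but share one and the same multiplier spectrum, against what was just proved. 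I expect the only step needing genuine care to be the Bertini reduction to a one-parameter family, which is what makes Theorem~\ref{thmmcmullen} applicable; everything else is routine Noetherian bookkeeping, and the real content is already contained in Theorem~\ref{thmmcmullen}.
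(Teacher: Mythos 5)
Your argument is correct and takes essentially the paper's own route: the paper deduces Theorem \ref{thmmcmmulti} from Theorem \ref{thmmcmullen} exactly by noetherianity of the Zariski topology (the loci $\{g: s_i(g)=s_i(f),\ i\le n\}$ are Zariski closed and stabilize, so the full multiplier spectrum is cut out by finitely many algebraic conditions) followed by applying McMullen's theorem to the resulting algebraic locus away from $FL_d(\C)$. The only difference is cosmetic: since Theorem \ref{thmmcmullen} is stated for an arbitrary irreducible algebraic family $\La$, you may apply it directly to each irreducible component $F^{(i)}$ (non-isotrivial if $\Psi(F^{(i)})$ is infinite, stable because the spectrum is constant on $F_f$), so the Bertini reduction to a curve is unnecessary, though harmless.
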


Replace the multipliers by its norm in the definition of multiplier spectrum, one get the definition of the \emph{length spectrum}.
More precisely, for every $f\in \text{Rat}_d(\C)$ and $n\geq 1$, we denote by $L_n(f)\in \R^{N_n}/S_{N_n}$ the element corresponding to the multiset $\{|\la_1|,\dots, |\la_{N_n}|\},$ where 
$\la_i, i=1,\dots, N_n$ are the multipliers of all $f^n$-fixed points. The length spectrum of $f$ is defined to be the sequence $L_n(f), n\geq 1.$  A priori,  the length spectrum contains less information than the multiplier spectrum. 
But in this paper we will show that it determines the conjugacy class up to finitely many choices,  hence generalize Theorem \ref{thmmcmmulti}.
\begin{thm}[Length spectrum as moduli]\label{thmlength}
Aside from the flexible Latt\`es family, the length spectrum determines the conjugacy class of endomorphisms in $\text{Rat}_d(\C)$, $d\geq 2$, up to finitely many choices.
\end{thm}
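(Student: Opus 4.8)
The plan is to reduce Theorem~\ref{thmlength} to the already-proven Theorem~\ref{thmmcmmulti} (McMullen's multiplier-spectrum rigidity), by showing that for a non-isotrivial family the length spectrum cannot be constant either—unless the family is flexible Latt\`es. More precisely, I would argue by contradiction: suppose there is an infinite set $S\subseteq \text{Rat}_d(\C)$ of maps, not contained (up to conjugacy) in finitely many classes together with the flexible Latt\`es locus, all sharing the same length spectrum $L_n$, $n\geq 1$. By noetherianity of the Zariski topology on $\text{Rat}_d$, after replacing $S$ by a subset we may assume $S$ is Zariski dense in an irreducible subvariety $Z\subseteq \text{Rat}_d$ of positive dimension whose image in $\sM_d(\C)$ is positive-dimensional; cutting by hypersurfaces and normalizing, we get a non-isotrivial irreducible algebraic family $f_\La$ over a smooth affine curve $\La$ on which $L_n(f_t)$ is constant in $t$ for every $n$.

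The heart of the matter is then to upgrade ``constant length spectrum'' to ``constant multiplier spectrum,'' i.e. to show such a family is stable in the sense used for Theorem~\ref{thmmcmullen}. The functions $t\mapsto s_n(f_t)$ are algebraic (holomorphic) on $\La$, with values in $\C^{N_n}/S_{N_n}$; composing with the elementary symmetric functions of the multipliers we get honest holomorphic functions $\sigma_{n,k}(t)$ on $\La$. Constancy of $L_n$ says the \emph{multiset of absolute values} $\{|\la_i(t)|\}$ is independent of $t$. The key step is the observation that if a finite collection of holomorphic functions $\la_1(t),\dots,\la_{N}(t)$ on a connected Riemann surface has the property that, as an unordered tuple, $\{|\la_i(t)|\}$ is independent of $t$, then after passing to a branched cover where the $\la_i$ become single-valued branches, each branch $|\la_i(t)|$ is constant, and a non-constant holomorphic function has non-constant modulus (open mapping theorem); hence each branch $\la_i(t)$ lies on a fixed circle. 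A holomorphic map into a circle is constant, so each $\la_i(t)$ is constant, and therefore so are all the $\sigma_{n,k}$, i.e. $s_n(f_t)$ is constant. This makes $f_\La$ a non-isotrivial stable irreducible algebraic family, and Theorem~\ref{thmmcmullen} forces $f_t\in FL(\C)$ for all $t$, contradicting our choice of $S$.

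There is a subtlety in the branched-cover argument: the multipliers $\la_i(t)$ of the $f^n$-fixed points are genuinely multivalued over $\La$ (the fixed points move in an algebraic family and may be permuted by monodromy), and one must be careful that ``the multiset $\{|\la_i(t)|\}$ is constant'' is preserved and that the individual branches extend holomorphically after base change, including across points where fixed points collide (multipliers equal to $1$). I would handle this by working over the (normalization of the) incidence variety $\{(t,x): f_t^n(x)=x\}\to \La$, which carries a well-defined multiplier function; its connected components give the branches, the collision points are a finite set that can be removed and then filled back in by the Riemann extension theorem once boundedness is known, and constancy of modulus on the punctured surface propagates. The main obstacle is thus not conceptual but this bookkeeping—ensuring the ``modulus constant $\Rightarrow$ function constant'' principle is applied on the right cover and that it indeed recovers the full multiplier spectrum $s_n$, not merely $|s_n|$; everything after that is a direct appeal to Theorem~\ref{thmmcmmulti} plus the isolatedness of non-flexible-Latt\`es exceptional maps in $\sM_d(\C)$ that was already invoked in the proof of Theorem~\ref{thmmcmullen}.
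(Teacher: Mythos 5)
There is a genuine gap, and it sits exactly at your opening reduction. From an infinite set $S$ of maps sharing the length spectrum you pass to its Zariski closure and then to a curve $\La$ ``on which $L_n(f_t)$ is constant in $t$ for every $n$.'' That last clause is unjustified: the fibers of $L_n$ are not Zariski closed in $\text{Rat}_d$ (they are only closed semialgebraic sets once one views $\text{Rat}_d(\C)$ as a real algebraic variety, via $|\la|^2$), so constancy of $L_n$ on a Zariski-dense subset $S$ of an irreducible variety $Z$ does not propagate to $Z$, and cutting $Z$ by hypersurfaces gives a complex algebraic curve that has no reason to meet $S$ in more than finitely many points, let alone to carry a constant length spectrum. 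A priori the locus $\{f: L(f)=L(g)\}$ could be a thin, totally real subset of $\text{Rat}_d(\C)$ containing no complex curve at all; producing any positive-dimensional structure inside it is precisely the content of the theorem, not a formal consequence of noetherianity. This is why the paper introduces the admissible-set machinery: it proves a descending chain condition for images of algebraic sets under \'etale maps (Theorem \ref{thmnoetherianad}, Propositions \ref{prolenlcoadm}--\ref{prolengsadm}, with the technical restriction to repelling lengths so that $\phi_n$ is \'etale away from multiplier $1$) in order to conclude only that the locus is \emph{semialgebraic}, from which one can extract a \emph{real-analytic} arc $\gamma$, not a complex algebraic family.

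Once only a real arc is available, your second step collapses as well: along $\gamma$ the multipliers are real-analytic functions of a real parameter, and ``constant modulus $\Rightarrow$ constant'' fails (think of $t\mapsto e^{it}\la_0$); the open mapping theorem gives nothing. Upgrading constant length to constant multiplier along $\gamma$ is the second hard point of the paper's proof, done via homoclinic orbits, non-linearity of CERs (Theorem \ref{linearcer}), holomorphic motion of expanding sets (Lemma \ref{motion}) and Sullivan's rigidity theorem (Theorem \ref{cerrigidity}), after which real-analyticity propagates the constancy from a small interval to all of $\gamma$ and McMullen's theorem finishes. Your branched-cover bookkeeping and the appeal to Theorem \ref{thmmcmmulti} would indeed be fine \emph{if} a complex one-parameter family with constant length spectrum existed, but neither of the two genuinely difficult steps of the theorem is addressed by the proposal.
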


\subsubsection{Strategy of the proof of Theorem \ref{thmlength}:}
Let $g\in \text{Rat}_d(\C)\setminus FL_d(\C)$. We need to show that the image of $$Z:=\{f\in \text{Rat}_d(\C)\setminus FL_d(\C) |\,\, L(f)=L(g)\}$$ in $\sM_d(\C)$ is finite.
For $n\geq 0$, set  $$Z_n:=\{f\in \text{Rat}_d(\C)\setminus FL_d(\C) |\,\, L_i(f)=L_i(g), i=1,\dots,n \}.$$ It is clear that $Z_i, i\geq 1$
 is a decreasing sequence of closed subsets of $\text{Rat}_d(\C)\setminus FL_d(\C)$ and 
 $Z=\cap_{n\geq 1} Z_n.$
For simplicity, we assume that all periodic points of $g$ are repelling. 
Otherwise, instead of the length spectrum $L(g)$ of all periodic points, we consider the length spectrum $RL(g)$ of all repelling periodic points. 
Such a change only adds some technical difficulties. 
To get a contradiction, we assume that $\Psi(Z)\in \sM_d(\C)$ is infinite.
 Our proof contains two steps.

\medskip

In Step 1, we show that $Z=Z_N$ for some $N\geq 0.$  
We first look at the analogue of this step for multiplier spectrum.
The analogue of $Z_n$ is
$$Z'_n:=\{f\in \text{Rat}_d(\C)\setminus FL_d(\C) |\,\, s_i(f)=s_i(g), i=1,\dots,n \},$$
which  is Zariski closed in $\text{Rat}_d(\C)\setminus FL_d(\C).$
Hence $Z_n'$ is stable when $n$ large by the Noetheriality. This is how Theorem \ref{thmmcmullen} implies Theorem \ref{thmmcmmulti}.
In the  length spectrum case,
since the $n$-th length map $L_n: \text{Rat}_d(\C)\to \R^{N_n}/S_{N_n}$ takes only real values, it is more natural to view $\text{Rat}_d(\C)$ as a real algebraic variety by splitting the complex variable to two real variables via $z=x+iy$. 
If all $Z_n, n\geq 1$ are real algebraic, we can still conclude this step by the Noetheriality.
Unfortunately, this is not true in general (c.f. Theorem \ref{thmexamplenonalg}).
Since the map $L_n^2$ sending $f$ to $\{|\la_1|^2,\dots, |\la_{N_n}|^2\}\in \R^{N_n}/S_{N_n}$ is semialgebraic, all $Z_n, n\geq 1$ are semialgebraic.  
The problem is that, in general closed semialgebraic sets do not satisfy the descending chain condition. 
We solve this problem by introducing a class of closed semialgebraic sets called  \emph{admissible} subsets.
Roughly speaking, admissible subsets are the closed subsets which are images of algebraic subsets under \'etale morphisms. See Section \ref{subsectionimageetale} for its precise definition and basic properties. We show that admissible subsets satisfy the descending chain condition.
Under the assumption that all periodic points of $g$ are repelling, we can show that all $Z_n$ are admissible.  The admissibility is only used to prove Theorem \ref{thmlength}. 

\medskip

Step 1 implies that $Z=Z_N$ is semialgebraic. Since $\Psi(Z)$ is infinite, there is an analytic curve $\gamma\simeq [0,1]$ contained in $Z$ such that $\Psi(\gamma)$ is not a point.
Every $t\in \gamma\subseteq \text{Rat}_d$ defines an endomorphism $f_t$. After shrinking $\gamma$, we may assume that $f_0$ is not exceptional.

\medskip

In Step 2, we show that the multiplier spectrum of $f_t$ does not depend on $t\in \gamma.$ Once Step 2 is finished, we get a contradiction by Theorem \ref{thmmcmmulti}.
Since for every $t\in \gamma$, $L(f_t)=L(g)$, all periodic points of $f_t$ are repelling. For every repelling periodic point $x$ of $f_0$, there is a real analytic map $\phi_x: \gamma\to \P^1(\C)$ such that 
for every $t\in \gamma$, $\phi_x(t)$ and $x$ have the same minimal period and  the norms of their multipliers are same.
Using homoclinic orbits, we may construct a CER $E_0$ of $f_0$ containing  $x.$ It is non-linear by Theorem \ref{linearcer}. By Lemma \ref{motion}, for $t$ sufficiently small, $E_0$ can be deformed to a CER $E_t$ of $f_t$ containing $\phi_x(t).$ Using Sullivan's rigidity theorem \cite{sullivan1986quasiconformal} (Theorem \ref{cerrigidity}), we show that $E_0$ and $E_t$ are conformally conjugate. In particular, the multipliers of $\phi_x(t)$  is a constant for $t$ sufficiently small. Since $\gamma$ is real analytic, the multipliers of $\phi_x(t)$  is a constant on $\gamma.$ Since $x$ is arbitrary,  all $f_t,t\in \gamma$ have the  same multiplier spectrum.  This finishes Step 2.

\subsubsection{Further discussion}  
It is interesting to know whether the uniform version of Theorem \ref{thmlength} holds.
\begin{que}\label{queuniformlen}
Is there $N\geq 1$ depending only on $d\geq 2$, such that for every $f\in \text{Rat}_d(\C)\setminus FL_d(\C)$, 
 $$\#\Psi(\{g\in \text{Rat}_d(\C)\setminus FL_d(\C)|\,\, L_i(g)=L_i(f), i=1,\dots,N\})\leq N\,?$$
\end{que}

For every $n\geq 0,$ we set $$R_n:=\{(f,g)\in (\text{Rat}_d(\C)\setminus FL_d(\C))^2|\,\, L_i(f)=L_i(g), i=1,\dots, n\}$$ and 
$$R_n':=\{(f,g)\in (\text{Rat}_d(\C)\setminus FL_d(\C))^2|\,\, s_i(f)=s_i(g),  i=1,\dots, n\}.$$
Both of them are  decreasing closed subsets of $(\text{Rat}_d(\C)\setminus FL_d(\C))^2.$ 
Since all $R_n'$ are algebraic subsets of $(\text{Rat}_d(\C)\setminus FL_d(\C))^2$, the sequence $R_n'$ is stable for $n$ large.
This implies that Theorem \ref{thmmcmmulti} for multiplier spectrum is equivalent to its uniform version.

If one can show that the sequence 
$R_n, n\geq 0$ is stable, for example if one can show that $R_n$ are admissible, 
then Question \ref{queuniformlen} has a positive answer. 
But at the moment, we only know that $R_n$ are semialgebraic.

\subsection{Marked multiplier and length spectrum}
By Theorem \ref{thmlength} and \ref{thmmcmmulti},
aside from the flexible Latt\`es family, the length spectrum (hence the multiplier spectrum) determines the conjugacy class of endomorphisms of degree $d\geq 2$ \emph{up to finitely many choices}. On the other hand, by \cite[Theorem 6.62]{Silverman2007}, the multiplier spectrum $f\mapsto s(f)$
(hence the length spectrum $f\mapsto L(f)$) is far from being injective when $d$ large.
For this reason, we consider the \emph{marked multiplier and length spectrum}.
We show that both of them are rigid. 

\begin{thm}[Marked multiplier spectrum rigidity]\label{markedmultiplier}
	Let $f$ and $g$ be two endomorphisms of $\P^1$ over $\C$ of degree at least $2$ such that $f$ is not exceptional.  Assume there is a homeomorphism $h:\sJ(f)\to \sJ(g)$ such that $h\circ f=g\circ h$ on $\sJ(f)$. Then the following two conditions are equivalent.
	 \begin{points}
	\item There is a non-empty open set $\Omega \subset \sJ(f)$ such that for every periodic point $x\in \Omega$ we have $df^n(x)=dg^n(h(x))$, where $n$ is the period of $x$;
	\item $h$  extends to an automorphism $h:\P^1(\C)\to \P^1(\C)$ such that $h\circ f=g\circ h$ on $\P^1(\C)$.
	\end{points}	
\end{thm}

Let $U,V\subset \P^1(\C)$ be two open sets.  A homeomorphism $h:U\to V$ is called \emph{conformal} if  $h$ is holomorphic or antiholomorphic in every connected component of $U$. Note that a conformal map $h$ is holomorphic if and only if $h$ preserves the orientation of $\P^1(\C)$. 

\begin{thm}[Marked length spectrum rigidity]\label{markedlength}
	Let $f$ and $g$ be two endomorphisms of $\P^1$ over $\C$ of degree at least $2$ such that $f$ is not exceptional.  Assume there is a homeomorphism $h:\sJ(f)\to \sJ(g)$ such that $h\circ f=g\circ h$ on $\sJ(f)$. Then the following two conditions are equivalent.
	 \begin{points}
	\item There is a non-empty open set $\Omega \subset \sJ(f)$ such that for every periodic point $x\in \Omega$ we have $|df^n(x)|=|dg^n(h(x))|$, where $n$ is the period of $x$;
 
	\item $h$  extends to a conformal map $h:\P^1(\C)\to \P^1(\C)$ such that $h\circ f=g\circ h$ on $\P^1(\C)$.
	\end{points}	
\end{thm}

Note that if $h:\Omega \to h(\Omega)$ is bi-Lipschitz, then it is not hard to show that  for $n$-periodic point $x\in \Omega$ we have $|df^n(x)|=|dg^n(h(x))|$. So the above theorem implies that a locally bi-Lipschitz conjugacy can be improved to a conformal conjugacy on $\P^1(\C)$. 

\medskip

Combining Theorem \ref{markedmultiplier} and  $\la$-Lemma \cite[Theorem 4.1]{mcmullen2016complex}, we get a second proof of Theorem \ref{thmmcmullen}. This proof does not use  Teichm\"{u}ller theory, but we need to use quasiconformal maps and the highly non-trivial Sullivan's rigidity theorem, which is a great achievement in thermodynamic formalism.

\begin{rem}

In Theorem \ref{markedlength}, in general $h$ can not be extended to an  automorphism on $\P^1(\C)$. The complex conjugacy 
$\sigma: z\mapsto \overline{z}$ induces a mark $h:=\sigma|_{\sJ(f)}: \sJ(f)\to \overline{\sJ(f)}=\sJ(\overline{f})$ preserving the length spectrum.
In general, some periodic point of $f$ may have non-real multipliers, hence in this case $h$  can not be extended to an  automorphism on $\P^1(\C)$. 
\end{rem}

\begin{rem} Theorem \ref{markedlength} was proved by Przytycki and Urbanski in \cite[Theorem 1.9]{przytycki1999rigidity} under the assumptions that both $f$ and $g$ are tame and $\Omega=\sJ(f)$. See \cite[Definition 1.1]{przytycki1999rigidity} for the precise definition of tameness. In \cite[Theorem 2]{Rees1984}, Rees showed that there are endomorphisms having dense critical orbits, hence not tame. 
\end{rem}
\medskip
\par The study of marked length spectrum rigidity has been investigated in various settings in dynamics and geometry.

\par In one-dimensional real dynamics, marked multiplier spectrum rigidity was proved for expanding circle maps (see Shub-Sullivan \cite{shub1985expanding}), and for some unimodal maps (see Martens-de Melo \cite{martens1999multipliers} and Li-Shen \cite{li2006smooth}).

\par In the contexts of geodesic flows on Riemannian manifolds with negative curvature,  
a long-standing conjecture stated by Burns-Katok \cite{Burns1985} (and
probably considered even before) asserted
the rigidity of marked length spectrum (for closed geodesics).  The surface case was proved by Otal \cite{Otal1990} and by Croke \cite{Croke1990} independently. 
A Local version of the Burns-Katok conjecture in any dimension
was proved by Guillarmou-Lefeuvre \cite{Guillarmou2019}.

\par It was also studied in dynamical billiards.  We refer the readers to Huang-Kaloshin-Sorrentino \cite{Huang2018}, B\'alint-De Simoi-Kaloshin-Leguil \cite{Balint2020}, De Simoi-Kaloshin-Leguil \cite{DeSimoi2019},  and the references therein.

%

\medskip

\par We prove Theorem \ref{markedlength} by combining Theorem \ref{linearcer} and Sullivan's rigidity theorem \cite{sullivan1986quasiconformal}, see Theorem \ref{cerrigidity}.
More precisely, let $o$ be a repelling fixed point of $f$.
We construct a family $C$ of CERs of  $f$ using homoclinic orbits which covers all backward orbits of $o$.
By Theorem \ref{linearcer}, all of them are non-linear.  We show that their images under $h$ are CERs of $g$. Applying Sullivan's rigidity theorem, we get conformal conjugacies link the CERs in $C$ to their images. Two CERs in $C$ have ``large" intersections,
hence those conformal conjugacies can be patched together. Using this, we  get a conformal extension of $h$ to some disk intersecting the Julia set of $f$. We can further extend it to a global conformal map on $\P^1(\C).$

Theorem \ref{markedmultiplier} is a simple consequence of Theorem \ref{markedlength} and a result of Eremenko-van Strien \cite[Theorem 1]{eremenko2011rational}  about endomorphisms with real multipliers.

\subsection{Zdunik's rigidity theorem}

The following rigidity theorem was proved by Zdunik \cite{zdunik1990parabolic}. 
\begin{thm}[Zdunik]\label{zdunik}
Let $f:\P^1\to \P^1$ be an endomorphism over $\C$ of degree at least $2$. Let $\mu$ be the maximal entropy measure and let $\alpha$ be the Hausdorff dimension of $\mu$. Then $\mu$ is absolutely continous with respect to the $\alpha$-dimensinal Hausdorff measure $\Lambda_\alpha$ on the Julia set if and only if $f$ is exceptional. 
\end{thm}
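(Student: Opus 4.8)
The plan is to exploit the structural results already developed in this paper — in particular, Theorem \ref{linearcer} on linear CERs and the homoclinic-orbit technology underlying Theorem \ref{thmadjconmultexcep} — together with the thermodynamic-formalism description of the measure of maximal entropy. Recall that $\mu$ is the unique measure of maximal entropy $\log d$, it is the equilibrium state for the potential $-\alpha\log|f'|$ with $\alpha=\dim_H\mu=\log d/\chi_\mu$ where $\chi_\mu$ is the Lyapunov exponent, and that $\mu$ has exact dimension $\alpha$ on $\sJ(f)$. The ``if'' direction is classical: when $f$ is exceptional one has an explicit invariant conformal structure (the flat metric pulled back from the torus in the Latt\`es case, or the flat metric on $\C^*$ in the monomial case), with respect to which $f$ is a local similarity, so $\mu$ is proportional to the Riemannian/Hausdorff measure on its support; I would simply recall this computation. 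The content is the ``only if'' direction: $\mu \ll \Lambda_\alpha$ forces $f$ to be exceptional.

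For the forward direction I would argue as follows. Assume $\mu$ is absolutely continuous with respect to $\Lambda_\alpha$. The first step is to produce, on a piece of the Julia set, a conformal expanding repeller whose natural measure is comparable to $\Lambda_\alpha$. Concretely, fix a repelling periodic point $o$ and use the homoclinic-orbit construction (as in the proofs of Theorems \ref{markedlength} and \ref{linearcer}) to build a CER $E \subset \sJ(f)$ that is not a finite set, say with transition structure $(E, f^k)$, carrying the Gibbs/conformal measure $\nu_\phi$ for the potential $\phi = -\alpha \log |f'|$. On such a CER one has the standard dimension theory: $\nu_\phi$ is exact-dimensional of dimension $\alpha$ and is equivalent to $\Lambda_\alpha|_E$ precisely when the pressure relation and a ``bounded distortion refinement'' hold. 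The second step is to transfer the hypothesis $\mu \ll \Lambda_\alpha$ to $E$: since $\mu|_E$ and $\nu_\phi$ are both Gibbs states for the same H\"older potential on the same CER they are mutually absolutely continuous (with bounded Radon--Nikodym derivative), so $\mu\ll\Lambda_\alpha$ implies $\nu_\phi \ll \Lambda_\alpha|_E$, hence — by the Gibbs property and the $\alpha$-conformality of $\Lambda_\alpha$ on scales adapted to $E$ — the Radon--Nikodym derivative is in fact bounded above and below, i.e.\ $\nu_\phi$ and $\Lambda_\alpha|_E$ are comparable. The third step is the rigidity input: comparability of the Gibbs measure with Hausdorff measure on a conformal repeller forces the potential $-\alpha\log|f'|$ to be cohomologous to a constant over $(E,f^k)$ (this is the classical dichotomy in Zdunik's and Makarov--Smirnov style arguments: either the two measures are mutually singular, or the potential is cohomologically trivial and then the repeller is ``linear''). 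Cohomological triviality of $\log|f'|$ on $E$ means that along every periodic orbit in $E$ the multiplier has modulus $(\text{const})^{(\text{period})}$, and by a standard bounded-distortion/linearization argument this upgrades to: $E$ is a \emph{linear} CER in the sense of Section \ref{subsectioncer}.

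At this point Theorem \ref{linearcer} applies directly: $f$ has a linear CER which is not a finite set, hence $f$ is exceptional. The main obstacle I anticipate is the middle step — showing that $\mu \ll \Lambda_\alpha$ globally really does descend to \emph{comparability} (not just absolute continuity) of the Gibbs measure with $\Lambda_\alpha$ on the CER $E$, and then extracting genuine cohomological triviality rather than a weaker statement. This is where one needs the sharp version of the "fluctuation" argument: absolute continuity $\nu_\phi \ll \Lambda_\alpha$ combined with the Gibbs inequalities $\nu_\phi(B(x,r)) \asymp r^\alpha e^{S_n\phi(x)}$ and the elementary bound $\Lambda_\alpha(B(x,r)) \le C r^\alpha$ pins the density between two positive constants, and then a Birkhoff/law-of-large-numbers argument on $(E,f^k,\nu_\phi)$ shows the Birkhoff sums of $\phi$ minus its mean cannot have nontrivial asymptotic fluctuations, i.e.\ $\phi$ is cohomologous to a constant. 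One must be careful that the CER produced by the homoclinic construction is rich enough (positive-dimensional, with enough periodic orbits) that linearity of $\phi$ on it is a nontrivial constraint — but this is exactly guaranteed by the same construction used elsewhere in the paper. I would also need to handle the case of parabolic or non-repelling periodic points by passing to the Julia set and the repelling part, as is standard; this adds only technical bookkeeping and does not affect the structure of the argument.
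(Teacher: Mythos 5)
Your closing step --- once you have a linear, infinite CER, invoke Theorem \ref{linearcer} to conclude that $f$ is exceptional --- is exactly how the paper finishes. But the route you propose for getting there has a genuine gap in the middle, precisely at your ``second step''. You want to transfer the global hypothesis $\mu\ll\Lambda_\alpha$ to a horseshoe $E$ built from homoclinic orbits by arguing that $\mu|_E$ and the Gibbs state $\nu_\phi$ of $\phi=-\alpha\log|df|$ on $(E,f^k)$ are equivalent Gibbs states, hence $\nu_\phi\ll\Lambda_\alpha|_E$ and then $\nu_\phi\asymp\Lambda_\alpha|_E$. This cannot work: a horseshoe $E$ is a proper closed $f^k$-invariant Cantor set, so $\bigcup_j f^j(E)$ is a proper closed invariant subset of $\sJ(f)$ and ergodicity of $\mu$ (with $\supp\mu=\sJ(f)$) forces $\mu(E)=0$; thus $\mu|_E$ is the zero measure and is certainly not a Gibbs state for $\phi$ on $E$. (Relatedly, $\mu$ is the equilibrium state of the constant potential, not of $-\alpha\log|df|$, unless one is already in the rigid situation.) Moreover $\dim_H E$ is in general strictly smaller than $\alpha$, so $\Lambda_\alpha|_E=0$ as well, and the asserted comparability $\nu_\phi\asymp\Lambda_\alpha|_E$ compares a probability measure with the zero measure. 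So no information about $\mu$ versus $\Lambda_\alpha$ descends to the CER by restriction, and the fluctuation/Birkhoff argument you run on $(E,f^k,\nu_\phi)$ has nothing to act on.

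What is actually needed --- and what the paper uses --- is the \emph{global} dichotomy of Przytycki--Urbanski--Zdunik (\cite[Theorem 6]{przytycki1989harmonic}): either $\mu\perp\Lambda_\alpha$, or $\alpha\log|df|-\log d=u\circ f-u$ with $u\in L^2(\sJ(f),\mu)$. This is the step that converts absolute continuity into a cohomological equation, and it lives on the whole Julia set with respect to $\mu$; it cannot be replaced by a local Gibbs-state comparison on a small repeller. Granting that input, the paper then uses Zdunik's lemma (\cite[Lemma 2]{zdunik1990parabolic}) to upgrade $u$ to a function that is continuous near any point off the postcritical set, chooses a repelling fixed point $o\notin PC(f)$ with a linearization domain $U$ on which $u$ is continuous, builds a horseshoe $K\subset U$, and observes that the continuous coboundary equation on $K$ makes $K$ a linear CER; Theorem \ref{linearcer} then gives exceptionality. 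So your proposal is structurally parallel only in its last step; to repair it you would have to import the global cohomology dichotomy (or reprove it), and also the continuity upgrade for the $L^2$ transfer function, neither of which your CER-restriction argument supplies.
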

Zdunik's  proof is divided into two parts. The first part was proved in her previous work \cite[Theorem 6]{przytycki1989harmonic} with Przytycki and Urbanski.
Later, she proved the second part (hence Theorem \ref{zdunik}) in \cite{zdunik1990parabolic}.
In this paper we will give a simple proof of the second part using Theorem \ref{linearcer}.

\subsection{Milnor's conjectures on multiplier spectrum}
As applications of Theorem \ref{thmadjconmultexcep} and Theorem \ref{linearcer}, we prove two conjectures of Milnor proposed in \cite{milnor2006lattes}.

\begin{thm}\label{thmmilnor}Let $f:\P^1\to \P^1$ be an endomorphism over $\C$ of degree at least $2$. Let $K$ be an imaginary quadratic field. Let  $O_K$ be the ring of integers of $K$.  If for every $n\geq 1$ and every $n$-periodic point $x$ of $f$, $df^n(x)\in O_K$, then $f$ is exceptional.
\end{thm}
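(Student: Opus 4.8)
The plan is to deduce this from the exceptional criterion of Theorem~\ref{thmadjconmultexcep} (equivalently, from Theorem~\ref{linearcer}), by exhibiting a suitable ``arithmetic rigidity'' for the multipliers along a homoclinic orbit. First I would fix a repelling fixed (or periodic) point $o$ of $f$; such a point exists since $f$ has infinitely many repelling periodic points. Since $o$ is repelling and lies in the Julia set, one can produce a homoclinic orbit: a point $z$ in the intersection of the stable and unstable ``manifolds'' of $o$, i.e.\ a backward orbit of $o$ landing near $o$ that can be closed up through the local repelling dynamics. Along the periodic points obtained by perturbing this homoclinic configuration (the standard ``Smale horseshoe'' construction in one complex variable, using inverse branches near $o$), all the multipliers are forced to lie in $O_K$ by hypothesis. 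The key point is then that the multiplier of the fixed point $o$ together with the contraction/expansion data along the homoclinic orbit satisfy strong integrality constraints in $O_K$: $O_K$ is a discrete subring of $\C$, so a sequence of algebraic integers in $O_K$ whose archimedean absolute values converge must be eventually constant, and more refined, all these numbers have Galois conjugate (under $z\mapsto\bar z$) also in $O_K$, so their absolute values lie in $O_K\cap\R=\Z$ after multiplying by the complex conjugate. I would use this to show that the CER built from the homoclinic orbit (as in the proof of Theorem~\ref{linearcer} and Theorem~\ref{markedlength}) has all its periodic multipliers equal to a fixed algebraic number, hence the associated Bowen parameter forces the repeller to be \emph{linear} (its multiplier function is locally constant, i.e.\ the transfer operator's pressure is realized by a single value), at which point Theorem~\ref{linearcer} applies and concludes that $f$ is exceptional.

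\medskip

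In more detail, the steps I would carry out are as follows. Step 1: reduce to showing that $f$ admits a linear CER which is not finite; by Theorem~\ref{linearcer} this suffices. Step 2: pick a repelling fixed point $o$ and, following Section~\ref{sectionhomoclinic}, build a homoclinic orbit $\mathcal{O}$ for $o$ and the associated CER $E$ containing $o$ (this is exactly the construction used in the proof of Theorems~\ref{linearcer} and~\ref{markedlength}). Step 3: observe that every periodic point of $E$ is a periodic point of $f$, so by hypothesis all their multipliers are in $O_K$. Step 4 (the arithmetic heart): show that these multipliers are in fact \emph{all equal} up to the finite ambiguity coming from $O_K$ being discrete. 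Here the mechanism is: along a CER, the periodic multipliers of period $n$ grow like $e^{nP}$ where $P$ is a pressure/Lyapunov quantity, and if they all lie in the \emph{discrete} ring $O_K$ one gets, after pairing $\lambda$ with $\bar\lambda$ (both in $O_K$ by stability of $O_K$ under complex conjugation), that $|\lambda|^2\in O_K\cap\R=\Z_{\geq 0}$; combined with the multiplicativity of multipliers under iteration and concatenation of periodic orbits inside the horseshoe, a pigeonhole/rigidity argument forces the ``local multiplier function'' on $E$ to be cohomologous to a constant, i.e.\ $E$ is linear. Step 5: $E$ is infinite by construction (a homoclinic orbit generates a horseshoe, hence a Cantor set), so Theorem~\ref{linearcer} gives that $f$ is exceptional.

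\medskip

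The main obstacle I expect is Step 4: turning the integrality $df^n(x)\in O_K$ into the statement that the CER is \emph{linear}. The naive use of discreteness only gives that each fixed set of multipliers is finite; one must leverage the multiplicative/semigroup structure of the multipliers along the horseshoe (periods add, multipliers multiply) to upgrade ``finitely many values'' to ``a single value up to coboundary.'' Concretely, if $\lambda_w$ denotes the multiplier of the periodic point with itinerary a cyclic word $w$ in the two (or more) inverse branches defining the horseshoe, then $\lambda_{w_1 w_2}$ is close to $\lambda_{w_1}\lambda_{w_2}$ (with multiplicative error controlled by bounded distortion), and $\lambda_w\in O_K$ for all $w$; one wants to conclude $\lambda_w = c^{|w|}$ up to bounded factors, i.e.\ the Hölder function $\log|\varphi|$ on the shift is cohomologous to a constant. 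This is precisely the kind of statement where a discrete target value for a nearly-additive Hölder cocycle forces it to be (cohomologous to) constant; making this rigorous may require either invoking the CER rigidity machinery (Theorem~\ref{cerrigidity}) applied to $E$ versus a linear model, or a direct Livšic-type argument using that a Hölder coboundary-plus-constant decomposition with \emph{integer} (or $O_K$) cohomology class must be trivial. A secondary technical point is the case where $f$ has no repelling fixed point of minimal period $1$: then one replaces $o$ by a repelling periodic point and works with $f^N$, which is harmless since $f$ is exceptional iff some iterate is.
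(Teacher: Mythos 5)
Your overall frame (homoclinic orbit at a repelling fixed point, exceptional criterion Theorem~\ref{thmadjconmultexcep}/\ref{linearcer}) matches the paper's starting point, but your Step 4 --- the ``arithmetic heart'' --- has a genuine gap, and it is exactly where the real work lies. Discreteness of $O_K$ plus approximate multiplicativity of multipliers in a horseshoe does not force linearity. The bounded-distortion estimate only gives $\lambda_{w_1w_2}=\lambda_{w_1}\lambda_{w_2}\cdot(1+O(1))$ with a \emph{multiplicative} bounded error, and at the scale $|\lambda_w|\asymp e^{n\chi}$ the lattice $O_K$ contains roughly $e^{2n\chi}$ points, vastly more than the $\sim k^n$ periodic points of period $n$, so no pigeonhole collision is forced; likewise $|\lambda_w|^2\in\Z$ is far too weak to pin down a single Lyapunov exponent, and there is no Liv\v{s}ic-type rigidity for the condition ``Birkhoff sums at periodic points are logarithms of elements of $O_K$'' (that value set is not discrete in $\R$ at the relevant scale). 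What makes discreteness bite in the paper is an \emph{additive} error tending to zero: along the adjoint sequence $q_i$ of a homoclinic orbit, Lemma~\ref{lemmultilim} (via the Poincar\'e linearization and the map $T$) gives $\mu_i:=df^i(q_i)=a\lambda^i+b+o(1)$, and since $O_K$ is a lattice in $\C$ (this is precisely where ``imaginary quadratic'' enters), a torus/repelling-fixed-point argument upgrades this to the exact identity $\mu_i=a\lambda^i+b$ for all large $i$ (Lemma~\ref{lemmuiab}). Your proposal never isolates this additive asymptotic, so the discreteness you invoke has nothing to act on.

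Even granting the exact formula $\mu_i=a\lambda^i+b$, your proposal has no counterpart to the second half of the paper's proof. Theorem~\ref{thmadjconmultexcep} yields exceptionality only in the case $b=0$; when $b\neq 0$ no CER/linearity statement is available, and the paper excludes this case by a genuinely arithmetic argument: for all but finitely many primes $\bp$ of $O_K$, the result of Benedetto--Ingram--Jones--Levy (or Rivera-Letelier) says only finitely many periodic multipliers can satisfy $|\mu_i|_\bp<1$, which combined with Lemma~\ref{lemoneinfimuip} forces $|\mu_i|_\bp=1$ there and uniform lower bounds at the finitely many bad primes; the product formula then bounds the archimedean norm $|\mu_i|$, contradicting $|a\lambda^i+b|\to\infty$. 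Without this non-archimedean step (or a substitute for it), the argument cannot close: your Step 4 would at best show the adjoint multipliers form an arithmetic progression in $O_K$, which by itself does not imply the repeller is linear nor that $f$ is exceptional.
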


The  inverse of Theorem \ref{thmmilnor} is also true by Milnor \cite[Corollary 3.9 and Lemma 5.6]{milnor2006lattes}.
In fact, the original conjecture of Milnor concerns the case $K=\Q$.
Since imaginary quadratic fields exist (e.g. $\Q(i)$) and they contain $\Q$, Theorem \ref{thmmilnor} implies Milnor's original conjecture.

Some special cases of Milnor's conjecture for integer multipliers are known before by Huguin:
\begin{points}
\item In \cite{Huguin2022}, the conjecture was proved for quadratic endomorphisms. 
\item In \cite{Huguin2021}, the conjecture was proved for unicritical polynomials.
In fact, Huguin proved a stronger statement, which only assumes the multipliers are in $\Q$ (instead of $\Z$).
\end{points}

\rem In the recent preprint \cite{Huguin2022a}, Huguin reproved and strengthened our Theorem \ref{thmmilnor}. In his result,
the multipliers are only assumed to be contained in an arbitrary number field.
Huguin's result relies on an arithmetic equidistribution result for small points proved by Autissier \cite{Autissier2001} and on a characterization of exceptional maps proved by Zdunik \cite{zdunik2014characteristic}.
\endrem

%
\medskip

The following result confirms another conjecture of Milnor in \cite{milnor2006lattes}.
\begin{thm}\label{thmmilnor1}
Let $f:\P^1\to \P^1$ be an endomorphism over $\C$ of degree at least $2$.  Assume there exist $a>0$ such that for every but finitely many periodic point $x$,  $f^n(x)=x$,  we have $|df^n(x)|=a^n$. Then $f$ is exceptional.
\end{thm}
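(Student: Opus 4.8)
\textbf{Plan of proof of Theorem \ref{thmmilnor1}.}
The strategy is to reduce the hypothesis on Lyapunov exponents to the existence of a non-trivial linear conformal expanding repeller, and then invoke Theorem \ref{linearcer}. First I would fix a repelling periodic point and use the theory of homoclinic orbits (Section \ref{sectionhomoclinic}) to build a CER $E$ of $f$ sitting inside the Julia set; the point of using a homoclinic orbit of a repelling cycle is that the resulting CER is an arbitrarily rich piece of the Julia set, so it carries ``most'' of the periodic points. Concretely, one arranges that the periodic points of $f$ lying in $E$ (i.e. those points $x$ with $f^n(x)=x$ that meet the fractal $E$) form an infinite set, hence all but finitely many of them satisfy the hypothesis $|df^n(x)|=a^n$.

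The key analytic step is to show that $|df^n(x)|=a^n$ for a Zariski-dense (in the dynamical sense) family of periodic points in $E$ forces $E$ to be linear, i.e. forces the existence of a conformal chart in which $f$ becomes affine on pieces of $E$. Here I would use the thermodynamic formalism: the function $\log|f'|$ is H\"older on $E$, and the condition $|df^n(x)|=a^n$ for every periodic $x\in E$ says exactly that the Birkhoff sums of $\log|f'|-\log a$ vanish on every periodic orbit. By Livšic's theorem (the periodic orbit obstruction criterion for coboundaries), $\log|f'|-\log a$ is a coboundary: $\log|f'(z)|=\log a + u(f(z))-u(z)$ for some continuous (in fact H\"older) $u$ on $E$. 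Exponentiating and integrating $u$ along the local inverse branches produces a conformal coordinate in which $f|_E$ is a similarity with ratio $a$; that is, $E$ is a linear CER. (One must be slightly careful that the hypothesis only gives the coboundary relation for \emph{all but finitely many} periodic points, but excluding finitely many orbits does not affect the Livšic conclusion — one can either pass to a subsystem avoiding them, or note that the exceptional set is not dense.) Since $E$ is infinite by construction, Theorem \ref{linearcer} applies and $f$ is exceptional.

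The main obstacle I anticipate is the passage from ``all but finitely many periodic points of $f$'' to ``all periodic points inside the CER $E$'': one needs the homoclinic-orbit construction of $E$ to be flexible enough that the finitely many excluded periodic orbits can be avoided, or at least that they do not prevent the Livšic argument. A secondary technical point is to make sure $E$ can be taken with non-empty interior in the Julia set — equivalently, that $E$ is not a finite set — which again is guaranteed by taking a genuine homoclinic orbit of a repelling cycle rather than a degenerate one. Once these points are settled, the argument is short: homoclinic orbit $\Rightarrow$ rich CER $E$, Livšic $\Rightarrow$ $E$ linear, Theorem \ref{linearcer} $\Rightarrow$ $f$ exceptional. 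I would also remark that the converse (exceptional maps have constant Lyapunov exponent $a^n$ on periodic points, with $a=\sqrt d$ for Latt\`es and monomial maps) is classical, so Theorem \ref{thmmilnor1} is an exact characterization.
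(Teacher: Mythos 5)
Your proposal is correct and takes essentially the same route as the paper: pass to an iterate with a repelling fixed point outside the finite exceptional set, build a horseshoe (CER) inside a linearization domain disjoint from that set so that \emph{every} periodic point of the subsystem satisfies $|df^n(x)|=a^n$ (this is exactly your first option for handling the ``all but finitely many'' issue), apply Livšic's theorem to get that $\log|df|$ is cohomologous to a constant on the horseshoe, and conclude via Theorem \ref{linearcer}. The only cosmetic difference is that the paper does not need your step of producing an explicit affine chart, since condition (i) of Definition \ref{deflinearcer} (cohomologous to a locally constant function) already makes the CER linear.
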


\begin{rmk}
Theorem \ref{thmmilnor1} can also be deduced by a minor modification of an argument of Zdunik \cite{zdunik2014characteristic}. 
\end{rmk}

Let $x$ be a $n$-periodic point of $f$. The \emph{Lyapunov exponent} of $x$ is a real number defined by $\frac{1}{n}\log |df^n(x)|$.  We let $\Delta(f)$ be the closure of the Lyapunov exponents of periodic points contained in the Julia set. Combining Theorem \ref{thmmilnor1} and results due to Gelfert-Przytycki-Rams-Rivera Letelier \cite{gelfert2010lyapunov}, \cite{gelfert2013lyapunov}, we get the following description of $\Delta(f)$ when $f$ is non-exceptional.  A closed interval in $\R$ is called non-trivial if it is not a singleton.
\begin{cor}\label{lyapunovspectrum}
Let $f:\P^1\to \P^1$ be a non-exceptional endomorphism over $\C$ of degree at least $2$. Then $\Delta(f)$ is a disjoint union of a non-trivial closed interval $I$ and a finite set $E$ (possibly empty).  Moreover there are at most $4$ periodic points whose Lyapunov exponents are contained in $E$, in particular $|E|\leq 4$.
\end{cor}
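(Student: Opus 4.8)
The plan is to combine the rigidity statement of Theorem \ref{thmmilnor1} with the structural results on the Lyapunov spectrum of rational maps due to Gelfert--Przytycki--Rams--Rivera-Letelier. First I would recall the relevant input from \cite{gelfert2010lyapunov}, \cite{gelfert2013lyapunov}: for a non-exceptional $f$ of degree $\geq 2$, the set $L_{\mathrm{per}}(f)$ of Lyapunov exponents of periodic points in $\sJ(f)$ has closure equal to a (possibly degenerate, but for non-exceptional maps non-degenerate) closed interval $[\chi_{\inf},\chi_{\sup}]$, up to the possible presence of finitely many isolated values; more precisely, there is a closed interval $I$ such that $\Delta(f) = I \cup E$ with $E$ finite and disjoint from $I$, where the isolated points of $E$ correspond to periodic orbits whose Lyapunov exponent cannot be approximated by others. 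The only thing left to pin down is that $I$ is non-trivial and that $E$ arises from at most $4$ periodic points.

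For the non-triviality of $I$, I would argue by contradiction: if $\Delta(f)$ were a finite set, then in particular there would be a single value $a$ with $\log a$ equal to the Lyapunov exponent $\frac1n\log|df^n(x)|$ for all but finitely many periodic points $x$, i.e. $|df^n(x)| = a^n$ for all but finitely many periodic $x$ with $f^n(x)=x$. By Theorem \ref{thmmilnor1} this forces $f$ to be exceptional, contradicting the hypothesis. (One should also rule out the slightly weaker degenerate scenario where $I$ collapses but $E$ is non-empty; this is handled the same way, since $\Delta(f)$ finite is exactly the hypothesis of Theorem \ref{thmmilnor1} after discarding finitely many orbits.) Hence $I$ is a genuine non-trivial interval.

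For the bound $|E|\leq 4$ and the count of at most $4$ periodic points: the isolated Lyapunov exponents in $E$ are, by the analysis in \cite{gelfert2010lyapunov}, \cite{gelfert2013lyapunov}, exactly the ones realized only by ``exceptional'' periodic orbits in the sense of that theory --- orbits contained in a finite forward-invariant set that plays the role of an exceptional set for $f$ (the complement of the basin-type structure). A non-exceptional rational map of degree $d\geq 2$ has an exceptional set (in the classical sense, the maximal finite completely invariant set) of cardinality at most $2$; but here the relevant finite invariant set governing isolated Lyapunov exponents can be slightly larger, and the sharp bound coming from \cite{gelfert2013lyapunov} is that there are at most $2$ such exceptional periodic orbits, each contributing a periodic point count of at most $2$ when one passes to the orbit, giving at most $4$ periodic points and hence $|E|\leq 4$. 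I would cite the precise statement from \cite{gelfert2013lyapunov} here rather than reprove it. The main obstacle is making sure the ``exceptional'' terminology in \cite{gelfert2010lyapunov}, \cite{gelfert2013lyapunov} is matched correctly with the notion of exceptional endomorphism used in this paper, and extracting from their proofs the exact numerical bound of $4$ periodic points (as opposed to merely finiteness); this is bookkeeping about small finite invariant sets under $f$ and its iterates, but it must be done carefully to get the stated sharp constant.
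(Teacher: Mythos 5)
Your overall route is the same as the paper's: quote the Gelfert--Przytycki--Rams--Rivera-Letelier structure theorem, and use Theorem \ref{thmmilnor1} to rule out the degenerate case where the interval collapses to a point. However, as written there are two genuine soft spots, both coming from not using the precise form of the GPRR input.

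First, the degenerate case. You argue: ``if $\Delta(f)$ were a finite set, then there would be a single value $a$ with $|df^n(x)|=a^n$ for all but finitely many periodic points,'' and later that ``$\Delta(f)$ finite is exactly the hypothesis of Theorem \ref{thmmilnor1} after discarding finitely many orbits.'' That inference is not valid on its own: a finite closure $\Delta(f)=\{a_1,a_2\}$ could a priori have each value realized by infinitely many periodic points, and then no single value is shared by all but finitely many of them, so Theorem \ref{thmmilnor1} does not apply. The way the paper closes this gap is by using the exact GPRR statement: there is a \emph{forward-invariant finite set} $\Sigma\subset\sJ(f)$ with $|\Sigma|\le 4$ such that the closure $\Delta'(f)$ of the Lyapunov exponents of periodic points in $\sJ(f)\setminus\Sigma$ is a single closed interval. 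If that interval is a singleton, then every periodic point outside $\Sigma$ has the same exponent; since $\Sigma$ contains at most $4$ points and there are only finitely many non-repelling cycles, all but finitely many periodic points of $f$ satisfy $|df^n(x)|=a^n$, and Theorem \ref{thmmilnor1} gives the contradiction. So your contradiction argument is the right idea, but it must be run on $\Delta'(f)$ (exponents away from $\Sigma$), not on the bare statement ``$\Delta(f)$ is finite.''

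Second, the constant $4$. Your proposed mechanism (``at most $2$ exceptional periodic orbits, each contributing at most $2$ points'') is not where the bound comes from, and I do not think it can be extracted in that form. The bound is simply $|\Sigma|\le 4$ for the forward-invariant set $\Sigma$ above: the set $E$ consists of exponents of the (at most $4$) periodic points lying in $\Sigma$, and $\Delta(f)=\Delta'(f)\cup\{\text{these exponents}\}=I\cup E$. You do hedge by saying you would cite the precise statement from \cite{gelfert2010lyapunov}, \cite{gelfert2013lyapunov}, and with that statement in hand both gaps close immediately; but as the proposal stands, the reduction to Theorem \ref{thmmilnor1} and the count of $4$ periodic points are not yet justified.
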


\subsection{Organization of the paper}
In Section \ref{sectionhomoclinic} we prove some basic properties of homoclinic orbits and we prove the fundamental exceptional criterion Theorem \ref{thmadjconmultexcep} by using only the information of a homoclinic orbit. In Section \ref{sectionmilnor} we prove Theorem \ref{thmmilnor}. In Section \ref{sectionberko} we recall some reslts about dynamics on the Berkovich projective line.  In Section \ref{sectionrescaling} we study the rescaling limit via the dynamics on the Berkovich projective line.  In Section \ref{sectionmcmullen} we give a new proof of McMullen's theorem (Theorem \ref{thmmcmullen}) by studying rescaling limits. In Section \ref{sectioncer} we recall some results about CER, and we prove Theorem \ref{linearcer}, Theorem \ref{markedmultiplier}, Theorem \ref{markedlength}, Theorem \ref{thmmilnor1} and Corollary \ref{lyapunovspectrum}.  Moreover we give a new proof of Theorem \ref{zdunik} and we give  another proof of Theorem \ref{thmmcmullen}.  In Section \ref{sectionlength} we prove Theorem \ref{thmlength}. 
\subsection*{Acknowledgement}
The authors would like to thank Serge Cantat, Romain Dujardin and Zhiqiang Li for their comments on the first
version of the paper. We thank Huguin for informing us his beautiful recent work \cite{Huguin2022a} for reproving and strengthening our Theorem \ref{thmmilnor}.
The first named author would like to thank the support and hospitality of BICMR during the preparation of this paper. The second named author would like to thank Thomas Gauthier and Gabriel Vigny for interesting discussions on Thurston's rigidity theorem.
 

\section{Homoclinic orbits and applications}\label{sectionhomoclinic}
For an endomorphism $f$ of $\P^1$ of degree at least $2$, we denote by $C(f)$ the set of critical points of $f$ and $PC(f):=\cup_{n\geq 1}f^n(C(f))$ the postcritical set. 
In this section, $\P^1(\C)$ is endowed with the complex topology.
\medskip

Let $f: \P^1\to \P^1$ an endomorphism over $\C$ of degree at least $2.$  Let $o$ be a repelling fixed point of $f$.
A \emph{homoclinic orbit} \footnote{This terminology was introduced by Milnor \cite{milnor2011dynamics} in his presentation of Julia's proof that repelling periodic points are dense in the Julia set. The word ``homoclinic orbit" dates back to Poincar\'e.} of $f$ at $o$ is a sequence of points $o_i, i\geq 0$ satisfying the following properties:
\begin{points}
\item $o_0=o$, $o_1\neq o$ and $f(o_{i})=o_{i-1}$ for $i\geq 1;$
\item $\lim\limits_{i\to \infty} o_i=o$.
\end{points}
Be aware that $o_i, i\geq 0$ is actually a backward orbit.

The main result of this Section is Theorem \ref{thmadjconmultexcep} which 
is a criterion for an endomorphism $f$ being exceptional via the information of a homoclinic orbit.
We will state and prove this theorem in the end of this section.

\subsection{Linearization domain and good return times}

Define a \emph{linearization domain} of $o$ to be an open neighborhood $U$ of $o$ such that there is an isomorphism $\phi: U\to \D$ sending $o$ to $0$, which  conjugates $f|_{U_0}:U_0\to U$ to the morphism $z\mapsto \la z$ via $\phi$, where  $U_0=f^{-1}(U)\cap U$ and $\la=df(o)$.  
We call such $\phi$ a \emph{linearization on $U$}. 


Define $g$ to be the morphism $U\to U$ sending $z$ to $\phi^{-1}(\la^{-1}\phi(z))$. It is the unique endomorphism of $U$ satisfiying $f\circ g=\id.$

\begin{rem}
By Koenigs' theorem \cite[Theorem 8.2] {milnor2011dynamics}, for every repelling point $o$ there is always a linearization domain $U$ . 
For every $r\in (0,1]$, $\phi^{-1}(\D(0,r))$ is also a linearization domain of $o$. In particular, the linearization domains of $o$ form a neighborhood system of  of $o.$
\end{rem}
\rem
Since $g$ is injective, for every $x\in U$, $f^{-1}(x)\cap U=g(x).$ In particular, if $o_i\in U$ for $i \geq l$, then $o_i=g^{i-l}(o_l)$ for all $i\geq l.$
\endrem

The following lemma shows that for every repelling fixed point $o$, there are many homoclinic orbits.
\begin{lem}\label{lemexisthomo}For every integer $m\geq 0$ and for every $a\in f^{-m}(o)$, there is a homoclinic orbit $o_i, i\geq 0$ of $o$ such that $o_m=a.$
\end{lem}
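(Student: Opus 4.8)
\textbf{Proof plan for Lemma \ref{lemexisthomo}.}

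The plan is to build the homoclinic orbit in two stages: a finite ``prescribed'' part that reaches $a$ at time $m$, and an infinite ``tail'' that converges to $o$, glued together so that the whole sequence is a genuine backward orbit of $o$. First I would fix a linearization domain $U$ of $o$ with linearization $\phi:U\to\D$ and the associated inverse branch $g:U\to U$ with $f\circ g=\id$; by the remark after the definition, once a point lies in $U$ its subsequent preimages inside $U$ are obtained by iterating $g$, and $g^k(x)\to o$ for every $x\in U$ since $|\la^{-1}|<1$. So the strategy for the tail is: land in $U$ at some time $\ell$ and then apply $g$ forever.

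The key point is to choose the finite part correctly. Set $o_0=o$ and, going backwards from $a=o_m$, I must pick preimages $o_0,o_1,\dots,o_m$ with $f(o_i)=o_{i-1}$, $o_m=a$, and crucially $o_1\neq o$. Start from $o_m=a$ and, since $f(a)\in f^{-(m-1)}(o)$ by hypothesis, inductively any choice of iterated $f$-images gives $o_{m-1}:=f(a),\ o_{m-2}:=f^2(a),\dots$, ending at $o_0=f^m(a)=o$; this automatically satisfies $f(o_i)=o_{i-1}$. The only constraint needing care is $o_1\neq o$, i.e. $f^{m-1}(a)\neq o$. If $m=0$ we need $a\neq o$ handled separately (see below); if $m\geq 1$ and $f^{m-1}(a)=o$, then $a$ is itself a preimage of $o$ under $f^{m-1}$, so I may simply regard $a$ as lying over $o$ at the smaller level $m-1$ (or $m'\le m-1$) and re-index, reducing to a case where the prescribed point genuinely maps to a point different from $o$ after one step; alternatively, reorganize so that $o_1$ is chosen among $f^{-1}(o)\setminus\{o\}$, which is nonempty because $o$ is not totally ramified (a degree $\ge 2$ map has at least two preimages of any point when counted properly, and $o$ being repelling it is not a critical point so $f^{-1}(o)$ has $\ge 2$ elements). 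Thus I can always arrange $o_1\neq o$.

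Next I extend past time $m$. The preimages $f^{-m-1}(o), f^{-m-2}(o),\dots$ are all nonempty (as $f$ is surjective), so I may keep choosing $o_{m+1},o_{m+2},\dots$ with $f(o_{i})=o_{i-1}$ by any selection; this defines a full backward orbit $(o_i)_{i\ge 0}$ with $o_m=a$ and $o_1\neq o$. It remains to ensure $o_i\to o$. Here I would invoke density of backward orbits near a repelling point: more precisely, by Lemma \ref{lemexisthomo}'s own context one knows that through any point one can steer a backward orbit into $U$. Concretely, since repelling periodic points lie in the Julia set and backward orbits of Julia points are dense in the Julia set, I can choose the selection of $o_{m+1},o_{m+2},\dots$ so that $o_\ell\in U=\phi^{-1}(\D)$ for some $\ell\ge m$; then redefine $o_i:=g^{i-\ell}(o_\ell)$ for all $i\ge \ell$, which is consistent ($f\circ g=\id$) and gives $o_i=\phi^{-1}\!\big(\la^{-(i-\ell)}\phi(o_\ell)\big)\to\phi^{-1}(0)=o$. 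This yields both defining properties of a homoclinic orbit while keeping $o_m=a$ unchanged.

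\textbf{Main obstacle.} The routine parts (existence of preimages, convergence of the $g$-tail, re-indexing for $o_1\neq o$) are straightforward from the linearization picture. The one genuinely nontrivial input is arranging $o_\ell\in U$ for some $\ell\ge m$, i.e.\ that some backward orbit through $a$ eventually enters an arbitrarily small neighborhood of $o$; this is exactly where one uses that $o$ lies in the Julia set and that preimages of Julia points accumulate everywhere on $\sJ(f)$ (equivalently, that the backward orbit of any point in $\sJ(f)$ is dense in $\sJ(f)$). For $m=0$ one needs $a\in f^{-0}(o)=\{o\}$, so $a=o$; then the statement asks for a homoclinic orbit with $o_0=o$, which exists by the genuine content of the lemma, so the case $m=0$ is either vacuous or reduces to producing any homoclinic orbit at all — which the same entering-$U$ argument provides by starting from any $o_1\in f^{-1}(o)\setminus\{o\}$ and steering into $U$.
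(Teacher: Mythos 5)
Your proposal is correct and follows essentially the same route as the paper: the paper simply picks an iterated preimage $o_l\in f^{m-l}(a)\cap U$ (which exists because the backward orbit of $a$ is dense in the Julia set and the linearization domain $U$ meets $\sJ(f)$ at $o$), defines the finite part by forward images and the tail by iterating the attracting inverse branch $g$, exactly as you do. Your extra discussion of the degenerate cases ($o_1\neq o$, $m=0$) concerns edge cases the paper's proof silently ignores, and your identification of ``some backward preimage of $a$ enters $U$'' as the one nontrivial input matches the paper's key step.
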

\proof
Let $U$ be a linearization domain of $o.$
Since preimages of $a$ are dense in the Julia set, there is $l\geq m$ such that $f^{m-l}(a)\cap U\neq\emptyset.$
Pick $o_l\in f^{m-l}(a)\cap U$ and for $i=0,\dots, l$, set $o_i:=f^{l-i}(o_l).$ Then $o_0=o$ and $o_m=a.$
For $i\geq l+1$, set $o_i:=g^{i-l}(o_l)$. Then $o_i, i\geq 0$ is a homoclinic orbit of $o.$
\endproof

\begin{defi}
Let $U$ be a connected open neighborhood of $o$ such that $U$ is contained in a linearization domain. For $i\geq 0$ let $U_i$ be the connected component of $f^{-i}(U)$ containing $o_i$. An integer $m\geq 1$ is called a  \emph{good return time} for the homoclinic orbit and $U$ if 
\begin{points}
\item $o_i\in U$ for every $i\geq m$;
\item $U_m\subset\subset U$, and 
and $f^m:U_m\to U$ is an isomorphism between $U_m$ and $U.$  	
\end{points}
\end{defi}

\rem\label{remhorsmenlarge} If $U$ itself is a linearization domain and $m$ is a good return time then $i$ is a good return time for all $i\geq m$. Indeed, one has $U_i=g^{i-m}(U_m)\subset\subset U$ and $f^i:U_i\to U$ can be writen as $f^m\circ g^{m-i}$ which is an isomorphism.
\endrem

\begin{pro}\label{proexihors}
The following statements are equivalent:
\begin{points}
\item $o_i\not\in C(f)$ for every $i\geq 1$;
\item there is a linearization domain $U$ and an integer $m\geq 1$ which is a good return time of $U$;
\item there is a linearization domain $U$ such that for every connected open neighborhood $V$ of $o$ , $V\subset U$, there is an integer $m\geq 1$ which is a good return time of $V$.

\end{points}

In particular, when $o\not\in PC(f)$, (i) (hence (ii) and (iii)) are satisfied.
\end{pro}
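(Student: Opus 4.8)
The plan is to establish the chain of implications $(iii)\Rightarrow(ii)\Rightarrow(i)$ together with $(i)\Rightarrow(ii)\Rightarrow(iii)$, from which all three statements are equivalent, and then to read off the last assertion directly. Throughout, fix a linearization domain $U_*$ of $o$ with linearizer $\phi_*\colon U_*\to\D$, multiplier $\la=df(o)$, central subdomain $(U_*)^0=\phi_*^{-1}(\D(0,1/|\la|))$ on which $f$ reads $z\mapsto\la z$, and inverse branch $g\colon U_*\to(U_*)^0$.

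The implication $(iii)\Rightarrow(ii)$ is immediate (take $V=U$). For $(ii)\Rightarrow(i)$: if $m$ is a good return time for a linearization domain $U$, then $f^m\colon U_m\to U$ is an isomorphism, so $0\ne df^m(o_m)=\prod_{j=1}^m df(o_j)$ by the chain rule, forcing $o_1,\dots,o_m\notin C(f)$; while for $i>m$ one has $o_i\in U$, hence $o_i=g(o_{i-1})$ lies in the open set $g(U)=f^{-1}(U)\cap U$ on which $f$ is an isomorphism, so $o_i\notin C(f)$. For $(ii)\Rightarrow(iii)$: by Remark \ref{remhorsmenlarge}, once $m$ is a good return time for a linearization domain $U$, so is every $i\ge m$, and the $U_i=g^{i-m}(U_m)$ shrink to $\{o\}$; hence given a connected open $V\subseteq U$, for $i$ large enough $\overline{U_i}\subseteq V$ and $o_j\in V$ for all $j\ge i$, and then the component $V_i$ of $f^{-i}(V)$ through $o_i$ satisfies $V_i\subseteq U_i\subset\subset V$ with $f^i\colon V_i\to V$ an isomorphism, so $i$ is a good return time for $V$. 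Finally, if $o_i\in C(f)$ for some $i\ge1$ then $o=f^i(o_i)\in f^i(C(f))\subseteq PC(f)$; so $o\notin PC(f)$ forces (i).

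The substantive step is $(i)\Rightarrow(ii)$. Assume $o_i\notin C(f)$ for all $i\ge1$. Since $o_i\to o$, pick $p\ge1$ with $o_i\in(U_*)^0$ for all $i\ge p$. As $f^k(o_p)=o_{p-k}\notin C(f)$ for $k=0,\dots,p-1$ by hypothesis, $f^p$ is a local isomorphism at $o_p$; thus $f^p$ is injective on some neighborhood $N_0$ of $o_p$, which we take with $\overline{N_0}\subseteq(U_*)^0$ and $N_0\cap C(f^p)=\emptyset$. Since $p$ is fixed (before $\delta$ is chosen, so no circularity arises), we may pick $\delta>0$ small enough that $U:=\phi_*^{-1}(\D(0,\delta))$ — a linearization domain — satisfies $\overline U\subseteq f^p(N_0)$ and the component of $f^{-p}(U)$ through $o_p$ is contained in $N_0$; call that component $N_1$, so $f^p\colon N_1\to U$ is an isomorphism. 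For $m$ large — say $m>p$, $|\la|^{-(m-p+1)}<\delta$, and $o_i\in U$ for all $i\ge m$ — I claim $m$ is a good return time for $U$. Indeed, using that $N_1\subseteq(U_*)^0$ and that $g$ is the unique inverse branch in $U_*$, the component $U_m$ of $f^{-m}(U)$ through $o_m=g^{m-p}(o_p)$ equals $g^{m-p}(N_1)$, so $f^m=f^p\circ f^{m-p}$ carries $U_m$ isomorphically onto $U$; moreover $U_m=g^{m-p}(N_1)\subseteq\phi_*^{-1}(\D(0,|\la|^{-(m-p+1)}))\subset\subset U$. This verifies (ii).

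I expect the main difficulty to lie in the last paragraph, precisely in the claim that the connected component of $f^{-m}(U)$ through $o_m$ is exactly $g^{m-p}(N_1)$: one must rule out that this component secretly attaches to other sheets of $f^{-m}(U)$, which would destroy both the injectivity of $f^m$ on it and its relative compactness in $U$. The decomposition $f^m=f^p\circ f^{m-p}$ with $p$ fixed is what makes this tractable — the first $p$ pullbacks are handled once and for all by shrinking the target disk $U$ so that the relevant component through $o_p$ lies in the fixed good neighborhood $N_0$, and the remaining $m-p$ pullbacks take place entirely inside $(U_*)^0$, where preimage components are explicit via $g$. Everything else is soft: the chain rule, the uniqueness of the linearizing inverse branch, and the fact that shrinking a disk shrinks the component of its $f^p$‑preimage through $o_p$ down to the point $o_p$.
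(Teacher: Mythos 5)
Your proof is correct and follows essentially the same route as the paper: for (i)$\Rightarrow$(ii) you take an injective branch of $f^{-p}$ near $o_p$ over a shrunken linearization disk and then contract by the attracting inverse branch $g$ to get $U_m\subset\subset U$, and (ii)$\Rightarrow$(iii) is again the contraction of $g$, just as in the paper's argument. Your identification of the component $U_m$ with $g^{m-p}(N_1)$ (via the boundary of $N_1$ mapping outside $U$) is in fact spelled out slightly more carefully than in the paper, but it is the same strategy, not a different one.
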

\proof
We first show (i) is equivalent to (ii). To see (ii) implies (i), let $m$ be a good return time of $U$, then by the definition of good return time $o_i\not\in C(f)$ for $i=1,\dots,m$. By Remark \ref{remhorsmenlarge} we conclude that $o_i\notin C(f)$ for every $i\geq 1$. To see (i) implies (ii), first choose a linearization domain $U_0$.
Let $g:U_0\to U_0$ be the morphism such that $f\circ g=\id$. 
Since $\lim\limits_{i\to \infty} o_i=o$, there is $l\geq 1$ such that 
$o_i\in U_0$ for $i\geq l.$ Since $o_i\not\in C(f)$ for every $i\geq 1$, we have $d(f^l)(o_l)\neq 0.$ So there is an open neighborhood $W$ of $o_l$ in $U_0$ such that 
$f^l(W)\subseteq U_0$ and $f^l|_W$ is injective. 
Pick a linearization domain of $U$ of $o$ contained in $f^l(W)$. Set $U_l:=f^{-l}(U) \cap W.$ Since $g$ is attracting, there is $m\geq l$ such that $g^{m-l}(U_l)\subset\subset U.$
We note that $U_m:=f^{-m}(U)\cap U=g^{m-l}(U_l)$. Hence $U_m\subset\subset U$, and $f^m:U_m\to U$ is an isomorphism. This implies (ii).
\medskip

It is clear that (iii) implies (ii).  It remains to show (ii) implies (iii). Let $l\geq 1$ be a good return time of $U$. Let $U_i$ (resp. $V_i$) be the connected component of $f^{-i}(U)$ (resp. $f^{-i}(V)$) for $i\geq 0$.  We have $U_l\subset\subset U$. Since $g$ is attracting, there is $m\geq l$ such that $g^{m-l}(U_l)\subset\subset V.$ This implies $m$ is a good return time of $V$.
 
\endproof
\subsection{Adjoint sequence of periodic points}
Let $U$ be a linearization domain and let $m$ be a good return time of $U$. We construct a sequence of periodic points $q_i, i\geq m$ as follows:
By Remark \ref{remhorsmenlarge}, for every $i\geq m$, 
$f^i|_{U_i}: U_i\to U$ is an isomorphism. Since $U_i\subset\subset  U$, the morphism $(f^i|_{U_i})^{-1}: U\to U_i$ is strictly attracting with respect to the hyperbolic metric on $U$. Hence it has a unique fixed point $q_i\in U_i$. Such $q_i$ is the unique $i$-periodic point of $f$ which is contained in $U_i.$ Indeed, $i$ is the smallest period of $q_i$ and $q_i$ is repelling.
We call such a sequence an \emph{adjoint sequence} for  the homoclinic orbit $o_i, i\geq 0$ with respect to the linearization domain $U$ and the good return time $m$ (we write $(U,m)$ for short). 
One say that a sequence of points  $q_i, i\geq 0$ is  an \emph{adjoint sequence}  of the homoclinic orbit $o_i, i\geq 0$ if $q_i, i\geq m$ is an adjoint sequence for $o_i, i\geq 0$ with respect to some $(U,m).$
It is clear that for every adjoint sequence $q_i, i\geq 0$ of $o_i, i\geq 0$, $\lim\limits_{i\to \infty} q_i=o.$
The following lemma shows that the adjoint sequences are unique modulo finite terms. 

\begin{lem}\label{lemadjunique}Let $q_i, i\geq 0$ and $q_i', i\geq 0$ be two adjoint sequence for $o_i, i\geq 0$. Then there is $l\geq 0$ such that $q_i=q_i'$ for all $i\geq l$.
\end{lem}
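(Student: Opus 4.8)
The plan is to show that any two adjoint sequences for the same homoclinic orbit $o_i, i\geq 0$ must eventually share their defining geometry, so that their terms coincide for large $i$. Suppose $q_i, i\geq m$ is an adjoint sequence with respect to $(U,m)$ and $q_i', i\geq m'$ is an adjoint sequence with respect to $(U',m')$, where $U, U'$ are linearization domains of $o$ and $m, m'$ are good return times for $U, U'$ respectively. By the Remark following Proposition \ref{proexihors} and the structure of linearization domains (they form a neighborhood system of $o$, with the inverse branch $g$ strictly attracting), I would first pass to a common smaller linearization domain $V \subseteq U \cap U'$. The key point is that the periodic point $q_i$ was characterized intrinsically: for $i$ large enough, $q_i$ is the \emph{unique} $i$-periodic point of $f$ lying in $U_i$, where $U_i$ is the connected component of $f^{-i}(U)$ containing $o_i$; and similarly $q_i'$ is the unique $i$-periodic point in $U_i'$, the component of $f^{-i}(U')$ containing $o_i$.

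The heart of the argument is therefore to show that for $i$ sufficiently large, $q_i$ and $q_i'$ both lie in the component $V_i$ of $f^{-i}(V)$ containing $o_i$ — once that is established, uniqueness forces $q_i = q_i' $ (both being the unique $i$-periodic point in $V_i$, which is an $i$-periodic point in $U_i$ and in $U_i'$, hence equal to $q_i$ and to $q_i'$ by the respective uniqueness statements — one should be slightly careful and instead argue $q_i$ is $i$-periodic and in $V_i \subseteq U_i'$ so $q_i = q_i'$). To locate $q_i$ inside $V_i$: since $V$ is itself a linearization domain (or contained in one), Remark \ref{remhorsmenlarge} applies after enlarging the return time, and the components $U_i$ are obtained by iterating the attracting inverse branch, $U_i = g^{i-m}(U_m)$, with $U_m \subset\subset U$. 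Because $g$ is strictly attracting toward $o$ on $U$, the diameters (in the hyperbolic metric of $U$, or the Euclidean metric near $o$) of the $U_i$ shrink to $0$ as $i\to\infty$, and $q_i \in U_i$ together with $o_i \in U_i$ forces $q_i \to o$ and $q_i$ to be deep inside any fixed neighborhood of $o$; more precisely, for $i$ large, $U_i \subseteq V$, and then $U_i$ is contained in the component $V_i$ of $f^{-i}(V)$ containing $o_i$ (since $o_i \in U_i$). Hence $q_i \in V_i$. The same reasoning with primes gives $q_i' \in V_i$ for large $i$.

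Concretely, I would take $l$ large enough that: (a) $l \geq m, m'$; (b) $U_i \subseteq V$ and $U_i' \subseteq V$ for all $i \geq l$ (possible because $\mathrm{diam}(U_i), \mathrm{diam}(U_i') \to 0$ and $o_i \to o$, so eventually these components sit inside $V$); and (c) the uniqueness statement ``$q_i$ is the only $i$-periodic point in $U_i$'' holds for $i \geq l$ (and likewise for $q_i'$ in $U_i'$), which is automatic from the construction for all $i \geq m$ resp. $i \geq m'$. Then for $i \geq l$: $q_i$ is $i$-periodic and $q_i \in U_i \subseteq V \subseteq U'$; moreover $q_i$ lies in the connected component of $f^{-i}(U')$ containing $o_i$, because $q_i$ and $o_i$ both lie in the connected set $U_i \subseteq f^{-i}(U')$. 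That component is exactly $U_i'$ by definition, so $q_i \in U_i'$, and by uniqueness of the $i$-periodic point in $U_i'$ we get $q_i = q_i'$.

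I expect the main obstacle to be the bookkeeping around ``connected component containing $o_i$'' — one must verify that $U_i$ (the component of $f^{-i}(U)$ through $o_i$) is genuinely contained in $U_i'$ (the component of $f^{-i}(U')$ through $o_i$) once $U_i \subseteq U' $, which is a soft connectedness argument but needs to be stated cleanly, together with the uniform shrinking $\mathrm{diam}(U_i)\to 0$. The latter follows from the strict attraction of the inverse branch $g$ in the hyperbolic metric (used already in the construction of the $q_i$ as fixed points of a strict contraction), combined with $o_i \to o$; alternatively one can invoke that the $U_i$ form a nested-type system shrinking to the point $o$. No genuinely hard analysis is required — it is all a consequence of the attracting dynamics of $g$ near the repelling fixed point $o$ and the intrinsic characterization of the adjoint periodic points.
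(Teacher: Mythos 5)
Your overall strategy is the same as the paper's (pin down $q_i$ and $q_i'$ as the unique $i$-periodic point in a common connected component of a pullback and invoke uniqueness), but the step you yourself flag as the main thing to check is justified by a false inclusion. You claim that once $U_i\subseteq V\subseteq U'$ (as subsets of $\P^1(\C)$), the connected set $U_i$ is contained in $f^{-i}(U')$ (resp.\ in the component $V_i$ of $f^{-i}(V)$ through $o_i$), and hence carries $q_i$ into $U_i'$. This is wrong: for $i\geq m$ a good return time one has $f^i(U_i)=U$, which is \emph{not} contained in $U'$ or $V$ unless $U\subseteq U'$, so $U_i\not\subseteq f^{-i}(U')$ and $U_i\not\subseteq V_i$ no matter how small $U_i$ is as a subset of $\P^1(\C)$. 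Being metrically close to $o$ says nothing about where $f^i$ sends a point, so "small diameter of $U_i$" cannot substitute for membership in the pullback. The statement you actually need is weaker and true — the single point $q_i$ lies in $U_i'$ (or $V_i$) for $i$ large — but it requires a different argument: since $q_i\to o$, for $i$ large $f^i(q_i)=q_i\in V$, and the connected set $(f^i|_{U_i})^{-1}(V)\subseteq f^{-i}(V)\subseteq f^{-i}(U')$ contains both $o_i$ (because $f^i(o_i)=o\in V$) and $q_i$; hence $q_i$ and $o_i$ lie in the same component of $f^{-i}(U')$, and uniqueness in $U_i'$ gives $q_i=q_i'$.

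For comparison, the paper sidesteps this entirely: using a linearization domain $U''\subseteq U\cap U'$ it reduces to the nested case $U'\subseteq U$ with a common return time $l$, where the only containment needed is $U_i'\subseteq U_i$ — the "smaller into bigger" direction, which is immediate because $f^{-i}(U')\subseteq f^{-i}(U)$ and components nest — and then $q_i'$ is an $i$-periodic point in $U_i$, hence equals $q_i$ by uniqueness. Your argument needs the "bigger into smaller" direction, which is exactly where the error enters; either repair it with the inverse-branch connection above, or restructure along the paper's lines.
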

\proof We only need to prove the case where $q_i, i\geq l$ is an adjoint sequence with respect to  $(U, l)$  and $q_i', i\geq l'$ is an adjoint sequence with respect to $(U', l').$
Since there is a linearization domain $U''$ such that $U''\subseteq U\cap U'$, we may assume that $U'\subseteq U.$
After replacing $l,l'$ by $\max\left\{l,l'\right\}$, we may assume that $l=l'.$ Then for every $i\geq l$, $U'_i\subseteq U_i.$ Then both $q_i$ and $q_i'$ are the unique $i$-periodic point of $f$ in $U_i$.
So $q_i=q_i'$ for $i\geq l.$
\endproof

\subsection{Poincar\'e's linearization map}
 Set $\la:=df(o)\in \C$. Since $o$ is repelling, $|\la|>1.$
 Let $(U,m)$ be the pair of linearization domain and good return time for $o_i, i\geq 0$ and let $q_i, i\geq 0$ be an adjoint sequence.

 A theorem of Poincar\'e \cite[Corollary 8.12] {milnor2011dynamics} says that there is a morphism $\psi: \C\to \P^1(\C)$ such that $\psi|_{\D}$ gives an isomorphism between $\D$ and $U$ and 
 \begin{equation}\label{equationpoincare}f(\psi(z))=\psi(\la z)
 \end{equation} for every $z\in \C.$
In particular, $\psi|_{\D}^{-1}: U\to \D$ is a linearization of $f$ on $U.$
Such a $\psi$ is called a \emph{Poincar\'e map}.

The following criterion  for exceptional endomorphisms using the Poincar\'e map $\psi$ is due to Ritt.
\begin{lemma}\label{lemritt}\cite{ritt1922periodic}
	If the Poincar\'e map $\psi$ is periodic, i.e. there is a $a\in\C^*$ such that $\psi(z+a)=\psi(z)$ for every $z\in\C$,  then $f$ is exceptional. 
\end{lemma}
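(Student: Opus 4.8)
The plan is to use the functional equation \eqref{equationpoincare}, namely $f(\psi(z)) = \psi(\lambda z)$, together with the hypothesized periodicity $\psi(z+a) = \psi(z)$, to build a semiconjugacy between $f$ and a self-map of a one-dimensional group. First I would observe that the set of periods $\Lambda := \{ b \in \C : \psi(z+b) = \psi(z) \ \forall z \} $ is a closed additive subgroup of $\C$ containing $a \neq 0$, and that $\psi$ is non-constant (it restricts to an isomorphism $\D \to U$), so $\Lambda$ is a discrete subgroup: it is either $\Z a$ (rank one) or a lattice $\Z a_1 + \Z a_2$ (rank two). The key compatibility point is that applying $f$ to $\psi(z+b) = \psi(z)$ and using \eqref{equationpoincare} gives $\psi(\lambda z + \lambda b) = \psi(\lambda z)$ for all $z$, hence $\lambda b \in \Lambda$ whenever $b \in \Lambda$; that is, multiplication by $\lambda$ preserves $\Lambda$, so $\lambda \Lambda \subseteq \Lambda$.

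Next I would split into the two cases. If $\Lambda$ has rank one, say $\Lambda = \Z a$, then $\lambda a = n a$ for some integer $n$, so $\lambda = n \in \Z$, and since $o$ is repelling $|n| = |\lambda| > 1$. The map $\psi$ then descends to an isomorphism $\C / \Z a \xrightarrow{\ \sim\ } \P^1(\C)$ (a priori onto its image, which by properness and the open mapping theorem is all of $\P^1$) semiconjugating $z \mapsto \lambda z$ on $\C/\Z a \cong \C^* = \G_m$ to $f$; under the identification $\C/\Z a \cong \C^*$ via $z \mapsto \exp(2\pi i z / a)$ this is exactly $w \mapsto w^n$, so $f$ is of monomial type. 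If $\Lambda$ has rank two, it is a lattice and $E := \C/\Lambda$ is an elliptic curve with $\lambda \Lambda \subseteq \Lambda$; then $z \mapsto \lambda z$ is a (holomorphic) isogeny $[\lambda]: E \to E$, and $\psi$ descends to a finite surjective map $E = \C/\Lambda \to \P^1(\C)$ semiconjugating $[\lambda]$ to $f$, so $f$ is Latt\`es. In either case $f$ is exceptional, which is the conclusion. (One should double-check that $\psi$ genuinely factors through $\C/\Lambda$ and that the induced map to $\P^1$ is surjective and finite; surjectivity follows because the image is open, closed and nonempty in $\P^1(\C)$, and finiteness because the fibers are discrete by non-constancy.)

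The main obstacle I anticipate is not the group-theoretic bookkeeping but verifying that $\psi$ really does descend to a well-defined \emph{algebraic} (or at least proper holomorphic, hence algebraic by compactness) map on the quotient torus, and correctly identifying the induced self-map in classical coordinates — i.e.\ recognizing $z\mapsto\lambda z$ on $\C^*$ as $w\mapsto w^n$ in the rank-one case, and as an isogeny (composed with translation, a priori) in the rank-two case. A minor subtlety is that in the rank-two case the induced endomorphism of $E$ is of the form $P \mapsto \lambda P + c$ for some constant $c$; since $f$ being Latt\`es only requires semiconjugacy to \emph{some} endomorphism of an elliptic curve (and affine maps of $E$ are endomorphisms composed with translations, which are still covered by the definition of Latt\`es after recentering), this causes no real trouble, but it is the kind of point that needs a clean sentence. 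Everything else — discreteness of $\Lambda$, the inclusion $\lambda\Lambda\subseteq\Lambda$, and the case division on $\rank\Lambda$ — is routine.
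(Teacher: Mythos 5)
The paper does not prove this lemma at all (it is quoted from Ritt), so the only question is whether your argument stands on its own. Your doubly periodic case does: discreteness of the period group $\Lambda$, the invariance $\la\Lambda\subseteq\Lambda$, and the descent of $\psi$ and of $z\mapsto\la z$ to the compact torus $\C/\Lambda$ are all correct, and compactness makes the descended map automatically a surjective finite branched cover, so $f$ is Latt\`es (your worry about a translation constant is moot, since $z\mapsto \la z$ descends to the multiplication endomorphism fixing $0$).

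The rank-one case, however, has a genuine gap. The claim that $\psi$ ``descends to an isomorphism $\C/\Z a\xrightarrow{\sim}\P^1(\C)$'' is false: $\C/\Z a\cong\C^*$ is not compact, so no such isomorphism exists, and the appeal to ``properness and the open mapping theorem'' is unjustified --- a $\Z a$-periodic meromorphic function on $\C$ need not descend to anything proper on $\C^*$ (e.g. $\exp(e^{-2\pi i z/a})$ descends to a map with an essential singularity at the puncture), its image need not be closed (for a Poincar\'e function the image is $\P^1$ minus the exceptional set of $f$), and discrete fibers do not give finiteness on a non-compact source. What you actually obtain is only a holomorphic map $h\colon\C^*\to\P^1(\C)$ with $f\circ h=h\circ(w\mapsto w^n)$, and to conclude that $f$ is of monomial type in the sense of the paper you must show that $h$ extends meromorphically across $0$ and $\infty$, i.e. that the descended map is a rational function of $w$. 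This is precisely the nontrivial content of Ritt's theorem in the simply periodic case --- note for instance that it forces $\deg f=|\la|$, which is in no way automatic from the descent --- and it requires a real argument (Ritt's original analysis, or growth/value-distribution estimates on the Poincar\'e function using $T(|\la|r,\psi)=d\,T(r,\psi)+O(1)$), none of which appears in your proposal. So the Latt\`es half of your proof is fine, but the monomial half is not yet a proof.
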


Ritt's theorem can be easily generalized as following. 
\begin{lem}\label{lemaffineconjex}If there is an affine automorphism $h: \C\to \C$ such that $h(0)\neq 0$ and $\psi\circ h= \psi$, then $f$ is exceptional.
\end{lem}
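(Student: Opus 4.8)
The plan is to deduce the statement from Ritt's periodicity criterion (Lemma \ref{lemritt}) by manufacturing a genuine nonzero period of $\psi$ out of the affine symmetry $h$. Write $h(z)=\alpha z+\beta$ with $\alpha\in\C^*$ and $\beta=h(0)\neq 0$. If $\alpha=1$, then $h$ is translation by $\beta\neq 0$, so $\psi(z+\beta)=\psi(z)$ for all $z\in\C$, i.e. $\psi$ is periodic, and Lemma \ref{lemritt} applies at once. So the only case to treat is $\alpha\neq 1$, where $h$ has a nonzero fixed point rather than being a translation.

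The key step is to observe that the set $G$ of holomorphic automorphisms $g$ of $\C$ with $\psi\circ g=\psi$ is stable under conjugation by the linear map $m_\lambda\colon z\mapsto \lambda z$, where $\lambda=df(o)$ (recall $|\lambda|>1$). Indeed, given $g\in G$, put $g':=m_\lambda\circ g\circ m_\lambda^{-1}$; applying the Poincar\'e equation \eqref{equationpoincare} twice together with $\psi\circ g=\psi$ once, one computes
\[
\psi(g'(z))=\psi\bigl(\lambda\, g(z/\lambda)\bigr)=f\bigl(\psi(g(z/\lambda))\bigr)=f\bigl(\psi(z/\lambda)\bigr)=\psi(z),
\]
so $g'\in G$. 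Since $G$ is visibly a group (and every automorphism of $\C$ is affine, so all its members are affine), applying this to $g=h$ shows that $h_1:=m_\lambda\circ h\circ m_\lambda^{-1}$, which is the affine map $z\mapsto \alpha z+\lambda\beta$, again lies in $G$.

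Finally, $h_1\circ h^{-1}\in G$ as well, and a direct computation gives $h_1\circ h^{-1}(z)=z+(\lambda-1)\beta$; since $\lambda\neq 1$ and $\beta\neq 0$, this is a nontrivial translation, so $\psi$ is periodic and Lemma \ref{lemritt} concludes that $f$ is exceptional. The only real content is spotting the conjugation-invariance of $G$ under $m_\lambda$ in the middle step; the algebra before and after it is routine, and the one point requiring a word of care is that \eqref{equationpoincare} holds on all of $\C$, so every identity above is valid globally and not merely near $o$.
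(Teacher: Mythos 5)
Your proof is correct and is essentially the paper's own argument: the paper likewise forms the group $G=\{g : \psi\circ g=\psi\}$, shows via the Poincar\'e equation \eqref{equationpoincare} that $z\mapsto \la h(\la^{-1}z)=\alpha z+\la\beta$ lies in $G$, and composes it with an inverse of $h$ to produce a nontrivial translation, concluding by Lemma \ref{lemritt}. Your case split at $\alpha=1$ and the order of composition ($h_1\circ h^{-1}$ giving $z+(\la-1)\beta$ versus the paper's $h^{-1}\circ g$ giving $z+\alpha^{-1}(\la-1)\beta$) are immaterial differences.
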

\proof
Let $G$ be the group of affine automorphisms $g$ of $\C$ satisfying $\psi\circ g=\psi.$ 
We have $h\in G$. It takes form $h: z\mapsto az+b$, $a\in \C^*$ and $b=h(0)\in \C^*.$
For every $z\in \C$, we have 
$$\psi(\la h(\la^{-1}z))=f\psi(h(\la^{-1}z))=f\psi(\la^{-1}z)=\psi(z).$$
Hence the automorphism
$g: z\mapsto \la h(\la^{-1}z)=az+\la b$ is contained in $G.$
Then $T:=h^{-1}\circ g: z\mapsto z+a^{-1}(\la-1)b$ is contained in $G.$
Since $b\neq 0$ and $|\la|>1$, $T$ is a nontrivial translation. 
We conclude the proof by Lemma \ref{lemritt}. 
\endproof
\medskip

Set $P:=\la^m \psi|_{\D}^{-1}(o_m)$ and $V:=\la^m(\psi|_{\D}^{-1}(U_m)).$ 
For $i\geq m,$ set $Q_i:=\psi|_{\D}^{-1}(q_i).$ 
One has $\psi(V)=U$, $\psi(P)=o$  and $\psi|_V: V\to U$ is an isomorphism. 
We set $T:=(\psi|_V)^{-1}\circ\psi|_{\D}:\D\to V$, then $T$  is an isomorphism. Similar constructions of $T$ appeared already in the works of Ritt \cite{ritt1922periodic} and Eremenko-van Strien \cite{eremenko2011rational}. 
We summaries our construction in the following figure.
\begin{figure}[h]
\centering
\includegraphics[width=11.5cm]{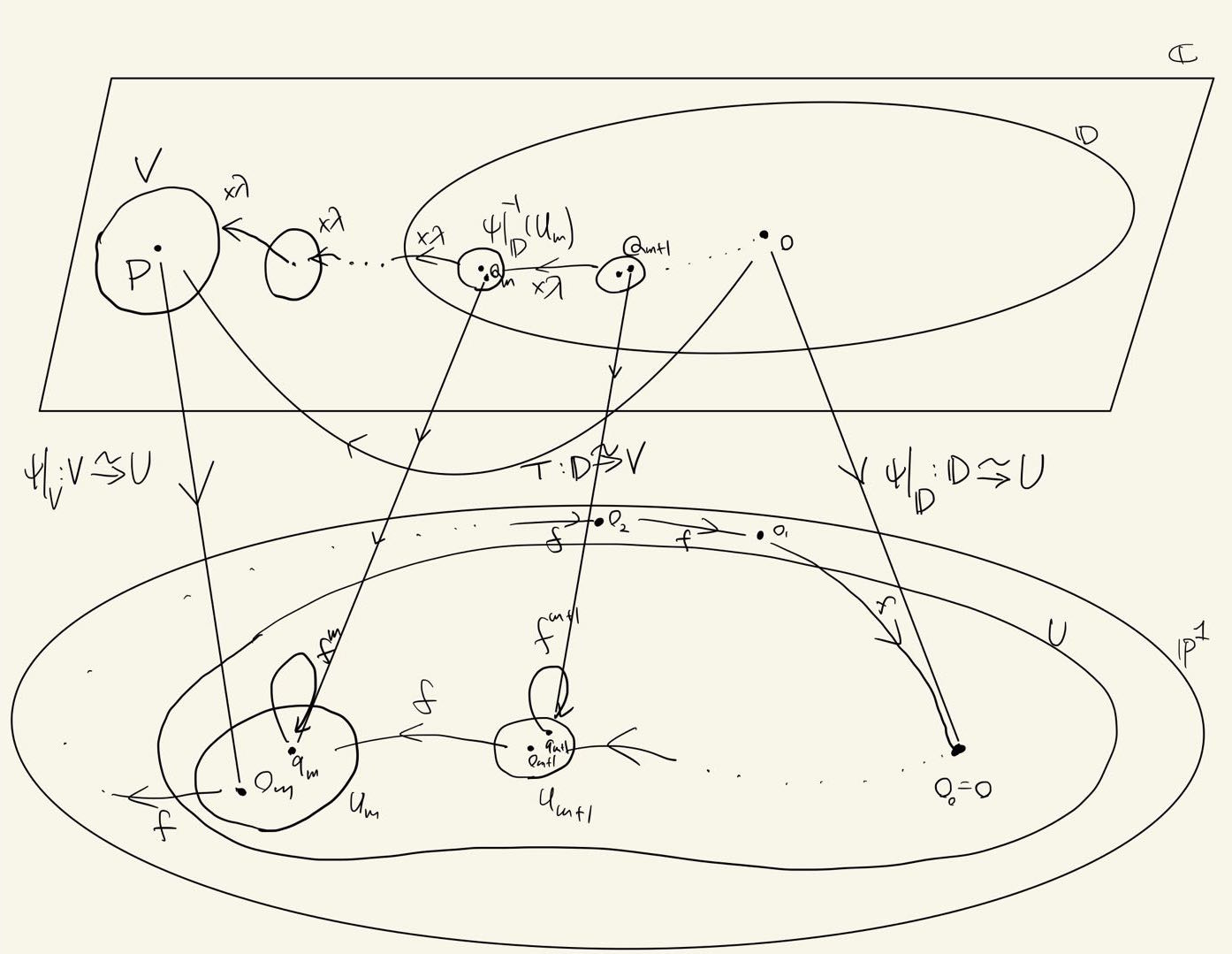}
\label{figc1}
\end{figure}

We have $\psi\circ T=\psi$ and $T(0)=P.$
Moreover, by our construction we have for every $i\geq m$, $V= \la^i (\psi|_{\D})^{-1}(U_i)$.  In particular $\la^i Q_i\in V$.  By (\ref{equationpoincare}) we have
\begin{equation*}
\psi(\la^i Q_i)=f^i(\psi(Q_i))=f^i(q_i)=q_i.
\end{equation*}
This implies

\begin{equation}\label{equationTQi}
	T(Q_i)=\la^iQ_i.
\end{equation}
Since $\lim\limits_{i\to \infty}q_i=o$, we have $\lim\limits_{i\to \infty}Q_i=0$ and 
\begin{equation}\label{equationlimQi} 
	\lim_{i\to \infty}\la^iQ_i=P.
\end{equation}
By (\ref{equationpoincare}) we have for every $i\geq 1$,
\begin{equation}\label{pocarederiva}
	df^i(\psi(z))d\psi(z)=\la^i d\psi(\la^i z),
\end{equation}

and by $\psi\circ T=\psi$ we have
\begin{equation}\label{Tderiva}
	d\psi(T(z))T'(z)=d\psi(z).
\end{equation}
Set $z=Q_i$. Combine (\ref{equationTQi}), (\ref{pocarederiva}) and (\ref{Tderiva}) we get 
\begin{equation*}
df^i(q_i)d\psi(\la^i Q_i)T'(Q_i)=\la^i d\psi(\la^i Q_i).
\end{equation*}
Since zeros of a holomorphic function are isolated, as $\la^i Q_i \to P$, for $i$ large enough we have $d\psi(\la^i Q_i)\neq 0$.  Hence for $i$ large enough,
\begin{equation}\label{equationdiffQi}
	\la^iT'(Q_i)^{-1}=df^i(q_i).
\end{equation}
\medskip

The following observation will be used in the proof of Theorem \ref{thmmilnor}.
\begin{lem}\label{lemmultilim}Set $\theta:=1/T': \D\to \C$. We have 
$$\lim_{i\to \infty}(df^i(q_i)-\la^i\theta(0))=P\theta'(0).$$
\end{lem}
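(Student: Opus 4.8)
The plan is to analyze the asymptotic expansion of the identity \eqref{equationdiffQi}, namely $df^i(q_i) = \lambda^i T'(Q_i)^{-1} = \lambda^i \theta(Q_i)$, as $i \to \infty$. Writing $\theta = 1/T'$, which is holomorphic on $\D$ with $\theta(0) = 1/T'(0) \neq 0$ since $T$ is an isomorphism, I would Taylor-expand $\theta$ at $0$:
\[
\theta(Q_i) = \theta(0) + \theta'(0) Q_i + O(Q_i^2),
\]
valid for $i$ large because $Q_i \to 0$ by \eqref{equationlimQi} (more precisely, since $\lim_i Q_i = 0$). Multiplying by $\lambda^i$ gives
\[
df^i(q_i) - \lambda^i \theta(0) = \lambda^i \theta'(0) Q_i + \lambda^i O(Q_i^2).
\]

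The key point is then to control the two terms on the right. For the first term, $\lambda^i Q_i \to P$ by \eqref{equationlimQi}, so $\lambda^i \theta'(0) Q_i \to P\theta'(0)$. For the error term, I need $\lambda^i Q_i^2 \to 0$; this follows because $\lambda^i Q_i^2 = (\lambda^i Q_i) \cdot Q_i$, where $\lambda^i Q_i$ converges (to $P$, hence is bounded) and $Q_i \to 0$. Thus $\lambda^i O(Q_i^2) = (\lambda^i Q_i)\cdot O(Q_i) \to P \cdot 0 = 0$. Combining, $\lim_{i\to\infty}\bigl(df^i(q_i) - \lambda^i\theta(0)\bigr) = P\theta'(0)$, which is the claim.

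I do not expect any serious obstacle here; the only points requiring a little care are: (a) justifying that the Taylor remainder is genuinely $O(Q_i^2)$ uniformly for $Q_i$ in a fixed small disc around $0$, which is immediate from holomorphy of $\theta$ on $\D$ together with $Q_i \to 0$ (so all but finitely many $Q_i$ lie in any prescribed small disc); and (b) keeping track of which quantities are bounded versus vanishing — the product structure $\lambda^i Q_i^2 = (\lambda^i Q_i)(Q_i)$ is exactly what makes the error negligible, exploiting that $\lambda^i Q_i$ has a finite limit rather than just being $O(1)$ in some weaker sense. One should also note at the outset that $\theta$ is well-defined: $T' $ is nowhere zero on $\D$ since $T:\D\to V$ is a biholomorphism, so $\theta = 1/T'$ is holomorphic on $\D$ and in particular $\theta'(0)$ makes sense. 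With these remarks the computation is a one-line limit.
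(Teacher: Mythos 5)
Your proof is correct and is essentially the paper's argument: the paper computes $\lim_i (df^i(q_i)-\la^i\theta(0))/P$ by rewriting it as the difference quotient $(\theta(Q_i)-\theta(0))/Q_i\to\theta'(0)$ via the same identity $df^i(q_i)=\la^i\theta(Q_i)$ and the limit $\la^iQ_i\to P$, which is just your first-order Taylor expansion in disguise. Your version has the minor cosmetic advantage of not dividing by $P$ and $Q_i$, but the mathematical content is identical.
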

\proof
By (\ref{equationlimQi}) and (\ref{equationdiffQi}),we have 
$$\lim_{i\to \infty}(df^i(q_i)-\la^i\theta(0))/P=\lim_{i\to \infty}(df^i(q_i)-\la^i\theta(0))/\la^iQ_i$$
$$=\lim_{i\to \infty}(df^i(q_i)/\la^i-\theta(0))/Q_i=\lim_{i\to \infty}(\theta(Q_i)-\theta(0))/Q_i=\theta'(0),$$
which concludes the proof.
\endproof

\medskip

The following is the main result of this section, which characterize exceptional endomorphisms by using the multipliers of adjoint sequence of a homoclinic orbit. 
\begin{thm}\label{thmadjconmultexcep}Let $f:\P^1\to \P^1$ be an endomorphism over $\C$ of degree at least $2$. Let $o$ be a repelling fixed point of $f$ such that $df(o)=\la$. Let $o_i, i\geq 0$ be a homoclinic orbit of $o$ such that $o_i\notin C(f)$ for every $i\geq 0$. Assume that there is $C\in \C^\ast$, such that for one (hence every) adjoint sequence $q_i, i\geq 0$ of $o_i, i\geq 0$, 
$df^i(q_i)=C\la^i$ for $i$ large. Then $f$ is exceptional. 
\end{thm}
\proof
We may assume that $q_i, i\geq m$ is adjoint with respect to the linearization domain and good return time $(U,m)$ for $o_i, i\geq 0$, and $d(f^i)(q_i)=C\la^i$ for all $i\geq m.$
By (\ref{equationdiffQi}), we get $T'(Q_i)=C^{-1}$ for $i\geq m.$ Since $Q_i\neq 0$ for $i\geq m$ and $\lim\limits_{i\to \infty}Q_i=0$,
$T'=C^{-1}$ on $\D$.  It follows that $T(z)=C^{-1}z+P$ for every $z\in \D.$ 
Then $T$ extends to the affine endomorphism on $\C$ sending  $z$ to $C^{-1}z+P.$ One have $\psi=\psi\circ T$ on $\C.$
We conclude the proof by Lemma \ref{lemaffineconjex}.
\endproof

\section{Proof of Milnor's conjecture}\label{sectionmilnor}

In this section we prove one of Milnor's conjectures (Theorem \ref{thmmilnor}). We postpone the proof of  another conjecture of Milnor (Theorem \ref{thmmilnor1}) to Section \ref{sectioncer}.

\proof[Proof of Theorem \ref{thmmilnor}]
Let $f:\P^1\to \P^1$ be an endomorphism over $\C$ of degree at least $2$. Let $K$ an imaginary quadratic field. 
Assume that for every $n\geq 1$ and every $n$-periodic point $x$ of $f$, $df^n(x)\in O_K.$ 

\medskip

After replacing $f$ by a suitable positive iterate, we may assume that $f$ has a repelling fixed point $o\notin PC(f)$.
Let $o_i, i\geq 0$ be a homoclinic orbit of $o.$ 
By Proposition \ref{proexihors}, there is a linearization domain and a good return time $(U,m)$ for $o_i, i\geq 0$. Let $q_i, i\geq m$ be the adjoint sequence for it.
Set $\mu_i:=df^i(q_i)\in O_K$ for $i\geq m$.  Set $\la:=df(o).$
\begin{lem}\label{lemmuiab}There are $a\in K^*,b\in K$ such that $\mu_i=a\la^i+b$ for $i$ large.
\end{lem}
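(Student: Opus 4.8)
The plan is to combine the structural identity \eqref{equationdiffQi} from the previous section with the integrality hypothesis on multipliers, via a growth/finiteness argument in the imaginary quadratic field $K$. First I would recall that $\theta := 1/T' : \D \to \C$ is holomorphic near $0$, so it has a convergent Taylor expansion $\theta(z) = \sum_{k \geq 0} c_k z^k$ on a disk $\D(0,\rho)$. By \eqref{equationdiffQi} we have $\mu_i = df^i(q_i) = \la^i \theta(Q_i)$ for $i$ large, and by \eqref{equationlimQi} the points $\la^i Q_i$ converge to $P \neq 0$; hence $Q_i \sim P \la^{-i}$ decays geometrically. Substituting, $\mu_i = \la^i \sum_{k \geq 0} c_k Q_i^k = \la^i \left(c_0 + c_1 Q_i + c_2 Q_i^2 + \cdots\right)$, and since $\la^i Q_i \to P$ this is $c_0 \la^i + c_1 (\la^i Q_i) + c_2 \la^i Q_i^2 + \cdots = c_0 \la^i + c_1(\la^i Q_i) + O(\la^{-i})$. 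So a priori $\mu_i = c_0 \la^i + (\text{something converging to } c_1 P) + (\text{error} \to 0)$; the statement to prove is that the middle and error terms collapse to an exact constant $b$ and that $c_0 \in K$.

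The key point is that $\la = df(o)$ is itself a multiplier of a periodic point, so $\la \in O_K$; and all $\mu_i \in O_K$. Now I would argue as follows. Since $\la \in O_K \subset K$ with $|\la| > 1$, and $K$ is imaginary quadratic (so $O_K$ is a lattice in $\C$), consider the $K$-vector space $V$ spanned by $\{\mu_i \la^{-i} : i \geq m\} \cup \{1\}$ inside $\C$ viewed as a $K$-vector space — but $\C$ is infinite-dimensional over $K$, so instead I would work more concretely: set $\nu_i := \mu_i - \la \mu_{i-1} \cdot (\text{correction})$... Actually the cleanest route is to use \eqref{equationdiffQi} directly: from $T'(Q_i) = \la^i / \mu_i$ and $T'(Q_{i+1}) = \la^{i+1}/\mu_{i+1}$, together with $Q_{i+1} = g\text{-type relation}$, one gets a recursion. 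But the most robust approach avoids recursions: since $\mu_i / \la^i = \theta(Q_i) \to c_0$ and $\mu_i \in O_K$, $\la \in O_K$, I claim $c_0 \in K$. Indeed $c_0 = \lim \mu_i \la^{-i}$; writing $\mu_i \la^{-i} = \mu_i \bar\la^i / |\la|^{2i}$ and using that $\mu_i \bar\la^i \in O_K$ while $|\la|^{2i} = N(\la)^i \in \Z_{>0}$ (as $\la \bar\la$ is the norm, a positive integer since $\la \in O_K$ in an imaginary quadratic field), we see $\mu_i \la^{-i} \in \frac{1}{N(\la)^i} O_K$, a sequence of elements of $K$ converging in $\C$; since $O_K$ is discrete this forces, after clearing, that $c_0 \in K$ and moreover that the "tail" $\mu_i - c_0 \la^i$ eventually equals a fixed element $b$ of $K$. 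Making this last step precise — that a convergent sequence of elements of $\frac{1}{N(\la)^i}O_K$ of the prescribed shape must stabilize after subtracting $c_0\la^i$ — is where I would spend the real effort: one writes $\mu_i - c_0\la^i = c_1(\la^i Q_i) + (\text{error}_i)$ with $\text{error}_i \to 0$ and $\la^i Q_i \to P$, so $\mu_i - c_0\la^i$ converges to $c_1 P =: b$; but $\mu_i - c_0\la^i \in K$ for all large $i$ (once we know $c_0 \in K$, using $\mu_i \in O_K \subset K$ and $c_0\la^i \in K$), and it converges, and it lies in a fixed fractional ideal's inverse-power tower — a discreteness argument then shows it is eventually constant, equal to $b \in K$. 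Set $a := c_0$.

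The main obstacle, as I see it, is the discreteness/stabilization argument: a priori $\mu_i \la^{-i} \in \frac{1}{N(\la)^i} O_K$ lives in progressively finer lattices, so convergence alone does not immediately give $c_0 \in K$ — I need to exploit the specific asymptotic expansion $\mu_i = c_0\la^i + c_1\la^i Q_i + O(\la^{-i})$ with $\la^i Q_i \to P$ to control the denominators. The trick will be to look at the combination $\mu_{i+1} - \la \mu_i$ or similar telescoping expressions, which kills the leading $c_0\la^i$ term and leaves something whose integrality and decay can be played against each other; alternatively, since $\la^i Q_i \to P$ geometrically fast, bounding $|\mu_i - c_0\la^i - b|$ by $C |\la|^{-i}$ for suitable $b$ and then invoking that $N(\la)^i(\mu_i - c_0\la^i - b) \in O_K$ has norm $\to 0$, hence vanishes for $i$ large. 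This pairing of an integrality statement (membership in $O_K$, or a controlled fractional ideal) with a geometric decay estimate is the crux, and once it is in place the identity $\mu_i = a\la^i + b$ for large $i$ follows, with $a = c_0 \in K^*$ (nonzero since $\theta(0) = 1/T'(0) \neq 0$) and $b = c_1 P \in K$.
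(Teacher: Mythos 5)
You have the right skeleton: from \eqref{equationdiffQi} and \eqref{equationlimQi} you recover the asymptotic $\mu_i=a\la^i+b+\epsilon_i$ with $a=\theta(0)\neq 0$, $b=P\theta'(0)$ and $\epsilon_i\to 0$ (this is exactly Lemma \ref{lemmultilim}, which you could simply cite), and you correctly identify that the whole content of the lemma is to upgrade this limit to an exact identity with $a,b\in K$, using $\mu_i,\la\in O_K$ and the discreteness of $O_K$ in $\C$ (which is where ``imaginary quadratic'' enters). But the step you actually spell out does not work. Your ``alternatively'' argument --- bound $|\mu_i-c_0\la^i-b|\le C|\la|^{-i}$, multiply by $N(\la)^i$ to land in $O_K$, and conclude the element vanishes because its norm tends to $0$ --- fails quantitatively: $N(\la)^i=|\la|^{2i}$, while the error is only $O(|\la|^{-i})$ (indeed $\la^iQ_i=T(Q_i)=P+O(|\la|^{-i})$), so $N(\la)^i\,\epsilon_i$ grows like $|\la|^{i}$ rather than decaying. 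It is also circular, since clearing denominators of $c_0\la^i+b$ presupposes $c_0,b\in K$, which is precisely what is to be proved. So as written the crux is left open; you say so yourself, and the proposal is therefore incomplete.

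For the record, your telescoping hint does close the gap and is a legitimate alternative to the paper's argument: $\nu_i:=\mu_{i+1}-\la\mu_i\in O_K$ converges to $(1-\la)b$, hence is eventually equal to it by discreteness of $O_K$; this gives $b\in K$ (as $\la\neq 1$), then $\mu_{i+1}-b=\la(\mu_i-b)$ for $i\ge i_0$ yields $\mu_i=a\la^i+b$ with $a=(\mu_{i_0}-b)\la^{-i_0}\in K$, and $a\neq 0$ since $\mu_i-b\to\infty$ (equivalently $a=\theta(0)$). The paper proceeds differently: it reduces modulo $O_K$, working on the torus $\T=\C/O_K$ on which multiplication by $\la$ descends to $[\la]$; the limit relation forces $[\la]^i\pi(a)\to\pi(b)$, and since $\pi(b)$ is a repelling fixed point of $[\la]$ a convergent orbit must eventually sit exactly at it, so $[\la]^i\pi(a)=\pi(b)$ for $i$ large; discreteness of $O_K$ then turns the limit $\mu_i-a\la^i\to b$ into the equality $\mu_i=a\la^i+b$, and finally $a,b\in K$ follow by solving two of these equations linearly in $a,b$ over $K$. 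Had you carried out the telescoping computation instead of only gesturing at it, your proof would have been correct and arguably more elementary than the torus argument; as submitted, the decisive step is missing and the only detailed substitute for it is erroneous.
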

\proof[Proof of Lemma \ref{lemmuiab}]
We view $K$ as a subfield of $\C.$ Then $O_K$ is a discrete subgroup of $(\C, +)$.
Set $\T:=\C/O_K$ and $\pi: \C\to \T$ the quotient map. Since $\la\in O_K$, the multiplication by $\la$ on $L$ descents to an endomorphism $[\la]$ on $\T.$ 
By Lemma \ref{lemmultilim}, we have 
\begin{equation}\label{equationlimitmui}\lim_{i\to \infty}(\mu_i-a\la^i)=b,
\end{equation} where 
$a=\theta(0)=1/T'(0)\in \C^*$ and $b=P\theta'(0)\in \C$ (See Section \ref{sectionhomoclinic} for the definitions of $T$ and $\theta$). Since $\mu_i\in O_K, i\geq m$, we get 
$$\lim_{i\to \infty}[\la]^i\pi(a)=\pi(b).$$
In particular, $\pi(b)$ is fixed by $[\la].$ Since $d[\la](b)=\la$, $[\la]$ is repelling at $\pi(b).$ Hence for $i$ large we must have
\begin{equation}\label{equationeqmui}
	[\la]^i\pi(a)=\pi(b).
\end{equation}
Since $O_K$ is discrete in $\C$, by 
(\ref{equationlimitmui}) and (\ref{equationeqmui}),  we have
\begin{equation}\label{equationabequmu}\mu_i=a\la^i+b \text{ for $i$ large}.
\end{equation}
There are $n> l\geq m$ such that 
$\mu_n=a\la^n+b$ and $\mu_l=a\la^l+b$.  This implies that $a,b\in K.$ 
\endproof

After enlarging $m,$ we may assume that $\mu_i=a\la^i+b$ for all $i\geq m.$ Assume by contradiction that $f$ is not exceptional. 
By Theorem \ref{thmadjconmultexcep}, we must have $b\neq 0.$ 
For $\bp\in \Spec O_K$, 
let $K_\bp$ 
be the completion of $K$ with respect to $\bp$. Denote by $|\cdot|_\bp$ the $\bp$-adic norm on $K_\bp$ normalized by $|p|_{\bp}=p^{-1}$ where $p:=\Char O_K/\bp.$
Let $K_\bp^{\circ}$ be the valuation ring of $K_{\bp}$. 
For $\mu\in O_K$, $\mu\in \bp$ if and only if $|\mu|_\bp<1$.
\begin{lem}\label{lemoneinfimuip}For $\bp\in \Spec O_K$ and $\epsilon>0$, if $\la\not\in \bp$, then
	there is $N\in \Z_{>0}$ such that $|\la^{Ni}-1|_\bp<\epsilon$ for all $i\geq 0.$
\end{lem}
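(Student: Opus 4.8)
The statement is a simple $\bp$-adic continuity/compactness fact about the powers of a $\bp$-adic unit. The plan is to pass to the residue field to make the orbit of $\la$ under exponentiation eventually periodic, and then use the ultrametric structure of $K_{\bp}^{\circ}$ to turn eventual periodicity into the desired uniform estimate. First I would observe that since $\la\notin\bp$, we have $|\la|_{\bp}=1$, so $\la\in (K_{\bp}^{\circ})^{\times}$. Reducing modulo the maximal ideal $\bp K_{\bp}^{\circ}$, the image $\bar\la$ lies in the finite field $k:=K_{\bp}^{\circ}/\bp K_{\bp}^{\circ}$, hence $\bar\la^{q-1}=1$ where $q:=\#k^{\times}+1$; equivalently there is some positive integer $M$ (one may take $M=q-1$) with $\la^{M}\equiv 1\pmod{\bp K_{\bp}^{\circ}}$, i.e. $|\la^{M}-1|_{\bp}\le |p|_{\bp}<1$.

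Next I would iterate this congruence to get arbitrarily small values. Write $\la^{M}=1+\pi$ with $|\pi|_{\bp}\le |p|_{\bp}<1$. Then for any $j\ge 1$,
\[
\la^{Mj}=(1+\pi)^{j}=1+\sum_{t=1}^{j}\binom{j}{t}\pi^{t},
\]
and by the ultrametric inequality $|\la^{Mj}-1|_{\bp}\le\max_{1\le t\le j}|\pi|_{\bp}^{t}=|\pi|_{\bp}\le |p|_{\bp}$. More importantly, choosing $j$ to be a large power of $p$ one can make $|\la^{Mj}-1|_{\bp}$ as small as desired: indeed a cleaner route is to note that the map $x\mapsto x^{p}$ contracts $1+\bp^{r}K_{\bp}^{\circ}$ into $1+\bp^{r+1}K_{\bp}^{\circ}$ for $r\ge 1$ (since $(1+u)^{p}=1+pu+\dots$ and $|pu|_{\bp}\le|p|_{\bp}|u|_{\bp}$), so $\la^{Mp^{k}}\in 1+\bp^{k+1}K_{\bp}^{\circ}$, and hence $|\la^{Mp^{k}}-1|_{\bp}\le |p|_{\bp}^{k+1}\to 0$. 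Given $\epsilon>0$, fix $k$ with $|p|_{\bp}^{k+1}<\epsilon$ and set $N:=Mp^{k}$; then $|\la^{N}-1|_{\bp}<\epsilon$.

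Finally I would upgrade the single estimate $|\la^{N}-1|_{\bp}<\epsilon$ to the estimate for all multiples $\la^{Ni}$, $i\ge 0$: for $i=0$ this is $|1-1|_{\bp}=0<\epsilon$, and for $i\ge 1$ write $\la^{Ni}-1=(\la^{N}-1)(\la^{N(i-1)}+\dots+\la^{N}+1)$; since each $\la^{Nj}$ is a unit, the second factor has norm $\le 1$, so $|\la^{Ni}-1|_{\bp}\le|\la^{N}-1|_{\bp}<\epsilon$. This completes the proof. I do not anticipate a serious obstacle here; the only point requiring a little care is the contraction estimate for $x\mapsto x^{p}$ on principal units (equivalently, that the one-units group is a pro-$p$ group), which is a standard consequence of the ultrametric inequality applied to the binomial expansion — one just has to make sure the argument is stated so that it works uniformly regardless of the ramification of $\bp$ over $p$.
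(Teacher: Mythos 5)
Your proof is correct and takes essentially the same route as the paper's: finiteness of the residue field gives $\la^{M}\equiv 1 \pmod{\bp}$, the pro-$p$ contraction on one-units (which the paper invokes simply as $\lim_{n\to\infty}\la^{lp^{n}}=1$ in the $\bp$-adic topology) gives $|\la^{N}-1|_{\bp}<\epsilon$, and the geometric-sum factorization $\la^{Ni}-1=(\la^{N}-1)(1+\la^{N}+\dots+\la^{N(i-1)})$ with the ultrametric inequality extends the bound to all $i\geq 0$. The only cosmetic slip is the bound $|\la^{Mp^{k}}-1|_{\bp}\le |p|_{\bp}^{k+1}$, which in the ramified case should read $|\pi_{\bp}|_{\bp}^{k+1}$ for a uniformizer $\pi_{\bp}$ of $\bp$; since this still tends to $0$ (as you anticipate in your closing remark on ramification), the argument is unaffected.
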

\proof[Proof of Lemma \ref{lemoneinfimuip}] 
Since $O_K/\bp$ is a finite field and $\la\not\in \bp$, there is $l\geq 1$ such that $\la^l-1\in \bp.$ Since
$$\lim_{n\to \infty}\la^{lp^n}=\lim_{n\to \infty}(1+(\la^l-1))^{p^n}=1$$ in the $\bp-$adic topology,
there is $N\in \Z_{>0}$, such that $|\la^N-1|_{\bp}<\epsilon.$ Then for every $i\geq 0$,
$|\la^{Ni}-1|_{\bp}=|\la^{N}-1|_{\bp} |1+\la^N\dots+\la^{N(i-1)}|_{\bp}<\epsilon.$
\endproof

Let $S$ be the finite set of prime ideals $\bp\in \Spec O_K\setminus \{0\}$ dividing $\la(\deg f)!\in O_K.$
For every $\bp\in \Spec O_K\setminus(S\cup \{0\})$, there is an embedding of field $\tau_K: K\hookrightarrow \C_p$ such that $|\cdot|_{\bp}$ is the restriction of the norm on  $\C_p$ via this embedding. Recall that  $\C_p$ is the completion of the algebraic closure of $\Q_p$. Then $\tau_K$ extends to an isomorphism $\tau:\C\to \C_p$. Via $\tau$, the norm $|\cdot|_{\bp}$  extends to a non-archimedean complete norm on $\C$. 
By \cite[Corollaire 4.7 and Corollaire 4.9]{Rivera-Letelier2003a} of Rivera-Letelier (or \cite[Corollary 1.6]{Benedetto2014} of Benedetto-Ingram-Jones-Levy), for every $\bp\in \Spec O_K\setminus(S\cup \{0\})$, there are at most finitely many integers $i\geq m$ satisfying $|\mu_i|_\bp<1$. We claim that  for every $i\geq m,$ we have $\mu_i=a\la^i+b\not\in \bp$ for every $\bp\in \Spec O_K\setminus(S\cup \{0\})$. 
In fact if there is $\bp\in \Spec O_K\setminus (S\cup \{0\})$ such that $a\la^i+b\in \bp$ for some $i\geq m$, by Lemma \ref{lemoneinfimuip}, there is $N\in \Z_{>0}$, such that for all 
$j\geq 0$, $|\la^{Nj}-1|_\bp<|a^{-1}|/2.$ Then for every $j\geq m$, we get 
$$|\mu_{i+Nj}|_{\bp}=|a\la^{i+Nj}+b|_{\bp}\leq \max\{|a\la^{i}+b|_{\bp}+|a\la^i(\la^{Nj}-1)|_{\bp}\}<1.$$
Thus we obtain infinitely many  integers $i\geq m$ satisfying $|\mu_i|_\bp<1,$ which is a contradiction.

\medskip
Set $S':=\{\bp\in S|\,\, \la\in \bp\}$ and $S''=S\setminus S'.$
Since $a\neq 0$, there is $l\geq 0$ such that $a\la^l+b\neq 0$. Set $$A:=\min(\{|a\la^{l}+b|_{\bp}|\,\, \bp\in S''\}\cup \{|b|_{\bp}|\,\, \bp\in S'\}\cup \{1\})>0.$$

For every $\bp\in S'$, there is an integer $M_{\bp}\geq m$ such that 
$|a\la^{M_{\bp}}|_{\bp}<|b|_{\bp}.$ Then, for every $i\geq M_{\bp}, \bp\in S'$, we have $$|\mu_i|_{\bp}=|b|_{\bp}\geq A.$$

For every $\bp\in S''$, by Lemma \ref{lemoneinfimuip}, there is $N_{\bp}\in \Z_{>0}$ such that for every $j\geq 0$,
$|\la^{N_{\bp}j}-1|_{\bp}<|a^{-1}|_{\bp}|a\la^{l}+b|_{\bp}.$ Then for all $j\geq m$,  we have
$$|\mu_{l+N_{\bp}j}|_{\bp}=|a\la^{l+N_{\bp}j}+b|_{\bp}=|(a\la^{l}+b)+a\la^i(\la^{N_{\bp}j}-1)|_{\bp}=|a\la^{l}+b|_{\bp}\geq A.$$

Set $M:=\max\{M_{\bp}|\,\, \bp\in S'\}$ and $N:=\prod_{\bp\in S''}N_{\bp}.$ For every $i\geq M$, by the above discussion we get 
$|\mu_{l+Ni}|_{\bp}\geq A$ for all $\bp\in S$. Fix an embedding of $K$ in $\C$.
For every $\bp\in \Spec O_K\setminus \{0\},$ set $n_{\bp}:=[K_{\bp}:\Q_{p}]$ with $p=\Char O_K/\bp.$ We have $n_\bp\leq 2$.
By product formula, we get, since $|\mu_{l+Ni}|_{\bp}=1$ for all $\bp\in \Spec O_K\setminus(S\cup \{0\})$,
$$|\mu_{l+Ni}|^{[K:\Q]}=\prod_{\bp\in \Spec O_K\setminus \{0\}}|\mu_{l+Ni}|_{\bp}^{-n_{\bp}}=\prod_{\bp\in S}|\mu_{l+Ni}|_{\bp}^{-n_{\bp}}\leq A^{-2|S|},$$ where $i\geq m$.

Hence $\mu_{l+Ni}, i\geq m$ is bounded in $\C.$ Since $a\neq 0$ and $|\la|>1$, we get a contradiction.  The proof is finished.
\endproof

\section{The Berkovich projective line}\label{sectionberko}
Let $\bk$ be a complete valued field with a non-trivial non-archimedean norm $|\cdot|$. We denote by $\bk^{\circ}$ the valuation ring of $\bk$, $\bk^{\circ\circ}$ the maximal ideal of $\bk^{\circ}$ and $\tilde{k}=\bk^{\circ}/\bk^{\circ\circ}$ the residue field.

In this section, we collect some basic facts about Berkovich's analytification of $\P^1_{\bk}$. 
We refer the readers  to \cite{Berkovich1990} for a general discussion on Berkovich space, and to \cite{baker2010potential} for a detailed description of the Berkovich projective line and the dynamics on it.

\subsection{Analytification of the projective line}
Let $\P^{1,\an}_{\bk}$ be the analytification of $\P^{1}_{\bk}$ in the sense  of Berkovich, which is a compact topological
space endowed with a structural sheaf of analytic functions. 
Only its topological structure will be used in this paper. We describe it briefly below.

The analytification $\A^{1,\an}_{\bk}$ of the affine line $\A^1_{\bk}$
is the space of all multiplicative semi-norms on $\bk[z]$ whose restriction to $\bk$ coincide with $|\cdot|$, endowed with the topology of pointwise convergence.
For any $x\in \A^{1,\an}_{\bk}$ and $P\in \bk[z]$, it is customary to denote $|P(x)|:=|P|_x$, where $|\cdot|_x$ is the semi-norm associated to $x$.

As a topological space $\P^{1,\an}_{\bk}$ is the one-point compactification of $\A_{\bk}^{1,\an}$. We write $\P_{\bk}^{1,\an}=\A_{\bk}^{1,\an}\cup \{\infty\}$. More formally it is obtained
by gluing two copies of $\A_{\bk}^{1,\an}$ in the usual way via the transition map $z\mapsto z^{-1}$ on the 
punctured affine line $(\A_{\bk}^1\setminus \{0\})^{\an}$.

The Berkovich projective line $\P^{1,\an}_{\bk}$ is an $\R$-tree in the sense that it is uniquely path
connected, see \cite[Section 2]{Jonsson} for the precise definitions. In particular for $x,y\in \P^{1,\an}_{\bk}$,
there is a well-defined segment $[x,y]$. 

For $a\in \bk$ and $r\in [0,+\infty)$, we denote  $\D(a,r)$ by the closed disk $\D(a,r):=\{x\in \A^{1,\an}_{\bk}:\, |(z-a)(x)|\leq r\}.$
One may check that the norm $\sum_{i\geq 0}a_i(z-a)^i\mapsto \max\{|a_i|r^i, i\geq 0\}$ defines a point $\xi_{a,r}\in \D(a,r).$ 
One set $x_{G}:=\xi_{0,1}$ and call it the Gauss point.
\begin{rem}When $r=0$, $\xi_{a,0}$ is exactly the image of $a$ via the identification $\bk=\A^1(\bk)\hookrightarrow \A_{\bk}^{1,\an}.$
\end{rem}

The group $\PGL_2(\bk)$ acts on $\P^1_{\bk}$, hence on $\P^{1, \an}_{\bk}$. 
\begin{lem}\cite[Proposition 1.4]{Dujardin2019}\label{lemgaussorbit}
	For a point $x\in \P^{1, \an}_{\bk}$, $x\in \PGL_2(\bk)\cdot x_g$ if and only if it takes form 
$x=\xi_{a,r}$ for some $a\in \bk$ and $r\in |\bk^*|.$
\end{lem}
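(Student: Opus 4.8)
The plan is to prove both implications directly, tracking how Möbius transformations act on semi-norms (recall a point of $\P^{1,\an}_{\bk}$ \emph{is} such a semi-norm, and $\gamma\in\PGL_2(\bk)$ acts by $(\gamma\cdot x)(\phi)=|\phi\circ\gamma|_x$).

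\textbf{The easy direction.} Suppose $x=\xi_{a,r}$ with $a\in\bk$ and $r=|c|$ for some $c\in\bk^*$. Let $\gamma\in\PGL_2(\bk)$ be the affine map $\gamma(z)=cz+a$. For $\phi=\sum_{i\geq 0}b_i(z-a)^i\in\bk[z]$ one has $\phi\circ\gamma=\sum_{i\geq 0}b_ic^iz^i$, hence $|\phi\circ\gamma|_{x_G}=\max_i|b_i|\,|c|^i=|\phi|_{\xi_{a,r}}$, so $\gamma\cdot x_G=\xi_{a,r}$, i.e. $x\in\PGL_2(\bk)\cdot x_G$. The same computation with an arbitrary affine map $\gamma_1(z)=c_1z+a_1$ gives more generally $\gamma_1\cdot\xi_{a',r'}=\xi_{\gamma_1(a'),\,|c_1|r'}$, a formula I will reuse. (This handles all finite points; the type II points ``at infinity'' are no exception, since in the coordinate $w=1/z$ one checks $\xi^w_{0,s}=\xi_{0,1/s}$, so they are again of the form $\xi_{a,r}$ with $r\in|\bk^*|$ in the standard chart.)

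\textbf{The hard direction.} Let $x=\gamma\cdot x_G$ with $\gamma\in\PGL_2(\bk)$. If $\gamma$ is affine we are done by the previous paragraph, so assume $\gamma$ has nonzero lower-left entry and write $\gamma=\gamma_1\circ\iota\circ\gamma_2$ with $\gamma_1,\gamma_2$ affine and $\iota(z)=1/z$ (the elementary decomposition of such a Möbius map). By the easy direction, $\gamma_2\cdot x_G=\xi_{a_2,r_2}$ with $r_2\in|\bk^*|$. Now evaluate $\iota\cdot\xi_{a_2,r_2}$ in two cases: if $|a_2|\leq r_2$ then $\xi_{a_2,r_2}=\xi_{0,r_2}$, and a one-line semi-norm computation gives $\iota\cdot\xi_{0,r_2}=\xi_{0,1/r_2}$; if $|a_2|>r_2$ then $0\notin\D(a_2,r_2)$, and the standard non-archimedean formula for the image of such a disk under $z\mapsto 1/z$ gives $\iota\cdot\xi_{a_2,r_2}=\xi_{1/a_2,\,r_2|a_2|^{-2}}$. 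In both cases $\iota\cdot\xi_{a_2,r_2}=\xi_{a_3,r_3}$ with $r_3\in|\bk^*|$. Applying the reused affine formula to $\gamma_1$ then yields $x=\gamma_1\cdot\xi_{a_3,r_3}=\xi_{\gamma_1(a_3),\,|c_1|r_3}$ with $|c_1|r_3\in|\bk^*|$, which is the desired form.

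\textbf{On the structure of the argument.} Conceptually, the lemma identifies the $\PGL_2(\bk)$-orbit of the Gauss point with the set of type II points of $\P^{1,\an}_{\bk}$ (see e.g. \cite{baker2010potential}); since $x_G$ is type II and this class of points is preserved by $\PGL_2(\bk)$ — for instance because the stabilizer of $x_G$ is $\PGL_2(\bk^{\circ})$ — this gives a more structural route, but identifying the type II points with $\{\xi_{a,r}:a\in\bk,\ r\in|\bk^*|\}$ still costs essentially the computation above. I would therefore present the hands-on argument, which is elementary and self-contained. The only genuinely delicate point is the bookkeeping for the inversion $\iota$: one must separate the cases $0\in\D(a_2,r_2)$ and $0\notin\D(a_2,r_2)$ and invoke the image-of-a-disk formula in the non-archimedean setting; everything else reduces to one-line evaluations of Gauss-type norms.
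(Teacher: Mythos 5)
Your argument is correct, and it is worth noting that the paper itself offers no proof of this lemma at all: it simply cites \cite[Proposition 1.4]{Dujardin2019}, so your hands-on computation supplies a self-contained argument where the paper relies on a reference. The structure is sound: the affine formula $\gamma_1\cdot\xi_{a',r'}=\xi_{\gamma_1(a'),|c_1|r'}$ gives the easy direction (and $r\in|\bk^*|$ is exactly what makes the scaling $z\mapsto cz+a$ available over $\bk$), and the decomposition $\gamma=\gamma_1\circ\iota\circ\gamma_2$ with $\gamma_2(z)=\delta z+\epsilon$, $\gamma_1$ affine with slope $(\beta\delta-\alpha\epsilon)/\delta\neq 0$, reduces the hard direction to the two inversion cases, in both of which the resulting radius ($1/|\delta|$, resp.\ $|\delta|/|\epsilon|^2$) visibly lies in $|\bk^*|$. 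The only place to be a little careful is the inversion step over a field $\bk$ that is not algebraically closed: checking equality of the two norms on linear polynomials with roots in $\bk$ does not by itself determine a multiplicative norm on $\bk[z]$, so you should either perform the verification over $\widehat{\overline{\bk}}$ and descend through the quotient map $\pi:\P^{1,\an}_{\widehat{\overline{\bk}}}\to\P^{1,\an}_{\bk}$ described in the paper (the $\PGL_2(\bk)$-action commutes with $\pi$ and $\xi_{a,r}$ over $\bk$ is the image of $\xi_{a,r}$ over $\widehat{\overline{\bk}}$), or, as you do, invoke the standard disk-image formula for $z\mapsto 1/z$ as a known fact; either fix is one line, so this is a presentational caveat rather than a gap. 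The structural alternative you mention (orbit $=$ type 2 points, stabilizer $\PGL_2(\bk^{\circ})$) is indeed the route the cited reference takes in spirit, but as you observe it only matches the statement as written when $\bk$ is algebraically closed (cf.\ Remark \ref{remalgcloseorbittypetwo}), whereas your computation proves the lemma for general $\bk$, which is the form actually used later (e.g.\ with $\bk=\C((t))$ via Remark \ref{remdensesubf}).
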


\begin{rem}\label{remdensesubf}
The stablizer of $\PGL_2(\bk)$ at $x_g$ is $\PGL_2(\bk^{\circ})$ which is open in $\PGL_2(\bk)$. So for any dense subfield $L$ of $\bk$, we have $\PGL_2(L)\cdot x_g=\PGL_2(\bk)\cdot x_g.$
\end{rem}

\subsection{Points in $\P^{1,\an}_{\bk}$}
Let $\widehat{\overline{\bk}}$ be the completion of the algebraic closure of $\bk$. It is still algebraically closed. 
By \cite[Corollary 1.3.6]{Berkovich1990}, $\Aut(\widehat{\overline{\bk}}/\bk)$ acts on $\P^{1,\an}_{\widehat{\overline{\bk}}}$ and we have $\P^1_{\widehat{\overline{\bk}}}/\Aut(\widehat{\overline{\bk}}/\bk)= \P^1_{\bk}.$ We denote by $\pi:\P^{1,\an}_{\widehat{\overline{\bk}}}\to \P^{1,\an}_{\bk}$ the quotient map. 
The points of $\P^{1,\an}_{\bk}$  can be classified into 4 types:
\begin{points}
\item  a type 1 point takes form $\pi(a)$ where $a\in \widehat{\overline{\bk}}\cup\{\infty\}=\P^{1,\an}_{\widehat{\overline{\bk}}}$;
\item a type 2 point takes form $\pi(\xi_{x,r})$ where $x\in \widehat{\overline{\bk}}$ and $r\in |\widehat{\overline{\bk}}^*|$;
\item a type 3 point takes form $\pi(\xi_{x,r})$ where $x\in \widehat{\overline{\bk}}$ and $r\in \R_{>0}\setminus |\hat{\overline{\bk}}^*|$;
\item a type 4 point takes form $\pi(x)$ where $x$ is the pointwise limit of $\xi_{x_i,r_i}$ such that the corresponding discs $\D(x_i,r_i)$ form a decreasing
sequence with empty intersection.
\end{points}
See \cite[Section 1.4.4]{Berkovich1990} for further details when $\bk$ is algebraically closed. See also \cite[Proposition 2.2.7]{Kedlaya2011} and \cite[Section 2.1]{Stevenson2019}.  The set of type 1 (resp. type 2) points is dense in $\P^{1,\an}_{\bk}$. Points of type 4 exist only when
$\bk$ is not spherically complete. If we view $\P^{1,\an}_{\bk}$ as a metric tree, then the end points have type 1 or 4.

\medskip

For every $x\in \P^{1,\an}_{\bk}$, we can define an equivalence relation on the set $\P^{1,\an}_{\bk}\setminus\{x\}$ as follows: $y\sim z$ if the paths $(x, y]$ and
$(x,z]$ intersect. The tangent space $T_{x}$ at $x$ is the set of equivalences classes of $\P^{1,\an}_{\bk}\setminus\{x\}$ modulo $\sim$.  
See \cite[Section 2.5]{Jonsson}
for details. If $x$ is an end point (a point of type 1 or 4), then $|T_{x}|=1$. If  $x$ is of type 3, then $|T_{x}|=2$. If $x$ is of type 2, then $|T_{x}|\geq 3.$
For a direction $v\in T_x$, let $U(v)$ be the set of all $y\in \P^{1,\an}_{\bk}$ such that the path $(x,y]$ presents $v.$ Then $U(v)$ is an open subset such that $\partial{U(v)}=x$.

\subsection{Dynamics on $\P_{\bk}^{1,\an}$} Let $f:\P^1_{\bk}\to\P^1_{\bk}$ be an endomorphism of degree $d\geq 2.$
We still denote by $f$ the induced endomorphism on $\P^{1,\an}_{\bk}$. 

\subsubsection{The tangent map}
For $x,y\in \P_{\bk}^{1,\an}$, if $f(x)=y$, then $x,y$ have the same type. Moreover, $f$ induces a tangent map $T_xf: T_x\to T_y$ sending
$v\in T_x$ to the unique direction $w\in T_y$ such that for every $z\in U(v)$, $(y,f(z)]\cap U(w)\neq\emptyset.$
We note that, in general $f(U(v))$ may not equal to $U(w).$ If $f(U(v))=U(w)$, we say that $v$ is a \emph{good direction}. Otherwise, it is called a \emph{bad direction}. 
If $v$ a bad direction, then $f(U(v))=\P^{1,\an}_{\bk}$  \cite[Theorem 7.34]{Benedetto2019a}.

\medskip

We may naturally identify $T_{x_{G}}$ with $\P^1(\tilde{\bk})$ as follows: Consider the standard model $\P^1_{\bk^{\circ}}$ of $\P_{\bk}^{1,\an}$. There is a reduction map $\red: \P_{\bk}^{1,\an}\to \P^1_{\tilde{\bk}}.$ The preimage of the generic point of $\P^1_{\tilde{\bk}}$ is the Gauss point $x_G$ and for every $y\in \P^1(\tilde{\bk})$, there is a unique $v_y\in T_{x_{G}}$ such that $U(v_y)=\red^{-1}(y).$ The map $\P^1(\tilde{\bk})\to T_{x_{G}}$ sending $y$ to $v_y$ is bijective. Let $h$ be any endomorphism of $\P^1_{\bk}$ such that $h(x_G)=x_G$, it extends to a rational self-map $h_{\bk^{\circ}}$ of $\P^1_{\bk^{\circ}}$. We denote by $\tilde{h}: \P^1_{\tilde{\bk}}\to \P^1_{\tilde{\bk}}$ the restriction of $h$ to the special fiber of $\P^1_{\bk^{\circ}}$ and call it the reduction of $h.$ Then $T_{x_G}h: T_{x_G}=\P^1(\tilde{\bk}) \to T_{x_G}$ is induced by $\tilde{h}$. We define $\deg T_{x_G}h$ to be the degree of $\tilde{h}.$
We note that $\deg \tilde{h}\leq \deg h$. The equality holds if and only if $h_{\bk^{\circ}}$ is an endomorphism. In this case, we say that $h$ has \emph{explicit good reduction}.

\medskip

More generally, for every $x,y\in \PGL_2(\bk)\cdot x_{G}$ with $f(x)=y$, we may define $$\deg T_{x}f:=\deg T_{x_G}(h^{-1}fg)=\deg \widetilde{h^{-1}fg},$$ where $h,g\in \PGL_2(\bk)$ with $g(x_G)=x$ and $h(x_G)=y.$ Then $1\leq \deg T_{x_G}f\leq \deg f$ and $\deg T_{x_G}f$ does not depend on the choices of $g,h.$
\rem\label{remalgcloseorbittypetwo}Assume that $\bk$ is algebraically closed. By Lemma \ref{lemgaussorbit}, the set of type 2 points in $\P^{1,\an}_{\bk}$ is exactly $\PGL_2(\bk)\cdot x_G.$
\endrem

\subsubsection{Periodic points}
Assume that $\bk$ is algebraically closed.
For $n\geq 1$, a $n$-periodic point of $f$ is a point $x\in \P^{1,\an}_{\bk}$ such that $f^n(x)=x.$ 
They can be divided into three types: attracting, indifferent and repelling.
A type 1 periodic point $x\in \P^{1}(\bk)$ of period $n\geq 1$ is called \emph{attracting} if $|d(f^n)(x)|<1$; \emph{indifferent} if $|d(f^n)(x)|=1$; and  \emph{repelling} if $|d(f^n)(x)|>1.$
A $n$-periodic point $x\in \P_{\bk}^{1,\an}$ of type 2 is called \emph{indifferent} if $\deg T_{x}f=1$; \emph{repelling} if $\deg T_{x}f\geq 2.$
Every $n$-periodic point $x\in \P_{\bk}^{1,\an}$ of type 3 or 4 are  \emph{indifferent}  \cite[Lemma 5.3, 5.4]{Rivera-Letelier2003}.

\subsubsection{Fatou and Julia sets}Assume that $\bk$ is algebraically closed.

\medskip

The \emph{Julia set} of $f$ is the set $\sJ(f)$ of points $z \in \P^{1,\an}_{\bk}$ with the following property: for every neighborhood $U$ of $z$, the union of iterates $\bigcup _{n\geq 0} f^n(U)$ omits only finitely many points of $\P^{1,\an}_{\bk}$. Its complement $\sF(f):=\P^{1,\an}_{\bk}\setminus \sJ(f)$ is the \emph{Fatou set} of $f$.

\medskip

We list some basic properties of the Julia and Fatou sets of $f$.
\begin{pro}\cite[Chapter 8 and Section 12.2]{Benedetto2019a}\label{probasicfj}
\begin{points}
\item The Fatou set $\sF(f)$ is open and the Julia set $\sJ(f)$ is closed.

\item All attracting periodic points of $f$ are contained in $\sF(f)$. 

\item All repelling periodic points of $f$ are contained in $\sJ(f)$.

\item We have $\sJ(f)=f(\sJ(f))=f^{-1}(\sJ(f))$ and  $\sF(f)=f(\sF(f))=f^{-1}(\sF(f))$.

\item Both  $\sJ(f)$ and $\sF(f)$ are nonempty.

\item For every  $z\in \sJ(f)$, $\cup_{n\geq 0}f^{-n}(z)$ is dense in $\sJ(f)$.

\item Repelling periodic points are dense in  $\sJ(f)$.
\end{points}
\end{pro}
\subsubsection{Good reduction}
We say $f$ has \emph{good reduction} if after some coordinate change $h \in \PGL_2(\bk)$, the map $h^{-1} \circ f \circ h $ has explicit good reduction.
\begin{thm}\cite[Theorem E]{Favre2010}\label{thmgoodredjulia}
The endomorphism $f$ has explicit good reduction if and only if $\sJ(f)=x_G$.
Moreover, if $\bk$ is algebraically closed, 
$f$ has good reduction if and only if $\sJ(f)$ is a single point.
\end{thm}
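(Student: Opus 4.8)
I will prove both equivalences directly; for the first one I may assume $\bk$ algebraically closed, since explicit good reduction is read off the standard model of $\P^{1,\an}_{\bk}$ (which base changes compatibly along $\bk\hookrightarrow\widehat{\overline{\bk}}$) and $\pi^{-1}(x_G)=x_G$, so neither side of ``$\sJ(f)=\{x_G\}$'' is affected. The ingredients from the excerpt are: $\sJ(f)$ and $\sF(f)$ are nonempty and totally $f$-invariant (Proposition \ref{probasicfj}); the reduction map identifies $T_{x_G}$ with $\P^1(\tilde\bk)$, with $\deg T_{x_G}f=\deg\tilde f\le\deg f$ and ``$\deg\tilde f=\deg f$'' being by definition explicit good reduction; a bad direction $v\in T_{x_G}$ satisfies $f(U(v))=\P^{1,\an}_{\bk}$; and repelling periodic points are dense in $\sJ(f)$ while periodic points of type $3$ and $4$ are indifferent. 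I also use two standard facts on local degrees not recalled in the excerpt (e.g.\ Baker--Rumely): at a type $2$ point $\xi$ with $f(\xi)=\eta$ one has $\deg_\xi f=\deg T_\xi f$, and $\sum_{\xi\in f^{-1}(\eta)}\deg_\xi f=\deg f$.

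\textbf{First equivalence.} If $f$ has explicit good reduction, then every $f^n$ does, so $\red\circ f^n=\tilde f^{\,n}\circ\red$. For $x\neq x_G$ the point $\red(x)$ is a closed point of $\P^1_{\tilde\bk}$ and $U:=\red^{-1}(\red(x))$ is an open neighbourhood of $x$ with $f^n(U)\subseteq\red^{-1}(\tilde f^{\,n}(\red(x)))$; hence $\bigcup_n f^n(U)\subseteq\red^{-1}(O)$, where $O$ is the forward $\tilde f$-orbit of $\red(x)$. Since $\tilde f$ and $\red(x)$ are defined over a finitely generated, hence proper, subfield $k_1\subsetneq\tilde\bk$, one has $O\subseteq\P^1(k_1)$, and a proper subfield of an algebraically closed field has infinite complement, so $\P^1(\tilde\bk)\setminus O$ is infinite; thus $\bigcup_n f^n(U)$ omits infinitely many points and $x\in\sF(f)$. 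As $\sJ(f)\neq\emptyset$, this forces $\sJ(f)=\{x_G\}$. Conversely, if $\sJ(f)=\{x_G\}$, total invariance gives $f(x_G)=x_G$ and $f^{-1}(x_G)=\{x_G\}$. The connected components of $\sF(f)=\P^{1,\an}_{\bk}\setminus\{x_G\}$ are the sets $U(v)$, $v\in T_{x_G}$. No $v$ is bad: a bad direction would give $x_G\in f(U(v))=\P^{1,\an}_{\bk}$, hence a preimage of $x_G$ inside $U(v)$, contradicting $f^{-1}(x_G)=\{x_G\}$. So $f(U(v))=U(T_{x_G}f(v))$ for every $v$, and surjectivity of $f$ forces $T_{x_G}f\colon\P^1(\tilde\bk)\to\P^1(\tilde\bk)$ to be onto, so $\tilde f$ is non-constant. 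Finally, $\sum_{\xi\in f^{-1}(x_G)}\deg_\xi f=\deg f$ together with $f^{-1}(x_G)=\{x_G\}$ gives $\deg\tilde f=\deg T_{x_G}f=\deg_{x_G}f=\deg f$, i.e.\ $f$ has explicit good reduction.

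\textbf{Second equivalence ($\bk$ algebraically closed).} If $f=hgh^{-1}$ with $g$ of explicit good reduction, then $\sJ(f)=h(\sJ(g))=\{h(x_G)\}$ is a single point. Conversely let $\sJ(f)=\{z_0\}$. Since repelling periodic points are dense in $\sJ(f)$, $z_0$ is a repelling periodic point, hence of type $1$ or $2$ (periodic points of type $3$ and $4$ are indifferent). Also $f^{-1}(z_0)\subseteq\sJ(f)=\{z_0\}$ and $f$ is onto, so $z_0$ is a totally invariant fixed point. If $z_0$ had type $1$, conjugating it to $\infty$ would make $f$ a polynomial of degree $d\ge2$, for which $\infty$ is a superattracting fixed point (a neighbourhood of $\infty$ is mapped into itself with iterates converging to $\infty$), hence $\infty\in\sF(f)$ --- a contradiction. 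So $z_0$ has type $2$; by Remark \ref{remalgcloseorbittypetwo} there is $h\in\PGL_2(\bk)$ with $h(x_G)=z_0$, and then $\sJ(h^{-1}fh)=h^{-1}(\{z_0\})=\{x_G\}$, so by the first equivalence $h^{-1}fh$ has explicit good reduction and $f$ has good reduction.

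\textbf{Main obstacle.} The only substantive point is the second half of the first equivalence: ruling out ``intermediate'' reductions with $1\le\deg\tilde f<\deg f$. This is exactly where conservation of degree, $\sum_{\xi\in f^{-1}(\eta)}\deg_\xi f=\deg f$, together with $\deg_\xi f=\deg T_\xi f$ at type $2$ points, is indispensable; everything else is the tree topology of $\P^{1,\an}_{\bk}$ and the formal properties of $\sF(f)$ and $\sJ(f)$ recalled in Proposition \ref{probasicfj}.
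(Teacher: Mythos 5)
The paper does not prove this statement at all: it is quoted verbatim from Favre--Rivera-Letelier (\cite[Theorem E]{Favre2010}), so there is no internal proof to compare against. Your argument is a self-contained alternative, and after checking it I find it correct. The forward implication (explicit good reduction $\Rightarrow \sJ(f)=\{x_G\}$) via $\red\circ f^n=\tilde f^{\,n}\circ\red$ and the observation that a forward $\tilde f$-orbit lies in $\P^1(k_1)$ for a finitely generated, hence proper, subfield $k_1\subsetneq\tilde\bk$ is sound; the converse correctly combines total invariance of $\sJ(f)$, the fact that a bad direction $v$ has $f(U(v))=\P^{1,\an}_{\bk}$, surjectivity of $T_{x_G}f$, and then the two local-degree facts ($\deg_\xi f=\deg T_\xi f$ at type 2 points and $\sum_{\xi\in f^{-1}(\eta)}\deg_\xi f=\deg f$) to force $\deg\tilde f=\deg f$; and the second equivalence correctly rules out a type 1 (polynomial/superattracting) or type 3/4 (indifferent) totally invariant Julia point and invokes Remark \ref{remalgcloseorbittypetwo} and Lemma \ref{lemgaussorbit} to move $z_0$ to $x_G$. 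You are candid that the two local-degree identities are imported from Baker--Rumely/Rivera-Letelier; since the paper itself outsources the whole theorem, that is acceptable, and these are exactly the nontrivial inputs one should expect (your ``main obstacle'' paragraph identifies this correctly). What your elementary route buys, compared with the source, is independence from the ergodic-theoretic framework in which Favre--Rivera-Letelier originally establish Theorem E; the price is the reliance on the degree-conservation formalism on $\P^{1,\an}_{\bk}$.

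One soft spot worth tightening: the opening reduction to $\bk$ algebraically closed. That explicit good reduction is insensitive to the extension $\bk\hookrightarrow\widehat{\overline{\bk}}$ is clear (the unit-resultant criterion is preserved), but the claim that the condition $\sJ(f)=\{x_G\}$ is unaffected implicitly uses the compatibility $\pi^{-1}(\sJ(f_{\bk}))=\sJ(f_{\widehat{\overline{\bk}}})$, which with the ``omits only finitely many points'' definition is itself a (standard but unproved) lemma; note also that this paper only defines $\sJ(f)$ when $\bk$ is algebraically closed, and only ever applies the theorem to $f_{\LL}$ with $\LL$ algebraically closed, so for the purposes of this paper you could simply state the first equivalence under that hypothesis and avoid the base-change step altogether. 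This is a presentational gap, not a mathematical one.
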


\rem\label{remjuliaonetypet}Assume that $\bk$ is algebraically closed. If $\sJ(f)$ is a single point, then by Theorem \ref{thmgoodredjulia} and (vii) of Proposition \ref{probasicfj}, it is a type 2 repelling point. 
\endrem


\section{Rescaling limits of holomorphic families}\label{sectionrescaling}
\subsection{Holomorphic families}

Recall that $\Psi: \text{Rat}_d(\C)\to \sM_d(\C)$ is the quotient morphism, where $\sM_d(\C):=\text{Rat}_d(\C)/\PGL_2(\C)$ is the moduli space.

Let $\La$ be a complex manifold, we denote by $\sO^{\an}(\La)$  the ring of holomorphic functions on $\La.$ 
 Moreover, if $\La$ is complex algebraic variety, we denote by $\sO(\La)$ the ring of algebraic functions on $\La.$

 A \emph{holomorphic (resp. meromorphic) family} on $\Lambda$ is an endomorphism (resp. meromorphic self-map) $f_{\La}$ on $\P^1\times \La$ such that $\pi_{\La}\circ f_{\La}=\pi_{\La}$, where $\pi_{\La}:\P^1(\C)\times \La\to \La$ is the projection to $\La.$   
 More concretely, one may write $f_{\La}([x:y],t)=([P_t(x,y):Q_t(x,y)], t)$  where $P_t(x,y), Q_t(x,y)$ are homogenous polynomials of same degree $d$ in $\sO^{\an}(\La)[x,y]$ without common divisor. We say that $f_{\La}$ is of degree $d$.
 Then $f_{\La}$ is holomorphic if there is no $(t,x,y)\in \La\times \C^*\times\C^*$ such that $P_t(x,y)=Q_t(x,y)=0.$

For $t\in \La$, we denote by $f_t$ the restriction of $f_{\La}$ to the fiber above $t.$ We denote by $I(f_{\La})$ the indeterminacy locus of $f_{\La}$ and $B(f_{\La}):=\pi_{\La}(I(f_{\La})).$ Then $I(f_{\La})$ and $B(f_{\La})$ are proper closed analytic subspace of $\P^1\times \La$ and $\La$ respectively.  For every $t\in \La\setminus B(f_{\La})$, we have $\deg f_t=d.$ When $\La$ is connected, this is equivalent to say that  $\deg f_t=d$ for one $t\in \La\setminus B(f_{\La}).$ A meromorphic family is holomorphic if and only if $B(f_{\La})=\emptyset$. 
  
 So give a degree $d$ holomorphic family $f_{\La}$ on $\La$ is equivalent to give a holomorphic morphism $t\mapsto f_t=P_t/Q_t$ from $\La$ to $\text{Rad}_d(\C).$
We say that $f_{\La}$ is algebraic if $\La$ is a complex algebraic variety and $f_{\La}: \P^1\times \La\to \P^1\times \La$ is algebraic i.e. $P_t,Q_t \in \sO(\La)[x,y].$
In other words, it means that the induced morphism $\La\to \text{Rad}_d(\C)$ is algebraic.

\medskip 

For a degree $d$ holomorphic family $f_{\La}$ on $\La$, let $\Psi_{\La}: \La\to \sM_d$ be the holomorphic morphism sending $t\in \La$ to the class of $f_t$ in $\sM_d(\C).$
 We say that $f_{\La}$ is \emph{isotrivial} if $\Psi_{\La}: \La\to \sM_d$ is locally constant. More generally for degree $d$ meromorphic family $f_{\La}$, we say that $f_{\La}$ is \emph{isotrivial} if 
 $f|_{\La\setminus B(f_{\La})}$ is isotrivial.
 \medskip

 \subsection{Potentially good reduction}   
 Assume that $\La$ is a Riemann surface and $f_{\La}$ is a meromorphic family of degree $d.$

 For $b\in \La$, we say that $f_{\La}$ has \emph{potentially good reduction} at $b$ if $\Phi_{\La\setminus (B(f_{\La})\cup\{b\})}: \La\to \sM_d$ extends to a holomorphic morphism 
 on $(\La\setminus B(f_{\La}))\cup \{b\}.$  In particular, $f_{\La}$ has potentially good reduction at every $b\in \La\setminus B(f_{\La}).$

 \begin{lem}\label{lemeverygoodisotrivial}Assume that $\La$ is an irreducible smooth projective curve. Let $f_{\La}$ be a meromorphic family of degree $d.$ 
 If $f_{\La}$ has potentially good reduction at every point in $\La$, then $f_{\La}$ is isotrivial. 
 \end{lem}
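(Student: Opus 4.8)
The plan is to reduce to the non-archimedean picture at each point of $\La$ via a specialization argument, and then use Theorem \ref{thmgoodredjulia} together with the fact that the moduli map $\Psi_\La$ is algebraic (hence rational, since $\La$ is a smooth projective curve) on $\La\setminus B(f_\La)$. First I would note that $\Psi_\La$ is a holomorphic map from the punctured curve $\La\setminus B(f_\La)$ to the affine variety $\sM_d(\C)$, and by hypothesis of potentially good reduction at every point of $\La$ it extends holomorphically across every point of $B(f_\La)$ as well; thus $\Psi_\La$ extends to a holomorphic map $\La\to\sM_d(\C)$ defined on the whole \emph{compact} Riemann surface $\La$. Since $\sM_d(\C)$ is an affine variety, any holomorphic map from a compact Riemann surface into it has image a single point — a holomorphic function on a compact Riemann surface is constant, and $\sM_d(\C)$ embeds into some $\C^N$. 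Hence $\Psi_\La$ is constant, i.e. $f_\La$ is isotrivial.

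The subtlety I expect to be the main obstacle is making precise what ``potentially good reduction at $b$'' buys us and checking that the extension of $\Psi_\La$ across points of $B(f_\La)$ (the indeterminacy locus of the family) is legitimate. The definition given in the excerpt already states that potentially good reduction at $b$ means $\Psi_{\La\setminus(B(f_\La)\cup\{b\})}$ extends holomorphically to $(\La\setminus B(f_\La))\cup\{b\}$; so the hypothesis literally says $\Psi_{\La\setminus B(f_\La)}$ extends holomorphically across each $b\in B(f_\La)$ individually, and since these are isolated points (as $\La$ is a curve and $B(f_\La)$ is a proper closed analytic subset, hence finite), the extensions glue to a single holomorphic map on all of $\La$. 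The only genuine content is then: a holomorphic map from a compact Riemann surface to an affine complex variety is constant. This is because $\sM_d(\C)$ admits a closed embedding into $\C^N$ for some $N$ (indeed $\sM_d(\C)=\Spec(\sO(\text{Rat}_d(\C)))^{\PGL_2(\C)}$ is affine of finite type), and each coordinate function pulls back to a holomorphic function on the compact $\La$, hence to a constant.

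To summarize the steps in order: (1) observe $B(f_\La)$ is a finite subset of $\La$; (2) invoke the hypothesis to extend $\Psi_{\La\setminus B(f_\La)}\colon\La\setminus B(f_\La)\to\sM_d(\C)$ holomorphically across each point of $B(f_\La)$, obtaining a holomorphic map $\widetilde{\Psi}\colon\La\to\sM_d(\C)$; (3) compose with a closed embedding $\sM_d(\C)\hookrightarrow\C^N$ and conclude each component of $\widetilde\Psi$ is a holomorphic function on the compact Riemann surface $\La$, hence constant; (4) therefore $\widetilde\Psi$ is constant, so $\Psi_\La$ is locally constant on $\La\setminus B(f_\La)$, which is precisely the definition of $f_\La$ being isotrivial. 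I do not anticipate needing the Berkovich machinery of Section \ref{sectionberko} for this particular lemma — that enters the \emph{converse-type} analysis elsewhere — though one could alternatively phrase step (2) by saying potentially good reduction at $b$ means the associated non-archimedean map over the completed local field at $b$ has good reduction, i.e. Julia set a single point by Theorem \ref{thmgoodredjulia}, which forces the moduli map to extend; I would keep the complex-analytic formulation since it matches the definition as stated.
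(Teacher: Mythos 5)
Your proof is correct and follows essentially the same route as the paper: the hypothesis directly furnishes a holomorphic extension of the moduli map $\Psi_{\La\setminus B(f_\La)}$ to all of the projective curve $\La$, and since $\sM_d(\C)$ is affine, any holomorphic map from a compact Riemann surface into it is constant, which is exactly the definition of isotriviality for the meromorphic family. Your extra details (finiteness of $B(f_\La)$, the closed embedding $\sM_d(\C)\hookrightarrow \C^N$) just make explicit what the paper leaves implicit.
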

 
 \proof
 Since  $f_{\La}$ has potentially good reduction at every point in $B(f_{\La})$, $\Psi_{\La\setminus B(f_{\La})}: \La\setminus B(f_{\La})\to \sM_d$ extends to a holomorphic morphism 
 $\Psi_{\La}:\La\to \sM_d.$ 
Recall that $\sM_d(\C)=\Spec (\sO(\text{Rat}_d(\C)))^{\PGL_2(\C)}$ is affine \cite[Theorem 4.36(c)]{Silverman2007}.
This follows from the fact that $\text{Rat}_d(\C)$ is affine and the geometric invariant theory \cite[Chapter 1]{Mumford1982}. 
 Since $\La$ is projective, $\Psi_{\La}$ is a constant map. This concludes the proof.
 \endproof

Having potentially good reduction is a local property at $b$, i.e.  for every open neighborhood $U$ of $b$ in $\La$, $f_{\La}$ has potentially good reduction at $b$ if and only if $f_{U}:=f_{\La}|_{\P^1(\C)\times U}$ has potentially good reduction at $b$.  Note that there is an open neighborhood $U$  of $b$ which is isomorphic to a disk $\D$ such that $f_{U\setminus \{b\}}$ is holomorphic.  
So we can focus on the case that $f_{\D}$ is a meromorphic family which is holomorphic on $\D^*.$ 
We will give another characterization of potentially good reduction via non-archimedean dynamics.

\subsection{Holomorphic family on puncture disk}\label{subsectionholpunc}
Let $f_{\D}$ be a a meromorphic family of degree $d\geq 2$ which is holomorphic on $\D^*.$ Let $t$ be the standard coordinate on $\D$.
We can relate $f_{\D}$ to some non-archimedean dynamics on the field of Laurent's series $\C((t)).$

Recall that on  $\C((t))$, there is a $t$-adic norm $|\cdot|$: Given an element $z=\sum_{n\geq n_0} a_n t^n\neq 0$, where $n_0\in\Z$, $a_n\in \C$ and $a_{n_0}\neq 0$, the $t$-adic norm of $z$ is $|z|:=e^{-n_0}$. This norm is non-archimedean and $\C((t))$ is complete for $|\cdot|$.
 Set $\LL:=\widehat{\overline{\C((t))}}.$ 

\medskip

\par Write $$f([x:y],t)=([P_t(x,y):Q_t(x,y)], t)$$ 
where $P_t(x,y), Q_t(x,y)$ are homogenous polynomials of degree $d$ in $\sO^{\an}(\D)[1/t][x,y]$ without common divisors. 
 Since $\sO^{\an}(\D)[1/t]\subseteq \C((t))$, $f_{\D}$ defines an endomorphism $f_{\C((t))}: [x,y]\mapsto [P_t(x,y):Q_t(x,y)]$ on $\P^1_{\C((t))}$ of degree $d.$ 
  Set $f_{\LL}:=f_{\C((t))}\hotimes_{\C((t))}\LL:\P^1_\LL \to \P^1_\LL$. 
  
  \medskip

 Recall that 
 \begin{equation}\label{equationbci}\overline{\C((t))}=\cup_{n\geq 1}\C((t^{1/n})).
 \end{equation}
To get endomorphisms over $\C((t^{1/n}))$, we introduce some base changes of $f_{\D}$ as follows.
Consider the morphism $\phi_n: U_n:=\D\to \D$ sending $t$ to $t^n.$ There is $u_n\in \sO^{\an}(U_n)$ such that $u_n^n=\phi^*t.$ Then $u_n$ is a coordinate on $U_n$ and we may identify 
 $\C[u_n]$ with $\C[t^{1/n}]$ (hence we may identify  $\C((u_n))$ with $\C((t^{1/n}))$). Let $o\in U_n$ be the point defined by $u_n=0.$ 
 The endomorphism  on $\P^{1,\an}_{\C((u_n))}$ induced by $f_{U_n}$ is $f_{\C((u_n))}=f_{\C((t))}\hotimes_{\C((t))}\C((t^{1/n})).$

 \begin{lem}\label{lempotgoodr}If $f_{\LL}$ has good reduction then $f_{\D}$ has potentially good reduction at $0$.
 \end{lem}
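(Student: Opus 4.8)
The plan is to translate the hypothesis "$f_\LL$ has good reduction" into a statement about the moduli morphism $\Psi_\D$ near $0$, using the dictionary between the $t$-adic valuation on $\C((t))$ and the analytic behavior of $f_\D$ as $t\to 0$. First I would use (\ref{equationbci}) together with the fact that good reduction is insensitive to algebraic base change: since $f_\LL = f_{\C((t))}\hotimes \LL$ has good reduction, and $\overline{\C((t))} = \cup_n \C((t^{1/n}))$, there is some $n\geq 1$ such that $f_{\C((u_n))} = f_{\C((t))}\hotimes \C((t^{1/n}))$ already has good reduction over $\C((t^{1/n})) = \C((u_n))$. Concretely, the coordinate change $h\in\PGL_2(\LL)$ witnessing good reduction can be taken in $\PGL_2(\C((t^{1/n})))$ for $n$ large enough, because its entries are algebraic over $\C((t))$ and hence lie in some finite extension. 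By Theorem~\ref{thmgoodredjulia}, good reduction of $f_{\C((u_n))}$ is equivalent to $\sJ(f_{\C((u_n))})$ being a single point; and passing to the completed algebraic closure this is the Gauss point after the coordinate change $h$.

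Next I would interpret this algebraic good reduction geometrically on the base $U_n = \D$ with coordinate $u_n$. Writing $f_{\C((u_n))}$ in the coordinate provided by $h\in\PGL_2(\C((u_n)))$, after clearing denominators the homogeneous polynomials $P, Q$ defining $h^{-1}\circ f_{\C((u_n))}\circ h$ have coefficients in $\C((u_n))^\circ = \C[[u_n]]$ with reduction mod $u_n$ still of degree $d$ (no common zero of the reductions $\tilde P, \tilde Q$). Since $\C[[u_n]]$-coefficients that are $u_n$-adically bounded are exactly holomorphic functions near $o\in U_n$ (Laurent series with no pole), this says precisely that $h$, viewed as a meromorphic family of Möbius transformations on a neighborhood of $o$ in $U_n$, conjugates $f_{U_n}$ into a holomorphic family $g_{U_n}$ which extends holomorphically across $o$ with $\deg g_o = d$. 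Hence $\Psi_{U_n}$ extends holomorphically across $o$, i.e. $f_{U_n}$ has potentially good reduction at $o$ in the sense of the definition preceding Lemma~\ref{lemeverygoodisotrivial}.

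Finally I would descend from $U_n$ to $\D$ via the ramified cover $\phi_n\colon U_n\to\D$, $t = u_n^n$. Potentially good reduction is a statement about $\Psi$; since $\Psi_{U_n} = \Psi_\D\circ\phi_n$ on $U_n\setminus(B(f_{U_n})\cup\{o\})$ and $\phi_n$ is a finite surjective holomorphic map with $\phi_n(o)=0$, the fact that $\Psi_{U_n}$ extends holomorphically across $o$ forces $\Psi_\D$ to extend continuously — hence holomorphically, by Riemann's removable singularity theorem applied to a chart of the affine variety $\sM_d$ — across $0$. (Alternatively one argues directly: $\Psi_\D$ is bounded near $0$ because $\Psi_{U_n}$ is, and a bounded holomorphic map into an affine variety extends across a puncture.) This gives potentially good reduction of $f_\D$ at $0$.

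The step I expect to be the main obstacle is the second paragraph: carefully justifying that the algebraic notion "the reduction of $h^{-1} f h$ mod $u_n$ has degree $d$" is equivalent to "$h$ conjugates the analytic family $f_{U_n}$ to one that is holomorphic and of full degree at $o$" — i.e. matching up the valuation ring $\C((u_n))^\circ$ with germs of holomorphic functions, and checking that the coordinate change $h$, a priori only defined over the field $\C((u_n))$, can be realized as an honest meromorphic (indeed holomorphic) family of Möbius maps on a punctured-then-filled neighborhood of $o$. One must also make sure the base change by $\phi_n$ and the passage to $\overline{\C((t))}$ do not lose or create good reduction, which is where Theorem~\ref{thmgoodredjulia} (in both its "$\bk$ algebraically closed" and "explicit good reduction" forms) and the openness of $\PGL_2(\bk^\circ)$ in $\PGL_2(\bk)$ (Remark~\ref{remdensesubf}) are used.
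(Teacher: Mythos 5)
Your overall route is the same as the paper's (reduce $h$ to $\PGL_2(\C((t^{1/n})))$ via (\ref{equationbci}) and Remark \ref{remdensesubf}, pass to the base change $f_{U_n}$, translate explicit good reduction into holomorphic extendability of a conjugated family across $o$, then descend along $\phi_n$ using that $\sM_d$ is affine and $\Psi_{U_n}=\Psi_{\D^*}\circ\phi_n$ to extend $\Psi_{\D^*}$ across $0$). But there is a genuine gap at exactly the step you flag as the "main obstacle," and your proposed resolution of it is incorrect. You assert that ``$\C[[u_n]]$-coefficients that are $u_n$-adically bounded are exactly holomorphic functions near $o$.'' This is false: the valuation ring $\C((u_n))^{\circ}=\C[[u_n]]$ consists of \emph{formal} power series, which need not converge on any neighborhood of $o$. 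The coordinate change $h$ produced by the non-archimedean argument a priori has entries only in $\C((t^{1/n}))$, so neither $h$ nor the conjugated map $h^{-1}\circ f_{\C((u_n))}\circ h$ is known to come from an actual meromorphic family of M\"obius maps, and membership of its coefficients in $\C[[u_n]]$ does not by itself give holomorphy at $o$. Without an honest analytic family you cannot say anything about the analytic moduli map $\Psi$.

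The paper's fix, which is the one missing ingredient in your write-up, is a \emph{second} application of Remark \ref{remdensesubf}: since $\C(u_n)$ is dense in $\C((u_n))$ and the stabilizer $\PGL_2(\C((u_n))^{\circ})$ of $x_G$ is open, one may replace $h$ by an element of $\PGL_2(\C(u_n))$, i.e.\ with rational-function entries, without changing $h(x_G)$. Such an $h$ tautologically defines a meromorphic family of degree $1$, holomorphic together with its inverse on $V\setminus\{o\}$ for a small disk $V\ni o$. Now the conjugated family $f_V:=h_V^{-1}\circ f_{U_n}|_V\circ h_V$ has coefficients that are \emph{convergent} Laurent series (elements of $\sO^{\an}(V)[1/u_n]$), and for convergent Laurent series lying in the valuation ring is equivalent to having no pole at $o$; so explicit good reduction of the induced map over $\C((u_n))$ does imply that $f_V$ is a holomorphic family of degree $d$ on all of $\P^1\times V$. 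From there your descent step (boundedness of $\Psi_{\D^*}$ near $0$ via $\Psi_{V\setminus\{o\}}=\Psi_{\D^*}\circ\phi_n$, then Riemann's removable singularity theorem on the affine variety $\sM_d$) goes through as you describe and matches the paper.
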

 
  \begin{rem}
  The inverse statement of Lemma \ref{lempotgoodr} is also true. However, we do not need that direction in this paper. So we leave it to readers.
 \end{rem}

 \proof[Proof of Lemma \ref{lempotgoodr}] 
 By Theorem \ref{thmgoodredjulia}, there is $h\in \PGL_2(\LL)$ such that $\sJ(f_{\LL})=\{h(x_G)\}.$
 Then  $h^{-1}\circ f_{\LL}\circ h$ has explicit good reduction. 
 By (\ref{equationbci}) and Remark \ref{remdensesubf}, we may assume that $h\in \PGL_2(\C((t^{1/n})))$ for some $n\geq 1.$
 Since $\C(u_n)$ is dense in $\C((u_n))=\C((t^{1/n}))$, by Remark \ref{remdensesubf} again, we may assume that $h\in \PGL_2(\C(u_n)).$ There is an open neighborhood $V$ of $o$ such that 
 $h$ and $h^{-1}$ are holomorphic on $V\setminus \{o\}$ i.e. they define holomorphic families $h_{V\setminus \{o\}}$ and $h^{-1}_{V\setminus \{o\}}.$ We may assume further that $V\simeq \D.$
 Consider the family $f_V:=h_V^{-1}\circ f_{U_n}|_V\circ h_V$. 
 Observe that 
 \begin{equation}\label{equationpsiv}\Psi_{\D^*}\circ \phi|_{V\setminus \{o\}}=\Psi_{V\setminus\{o\}}.
 \end{equation}
 Then $f_V$ induces an endomorphism $f_{\C((u))}=f_{\C((t))}\hotimes_{\C((t))}\C((u))$
 on $\P^{1,\an}_{\C((u))}$, which has good reduction. So $f_V$ is an endomorphism on $\P^1\times V$. So $\Psi_{V\setminus\{o\}}$ extends to a holomorphic morphism $\Psi_V: V\to \sM_d.$ By (\ref{equationpsiv}), $\Psi_{\D^*}$ is bounded in some neighborhood of $o.$ So $\Psi_{\D^*}$ extends to a holomorphic morphism on $\D$, which means that $f_{\D}$ has potentially good reduction at $0$.
\endproof


%


The following definition was introduced by Kiwi.
\begin{defi}\cite{kiwi2015rescaling}\label{defiresclim}
	Let $f_{\D}$ be a meromorphic family of degree $d\geq 2$ which is holomorphic on $\D^*.$ We say an endomorphism $g$ is a rescaling limit 
	of $f_{\D}$ (or $f_{\D^*}$) (via $(q,M_{\D})$) if there is an integer $q\geq 1$, a finite set $S\subset \P^1(\C)$ and a meromorphic family $M_{\D}$ of degree 1, such that $M_{\D}$ and $M^{-1}_{\D}$ are holomorphic on $\D^*$ and
	\begin{equation*}
		M_t^{-1}\circ f_t^q\circ M_t(z)\to g(z)
	\end{equation*}
when $t\to 0$ , uniformly on compact subsets of $\P^1(\C)\setminus S$.
	\end{defi}

The following result was proved by Kiwi.
\begin{pro}\cite[Proposition 3.4]{kiwi2015rescaling}\label{kiwirescaling}
Let $f_{\D}$ be a meromorphic family of degree $d\geq 2$ which is holomorphic on $\D^*$.  Let  $M_{\D}$  be a meromorphic family of degree $1$ , such that $M_{\D}$ and $M^{-1}_{\D}$ are holomorphic on $\D^*$. 
Then for all $q\geq 1$, the following are equivalent:

\begin{points}
\item There exist an endomorphism $g$ on $\P^1$  and a finite set $S\subset \P^1(\C)$  satisfy
	\begin{equation*}
	M_t^{-1}\circ f_t^q\circ M_t(z)\to g(z)
\end{equation*}
when $t\to 0$ , uniformly on compact subsets of $\P^1(\C)\setminus S$.

\item The point $x=M_{\LL}(x_G)$ is fixed by $f_{\LL}^q$ and $\widetilde{M_{\LL}^{-1}\circ f_{\LL}^q\circ M_{\LL}}=g.$
\end{points}

In the case (i) and (ii) hold,  $T_x f^q: T_x \to T_x $ can be identified with $g$ after identify $T_x $ to $T_{x_G}=\P^1(\C)$ via $T_{x_G}M_{\LL}: T_{x_G}\to T_{x}$. 
Under this identification, $S$ is a finite subset of $T_x$ which contains all  the bad directions of $T_x f^q$. 
\end{pro}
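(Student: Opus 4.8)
The plan is to represent $h_t:=M_t^{-1}\circ f_t^q\circ M_t$ by a single homogeneous pair of polynomials with coefficients in $\sO^{\an}(\D)[1/t]\subset\C((t))$, normalize it with respect to the $t$-adic Gauss norm, and then read off both conditions from its value at $t=0$. First I would note that, since $M_\D$ and $M_\D^{-1}$ are holomorphic on $\D^*$ and meromorphic on $\D$, composition produces a meromorphic family $h_\D$ of degree $N:=d^q$ holomorphic on $\D^*$, whose associated endomorphism over $\LL$ is $h_\LL=M_\LL^{-1}\circ f_\LL^q\circ M_\LL$; in particular $h_\LL(x_G)=x_G$ exactly when $x=M_\LL(x_G)$ is fixed by $f_\LL^q$. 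Write $h_t=[A_t(x,y):B_t(x,y)]$ with $A_t,B_t$ homogeneous of degree $N$ and without common factor over $\C((t))$, and multiply the pair by a suitable integer power of $t$ so that afterwards every coefficient lies in $\sO^{\an}(\D)$ (is holomorphic near $0$) and at least one is nonzero at $t=0$; this Gauss normalization leaves $h_t$ unchanged for $t\neq 0$. Set $A_0:=A_t|_{t=0}$, $B_0:=B_t|_{t=0}$, homogeneous of degree $N$ and not both zero; they may acquire a common factor, and this is the source of the finite exceptional set.

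The heart of the argument, and what I expect to be the main obstacle, is the reduction-theory dictionary over $\C((t))$, whose residue field is $\C$ (the residue field of $\LL$ being $\overline\C=\C$): for a Gauss-normalized representative one has $h_\LL(x_G)=x_G$ if and only if $[A_0:B_0]$ is a nonconstant rational self-map of $\P^1(\C)$, and in that case the reduction $\widetilde{h_\LL}$ equals $[A_0:B_0]$ with $\deg\widetilde{h_\LL}=N-\deg\gcd(A_0,B_0)$; moreover, under the identifications $T_{x_G}=\P^1(\tilde\LL)=\P^1(\C)$ recalled in Section \ref{sectionberko}, the tangent map $T_{x_G}h_\LL$ is $[A_0:B_0]$ and its bad directions are exactly the common zeros of $A_0$ and $B_0$ (at such a direction the corresponding ball is mapped onto all of $\P^{1,\an}_\LL$). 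Making the normalization precise, detecting $h_\LL(x_G)=x_G$ through nonconstancy of the reduced value at $t=0$, and matching the degree drop of the reduction with the bad directions are the delicate points; everything else is soft. Granting this, condition (ii) becomes the assertion that $[A_0:B_0]$ is nonconstant and equals $g$.

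With the dictionary in hand the two implications are routine. For (ii)$\Rightarrow$(i): the coefficients of $A_t,B_t$ are finitely many functions holomorphic near $0$, so $A_t\to A_0$ and $B_t\to B_0$ uniformly on the unit sphere of $\C^2$ as $t\to 0$; taking $S$ to be the common zero set of $A_0,B_0$ (finite because $g$ is nonconstant), on any compact $K\subset\P^1(\C)\setminus S$ the quantity $\max(|A_0|,|B_0|)$ is bounded below, so for $t$ small $[A_t:B_t]$ is defined on $K$ and converges uniformly to $[A_0:B_0]=g$, which is (i). For (i)$\Rightarrow$(ii): evaluating at a point outside $S$ and outside the common zero set of $A_0,B_0$, continuity at $t=0$ forces $g=[A_0:B_0]$ there, hence on a dense open set, hence as rational maps; since $g$ is an endomorphism it is nonconstant, so by the dictionary $h_\LL(x_G)=x_G$ and $\widetilde{h_\LL}=g$, \ie (ii).

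Finally, for the last assertion of the proposition, I would observe that $M_\LL\in\PGL_2(\C((t)))$ (so by Lemma \ref{lemgaussorbit} and Remark \ref{remdensesubf} the point $x$ is of type $2$ and $T_{x_G}M_\LL:T_{x_G}\to T_x$ is an isomorphism); conjugating, $T_xf^q=T_{x_G}M_\LL\circ T_{x_G}h_\LL\circ(T_{x_G}M_\LL)^{-1}$ is identified with $T_{x_G}h_\LL=g$ under $T_x\cong\P^1(\C)$, and the bad directions of $T_xf^q$ correspond to the common zeros of $A_0$ and $B_0$, which are contained in the set $S$ produced above. One caveat to keep in mind throughout is that Kiwi's notion of rescaling limit requires $g$ to be a genuine (nonconstant) rational map, which is exactly what rules out the degenerate case where $A_0$ and $B_0$ become proportional.
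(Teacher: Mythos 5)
The paper does not actually prove this proposition; it is quoted from Kiwi (\cite[Proposition 3.4]{kiwi2015rescaling}), so there is no internal argument to compare against. Your route — take the homogeneous pair representing $M_t^{-1}\circ f_t^q\circ M_t$ with coefficients in $\sO^{\an}(\D)[1/t]$, Gauss-normalize the power of $t$, and read (i) and (ii) off the reduced pair $[A_0:B_0]$ — is essentially the standard proof (and in substance Kiwi's own, via the dictionary between degenerating families and rational maps over $\C((t))\subset\LL$). The two implications are handled correctly, and the caveat that $g$ must be nonconstant for (i)$\Rightarrow$(ii) is exactly right. Two small points deserve explicit justification. First, you assert a coprime-over-$\C((t))$ representative with coefficients in $\sO^{\an}(\D)[1/t]$; this is true but should be said: the naive composition of the representatives of $M_\D^{-1}$, $f_\D^q$, $M_\D$ already has degree $d^q$, coefficients holomorphic on $\D$, and is coprime over $\C((t))$ because its specializations at every $t\neq 0$ are coprime (so its resultant, a holomorphic function of $t$, is not identically zero). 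If instead you first divided by a $\gcd$ over $\C((t))$ you could a priori lose convergence of the coefficients as $t\to 0$ in $\C$, which your (ii)$\Rightarrow$(i) argument needs. The non-archimedean facts you invoke (fixed Gauss point iff nonconstant reduction; $T_{x_G}$-map induced by the reduction; a direction is good iff the normalized reduced pair does not both vanish there) are indeed standard, but they are the entire content here, so in a complete write-up they need precise references or proofs.

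The one genuine shortfall concerns the last sentence of the proposition. You prove that one particular choice of $S$, namely the common zero locus of $(A_0,B_0)$, works in (i) and coincides with the bad directions; but the statement asserts that the finite set $S$ occurring in (i) — whatever it is — contains all bad directions. For that you must show that locally uniform convergence necessarily fails near a bad direction: if $z_v$ is a common zero of $(A_0,B_0)$ and $h_t\to g$ uniformly on a closed disk $\overline{D}$ around $z_v$ disjoint from $S$, one gets a contradiction because the positive surplus multiplicity at $v$ forces, for every $w$ outside the small set $g(\overline{D})$ and all small $t\neq 0$, solutions of $h_t(z)=w$ inside $D$ (equivalently, a Rouch\'e--Hurwitz count of zeros of $w_2A_t-w_1B_t$ in $D$ exceeds the count for $g$). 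This converse is not in your proposal. It is a modest addition, and it is worth noting that the direction the paper actually uses later (one may take $S$ equal to the bad directions, as in Theorem \ref{rescaling} and Remark \ref{remresgeo}) is precisely the direction you did prove.
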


\rem\label{remresgeo} 
One may rewrite Definition \ref{defiresclim} in the following more geometric way:
Let $h_{\D}$ be the meromorphic family $h_\D:=M_\D^{-1}\circ f^q_{\D}\circ M_\D$ on $\P^1(\C)\times \D$, then $h_0=g$.
Moreover,  $S$ can be any finite subset containing $S_0$ where $I(h_{\D})=S_0\times \{0\}\subseteq\P^1(\C)\times \D$.
\endrem

\begin{cor}\label{cortypetwomodel}Let $x\in \P_{\LL}^{1,\an}$ be a type 2  fixed point of $f_{\LL}.$ Assume that $T_xf_{\LL}$ is conjugate to some endomorphism $g: \P^1(\C)\to \P^1(\C).$ Then there is $n\geq 1$, such that $g$ is a rescaling limit of $f_{U_n}|_V$ where $f_{U_n}$ is the base change of $f_{\D}$ by the morphism $U_n:=\D\to \D$ sending $t$ to $t^n$ as in Section \ref{subsectionholpunc} and $V$ is an open neighborhood of $o\in U_{n}$ isomorphic to $\D.$
\end{cor}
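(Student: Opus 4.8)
The plan is to combine Lemma~\ref{lemgaussorbit}, Remark~\ref{remdensesubf} and the equality $\overline{\C((t))}=\cup_{n\geq 1}\C((t^{1/n}))$ from (\ref{equationbci}) in order to realize the abstract type~$2$ fixed point $x$ as $M_{\LL}(x_G)$ for a degree~$1$ family defined over a finite base change, and then invoke Kiwi's Proposition~\ref{kiwirescaling}. First I would observe that, since $x$ is a type~$2$ point of $\P^{1,\an}_{\LL}$ and $\LL=\widehat{\overline{\C((t))}}$ is algebraically closed, Remark~\ref{remalgcloseorbittypetwo} gives $x\in\PGL_2(\LL)\cdot x_G$, say $x=M_{\LL}(x_G)$ with $M_{\LL}\in\PGL_2(\LL)$. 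By (\ref{equationbci}) we have $\overline{\C((t))}=\cup_{n\geq 1}\C((t^{1/n}))$, so the (finitely many) coefficients of $M_{\LL}$ lie in some $\C((t^{1/n}))$; together with Remark~\ref{remdensesubf} (the stabilizer of $x_G$ is open, so $\PGL_2$ of a dense subfield already reaches the whole $\PGL_2(\LL)$-orbit of $x_G$), and the density of $\C(u_n)$ in $\C((u_n))=\C((t^{1/n}))$, I may assume $M_{\LL}\in\PGL_2(\C(u_n))$ for a suitable $n\geq 1$, where $u_n$ is the coordinate on $U_n:=\D$ with $u_n^n=\phi_n^*t$ as in Section~\ref{subsectionholpunc}.

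Next, such an element of $\PGL_2(\C(u_n))$ defines a degree~$1$ meromorphic family $M_{U_n}$ on $U_n$, and there is an open neighborhood $V\simeq\D$ of $o\in U_n$ on which both $M_{V}$ and $M_{V}^{-1}$ are holomorphic away from $o$; thus $M_V$ is a degree~$1$ meromorphic family on $V$ with $M_V,M_V^{-1}$ holomorphic on $V\setminus\{o\}$, as required in Definition~\ref{defiresclim} (after the harmless identification $V\simeq\D$, $o\mapsto 0$). The endomorphism $f_{U_n}|_V$ induces on $\P^{1,\an}_{\C((u_n))}$, hence on $\P^{1,\an}_{\LL}$, exactly $f_{\LL}$; in particular $M_{\LL}(x_G)=x$ is a fixed point of $f_{\LL}$ (which is part of the hypothesis) and $\widetilde{M_{\LL}^{-1}\circ f_{\LL}\circ M_{\LL}}$ is, by definition of the reduction, the tangent map $T_xf_{\LL}$ read on $T_{x_G}=\P^1(\C)$, which by hypothesis is (conjugate to) $g$. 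Applying Proposition~\ref{kiwirescaling} with $q=1$ to the family $f_{U_n}|_V$ and the degree~$1$ family $M_V$: condition~(ii) holds, so condition~(i) holds, i.e.\ there is a finite set $S\subset\P^1(\C)$ with $M_t^{-1}\circ f_{t}\circ M_t(z)\to g(z)$ uniformly on compacts of $\P^1(\C)\setminus S$ as $t\to o$. That is precisely the statement that $g$ is a rescaling limit of $f_{U_n}|_V$ via $(1,M_V)$, which is what Corollary~\ref{cortypetwomodel} asserts.

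The one point requiring a little care — and the place I expect the only genuine friction — is the passage from ``$M_{\LL}$ has coefficients in $\C((t^{1/n}))$'' to ``$M_{\LL}$ may be chosen with coefficients in $\C(u_n)$ (rational functions, not just Laurent series)''; this is exactly where Remark~\ref{remdensesubf} is used twice, once to descend from $\LL$ to a finite extension $\C((t^{1/n}))$ using the $\overline{\C((t))}=\cup_n\C((t^{1/n}))$ decomposition, and once more to approximate the Laurent-series entries by rational functions while staying in the same $\PGL_2$-orbit of $x_G$ (hence not moving $x$). Everything else is bookkeeping: shrinking $\D$ to an open set $V$ on which $M_V^{\pm1}$ are holomorphic off $o$, identifying the induced non-archimedean map with $f_{\LL}$, and reading off the tangent map as the reduction. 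One should also note that $\deg T_xf_{\LL}$ could in principle be $1$ (if $g$ has degree $1$), but $g$ being conjugate to $T_xf_{\LL}$ is all that is claimed, so no positivity of degree is needed in the statement.
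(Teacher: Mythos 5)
Your proposal is correct and follows essentially the same route as the paper's own proof: realize $x=M_{\LL}(x_G)$, use (\ref{equationbci}) with Remark \ref{remdensesubf} to put $M_{\LL}$ in $\PGL_2(\C((t^{1/n})))$, use density of $\C(u_n)$ and Remark \ref{remdensesubf} again to take $M_{\LL}\in\PGL_2(\C(u_n))$, shrink to a neighborhood $V$ of $o$ where $M^{\pm 1}$ are holomorphic off $o$, and conclude by Proposition \ref{kiwirescaling} with $q=1$. The only (harmless) refinement beyond the paper is your explicit remark that the conjugacy identifying $T_xf_{\LL}$ with $g$ can be absorbed into the choice of $M_{\LL}$.
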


\proof
There is $M_{\LL}\in \PGL_2(\LL)$ such that $x=M_{\LL}(x_G)$,
 by (\ref{equationbci}) and Remark \ref{remdensesubf}, we may assume that $M_{\LL}\in\PGL_2(\C((t_n^{1/n})))$ for some $n\geq 1.$ 
Let $f_{U_n}$ be the base change of $f_{\D}$ by the morphism $\phi_n:U_n:=\D\to \D$ sending $t$ to $t^n$ 
and pick $u_n$ with $u_n^n=\phi_n^{-1}(t)$ as in Section \ref{subsectionholpunc}. Since $\C(u_n)$ is dense in $\C((u_n))=\C((t^{1/n}))$, by Remark \ref{remdensesubf} again, we may assume that $M_{\LL}\in \PGL_2(\C(u_n)).$ There is an open neighborhood $V$ of $o$ such that 
 $M_{\LL}$ and $M_{\LL}^{-1}$ are holomorphic on $V\setminus \{o\}$ i.e. they define holomorphic families $M_{V\setminus \{o\}}$ and $M^{-1}_{V\setminus \{o\}}.$
Then we conclude the proof by Proposition \ref{kiwirescaling}.
\endproof

\subsection{Endomorphisms without repelling type I periodic points}
In general the Julia set of an endomorphism $f_{\LL}$ on $\P_{\LL}^{1,\an}$ is a complicated object. The following theorem due to Favre-Rivera Letelier \cite{favrerivera}  and independently by Luo \cite[Proposition 11.4]{luo2019trees}  classifies the case when  $f_{\LL}$ has no repelling type I periodic points.
\begin{thm}\label{notype1}
Let $f_{\LL}:\P_{\LL}^{1,\an}\to \P_{\LL}^{1,\an}$ be an endomorphism. Assume $f_{\LL}$ has no type 1 repelling  periodic points. Then the Julia set of $f_{\LL}$ is contained in  a segment.
\end{thm}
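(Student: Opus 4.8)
If $\sJ:=\sJ(f_\LL)$ is a single point there is nothing to prove, so assume it is not; then $\sJ$ is infinite, since a finite totally invariant set would consist of at most two exceptional orbits, forcing good reduction and hence $\sJ$ a single point. Recall that $\P^{1,\an}_\LL$ is an $\R$-tree, and that a compact subset is contained in a segment exactly when its convex hull has no point of valence $\ge 3$. The plan is to argue by contradiction: assuming $\sJ$ is not contained in a segment, I will produce a type $1$ repelling periodic point of $f_\LL$.

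First I would locate the branching inside $\sJ$ and push it onto a repelling cycle. Let $B$ be the set of type $2$ points $x$ for which there are three directions $v_1,v_2,v_3\in T_x$ with $U(v_i)\cap\sJ\ne\emptyset$ for all $i$; since end points and type $3$ points have valence $\le 2$, the branching hypothesis gives $B\ne\emptyset$. Because every tangent map $T_xf$ is surjective and $\sJ$ is totally invariant (Proposition \ref{probasicfj}(iv)), one checks that $f^{-1}(B)\subseteq B$. Using the classification of Fatou components (\cite[Chapter 8 and Section 12.2]{Benedetto2019a}) together with the fact that $\partial\Omega\subseteq\sJ$ for every Fatou component $\Omega$, one sees that $B$ meets $\sJ$; then, since $\bigcup_{n\ge 0}f^{-n}(z)$ is dense in $\sJ$ for every $z\in\sJ$ (Proposition \ref{probasicfj}(vi)), the set $B\cap\sJ$ is dense in $\sJ$. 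Repelling periodic points are also dense in $\sJ$ (Proposition \ref{probasicfj}(vii)), and by hypothesis, together with the fact that all type $3$ and type $4$ periodic points are indifferent, these are all of type $2$. Combining the density of $B\cap\sJ$ with the uniform hyperbolic expansion of $f_\LL$ near a repelling type $2$ cycle, one obtains a repelling type $2$ periodic point $x$ at which $\sJ$ branches. Replacing $f_\LL$ by an iterate and conjugating by an element of $\PGL_2(\LL)$, we may take $x=x_G$, with reduction $\tilde f=T_{x_G}f$ of degree $m\ge 2$ over the residue field $\tilde\LL$ (algebraically closed of characteristic $0$).

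The heart of the matter is the local analysis near this branching repelling fixed point, which I would run using Rivera-Letelier's description of rational maps near repelling periodic points (\cite{Rivera-Letelier2003}). The mechanism: among the $\sJ$-directions at $x_G$ there is one, $v_0$, which is periodic under $\tilde f$; passing to the Berkovich disk $U(v_0)$ and to an iterate, the restricted map is "polynomial-like" of degree $\ge 1$ with Julia set $\sJ\cap U(v_0)$, which is again infinite and, by the density of $B$ just proved, again branches inside $U(v_0)$; moreover the new branch point can again be placed on a repelling type $2$ cycle, lying at hyperbolic distance at least some fixed $\delta>0$ deeper than $x_G$ because of the expansion. Iterating this produces a strictly nested sequence of closed Berkovich disks $D_0\supsetneq D_1\supsetneq D_2\supsetneq\cdots$ whose hyperbolic diameters tend to $0$; since each $D_k$ is cut out by an $\LL$-analytic map and the construction is governed by a fixed iterate of $f$ near a cycle, the intersection $\bigcap_k D_k$ is a single classical point $p\in\P^1(\LL)$, that is, a type $1$ point, and $p$ lies in $\sJ$ as a limit of points of $\sJ$ in the $D_k$, and is periodic, say of period $N$, with $|d(f^N)(p)|>1$ forced by the expansion estimate. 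Thus $p$ is a type $1$ repelling periodic point, contradicting the hypothesis; hence $\sJ$ is contained in a segment.

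The main obstacle is precisely the local step of the last paragraph: showing, via the non-archimedean local theory, that branching of $\sJ$ near a repelling type $2$ cycle must descend along a periodic direction to another branching configuration on a repelling cycle with a uniform gain in hyperbolic depth, and then that the resulting nested disks shrink to a genuine type $1$ point rather than a type $4$ point. By contrast, the tree-combinatorial input of the second paragraph (backward invariance and density of the branch locus, and the reduction to a branch point lying on a repelling cycle) and the identification of $\bigcap_k D_k$ with a classical point are comparatively routine given the facts recalled in Section \ref{sectionberko} and the classification of Fatou components.
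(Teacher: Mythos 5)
First, note that the paper does not prove this statement at all: it is quoted as a known theorem of Favre--Rivera-Letelier and of Luo \cite[Proposition 11.4]{luo2019trees}, so there is no in-paper argument for your proposal to be measured against; it would have to stand on its own as a replacement for that citation, and it does not. Your text is a strategy outline in which every load-bearing step is asserted rather than proved. Most seriously, the step you yourself flag as ``the heart of the matter'' --- that branching of $\sJ(f_\LL)$ at a repelling type~2 cycle descends along a periodic direction to another branching repelling cycle with a \emph{uniform} gain in hyperbolic depth, and that the resulting nested disks shrink to a type~1 (rather than type~4) point which is moreover \emph{periodic} and repelling --- is exactly where the content of the theorem lies, and none of it is carried out: you give no mechanism forcing the uniform $\delta>0$, no fixed-point argument producing periodicity of the limit (one would need something like an iterate mapping a disk properly over a strictly larger disk containing it), and no argument ruling out a type~4 or non-periodic limit. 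Since $\LL$ is not spherically complete, the type~4 alternative is a genuine threat, not a formality.

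The ``routine'' steps are also not routine. The claim that the branch locus $B$ meets $\sJ(f_\LL)$ is justified only by a vague appeal to the classification of Fatou components; in general the branch points of the convex hull of a Berkovich Julia set can perfectly well lie in the Fatou set (this happens, e.g., for quadratic polynomials over $\C_p$ with Cantor Julia sets consisting of type~1 points), so any correct argument here must use the hypothesis that there are no type~1 repelling periodic points --- which your paragraph never invokes at this point. Likewise, the passage from ``$B\cap\sJ$ is dense in $\sJ$'' to ``there is a repelling type~2 \emph{periodic} point at which $\sJ$ branches'' does not follow from density plus density: branching at nearby points does not transfer to the periodic point (indeed, when $\sJ$ is a segment every repelling type~2 point sees only two Julia directions), so an actual argument is needed, and ``uniform hyperbolic expansion near the cycle'' is not one. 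As it stands the proposal identifies a plausible contradiction scheme but leaves all three of these gaps open, so it cannot serve as a proof of the theorem; the known proofs (Favre--Rivera-Letelier, Luo) rely on substantially more machinery than the facts recalled in Section~\ref{sectionberko}.
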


By (v) of Proposition \ref{probasicfj}, $\sJ(f_{\LL})\neq \emptyset.$
In the above theorem, if $f_{\LL}$ does not have good reduction, then the segment can not be a point. 
As a corollary, we get the following lemma.
\begin{lem}\label{segmenttangent}
Let $f_{\LL}:\P_{\LL}^{1,\an}\to \P_{\LL}^{1,\an}$ be an endomorphism of degree $d\geq 2$, which does not have good reduction.  Assume that $\sJ(f_{\LL})$ is contained in  a minimal segment $[a,b]$.  Let $x$ be a repelling type 2 periodic point in  $(a,b)$ with period $q\geq 1$.  Then the tangent map $T_x f^q$ is conjugate to $z\mapsto z^{m}$ for some $|m|=\deg T_x f^q\geq 2$.  Moreover every bad direction of $T_x f^q$ is presented by $(x,a]$ or $(x,b]$ and under the above conjugacy, it is identified  to $0$ or $\infty$.
\end{lem}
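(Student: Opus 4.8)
The plan is to analyze the local picture of the tree action near the repelling type 2 periodic point $x$, combining the structure of the Julia set as a segment with the classification of type 2 periodic points via the reduction map. First I would reduce to the case $q=1$ by replacing $f_{\LL}$ with $f_{\LL}^q$; note that passing to an iterate does not change the Julia set or the fact that $f_{\LL}$ has no good reduction (if $f_{\LL}^q$ had good reduction its Julia set would be a point, contradicting that $\sJ(f_{\LL})$ is a non-degenerate minimal segment, since $\sJ(f_{\LL}) = \sJ(f_{\LL}^q)$). So assume $f_{\LL}(x) = x$ with $x$ of type 2, repelling, so $\deg T_x f =: m' \geq 2$ where $T_x f : \P^1(\tilde\LL) \to \P^1(\tilde\LL)$ is the reduction, and $\tilde\LL = \overline{\C}$ since the residue field of $\LL$ is the algebraic closure of $\C$, i.e. $\C$ itself (here $\tilde\LL \cong \C$). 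Thus $T_x f$ is an endomorphism of $\P^1(\C)$ of degree $m' \geq 2$.

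Next I would pin down the dynamics of $T_x f$ on the tangent directions coming from the segment. Since $\sJ(f_{\LL}) \subseteq [a,b]$ is the minimal such segment and $x \in (a,b)$, the two directions $v_- , v_+ \in T_x$ presented by $(x,a]$ and $(x,b]$ are exactly the directions whose associated open sets $U(v_\pm)$ meet $\sJ(f_{\LL})$. Every other direction $v \in T_x$ has $U(v) \cap \sJ(f_{\LL}) = \emptyset$, hence $U(v) \subseteq \sF(f_{\LL})$. Because $\sJ(f_{\LL})$ is totally invariant (Proposition \ref{probasicfj}(iv)) and $f(x)=x$, the tangent map $T_x f$ must preserve the pair $\{v_-, v_+\}$: indeed a direction $v$ with $U(v)$ meeting $\sJ(f_{\LL})$ must map to a direction $w$ with $U(w)$ meeting $\sJ(f_{\LL})$ — this uses that if $v$ is a good direction then $f(U(v)) = U(w)$, while if $v$ is a bad direction then $f(U(v)) = \P^{1,\an}_{\LL}$, which certainly meets $\sJ$, so in fact bad directions at $x$ can only be $v_-$ or $v_+$; and conversely, surjectivity of $T_x f$ plus minimality of $[a,b]$ forces $v_\pm$ to be in the image only from $v_\mp$ or themselves. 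The key structural input is then: the holomorphic map $T_x f$ of degree $m' \geq 2$ on $\P^1(\C)$ has a totally invariant set of at most two points $\{v_-, v_+\}$ (after identifying $T_x \cong \P^1(\C)$). A degree $m' \geq 2$ rational map with a totally invariant $2$-point set is conjugate to $z \mapsto z^{\pm m'}$ (the classical fact, e.g. via Riemann–Hurwitz: each of the two points is totally ramified), and a $1$-point totally invariant set is impossible here because then one of $v_-, v_+$ would be a fixed bad/ramified point and minimality of the segment would be violated — actually $\{v_-,v_+\}$ is genuinely a $2$-element set since $x \in (a,b)$ is an interior point. I would conclude $T_x f$ is conjugate to $z \mapsto z^m$ with $|m| = m' = \deg T_x f \geq 2$, the sign recording whether $v_-, v_+$ are swapped ($m < 0$) or fixed ($m > 0$).

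Finally I would extract the statement about bad directions. Under the conjugacy sending $\{v_-, v_+\}$ to $\{0, \infty\}$, the map $z \mapsto z^m$ has its critical values (hence the only candidates for bad directions, since a good direction's image-set is exactly $U(w)$ and bad directions are detected by total ramification / the image-set jumping to all of $\P^{1,\an}$) at $0$ and $\infty$. More precisely, by \cite[Theorem 7.34]{Benedetto2019a} a direction $v$ is bad only if it is ``ramified'' in an appropriate sense, and for $z \mapsto z^m$ the only directions that can possibly be bad are those over $0$ and $\infty$; these are precisely the ones presented by $(x,a]$ and $(x,b]$. So every bad direction of $T_x f^q$ is presented by $(x,a]$ or $(x,b]$ and is identified with $0$ or $\infty$ under the conjugacy.

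The main obstacle I anticipate is the bookkeeping needed to justify rigorously that $T_x f$ genuinely \emph{preserves} the unordered pair $\{v_-,v_+\}$ as a \emph{totally invariant} set (both forward and backward invariance), rather than merely mapping it into the set of "directions meeting $\sJ$." This requires carefully combining: minimality of the segment $[a,b]$ (so that no proper sub-segment absorbs $\sJ(f_{\LL})$), total invariance of $\sJ(f_{\LL})$, the fact that bad directions at $x$ map their open sets onto all of $\P^{1,\an}_{\LL}$, and a degree count ensuring $f$ does not "lose" the segment directions — this is where one must be most careful, and it is plausible the cleanest route is to invoke the local structure results on type 2 periodic points and reductions already developed (or cited) in Section \ref{sectionberko}, rather than re-deriving the combinatorics of the $\R$-tree action by hand.
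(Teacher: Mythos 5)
Your proposal follows essentially the same route as the paper's proof: minimality of $[a,b]$ gives $a,b\in\sJ(f_{\LL})$, total invariance of the Julia set together with the good/bad-direction dichotomy shows that $\{v_-,v_+\}$ is exactly the set of directions at $x$ meeting $\sJ(f_{\LL})$, that it is totally invariant under $T_xf^q$, and that all bad directions lie in it, after which the classification of degree $\geq 2$ maps of $\P^1(\C)$ with a totally invariant two-point set gives the conjugacy to $z\mapsto z^{\pm\deg T_xf^q}$. The only points to tighten are minor: deduce that $\sJ(f_{\LL})$ is not a point directly from Theorem \ref{thmgoodredjulia} (rather than asserting the segment is non-degenerate, which is what that gives you), state explicitly that $a,b\in\sJ(f_{\LL})$ so that $U(v_\pm)$ meet the Julia set in the backward-invariance step (for $w$ with $T_xf^q(w)=v_i$ one has $U(v_i)\subseteq f^q(U(w))$ whether $w$ is good or bad), and drop the ``critical values'' justification for locating bad directions in favor of the Julia-set argument you already gave.
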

\begin{proof}
Since $[a,b]$  is the minimal segment that contains $\sJ(f)$, $a$ and $b$ are contained in the Julia set.   Since $\deg f_{\LL}\geq 2$ and $f_{\LL}$ does not have good reduction, the Julia set is not a single point. Hence $a\neq b$. Let $v_1$ (resp. $v_2$) be the direction in $T_x$ represented by the segment $(x,a]$ (resp. $(x,b]$). 
Since $\sJ(f_{\LL})\subseteq [a,b]$, $\{v\in T_x|\,\, U(v)\cap \sJ(f_{\LL})\neq \emptyset\}=\{v_1,v_2\}.$
Since $\sJ(f_{\LL})$ is totally invariant, for $v\in T_x$, if $f^q(U(v))\cap \sJ(f_{\LL})\neq\emptyset$, then $U(v)\cap  \sJ(f_{\LL})\neq\emptyset.$ Hence $v\in \{v_1,v_2\}.$
This implies (i) $\left\{ v_1,v_2\right\}$ is totally invariant by $T_x f^q$.  Actually let $w\in (T_x f^q)^{-1}(v_i)$ for some $i=1,2$. Then we have $U(v_i)\subset f^q(U(w))$. This implies $f^q(U(w))\cap \sJ(f_{\LL})\neq \emptyset.$ Thus $w=v_i$.  (ii) Bad directions of $T_x f^q$ are contained in $\left\{ v_1,v_2\right\}$. Actually if $w$ is a bad direction, then we have $f^q(U(w))=\P_{\LL}^{1,\an}$, hence $f^q(U(w))\cap \sJ(f_{\LL})\neq\emptyset$, which implies $w=v_1$ or $v_2$.  

Finally,  an endomorphism of degree $\deg T_x f^q$ on $\P^1(\C)$ has a totally invariant set with two elements must is conjugate to $z\mapsto z^{m}$ for some $|m|=\deg T_x f^q$.  This conjugacy maps $ \left\{ v_1,v_2\right\}$ to $\left\{ 0,\infty\right\}$, which concludes the proof.
\end{proof}

The following Theorem is the main result of this section.
\begin{thm}\label{rescaling}
	Let $f_{\D}$ be a meromorphic family of degree $d\geq 2$ which is holomorphic on $\D^*$. Assume that  $f_{\D}$ does not have potentially good reduction at $0.$ For every $n\geq 1$, assume that the multipliers of the $n$-periodic points of $f_t$ are uniformly bounded in $t$.  
Then there is $n\geq 1, m\geq 2$, such that $g: z\mapsto z^m$ is a rescaling limit of $f_{U_n}|_V$ where $f_{U_n}$ is the base change of $f_{\D}$ by the morphism $U_n:=\D\to \D$ sending $t$ to $t^n$ as in Section \ref{subsectionholpunc} and $V$ is an open neighborhood of $o\in U_{n}$ isomorphic to $\D.$	
Moreover, we may ask the finite set $S$ in Definition \ref{defiresclim} to be contained in $\left\{ 0,\infty\right\}$.
\end{thm}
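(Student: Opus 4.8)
The plan is to combine the non-archimedean classification of Julia sets without repelling type~1 points (Theorem~\ref{notype1}) with the rescaling dictionary of Kiwi (Proposition~\ref{kiwirescaling}, Corollary~\ref{cortypetwomodel}). First I would pass from the family $f_\D$ to the endomorphism $f_\LL$ on $\P^{1,\an}_\LL$ obtained by viewing the coefficients in $\C((t))$ and extending scalars to $\LL=\widehat{\overline{\C((t))}}$, as in Section~\ref{subsectionholpunc}. By Lemma~\ref{lempotgoodr} (contrapositive), the hypothesis that $f_\D$ does \emph{not} have potentially good reduction at $0$ forces $f_\LL$ \emph{not} to have good reduction; by Theorem~\ref{thmgoodredjulia} and Proposition~\ref{probasicfj}(v), $\sJ(f_\LL)$ is then a nonempty set which is not a single point.

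The key bridge is to rule out type~1 repelling periodic points of $f_\LL$. This is exactly where the boundedness hypothesis on multipliers enters: if $x\in\P^1(\LL)$ were a repelling type~1 periodic point of period $n$, then by the Benedetto--Ingram--Jones--Levy / Rivera-Letelier results invoked in Section~\ref{sectionmilnor} (the same input used in the proof of Theorem~\ref{thmmilnor}), together with the fact that $f_t^n$-fixed points specialize to $f_\LL^n$-fixed points, one would produce $n$-periodic points of $f_t$ whose multipliers blow up as $t\to 0$ in some $t$-adic/archimedean sense; more precisely the $t$-adic multiplier $|df_\LL^n(x)|>1$ translates into an unbounded family of multipliers of $f_t^n$-periodic points, contradicting the hypothesis. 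Having excluded type~1 repelling periodic points, Theorem~\ref{notype1} gives that $\sJ(f_\LL)$ is contained in a segment; let $[a,b]$ be the minimal such segment, which is nondegenerate since $\sJ(f_\LL)$ is not a point. By density of repelling periodic points (Proposition~\ref{probasicfj}(vii)) and the fact that all type~3 and type~4 periodic points are indifferent, there is a repelling \emph{type~2} periodic point $x$ in the interior $(a,b)$, of some period $q\geq 1$.

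Now apply Lemma~\ref{segmenttangent}: the tangent map $T_xf_\LL^q$ is conjugate to $z\mapsto z^m$ with $|m|=\deg T_xf_\LL^q\geq 2$, and every bad direction is carried to $0$ or $\infty$ under this conjugacy. Replacing $q$ by a further multiple if necessary (or $m$ by $m^k$), one may arrange $m\geq 2$ (an iterate of $z\mapsto z^m$ is $z\mapsto z^{m^k}$, still of the required form, and one checks the sign issue only affects whether we take $z\mapsto z^m$ or its composition with $z\mapsto 1/z$, which can be absorbed into the Möbius family $M_\D$). Finally, Corollary~\ref{cortypetwomodel} (which is Proposition~\ref{kiwirescaling} packaged for a type~2 fixed point of $f_\LL$ after passing to a base change $U_n\to\D$, $t\mapsto t^n$, to clear denominators and make the linearizing Möbius transformation algebraic over $\C(u_n)$) yields $g\colon z\mapsto z^m$ as a rescaling limit of $f_{U_n}|_V$ for suitable $n$ and $V\simeq\D$; and the last sentence of Proposition~\ref{kiwirescaling} identifies the exceptional set $S$ with a finite subset of $T_x$ containing the bad directions, which by Lemma~\ref{segmenttangent} lands in $\{0,\infty\}$ under the conjugacy.

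The main obstacle I expect is the step excluding type~1 repelling periodic points of $f_\LL$ from the boundedness of multipliers: one has to argue carefully that a repelling type~1 fixed point of $f_\LL^q$ over $\LL$ actually descends (after a base change) to an \emph{algebraic} family of periodic points $t\mapsto x(t)$ on $\P^1(\C)$ whose multipliers, as holomorphic functions of $t$ near $0$, have a pole — i.e.\ that the $t$-adic valuation of the multiplier being negative genuinely produces archimedean blow-up along the family. This requires knowing the periodic points and their multipliers vary algebraically (via the dynatomic construction) and invoking the cited non-archimedean equidistribution/good-reduction results to control how many periodic points can "escape to the boundary"; the boundedness hypothesis is then precisely the clean input that forbids this.
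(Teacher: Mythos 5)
Your proposal follows essentially the same route as the paper: pass to $f_{\LL}$, use the uniform boundedness of multipliers to rule out type~1 repelling periodic points (the Puiseux-series/pole argument you sketch as the "main obstacle" is exactly the right justification, and the appeal to Benedetto--Ingram--Jones--Levy is superfluous for this step), then apply Theorem~\ref{notype1} and Lemma~\ref{segmenttangent} to a repelling type~2 periodic point inside the segment, and conclude with Corollary~\ref{kiwirescaling}'s packaged form, Corollary~\ref{cortypetwomodel}. The one place where you are thinner than the paper is in producing a repelling type~2 periodic point in the \emph{open} segment $(a,b)$: the paper quotes Rivera-Letelier's theorem that an endomorphism without good reduction has infinitely many repelling type~2 periodic points (so all but at most two lie in $(a,b)$), whereas your density argument additionally needs to know that $\sJ(f_{\LL})$ is not just $\{a,b\}$ (e.g.\ that it is infinite), a point you should make explicit.
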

\begin{proof}
Let $f_{\LL}:\P_{\LL}^{1,\an}\to \P_{\LL}^{1,\an}$ be the endomorphism induced by $f_{\D}$. The multipliers of the $n$-periodic points of $f_t$ are uniformly bounded in $t$ implies $f_{\LL}$ has no repelling type 1 periodic points. By Theorem \ref{notype1}, $\sJ(f_{\LL})$ is contained in a minimal segment $[a,b]$.  
Since $f_{\D}$ does not have potentially good reduction at $0$, by Lemma \ref{lempotgoodr}, $f_{\LL}$ does not have good reduction. By a result of Rivera-Letelier \cite[Theorem 10.88]{baker2010potential}, there are infinitely many repelling type 2 periodic points. By (iii) of Proposition \ref{probasicfj}, they are necessarily contained in $\sJ(f_{\LL})$. Pick a repelling type II periodic point $x$ that are contained in $(a,b)$ of period $q\geq 1$. By Lemma \ref{segmenttangent}, replace $q$ by $2q$ if necessary, the tangent map $T_x f^q$ is conjugate to $z\mapsto z^{m}$ for some $m\geq 2$.  Moreover the bad directions of $T_x f^q$  can be identified  with a subset of $\left\{ 0,\infty\right\}$ by the conjugacy. The proof is finished by using Corollary \ref{cortypetwomodel}. 
\end{proof}

\section{A new proof of McMullen's theorem}\label{sectionmcmullen}
We can now give a new proof of Theorem \ref{thmmcmullen}.
\proof[Proof of Theorem \ref{thmmcmullen}]
Let $f_{\La}$ be a non-isotrivial stable irreducible algebraic family of endomorphisms of degree $d\geq 2$. 
Since $\La$ is covered by affine open subsets, we may assume that $\La$ itself is affine.
Cutting $\La$ by hyperplanes and removing the singular points,
we can reduce to the case that $\La$ is a connected Riemann surface of finite type.  
Since the only non-isotrivial family of exceptional endomorphisms of degree $d$ is the flexible Latt\`es family,
we only need to show that there is a nonempty open subset $W$ of  $\La$ such that for $t\in W$, $f_t$ is exceptional.

Write $\La=M\setminus B$, where $M$ is a compact Riemann surface and $B$ is a finite subset. Since $f_{\La}$ is algebraic, it extends to a meromorphic family of degree $d.$
We have $B(f_M)\subseteq B.$ Since $f_{\La}$ is not isotrivial, by Lemma \ref{lemeverygoodisotrivial}, there is $b\in B$ such that $f_M$ does not have potentially good reduction at $b.$ Reparametrize our family near $b\in M$, we get a meromorphic family $f_{\D}$ of degree $d\geq 2$, which is holomorphic on $\D^*$ and preserves multiplier spectrum.  

By Theorem \ref{rescaling}, after replacing $f_{\D}$ by the family $f_{V}$ in Theorem \ref{rescaling}, we may assume that $z\mapsto z^m$ for some $m\geq 2$ is a
rescaling limit of $f_{\D}$ with $S=\{0,\infty\}.$
Using the reformulation of the rescaling limit in Remark \ref{remresgeo}, there is an integer $q\geq 1$ and a meromorphic family $M_{\D}$ of degree 1, such that $M_{\D}$ and $M^{-1}_{\D}$ are holomorphic on $\D^*$, and $h_0$ is $z\to z^m$ where $h_\D:=M_\D^{-1}\circ f^q_{\D}\circ M_\D$ on $\P^1(\C)\times \D$.
Moreover $I(h_{\D})\subseteq \{0,\infty\}\times \{0\}\subseteq \P^1(\C)\times \D.$
We may replace $f_{\D}$ by $h_{\D}$ and assume that $f_0: z\mapsto z^m$ and $I(f_{\D})\subseteq \{0,\infty\}\times \{0\}\subseteq \P^1(\C)\times \D.$

The Julia set of $f_0$ is the unit circle $S^1$, and $f_0$  is  expanding on $S^1$. We need the following classical lemma of holomorphic motions of expanding sets. A proof can be found (without using quasiconformal maps) in Jonsson \cite{jonsson1998holomorphic}, which is also valid in higher dimension.  Let $K\subset \P^1(\C)$ be a compact set. We say $f:K\to K$ is \emph{expanding} if there exist $C>0$ and $\rho>1$ such that $|df^n(x)|\geq C\rho^n$ for every $n\geq 0$ and $x\in K$.
\begin{lem}\label{motion}
Let $(f_t)_{t\in\D}$ be a family of endomorphisms on $\P^1(\C)$. Suppose $f_0$ has an expanding set $K$, $f(K)=K$. Assume $(f_t)$ is a holomorphic family in a neighborhood of $K$, i.e. there exist an open set $V$, $K\subset V$ such that for every $z\in V$, $t\mapsto f_t(z)$ is holomorphic in $\D$. Then there exist $r>0$ and a continuous  map $h:\D_r\times K\to \P^1(\C)$ such that for each $t\in \D_r$:
\begin{points}
\item $K_t:=h(t,K)$ is an expanding set of $f_t$.
\item the map $h_t:=h(t, \cdot):K\to K_t$ is a homeomorphism and $f_t\circ h_t=h_t\circ f_0$.
\end{points}
\end{lem}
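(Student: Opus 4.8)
The plan is to prove Lemma \ref{motion} by the standard graph-transform / implicit-function argument, realizing the holomorphic motion as a fixed point of a contraction on a space of continuous sections. Since $f_0$ is expanding on $K$ with $f_0(K)=K$, there exist $C>0$, $\rho>1$ such that $|df_0^n(x)|\geq C\rho^n$ on $K$. Passing to an iterate (which is harmless, since a motion for $f_0^N$ conjugating $f_t^N$ can be upgraded to one for $f_0$ by an averaging/uniqueness argument — see below), I may assume $|df_0(x)|\geq \kappa>1$ on a neighborhood of $K$. The first step is to set up function spaces: let $\mathcal{X}$ be the Banach space of bounded continuous maps $\sigma:\D_r\times K\to \P^1(\C)$ that are holomorphic in $t\in\D_r$ for each fixed $x\in K$, equipped with the sup-norm (using a fixed metric on $\P^1(\C)$, working in local charts near $K$ where $f_0$ is a genuine holomorphic map $V\to\P^1$). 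The motion $h$ should satisfy the functional equation $f_t\circ h_t=h_t\circ f_0$ on $K$, i.e. $h_t(f_0(x))=f_t(h_t(x))$.

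\textbf{Construction via the inverse branch operator.} Because $f_0$ is expanding, for each $x\in K$ there is a well-defined local inverse branch $g_{0,x}$ of $f_0$ near $f_0(x)$ sending $f_0(x)\mapsto x$, contracting by a factor $\leq \kappa^{-1}$; and for $t$ small these deform to inverse branches $g_{t,x}$ of $f_t$ depending holomorphically on $t$. Define the operator $\mathcal{T}$ on $\mathcal{X}$ by
\[
(\mathcal{T}\sigma)_t(x) := g_{t,\,\sigma_t(x)'}\big(\sigma_t(f_0(x))\big),
\]
more precisely one uses the inverse branch of $f_t$ near $\sigma_0(x)=x$ applied to $\sigma_t(f_0(x))$; the desired $h$ is a fixed point $\mathcal{T}h=h$, which encodes exactly $f_t(h_t(x))=h_t(f_0(x))$. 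One checks: (a) $\mathcal{T}$ preserves a small closed ball around the inclusion $\iota:(t,x)\mapsto x$ in $\mathcal{X}$, provided $r$ is small (so that $f_t$ is $C^1$-close to $f_0$ on $V$ and the branches are defined); (b) $\mathcal{T}$ is a contraction with constant $\leq\kappa^{-1}<1$, because composing with a contracting inverse branch contracts sup-distances. The contraction mapping principle then yields a unique fixed point $h\in\mathcal{X}$; holomorphicity in $t$ is preserved since $\mathcal{X}$ is closed under uniform limits and each $\mathcal{T}\sigma$ is holomorphic in $t$. This gives the continuous map $h:\D_r\times K\to\P^1(\C)$ with $f_t\circ h_t=h_t\circ f_0$, which is (ii) apart from the homeomorphism claim.

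\textbf{Injectivity, the expanding property, and surjectivity.} That $h_t$ is injective follows because $h_t$ is a uniform limit (in fact the fixed point is $\lim_n \mathcal{T}^n\iota$) of perturbations of $\iota$ and, more robustly, from the functional equation plus expansion: if $h_t(x)=h_t(y)$ then pushing forward by $f_t^n$ and using the dynamical conjugacy shows $f_0^n(x)$ and $f_0^n(y)$ stay close for all $n$, forcing $x=y$ by expansivity of $f_0$ on $K$ (after shrinking $r$ so the "closeness" threshold is below the expansivity constant). Since $K$ is compact and $h_t$ continuous and injective, $h_t:K\to K_t:=h_t(K)$ is a homeomorphism onto its image, and $f_t(K_t)=f_t(h_t(K))=h_t(f_0(K))=h_t(K)=K_t$, so $K_t$ is forward invariant. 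Finally $K_t$ is an expanding set of $f_t$: differentiating the conjugacy $f_t\circ h_t=h_t\circ f_0$ is not available ($h_t$ need only be continuous), so instead one argues directly that for $r$ small $|df_t(z)|\geq \kappa'>1$ for all $z$ in a neighborhood of $K_t$ (which holds since $K_t$ is uniformly close to $K$ and $f_t$ close to $f_0$ in $C^1$), giving $|df_t^n(z)|\geq (\kappa')^n$ on $K_t$; that is exactly the definition of expanding. This establishes (i).

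\textbf{Main obstacle and the iterate issue.} The genuinely delicate point is organizing the inverse-branch operator so that it is simultaneously (a) well-defined for all $x\in K$ with uniform domains of definition and (b) a uniform contraction — this requires a careful, uniform choice of neighborhoods and the observation that the "$\sigma$-dependence" of the branch $g_{t,\sigma_t(x)}$ is itself Lipschitz with small constant, so it does not destroy the contraction; this is where compactness of $K$ and the uniform expansion estimate are essential. A secondary technical point is passing from an iterate back to $f_0$: if $|df_0^N(x)|\geq C\rho^N$ only for some $N$, one first builds the motion $\tilde h$ conjugating $f_t^N$ to $f_0^N$; uniqueness of the fixed point in the small ball then forces $\tilde h$ to also conjugate $f_t$ to $f_0$, because $h':=f_t^{-\text{branch}}\circ\tilde h\circ f_0$ is another fixed point of the ($f_0^N$-)operator in the same ball. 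Since the statement is classical and a quasiconformal-free proof is given in Jonsson \cite{jonsson1998holomorphic}, I would in the write-up either reproduce this contraction argument in the degree of detail above or simply cite \cite{jonsson1998holomorphic}, noting that the argument there is valid in any dimension and in particular on $\P^1(\C)$.
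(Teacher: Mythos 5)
The paper offers no proof of this lemma at all: it is imported as a classical statement with a citation to Jonsson \cite{jonsson1998holomorphic}, and your inverse-branch contraction argument (fixed point of $\sigma\mapsto g_{t,x}\circ\sigma_t\circ f_0$ on a ball of continuous, fiberwise-holomorphic sections around the inclusion, plus expansivity for injectivity and the direct derivative estimate on a neighborhood of $K_t$ for the expanding property) is exactly the standard quasiconformal-free proof given in that reference. So your proposal is correct and coincides with the intended argument; the only cosmetic point is to state the operator cleanly as using the inverse branch of $f_t$ indexed by the base point $x$ (as you clarify), rather than by $\sigma$, so that the contraction constant is manifestly $\le\kappa^{-1}$.
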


We set $f_0:z\mapsto z^m$ and $K:=S^1$ in the above lemma. The endomorphism $f_0$ has the following properties:
\begin{points}
\item[(1)] $f_0^{-1}(K)=f_0(K)=K;$
\item[(2)] all periodic points outside the exceptional set $\{0,\infty\}$ are contained in $K;$ 
\item[(3)] for every $n$-periodic point $z\in K$, we have $df_0^n(z)=m^n$. 
\end{points}
\medskip

Since the family $(f_t)_{t\in\D^\ast}$ has the same multiplier spectrum,  the multiplier of the periodic point $h_t(z)$ of $f_t$ does not change in the family $t\in \D_r^\ast$. Hence for every $t\in \D_r$ we have $df_t^n(h_t(z))=m^n$.  We choose a homoclinic orbit $o_i$, $i\geq 0$ of $f_0$ with $o_0=1$.  By $(1)$, all $o_i, i\geq 1$ are contained in $K.$
Hence $h_t(o_i)$, $i\geq 0$ is a homoclinic orbit of $f_t$ at $z=h_t(1)$, for $t\in \D_r$.  Let $q_i$, $i\geq 0$ be an adjoint sequence of $o_i$, $i\geq 0$. For every $t\in \D_r^\ast$, we need to show $h_t(q_i)$, $i\geq 0$ is an adjoint sequence of $h_t(o_i)$, $i\geq 0$.   In fact let $U_t$ be a linearization domain of $f_t$ at $h_t(1)$. Let $U_{t,i}$ be the connected component of $f_t^{-i}(U_t)$ containing $h_t(o_i)$. Let $l$ be a good return time of $U_t$. For every $n\geq l$, $f_t^n:U_{t,n}\to U_t$ is an isomorphism, with a unique fixed point $p_n$.  Let  $V$ be the connected component of $h_t^{-1}(U_t\cap K_t)$ containing $1$. It is an open arc in $S^1$. Let $V_n$ be the connected component of $f_0^{-n}(V)$ containing $o_n$. 
Since $K$ is totally invariant by $f_0$ and $V$ contains some linearization domain at $1$, 
after enlarging $l$ if necessary, for every $n \geq l$ we have $q_n\in V_n\cap K$. Hence $h_t(q_n)\in U_{t,n}\cap K_t$, which is  fixed by $f_t^n:U_{t,n}\to U_t$. By the uniqueness of $p_n$ we have $p_n=h_t(q_n)$. Hence $h_t(q_i)$, $i\geq 0$ is an adjoint sequence of $h_t(o_i)$, $i\geq 0$. 


For every $t\in \D_r^\ast$,  we consider the dynamics of $f_t$. The fixed point $h_t(1)$ has multiplier $m$ and the adjoint sequence $h_t(q_i)$, $i\geq 0$  of the  homoclinic orbit $h_t(o_i)$, $i\geq 0$  has multiplier $m^i$ when $i$ large enough. By Theorem \ref{thmadjconmultexcep}, $f_t$ is exceptional, which concludes the proof.
\endproof

\section{Conformal expanding repellers and applications}\label{sectioncer}
\subsection{Definition, examples and rigidity of CER}\label{subsectioncer}
The following definition was introduced by Sullivan \cite{sullivan1986quasiconformal}.
\begin{defi}\label{defcer}
Let $f:\P^1\to\P^1$ be an endomorphism over $\C$. An compact set $K\subset \P^1(\C)$ is called a CER of $f$ if
\begin{points}
\item there exist $m\geq 1$ and a neighborhood $V$ of $K$ such that $f^m(K)=K$ and $K=\cap_{n\geq 0}f^{-mn}(V)$.
\item $f^m:K\to K$ is expanding, i.e. there are constants $C>0$ and $\la>1$ such that $|df^{nm}(x)|\geq C\la^n$ for every $x\in K$ and $n\geq 1$;
\item $f^m:K\to K$ is topologically exact, i.e. for every open set $U\subset K$ there exist $n\geq 0$ such that $f^{mn}(U)=K$.
\end{points} 
\end{defi}

\begin{rem}\label{openmap}
Condition (i)+(ii) is equivalent to $f^m$ is expanding on $K$ and $f^m:K\to K$ is an open map \cite[Lemma 6.1.2]{przytycki2010conformal}.
\end{rem}
The following is an important class of examples of CER.
\begin{exa}\label{horseshoe}
Assume  $V$, $U_i$, $1\leq i\leq k$ are connected open sets in $\P^1(\C)$, $k\geq 2$  such that $\overline{U_i}\subset V$, and there exist $m\geq 1$ such that $f^m: U_i\to V$ is an isomorphism. Then we call
\begin{equation*}
K:=\left\{ z\in \bigcup_{i=1}^k U_i  \Bigg|\,\, f^{mn}(z)\in \bigcup_{i=1}^k U_i\;\text{for every}\;n\geq 0\right\}
\end{equation*}
a \emph{horseshoe} of $f$. We check that  $K$ satisfies the three conditions in Definition \ref{defcer}. Let $V_0:= \cup_{i=1}^k U_i $.
\begin{points}
\item[{\bf Condition (i):}] It follows from the definition of $K$; 
\item[{\bf Condition (ii):}]  $f^m: V_0 \to V$ strictly expands the hyperbolic metric of $V$,  this implies $f^m:K\to K$ is expanding;
\item[{\bf Condition (iii):}]Again using  $f^m: V_0 \to V$ strictly expands the hyperbolic metric of $V$, the maximal diameter of the connected components  of $f^{-nm}(V_0)\cap V_0$ shrinks  to $0$ when $n\to \infty$. For each open set $W\subset K$,  there exist integer $n\geq 0$ and a connected component $B$ of $f^{-nm}(V_0)\cap V_0$ such that $B\cap K\subset W$. Since $f^{(n+1)m}(B\cap K)=K$, we have $f^{(n+1)m}(W)=K$. Hence $f^m:K\to K$ is topologically exact.
\end{points}

 Moreover $K$ is a Cantor set, in particular $K$ is not a finite set. 
\end{exa}

When $f$ has degree at least $2$, there are plenty of horseshoes. Following the terminology in section 2, 
we can construct a horseshoe associated to finite numbers of homoclinic orbits at $o$. We prove the following lemma which will be used in the proof of Theorem \ref{markedlength}. 
\begin{lem}\label{cerhomoclinic}
Let $o$ be a repelling fixed point. Let $k\geq 1$ be an integer. Assume for each fixed $1\leq j\leq k$,  $o_i^{j}$, $i\geq 0$ is a homoclinic orbit of $o$ such that $o_i^j\notin C(f)$. Then there exist an integer $m\geq 1$ and a horseshoe $f^m:K\to K$ such that $o_{mi}^j\in K$ for every $i\geq 0$ and $1\leq j\leq k$. Moreover for each $0\leq q\leq m-1$, $f^q(K)$ is a CER.
\end{lem}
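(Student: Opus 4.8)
The plan is to build the horseshoe $K$ by enlarging, via a common good return time, the construction of an adjoint sequence carried out in Section~\ref{sectionhomoclinic}, but now simultaneously for the $k$ given homoclinic orbits. First I would fix a linearization domain $U$ of $o$, with linearization $\phi\colon U\to\D$ and attracting inverse branch $g\colon U\to U$ satisfying $f\circ g=\mathrm{id}$. Applying Proposition~\ref{proexihors} to each orbit $o_i^j$ (which is legitimate precisely because $o_i^j\notin C(f)$), for every $j$ there is a good return time for $(U,\cdot)$; taking the maximum over $1\le j\le k$ and shrinking $U$ if necessary by Remark~\ref{remhorsmenlarge}, I get a single integer $m\ge 1$ which is a good return time of $U$ for all $k$ homoclinic orbits at once. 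For each $j$ let $U^j$ be the connected component of $f^{-m}(U)$ containing $o_m^j$; by the definition of good return time $\overline{U^j}=U^j_{?}\subset\subset U$ and $f^m\colon U^j\to U$ is an isomorphism. If the points $o_m^1,\dots,o_m^k$ are pairwise distinct we may also shrink $U$ so that the $U^j$ are pairwise disjoint; if some of them coincide we simply discard the repeated branches, which does not affect the conclusion. Setting $V:=U$, $U_j:=U^j$ with $k'\le k$ distinct branches, and adding one more branch if needed to guarantee at least two (this is where one uses that $\deg f\ge 2$, so $o$ has another preimage leading to another inverse branch landing in $U$; alternatively invoke Lemma~\ref{lemexisthomo} to produce an auxiliary homoclinic orbit and enlarge $m$ accordingly), Example~\ref{horseshoe} applies and produces a horseshoe $f^m\colon K\to K$ with $V_0=\bigcup U_j$.

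Next I would check that $o_{mi}^j\in K$ for all $i\ge 0$ and all $j$. Since $m$ is a good return time, $o_i^j\in U$ for all $i\ge m$, and by the remark following the definition of linearization domain, $o_i^j=g^{i-m}(o_m^j)$ for $i\ge m$; in particular the subsequence $(o_{mi}^j)_{i\ge 1}$ lies in $U$ and each term is the image of the next under $f^m$, so $f^{mn}(o_{mi}^j)=o_{m(i-n)}^j\in\bigcup_j U_j$ for $0\le n\le i$, and for $n>i$ one is at $o_{m(i-n)}^j$ with non-positive index, which is a point on the forward orbit of $o$ under $f$; here I should be slightly careful and rather note that $o_{mi}^j$ with $i\ge 1$ belongs to $U^{j'}$ for the appropriate branch $j'$ (the one containing $o_m^{j}$'s image chain), hence $o_{mi}^j\in\bigcup U_{j'}$, and its $f^m$-iterates run down the orbit staying inside $\bigcup U_{j'}\cup\{o\}$; since $o$ itself can be taken to lie in $V_0$ (it is the common image point, inside $U=V$, and after one further shrink it lies in one of the $U_j$, being fixed), membership in $K$ follows. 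I expect this bookkeeping — matching each $o_{mi}^j$ to the correct inverse branch and confirming all forward $f^m$-iterates remain in $V_0$ — to be the main obstacle, essentially the same subtlety handled in Lemma~\ref{lemadjunique} and in the proof of Theorem~\ref{thmmcmullen}.

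Finally, for the last sentence, fix $0\le q\le m-1$ and set $K_q:=f^q(K)$. Since $f^m(K)=K$, we get $(f^m)(K_q)=f^{q}(f^m(K))=f^q(K)=K_q$, so $K_q$ is $f^m$-invariant. Expansion transfers: if $|df^{mn}(x)|\ge C\rho^{mn}$ on $K$, then on $K_q$, writing a point as $f^q(x)$, the chain rule together with the fact that $f^q$ has bounded derivative with no critical points on $K$ (the $o_i^j\notin C(f)$ hypothesis, plus shrinking $U$ so that $f^q$ is a local isomorphism near $K$) gives a comparable lower bound for $|d(f^m)^n|$ on $K_q$, so $f^m\colon K_q\to K_q$ is expanding. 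Topological exactness likewise transfers, since $f^q\colon K\to K_q$ is an open continuous surjection commuting with the dynamics: given an open $W\subset K_q$, pull back to an open subset of $K$, apply topological exactness of $f^m$ on $K$ to reach $K$, then push forward by $f^q$ to reach $K_q$. Hence $f^q(K)$ is a CER, which completes the proof.
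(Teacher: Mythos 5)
There is a genuine gap in the construction of the horseshoe: your $V_0$ omits the central inverse branch at $o$, and without it the containment $o_{mi}^j\in K$ fails outright. The forward $f^m$-orbit of $o_{mi}^j$ runs down to $o_0^j=o$ and then sits at the fixed point $o$ forever, so $o_{mi}^j\in K$ forces $o\in V_0$; likewise the tail points $o_{mi}^j$ for $i\geq 2$ lie in $g^m(U)$ (they are $g^{m(i-1)}(o_m^j)$), not in any of the branches $U^j$. Your proposed remedies do not fix this: shrinking $U$ makes each $U^j$ a small component around $o_m^j\neq o$, so it cannot be made to contain $o$ (indeed, if $o\in U^j$ then $o$ would be the unique fixed point of $(f^m|_{U^j})^{-1}$, contradicting the adjoint-sequence construction); and an extra branch taken at another preimage of $o$, or from an auxiliary homoclinic orbit via Lemma \ref{lemexisthomo}, is again a component around a point different from $o$ and contains neither $o$ nor the points $o_{mi}^j$, $i\geq 2$. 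The paper's proof resolves exactly this by setting $V_0:=\bigl(\bigcup_{j=1}^k U_m^j\bigr)\cup g^m(U)$: the extra branch $g^m(U)\subset\subset U$ with $f^m|_{g^m(U)}$ an isomorphism onto $U$ captures $o$ and all the tail points, makes the membership $o_{mi}^j\in K$ immediate, and automatically provides at least two branches for Example \ref{horseshoe}. This is the one missing idea; the rest of your first two paragraphs (common good return time via Proposition \ref{proexihors} and Remark \ref{remhorsmenlarge}) matches the paper.

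On the last assertion, your transfer of expansion and topological exactness to $K_q=f^q(K)$ is in the spirit of the paper's argument (which uses that $\phi_q:=f^q|_{V_0}$ is a finite holomorphic covering semiconjugating $f^m|_K$ to $f^m|_{K_q}$, with $|d\phi_q|$ bounded below on $K$), but you never verify condition (i) of Definition \ref{defcer}. By Remark \ref{openmap} this amounts to showing $f^m:K_q\to K_q$ is an open map; you assert that $f^q|_K:K\to K_q$ is open without proof, and that assertion is exactly what needs an argument (one must check, using injectivity of $f^q$ on each component of $V_0$ and the total-invariance defining $K$, that the local inverse of $f^q$ sends points of $K_q$ near $f^q(x)$ back into $K$ near $x$). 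This part is patchable along the paper's lines, but as written it is incomplete.
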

\begin{proof}
By Lemma \ref{proexihors}, there exist a linearization domain $U$ of $o$ and an integer $m$ such that for every $1\leq j\leq k$, $m$ is a common good return time of $U$ for the homoclinic orbits $o_i^j$, $i\geq 0$. Let $U_m^j$ be the connected component of $f^{-m}(U)$ containing $o_m^j$.  Let 
\begin{equation*}
V_0:= \left(\bigcup_{j=1}^k U_m^j\right)\cup g^m(U).	
\end{equation*} Then the set
\begin{equation*}
K:=\left\{ z\in V_0 |\,\, f^{mn}(z)\in V_0\;\text{for}\;n\geq 0\right\}
\end{equation*}
is a horseshoe of $f$.  Clearly we have $o_{mi}^j\in K$ for every $i\geq 0$ and $1\leq j\leq k$. 
\medskip

For each $0\leq q\leq m-1$ let $K_q:=f^q(K)$.  We know that $f^q: U_m^j\to U_{m-q}^j$ is an isomorphism, and $f^q:g^m(U)\to g^{m-q}(U)$ is an isomorphism. This implies $f^q:V_0\to f^q(V_0)$ is a finite holomorphic covering (the image of $f^q$ of two components of $V_0$ may coincide).  We let $\phi_q$ denote this map.  Then we have 
\begin{equation*}
	\phi_q\circ f^m|_{V_0}=f^m|_{f^q(V_0)}\circ \phi_q 
\end{equation*}
on $f^{-m}(V_0)\cap V_0$, which implies that  $f^m:K\to K$ and $f^m:K_q\to K_q$ are holomorphically semi-conjugated by $\phi_q$ on the corresponding neighborhoods of $K$ and $K_q$.  We check that $K_q$  satisfies the three conditions in Definition \ref{defcer}.
Since $\phi_q$ is a covering and $f^m:K\to K$ is an open map, $f^m:K_q\to K_q$  is an open map.
Since $f^m:K\to K$ is expanding and $|d\phi_q|>c$ on $K$ for some constant $c>0$, $f^m:K_q\to K_q$ is expanding. 
By Remark \ref{openmap}, conditions (i) and (ii) hold. Since $f^m:K\to K$ is topologically exact  and $\phi_q:K\to K_q$ is a semi-conjugacy, $f^m:K_q\to K_q$ is topologically exact. This implies Condition (iii). Hence $K_q=f^q(K)$ is a CER.
\end{proof}



\medskip

The following definition of linear CER was introduced by Sullivan \cite{sullivan1986quasiconformal}.
\begin{defi}\label{deflinearcer}
Let $f:\P^1\to\P^1$ be an endomorphism over $\C$. Let $K$ be a CER of $f$. $f(K)=K$.  We call $K$ linear if one of the following conditions holds. 
\begin{points}
\item The function $\log |df|$ is cohomologous to a locally constant function on $K$, i.e. there exist a continuous function $u$ on $K$ such that $\log |df|-(u\circ f-u)$ is locally constant on $K$.
\item there exist an atlas $\left\{ \phi_i\right\}_{1\leq i\leq k}$ that is a family of holomorphic injections $\phi_i:V_i\to \C$ such that $K\subset \cup_{i=1}^k V_i$ and all the maps $\phi_i\circ\phi_j^{-1}$ and $\phi_i\circ f\circ \phi_j^{-1}$ are affine. 
\end{points}
\end{defi}
A proof that these two conditions are actually equivalent can be found in Przytycki-Urbanski \cite[section 10.1]{przytycki2010conformal}.
\medskip

The following Sullivan's rigidity theorem \cite{sullivan1986quasiconformal} will be used in the proof of Theorem \ref{thmlength} and Theorem \ref{markedlength}.  A proof can be found in \cite[section 10.2]{przytycki2010conformal}.

\begin{thm}[Sullivan]\label{cerrigidity}
Let $(f,K_f)$, $(g,K_g)$ be two CERs such that $K_f$ is non-linear, $f(K_f)=K_f$, $g(K_g)=K_g$.  Let $h:K_f\to K_g$ be a homeomorphism such that $h\circ f=g\circ h$ on $K_f$. Then the following two conditions are equivalent
\begin{points}
	
	\item for every periodic point $x\in K_f$ we have $|df^n(x)|=|dg^n(h(x))|$, where $n$ is the period of $x$;

	\item there exist a neighborhood $U$ of $K_f$ and a neighborhood $V$ of $K_g$ such that $h$ extends to a conformal map $h:U\to V$. 
\end{points}
\end{thm}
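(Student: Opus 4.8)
This is Sullivan's rigidity theorem for conformal expanding repellers; here is the approach I would take.

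\emph{The direction (ii)$\Rightarrow$(i) is elementary.} First note that a non-linear CER cannot be a single repelling periodic point, so $K_f$ is infinite; and since $f^m$ is topologically exact on $K_f$, an isolated point of $K_f$ would be mapped onto all of $K_f$ in finitely many steps, forcing $K_f$ to be a point. Hence $K_f$, and therefore $K_g=h(K_f)$, is perfect. Now let $x\in K_f$ be $n$-periodic and set $y:=h(x)$. On a small disk $\Omega\ni y$ the map $\tilde g:=h\circ f^n\circ h^{-1}$ is holomorphic (it is the composition of the holomorphic $f^n$ with $h$ near $x$ and with $h^{-1}$ near $y$, and whichever parity $h$ has near $x$ it is cancelled by that of $h^{-1}$, since $f^n(x)=x$ keeps us in one component of $U$); it fixes $y$; and for $w\in K_g\cap\Omega$ the conjugacy on $K_f$ gives $\tilde g(w)=h(f^n(h^{-1}(w)))=g^n(w)$. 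As $K_g$ is perfect, $K_g\cap\Omega$ accumulates at $y$, so the identity theorem yields $\tilde g=g^n$ on $\Omega$. Differentiating $\tilde g=h\circ f^n\circ h^{-1}$ at the fixed point $y$ cancels the nonzero chart-derivative of $h$ and, upon taking moduli, gives $|dg^n(y)|=|df^n(x)|$, which is (i).

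\emph{For (i)$\Rightarrow$(ii) I would run the thermodynamic-formalism proof.} Since $f|_{K_f}$ and $g|_{K_g}$ are expanding and topologically exact, $h$ is Hölder \cite{przytycki2010conformal}, so $\log|df|$ and $(\log|dg|)\circ h$ are Hölder on $K_f$; by the chain rule their Birkhoff sums over an $n$-periodic $x$ are $\log|df^n(x)|$ and $\log|dg^n(h(x))|$, so (i) asserts exactly that these two potentials have equal sums over every periodic orbit of $f|_{K_f}$. The Livsic theorem then provides a Hölder $u\colon K_f\to\R$ with
\begin{equation*}
\log|df|-(\log|dg|)\circ h\;=\;u\circ f-u\qquad\text{on }K_f .
\end{equation*}
Cohomologous potentials have equal pressure and $h$ conjugates $(K_g,g)$ to $(K_f,f)$, so $P_f(-t\log|df|)=P_g(-t\log|dg|)$ for every $t$; by Bowen's formula for CERs their common zero gives $\mathrm{HD}(K_f)=\mathrm{HD}(K_g)=:\alpha$, and the displayed identity shows that $h$ pushes the $\alpha$-conformal measure $m_f$ of $K_f$ to a measure with bounded density with respect to the $\alpha$-conformal measure $m_g$ of $K_g$.

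The crux is to upgrade this to infinitesimal conformality. For $x\in K_f$ and $n\ge 0$ let $\beta_n$ be the inverse branch of $f^n$ on a fixed-size disk about $f^n(x)$ returning near $x$; by the Koebe distortion theorem these have uniformly bounded distortion, so $\beta_n$ differs by a bounded factor from the affine contraction $w\mapsto x+df^n(x)^{-1}(w-f^n(x))$. The conjugacy gives $h\circ\beta_n=\gamma_n\circ h$ with $\gamma_n$ the matching inverse branch of $g^n$; rescaling by $df^n(x)$ and $dg^n(h(x))$ and controlling the ratio through $|df^n(x)/dg^n(h(x))|=e^{u(f^nx)-u(x)}$, one obtains a precompact family of uniformly bi-Lipschitz maps whose subsequential limits are affine-plus-translation maps between tangent sets of $K_f$ at $x$ and of $K_g$ at $h(x)$. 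If on an $m_f$-full set all such tangent limits are complex-affine, then the bounded-distortion and Markov-partition structure of the CER lets one assemble $h$, element by element over a finite Markov partition and using that its transition maps are holomorphic, into a holomorphic-or-anti-holomorphic map on a neighborhood $U$ of $K_f$, which is (ii). Otherwise, on a set of positive $m_f$-measure some tangent limit is real-affine but not complex-affine, and its complex dilatation is a measurable $\mu\colon K_f\to\D\setminus\{0\}$ with $\mu(fx)=\mu(x)\,\overline{df(x)}/df(x)$ for $m_f$-a.e.\ $x$, i.e.\ a non-trivial measurable invariant line field; such a field makes $\arg(df)$ cohomologous along $f$ (modulo $\pi$) to a locally constant function, which together with the displayed cohomology exhibits $df$ itself as cohomologous to a locally constant function, contradicting the non-linearity of $K_f$ in the sense of Definition \ref{deflinearcer}. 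Hence only the first alternative can occur, proving (ii).

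\emph{The main obstacle} is everything past the Livsic step: converting the cohomological identity on the fractal $K_f$ into genuine infinitesimal conformality while working throughout with the \emph{singular} conformal measure $m_f$ (so the usual Lebesgue-almost-everywhere arguments are unavailable), and proving the rigidity lemma that a non-linear CER carries no non-trivial measurable invariant line field. That lemma is the only place the non-linearity hypothesis is used, and it is where essentially all of the thermodynamic-formalism machinery is spent; the final patching of the chart-wise conformal extensions into a single conformal map on a neighborhood is then routine.
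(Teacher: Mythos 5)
The first thing to note is that the paper does not prove Theorem \ref{cerrigidity} at all: it is quoted as Sullivan's theorem, with the proof attributed to \cite{sullivan1986quasiconformal} and to \cite[Section 10.2]{przytycki2010conformal}, and it is then used as a black box in the proofs of Theorems \ref{markedlength} and \ref{thmbigspecrig}. So there is no in-paper argument to compare with; what you have written is an outline of the standard thermodynamic-formalism proof from that reference.

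Judged as a proof, your text has a genuine gap in the direction (i)$\Rightarrow$(ii). The implication (ii)$\Rightarrow$(i) is fine, and the reduction via H\"older conjugacy and the Livsic theorem to the cohomological identity $\log|df|-(\log|dg|)\circ h=u\circ f-u$ is correct. But everything after that point --- precompactness of the rescaled inverse branches, the dichotomy between complex-affine and merely real-affine tangent limits, the assembly of chart-wise extensions over a Markov partition, and above all the rigidity claim that a non-linear CER carries no nontrivial measurable invariant line field --- is asserted rather than established, and you acknowledge as much; this is exactly where the substance of Sullivan's theorem lies. Moreover, the one justification you do offer for the final contradiction does not work as stated: an invariant line field constrains $\arg(df)$, and the displayed identity relates $\log|df|$ to $\log|dg|\circ h$, but neither fact, nor their combination, shows that $\log|df|$ is cohomologous to a \emph{locally constant} function on $K_f$, which is what Definition \ref{deflinearcer} requires in order to contradict non-linearity. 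Closing this step needs the actual machinery of \cite[Sections 10.1--10.2]{przytycki2010conformal} (or Sullivan's original argument), which the sketch does not supply; as it stands, the proposal is a plausible roadmap to the cited proof, not a proof.
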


Here as in Theorem \ref{markedlength}, a conformal map may change the orientation of $\P^1(\C)$.
\medskip

\subsection{Having a linear CER implies exceptional}
Now we give a proof of Theorem \ref{linearcer}. 
\proof[Proof of Theorem  \ref{linearcer}]
Let $K$ be a linear CER of $f$, which is not a finite set. By \cite[Proposition 4.3.6]{przytycki2010conformal}, there exist a repelling periodic point $o\in K$ of $f$.  Passing to an iterate of $f$ we may assume $f(K)=K$ and $f(o)=o$.  Topological exactness of $f$ on $K$ implies for every $a\in K$, the preimages of $f|_K$ are  dense in $K$.  Let $U$ be a linearization domain $U$ of $f$ at $o$.  Since $K\neq \left\{ o\right\}$,  there exist $l\geq 1$ and a point $p_l\in K$ such that  $p_l\neq o$, $f^l(p_l)=o$.
 Then there exist a (unique) homoclinic orbit $o_i$, $i\geq 0$ such that $o_l=p_l$ and $o_i\in U$ for every $i\geq l$.  Clearly $o_i\in K$ when $i\leq l$.  By the definition of CER, there exist a neighborhood $V$ of $K$ such that  $K=\cap_{n\geq 0}f^{-n}(V)$. Shrink $U$ if necessary we assume $U\subset V$.  Hence for every $i\geq l$  we have $o_i\in V$. This implies for every fixed $i\geq 0$, for every $n\geq 0$ we have $f^n(o_i)\in V$. Hence $o_i\in K$ for every $i\geq 0$. 
\medskip

Let $\left\{V_j\right\}_{1\leq j\leq k}$ be an affine atlas in Definition \ref{deflinearcer}.  Shrink the linearization domain  $U$ if necessary we may assume for every $i\geq 0$,  $U_i$ (the connected component of $f^{-i}(U)$ containing $o_i$)  is contained in some affine chart, say $V_{j(i)}$. In particular $U\subset V_{j(0)}$ and $U_i\subset  V_{j(0)}$ for every $i\geq l$.  Let $\left\{ q_i\right\}$, $i\geq 0$ be the adjoint sequence of $o_i$, $i\geq 0$.  For every large enough integer $n$ we have $q_n\in U_n$. For such fixed $n$, for every  $1\leq i\leq n$ we have $f^{n-i}(q_n)\in U_i\subset V_{j(i)}$.  Let $\la_i\in \C^\ast$  be the derivatives of the affine map $\phi_{j(i+1)}\circ f\circ \phi_{j(i)}^{-1}$, where $0\leq i\leq l-1$.  Let $\la\in \C^\ast$ be the derivatives of the affine map  $\phi_{j(0)}\circ f\circ \phi_{j(0)}^{-1}$. Then we have $df(o)=\la$, and for every $n$ large enough we have
\begin{equation*}
df^n(q_n)=\left(\prod_{i=0}^{l-1} \la_i \right)\la^{n-l}.
\end{equation*}
By Theorem \ref{thmadjconmultexcep}, $f$ is exceptional. The proof is finished.
\endproof

\subsection{Marked length spectrum rigidity}
We now prove Theorem \ref{markedlength} by using Theorem \ref{linearcer} and Lemma \ref{cerhomoclinic}. 
\proof[Proof of Theorem \ref{markedlength}]
It is clear that (ii) implies (i). We need to show (i) implies (ii). Assume that $h$ preserves the marked length spectrum on $\Omega$. 
If $h$ extends to a global conformal map $\P^1(\C)\to \P^1(\C)$, since $h\circ f=g\circ h$ on $\sJ(f)$, the same equality holds on $\P^1(\C)$.
So we may replace $f$ by its iterate.
Passing to an iterate of $f$, we assume $f$ has a repelling fixed point $o\in \Omega$ and $o\notin PC(f)$.  A result of Eremenko-van Strien \cite{eremenko2011rational} says that  if a  non-Latt\`es endomorphism $f$ has the property that all the multipliers are real for periodic points contained in a non-empty open set of $\sJ(f)$, then $\sJ(f)$ is contained in a circle.  By this result there are two cases:
\begin{points}
	\item we can further choose $o$ such that $df(o)\notin \R$;
	\item $\sJ(f)$ is contained in a circle $C$.
\end{points}

By our choice of $o$, $h(o)$ is a repelling fixed point of $g$. Moreover we have $h(o)\notin PC(g)$ since $h$ preserve critical points in the Julia set. This can be proved using the total invariance of the Julia sets and the fact that critical means locally not injective.  Let $o_i$ $i\geq 0$ be a homoclinic orbit of $o$. Then $h(o_i)$, $i\geq 0$ is a homoclinic orbit of $h(o)$. Let $U$ be a linearization domain of $o$ such that $U\cap \sJ(f)\subset \Omega$. Let $W$ be a connected open neighborhood of $h(o)$ such that $h(U\cap \sJ(f))\subset W$ and $W\cap \sJ(g)\subset h(\Omega)$.  By Lemma \ref{proexihors}, shrink $U$ and $W$ if necessary there exist  $m\geq 1$ such that  $m$ is a good return time of $U$ (resp. $W$) for $o_i, i\geq 0$ (resp. $h(o_i), i\geq 0$ ). By Lemma \ref{cerhomoclinic}, there exist two horseshoes,  $f^m:K_f\to K_f$ (resp. $g^m:X_g\to X_g$) such that $o_{im}\in K_f$, $i\geq 0$ (resp. $h(o_{im})\in X_g$, $i\geq 0$).  We let $K_g:=h(K_f)$. By our construction we have $h:K_f\to K_g$ is a homeomorphism and $h\circ f^m=g^m\circ h$ on $K_f$. Moreover $K_g\subset X_g$. We check that $K_g$ is a CER of $g$: $g^m:K_g\to K_g$ is open and topologically exact since $f^m:K_f\to K_f$ is; $g^m:K_g\to K_g$ is expanding since $K_g$ is contained in an expanding set $X_g$. Hence $K_g$ is a CER of $g$.  Passing to an iterate we may assume $f(K_f)=K_f$ and $g(K_g)=K_g$.  To simplify the notation, for $i\geq 0$ we let $o_i$ be the unique point in $f^{-i}(o)$ which is contained in the previous homoclinic orbit.
\medskip

Since $f$ is not exceptional, $K_f$ is a non-linear CER by Theorem \ref{linearcer}.  Moreover by our construction we have $K_f\subset \Omega$.   Hence for every $n$-periodic point $x\in K_f$, we have $|df^n(x)|=|dg^n(h(x))|$.  By Theorem \ref{cerrigidity}, $h$ can be extended conformally to a neighborhood $V$ of $K_f$ such that $V\cap \sJ(f) \subset \Omega$. We denote this extension by $\tilde{h}$.  In case (ii), we can further assume that $\tilde{h}$ is in fact holomorphic: if  $\tilde{h}$ is antiholomorphic on some connected component $B$ of $V$, let $\phi$ be a  non-identity conformal map (necessarily antiholomorphic) on $\P^1(\C)$ such that $\phi$ fixes every point in $C$, then on $B$, we may replace $\tilde{h}$ by $\tilde{h}\circ \phi$, which is holomorphic. We have $\tilde{h}=h$ on $K_f$.  Since $\tilde{h}\circ f=g\circ \tilde{h}$ on $K_f$ and $K_f$ is a perfect set, by the conformality of $\tilde{h}$ we have $\tilde{h}\circ f=g\circ \tilde{h}$ on $V$. 
\medskip

Next we show that $\tilde{h}=h$ on $U_0\cap \sJ(f)$, where $U_0\subset V$ is a linearization domain of $o$. Let $E$ be the set of all $f$-preimages of $o$.  For every $a\in E\cap U_0$, $f^q(a)=o$, there exist a homoclinic orbit $o_i'$ of $o$ such that $a=o'_q$, and $o'_i\in U_0$ for every $i\geq q$.

Choose $m'\geq q$, by similar construction as in first paragraph, we get two CERs,  $f^{m'}:K''_f\to K''_f$ (resp. $g^{m'}:K''_g\to K''_g$) such that $o_{im'}\in K''_f$ and $o_{im'}'\in K''_f$ (resp. $h(o_{im'})\in K''_g$ and $h(o'_{im'})\in K''_g$) for $i\geq 0$. Moreover, $K''_f$ is a horseshoe and $K''_g$ is contained in a horseshoe $X''_g$.
By Lemma \ref{cerhomoclinic}, $K'_f:=f^{m'-q}(K''_f)$ and $f^{m'-q}(X''_f)$ are CERs.
Since $K'_g:=f^{m'-q}(K''_g)\subseteq f^{m'-q}(X''_f)$, $g^m: K'_g\to K'_g$ is expending. 
Since $h:K'_f\to K'_g$ is a homeomorphism and $h\circ f^{m'}=g^{m'}\circ h$ on $K'_f$,
$g^{m'}: K'_g\to K'_g$ is open and topologically exact. By Remark \ref{openmap}, $K'_g$ is a CER.
Moreover, we have $o_{q+im'}\in K'_f$ and $o_{q+im'}'\in K'_f$ (resp. $h(o_{q+im'})\in K'_g$ and $h(o'_{q+im'})\in K'_g$) for $i\geq 0$. 
Since $f$ is not exceptional, $K'_f$ is a non-linear CER by Theorem \ref{linearcer}.  Moreover every periodic point $x$ of $f^{m'}:K'_f\to K'_f$ has the form $x=f^{m'-q}(y)$, where $y$ is a  periodic point $x$ of $f^{m'}:K''_f\to K''_f$. Since $K''_f\subset \Omega$, we get that the $f$-orbit of $x$ has non empty intersection with $\Omega$. This implies for every  $n$-periodic point $x$ of $f^{m'}:K'_f\to K'_f$ we have $|df^{m'n}(x)|=|dg^{m'n}(h(x))|$. By Theorem \ref{cerrigidity}, $h$ can be extended conformally to a neighborhood $V'$ of $K'_f$.  Denote this extension by $\tilde{h}'$. In case (ii) we further assume that $\tilde{h}'$ is  holomorphic. We have  $\tilde{h}'(o_{q+im'})=\tilde{h}(o_{q+im'})=h(o_{q+im'})$, $i\geq 0$.  The set  $\left\{o_{q+im'}, i\geq 0 \right\}$ is a set with accumulation point $o$. We claim that $\tilde{h}'=\tilde{h}$ on $V_0$,  where $V_0$ is the connected component of $V\cap V'$ containing $o$.  In case (i), since $df(o)\notin \R$, $\tilde{h}'$ and $\tilde{h}$ are both holomorphic or both antiholomorphic on $V_0$, hence $\tilde{h}'=\tilde{h}$ on $V_0$. In case (ii), by our choices $\tilde{h}'$ and $\tilde{h}$ are both holomorphic, hence $\tilde{h}'=\tilde{h}$ on $V_0$. 

there exist $b\in V_0\cap K'_f$  such that $f^{q+nm'}(b)=a$ for some $n\geq 0$ and $\left\{ b, f(b), \cdots, f^{q+nm'}(b)\right\}\subset U_0$.  We also have $\tilde{h}(b)=\tilde{h}'(b)=h(b)$. Since $\tilde{h}\circ f=g\circ \tilde{h}$ on $U_0$ we have
\begin{equation*}
	\tilde{h}(a)=\tilde{h}(f^q(b))=g^q(\tilde{h}(b))=g^q(h(b))=h(f^q(b))=h(a).
\end{equation*}
This implies $\tilde{h}=h$ on $E\cap U_0$. Since $E$ is dense in $\sJ(f)$ we get that $\tilde{h}=h$  on $U_0\cap \sJ(f)$. 
\medskip

To summary we have shown that the homeomorphism $h:\sJ(f)\to \sJ(g)$ conjugates $f$ to $g$ can be  extended conformally to a disk intersecting $\sJ(f)$. By a lemma due to Przytycki-Urbanski \cite[Proposition 5.4, Lemma 5.5]{przytycki1999rigidity}, $h$ extends to a conformal map $h:\P^1(\C)\to \P^1(\C)$ such that  $h\circ f=g\circ h$  on $\P^1(\C)$. 
\endproof


\subsection{Marked multiplier spectrum rigidity}
Combining Theorem \ref{markedlength} and Eremenko-van Strien's theorem \cite{eremenko2011rational}, we now prove Theorem \ref{markedmultiplier}.
\proof[Proof of Theorem \ref{markedmultiplier}]
It is clear that (ii) implies (i). We need to show that (i) implies (ii). 
Assume $h$ preserves the marked multiplier spectrum on $\Omega$. By Theorem \ref{markedlength}, $h$ can be extended to a conformal map on $\P^1(\C)$. If $h$ is holomorphic then we are done. If $h$ is antiholomorphic, then the multipliers of all periodic points in $\Omega$ are real. By the main theorem in \cite{eremenko2011rational}, $\sJ(f)$  is contained in a circle $C$.  Let $\phi$ be a  non-identity conformal map on $\P^1(\C)$ such that $\phi$ fixes every point in $C$.  Let $\tilde{h}:=h\circ \phi$, then $\tilde{h}\in \PGL_2(\C)$, and we have $ \tilde{h}\circ f=g\circ \tilde{h}$ on $\P^1(\C)$, this finishes the proof.
\endproof


\subsection{Another proof of McMullen's theorem}
Now we can give another proof of Theorem \ref{thmmcmullen} using $\la$-Lemma and Theorem \ref{markedmultiplier}. 

\proof[Proof of Theorem \ref{thmmcmullen}]
By using $\la$-Lemma \cite[Theorem 4.1]{mcmullen2016complex}, it is well known that two endomorphisms in  a
stable family are quasiconformally conjugate on thier Julia sets. Assume by contradiction the conclusion is not true. Since  exceptional endomorphisms that are not flexible Latt\`es are isolated in the moduli space $\mathcal{M}_d$,  there is at least one $f$  in the familly that is not exceptional.  Let $g$ be another endomorphism in the family. Let $h:\sJ(f)\to \sJ(g)$ be the quasicoformal conjugacy.  Since multiplier spectrum is preserved in this family
 and the conjugacy $h$ moves continuously in the family, for every $n$-periodic point $x$ of $f$ we have $df^n(x)=dg^n(h(x))$. By Theorem \ref{markedmultiplier}, $h$ extends to an automorphism on $\P^1(\C)$. This contradicts to the assumption that the family  is non-isotrivial. 
\endproof

\medskip
\subsection{Milnor's conjecture on Lyapunov exponent}
We now prove Theorem \ref{thmmilnor1}  using Theorem \ref{linearcer}.
\proof[Proof of Theorem \ref{thmmilnor1}]
Let $S$ be the finite exceptional set of periodic points in Theorem \ref{thmmilnor1}.  Passing to an iterate of $f$ there exist a repelling fixed point $o$ of $f$ such that $o\notin S$.  Choose a linearization domain $U$ of $o$ such that $U\cap S=\emptyset$. By the discussion in Lemma \ref{cerhomoclinic}, there exist a horseshoe $K\subset U$.  Passing to an iterate of $f$,  we assume that $f(K)=K$. For every $n$-periodic point $x\in K$, we have $|df^n(x)|=b^n$ for some $b>0$.  Consider the function $\phi:=\log |df|$.  We have shown that for every $n$-periodic point $x\in K$,  $\sum_{i=0}^{n-1}\phi(f^i(x)) =n\log b$. Recall the following classical Livsic Theorem \cite{livvsic1972cohomology}
\begin{lem}
	Let $K$ be a CER of $f$, $f(K)=K$. Let $\phi$ be a H\"older continuous function on $K$. Assume there exits a constant $C$ such that for every $n$-periodic point $x\in K$ of $f$ we have 
	\begin{equation*}
		\sum_{i=0}^{n-1}\phi(f^i(x))=nC,
	\end{equation*}
	then  there exist a continuous function $u$ on $K$ such that $\phi-C=u\circ f-u$. 
\end{lem}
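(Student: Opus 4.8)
The plan is to prove the classical Livsic theorem by the standard orbit-coboundary construction. After replacing $\phi$ by $\phi-C$, we may assume $C=0$, so that $\sum_{i=0}^{n-1}\phi(f^i(x))=0$ for every $n$-periodic point $x\in K$, and we must build a continuous $u$ on $K$ with $u\circ f-u=\phi$. First I would use topological exactness: since some iterate of $f$ is topologically exact on the compact metric space $K$, the map $f$ is topologically transitive, so by the Baire category theorem the set of points with dense forward orbit is a dense $G_\delta$ in $K$; fix such a point $p$. Define $u$ along the forward orbit of $p$ by $u(f^np):=\sum_{i=0}^{n-1}\phi(f^i(p))$, so that $u(p)=0$ and $u(f^{n+1}p)-u(f^np)=\phi(f^np)$ for all $n\ge0$; the cohomological equation thus holds tautologically along the orbit of $p$.

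The crux is to show that $u$ is uniformly continuous on the dense set $\{f^np\}_{n\ge0}$, and this is where the expanding property of the CER and the periodic-orbit hypothesis enter, through a closing (shadowing) lemma. Because $f$ expands a conformal metric in a neighbourhood of $K$ by a factor $\rho>1$, there exist $\delta_0>0$ and a constant $C_1$ such that whenever a finite orbit segment $x_0,x_1=f(x_0),\dots,x_N=f^N(x_0)$ in $K$ satisfies $d(x_N,x_0)<\delta_0$, the composition $G$ of the local inverse branches of $f$ taken along the segment is a contraction of a small ball about $x_0$; its unique fixed point $q$ is then $N$-periodic and satisfies $d(f^iq,x_i)\le C_1\rho^{-(N-i)}d(x_N,x_0)$ for $0\le i\le N$. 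Applying this with $x_0=f^mp$ and $N=n-m$ for any $m<n$ with $d(f^mp,f^np)<\delta_0$, and using $\sum_{i=0}^{N-1}\phi(f^iq)=0$ together with the H\"older continuity of $\phi$ (constant $H$, exponent $\alpha$), I obtain
\begin{equation*}
\bigl|u(f^np)-u(f^mp)\bigr|=\Bigl|\sum_{i=0}^{N-1}\bigl(\phi(x_i)-\phi(f^iq)\bigr)\Bigr|\le\sum_{i=0}^{N-1}H\,d(x_i,f^iq)^\alpha\le\frac{HC_1^\alpha}{1-\rho^{-\alpha}}\,d(f^mp,f^np)^\alpha,
\end{equation*}
a modulus of continuity independent of $m$ and $n$.

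Finally I would extend $u$ by uniform continuity to a (in fact H\"older) continuous function on $K=\overline{\{f^np\}_{n\ge0}}$. Since $u\circ f-u=\phi$ holds on the dense forward orbit of $p$ and all of $u$, $u\circ f$, $\phi$ are continuous on $K$, the identity $u\circ f-u=\phi$ extends to all of $K$; undoing the normalization yields $\phi-C=u\circ f-u$. The main obstacle is the closing lemma with the exponential shadowing estimate $d(f^iq,x_i)\le C_1\rho^{-(N-i)}d(x_N,x_0)$: this is the only place where expansiveness and the expanding property of the CER are genuinely used, and the only place the assumption on periodic orbits is invoked; everything else is bookkeeping with the definition of $u$ and continuity.
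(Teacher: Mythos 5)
The paper does not prove this lemma at all: it is quoted as the classical Livšic theorem with a citation to Livšic's paper, so there is no in-paper argument to compare with. Your proposal is the standard proof of that theorem for expanding maps (transitive point, orbit sums, Anosov-type closing lemma, Hölder summation), and it is essentially correct. Two points should be made explicit to really close it. First, $\phi$ and the periodic-orbit hypothesis are only available on $K$, so you must check that the shadowing periodic point $q$ produced by the inverse-branch contraction actually lies in $K$; this follows from condition (i) of the paper's definition of a CER (local maximality, $K=\cap_{n\geq 0}f^{-mn}(V)$), since the whole $f$-orbit of $q$ stays in a small neighbourhood of $K$, but it is not automatic from expansion alone and is exactly the place where one uses that $K$ is a repeller rather than an arbitrary invariant expanding set. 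Second, in the CER definition the expansion estimate is stated for an iterate $f^m$, while your closing lemma uses inverse branches of $f$ itself; this is harmless (surjectivity of $f|_K$ and $|d(f^m)^n|\geq C\rho^n$ force $df\neq 0$ on $K$, so $f$ is expanding in the paper's sense with $\rho^{1/m}$, possibly after an adapted metric to absorb the constant), but it deserves a sentence. Finally, well-definedness of $u$ on the orbit of the transitive point $p$ uses that the orbit points are distinct, which fails only when $K$ is finite, a case that is trivial because then every point of $K$ is periodic. With these remarks added, your argument is a complete and self-contained proof, and in fact yields a Hölder (not merely continuous) transfer function $u$, slightly more than the lemma asserts.
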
 
Applying the above theorem to $\phi:=\log |df|$, we get that $\phi$ is cohomologous to a constant function on $K$ in the sense of Definition \ref{deflinearcer}. Hence $K$ is a linear CER, which is not a finite set. By Theorem \ref{linearcer}, $f$ is exceptional. The proof is finished.
\endproof

Next we prove Corollary \ref{lyapunovspectrum}. Let $f:\P^1\to \P^1$ be an endomorphism over $\C$ of degree at least $2$.  By Gelfert-Przytycki-Rams \cite{gelfert2010lyapunov}, there is a forward invariant finite set $\Sigma\subset \sJ(f)$ with cardinality at most 4 (possibly empty), such that for every finite set $F\subset \sJ(f)\setminus \Sigma$, we have $f^{-1}(F)\setminus C(f)\neq F$.  Let $\Delta'(f)$ be  the closure of the Lyapunov exponents of periodic points contained in $\sJ(f)\setminus \Sigma$. The following Theorem was proved by Gelfert-Przytycki-Rams-Rivera Letelier. Be aware that the definition of ``exceptional" in \cite{gelfert2010lyapunov} and \cite{gelfert2013lyapunov} has a different meaning.
\begin{thm}\cite[Theorem 2]{gelfert2010lyapunov}, \cite[Theorem 1, Proposition 10]{gelfert2013lyapunov}.\label{gprr}
Let $f:\P^1\to \P^1$ be an endomorphism over $\C$ of degree at least $2$. Then $\Delta'(f)$ is a  closed interval  (possibly a singleton).
\end{thm}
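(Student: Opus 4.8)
The plan is to realize $\Delta'(f)$ as the closure of the set of Lyapunov exponents of the ergodic $f$-invariant probability measures with positive exponent whose support avoids $\Sigma$, and then to show that this set of exponents is an interval by interpolating between repelling periodic orbits inside conformal expanding repellers. Throughout, the property defining $\Sigma$ — that $f^{-1}(F)\setminus C(f)\neq F$ for every finite $F\subset\sJ(f)\setminus\Sigma$ — is precisely what allows inverse branches to be chosen away from the critical set, so that Koebe distortion estimates apply and expanding subsystems can be built.

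First I would reduce to a statement about measures. A repelling $n$-periodic point $x\in\sJ(f)\setminus\Sigma$ carries the atomic ergodic measure equidistributed on its orbit, with Lyapunov exponent $\frac1n\log|df^n(x)|>0$; conversely, a Katok-type closing lemma — valid here because, by the property of $\Sigma$, one can shadow orbit segments by periodic orbits missing $C(f)$ — shows that any ergodic measure $\mu$ with $\chi_\mu>0$ and $\supp\mu\cap\Sigma=\emptyset$ is weak-$\ast$ approximated by periodic measures whose exponents tend to $\chi_\mu$. Hence $\Delta'(f)$ is the closure of $\{\chi_\mu\}$ over such $\mu$; set $\chi_-$ and $\chi_+$ to be the infimum and supremum of this set (possibly equal).

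The key step is the following interpolation. Given two repelling periodic points $p,q\in\sJ(f)\setminus\Sigma$, I would build a conformal expanding repeller (horseshoe) $E=E(p,q)$ containing $p$ and $q$: after passing to a common iterate so that $p$ is fixed, density of backward orbits together with the $\Sigma$-condition furnishes homoclinic orbits of $p$ and heteroclinic chains joining $p$ and $q$ that avoid $C(f)$, and the horseshoe construction of Example \ref{horseshoe} and Lemma \ref{cerhomoclinic} then yields such an $E$ with $p,q\in E$. On the CER $E$ the one-dimensional thermodynamic formalism gives that the Lyapunov spectrum of $E$ — the set of $\chi_\nu$ for $\nu$ ergodic on $E$ — is a closed interval $[\chi_-^E,\chi_+^E]$, obtained as the range of the derivative of the real-analytic convex pressure function $t\mapsto P_E(-t\log|df|)$, and in particular it contains $\chi_p$ and $\chi_q$; moreover every value in it is a Lyapunov exponent of some ergodic measure on $E$, hence — the closing lemma being automatic on a CER — a limit of Lyapunov exponents of repelling periodic points of $E\subset\sJ(f)\setminus\Sigma$. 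Thus the whole segment with endpoints $\chi_p$ and $\chi_q$ lies in $\Delta'(f)$. Feeding this back, for any $c\in(\chi_-,\chi_+)$ choose $p,q$ with $\chi_p<c<\chi_q$ and conclude $c\in\Delta'(f)$, so that $\Delta'(f)=[\chi_-,\chi_+]$ is a closed interval. If $f$ is exceptional the statement is immediate, since all repelling periodic points then share the exponent $\frac12\log\deg f$ and $\Delta'(f)$ is a singleton; if $f$ is not exceptional, Theorem \ref{linearcer} ensures the CERs built above are non-linear, so $\log|df|$ is not cohomologous to a constant on them and $[\chi_-^E,\chi_+^E]$ is non-degenerate, in agreement with Corollary \ref{lyapunovspectrum}.

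The main obstacle is the construction in the key step: producing, for two prescribed repelling periodic orbits in $\sJ(f)\setminus\Sigma$, a single conformal expanding repeller that contains both while remaining uniformly away from $C(f)$. This is the technical heart of the argument of Gelfert–Przytycki–Rams, and it is exactly what forces the introduction of the set $\Sigma$: without control of the critical orbit one cannot guarantee bounded distortion for the relevant inverse branches, and the expanding structure — hence the interpolation — may fail. By contrast, once the CER is in hand, the fact that its Lyapunov spectrum fills a closed interval is standard, following from the analyticity and convexity of the pressure and the Legendre-transform description of the multifractal (Lyapunov) spectrum.
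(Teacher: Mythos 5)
You should first note that the paper contains no proof of Theorem \ref{gprr} to compare against: it is imported verbatim from \cite{gelfert2010lyapunov} and \cite{gelfert2013lyapunov} and used only as an input to Corollary \ref{lyapunovspectrum}. So what you have produced is an outline of the external Gelfert--Przytycki--Rams(--Rivera Letelier) argument, and it does capture their strategy correctly at the top level: approximate exponents of ergodic measures by repelling periodic orbits, interpolate between two periodic exponents inside a single expanding subsystem, and use the convexity/analyticity of the pressure function to see that the Lyapunov spectrum of a topologically exact CER is a closed interval with periodic exponents dense in it.

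As a proof, however, the proposal has a genuine gap exactly where you say the difficulty lies: the construction, for two prescribed repelling periodic orbits $p,q\subset\sJ(f)\setminus\Sigma$, of one conformal expanding repeller containing both. Saying that density of backward orbits plus the $\Sigma$-condition ``furnishes heteroclinic chains avoiding $C(f)$'' is not enough: to get a horseshoe one needs the return branches to be uniformly contracting inverse branches with bounded distortion, which requires quantitative control of the connecting blocks against the expansion accumulated along the periodic blocks (a shadowing/telescoping argument); this is precisely the technical content of the cited papers, not a routine consequence of Example \ref{horseshoe}. Nor can you lean on Lemma \ref{cerhomoclinic}, which only treats homoclinic orbits of a single repelling fixed point and produces a repeller near that one point. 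The Katok-type closing lemma you invoke to identify $\Delta'(f)$ with the closure of exponents of positive-exponent ergodic measures is likewise quoted rather than proved, and that identification silently drops indifferent periodic points (parabolic or Cremer) lying in $\sJ(f)\setminus\Sigma$: these have exponent $0$, belong to $\Delta'(f)$ by definition, and require a separate argument showing $0$ is then approached by repelling exponents, i.e.\ is an endpoint of the interval rather than an isolated value. Finally, your treatment of the exceptional case is inaccurate: for maps of monomial or Chebyshev type the common exponent off $\Sigma$ is $\log\deg f$ (with $2\log\deg f$ occurring only at the finitely many points of $\Sigma$), not $\tfrac12\log\deg f$, which is the Latt\`es value; the singleton conclusion for $\Delta'(f)$ still holds, but not for the reason given.
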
 

\proof[Proof of Corollary \ref{lyapunovspectrum}]
If $\Delta'(f)$ is not a singleton we are done by Theorem \ref{gprr}.  If $\Delta'(f)$ is a singleton, then by Theorem \ref{thmmilnor1}, $f$ is exceptional, contradics to our assumption. This finishes the proof.
\endproof
\medskip

\subsection{A simple proof of Zdunik's theorem}
Next we give a simple proof of Theorem \ref{zdunik}, using Theorem \ref{linearcer}. 
\proof[Proof of Theorem \ref{zdunik}]
It is easy to observe that if $f$ is exceptional then $\mu$ is absolutely continuous with respect to $\Lambda_\alpha$. We only need to show the converse is true. 
\medskip

Let $\phi:=\alpha \log |df|$. Following Zdunik \cite{zdunik1990parabolic}, we say $\phi$ is cohomologous to $\log d$ if there exist a function $u\in L^2 (\sJ(f),\mu)$ such that $\phi-\log d=u\circ f-u$ holds for almost every point, where $\sJ(f)$ is the Julia set. By a result of Przytycki-Urbanski-Zdunik \cite[Theorem 6]{przytycki1989harmonic}, $\phi$ is not cohomologous to $\log d$ implies $\mu$ is singular with respect to $\Lambda_\alpha$.  So we only need to show that $\phi$ is cohomologous to $\log d$ implies $f$ is exceptional.  Now we assume $\phi-\log d=u\circ f-u$ for some $u\in L^2 (\sJ(f),\mu)$. By a lemma due to Zdunik \cite[Lemma 2]{zdunik1990parabolic}, for every $p\notin PC(f)$, there exist a neighborhood $U$ of $p$ such that $u$ equals to a continuous function almost everywhere. We observe that if $\phi_f:=\alpha \log |df|$ satisfy $\phi_f-\log d=u\circ f-u$  then $\phi_{f^n}:=\alpha \log |df^n|$ satisfies 
\begin{equation}\label{7.1}
	\phi_{f^n}-n\log d=u\circ f^n-u.
\end{equation}
\medskip 

Passing to an iterate of $f$ there exist a repelling fixed point $o\notin PC(f)$. Let $U$ be a linearization domain of $o$ such that $u$ is continous on $U$.  Let $K$ be a horseshoe of $f$ contained in $U$. Passing to an iterate of $f$, we may assume $f(K)=K$. Since $u$ is continuous on $K$, by (\ref{7.1}) the function $\log |df|$ is cohomologous to a constant on $K$ in the sense of  Definition \ref{deflinearcer}. This implies $K$ is a linear CER.  Since $K$ is not a finite set, by Theorem \ref{linearcer}, $f$ is exceptional. The proof is finished. 
\endproof

\section{Length spectrum as moduli}\label{sectionlength}
For $N\geq 1$, the symmetric group $S_N$ acts on $\C^N$ (resp. $\R^N$) by permuting the coordinates. 
Using symmetric polynomials, one can show that $\C^N/S_N\simeq \C^N.$
For every element $(\la_1,\dots, \la_N)\in \C^N$ (resp. $\R^N$), we denote by $\{\la_1,\dots,\la_N\}$ its image in $\C^N/S_N$ (resp. $\R^N/S_N$).
We may view the elements in $\C^N/S_N$ as multisets.\footnote{A multiset is a set except allowing multiple instances for each of its elements. The number of the instances of an element is called the multiplicity. For example: $\{a,a,b,c,c,c\}$ is a multiset of cardinality $6$, the multiplicities for $a,b,c$ are 2,1,3.}

\medskip

For $d\geq 2,$
let $f_{\text{Rat}_d}: \text{Rat}_d\times \P^1$ be the endomorphism sending $(t,z)$ to $(t, f_t(z))$ where $f_t$ is the endomorphism  associated to 
$t\in \text{Rat}_d$.  For  $t\in \text{Rat}_d$, $f_t^n$ has $N_n:=d^n+1$ fixed points counted with multiplicities. 
Let $\la_1,\dots,\la_{d^n+1}$  be the multipliers of such fixed points.
Define $s_n(t)=s_n(f_t):=\{\la_1,\dots,\la_{d^n+1}\}\in \A^{N_n}/S_{N_n}$ the \emph{$n$-th multiplier spectrum of $f_t$}.
Similarly, define $L_n(t)=L_n(f_t):=\{|\la_1|,\dots,|\la_{d^n+1}|\}\in \R^{N_n}/S_{N_n}$ the \emph{$n$-th length spectrum of $f_t$}.
Both $s_n(f_t)$ and $L_n(f_t)$ only depend on the conjugacy class of $f_t.$

\medskip

For every $n\geq 1,$ let $\Per_n(f_{\text{Rat}_d})$ be the closed subvariety of $\text{Rat}_d\times \P^1$ of the $n$-periodic points of $f_{\text{Rat}_d}.$ 
Let $\phi_n: \Per_n(f_{\text{Rat}_d})\to \text{Rat}_d$ be the first projection. It is a finite map of degree $d^n+1.$
Let $\la_n: \Per_n(f_{\text{Rat}_d})\to \A^1$ be the algebraic morphism $(f_t, x)\mapsto df_t^n(x)\in \A^1$. 
Let $|\la_n|: \Per_n(f_{\text{Rat}_d(\C)})(\C)\to [0,+\infty)$ be the composition of $\la_n$ to the norm map $z\in \C\mapsto |z|\in [0,+\infty).$ 
A fixed point $x$ of $f_t^n$ has multiplicity $>1$ if and only if $df_t^n(x)=1$. This shows that the map $\phi_n$ is \'etale at every point $x\in \Per_n(f_{\text{Rat}_d})\setminus \la_n^{-1}(1).$

\medskip

We may view $\Per_n(f_{\text{Rat}_d})$ as the moduli space of endomorphisms of degree $d$ with a marked $n$-periodic point.
So we may also denote  it by $\text{Rat}_d[n]$ or $\text{Rat}^1_d[n]$.
More generally, for every $s=1,\dots, d^n+1$, one may construct the moduli space $\text{Rat}^s_d[n]$ of endomorphisms of degree $d$ with $s$ marked $n$-periodic point as follows:
For $s=2,\dots, d^n+1,$
consider the fiber product $(\text{Rat}_d[n])^s_{/\text{Rat}_d}$ of $s$ copies of $\text{Rat}_d[n]$ over 
$\text{Rat}_d.$ For $i\neq j\in \{1,\dots, d^n+1\}$, let 
$\pi_{i,j}: (\text{Rat}_d[n])^s_{/\text{Rat}_d}\to (\text{Rat}_d[n])^2_{/\text{Rat}_d}$ be the projection to the $i,j$ coordinates. 
The diagonal $\Delta\subseteq (\text{Rat}_d[n])^2_{/\text{Rat}_d}$ is an irreducible component of $(\text{Rat}_d[n])^2_{/\text{Rat}_d}$.
One define $\text{Rat}^s_d[n]$ to be the Zariski closure of $$(\text{Rat}_d[n])^s_{/\text{Rat}_d}\setminus (\cup_{i\neq j\in \{1,\dots, d^n+1\}}\pi_{i,j}^{-1}(\Delta))$$ in $(\text{Rat}_d[n])^s_{/\text{Rat}_d}.$ Denote by $\phi_n^s: \text{Rat}^s_d[n]\to \text{Rat}_d$ the morphism induced by $\phi_n$. Let $\la^s_n: \text{Rat}^s_d[n]\to \A^s$ the morphism
defined by $(t, x_1,\dots,x_s)\mapsto (df^n(x_1),\dots, df^n(x_s))$ and  $|\la^s_n|: \text{Rat}^s_d[n](\C)\to \R^s$ the map
defined by $(t, x_1,\dots,x_s)\mapsto (|df^n(x_1)|,\dots, |df^n(x_s)|).$ 
Since $\phi_n$ is \'etale at every point $x\in \Per_n(f_{\text{Rat}_d})\setminus \la_n^{-1}(1),$
$\phi^s_n$ is \'etale at every point $x\in (\la_n^s)^{-1}((\A^1\setminus \{1\})^s).$ 

\medskip

To prove Theorem \ref{thmlength}, we need to study the subsets taking form 
$\La_n(a):=L_n^{-1}(a)$ where $a\in \R^{N_n}/S_{N_n}.$
Since $L_n$ is not holomorphic (hence not algebraic), in general, the above set is not algebraic. The problem is that one projects a real algebraic set under a finite map may not be real algebraic.
To get some algebricity of $\La_n(a)$, one can view $\text{Rat}_d(\C)$ as an real algebraic variety by 
splitting a complex variable $z$ into two real variety $x,y$ via $z=x+iy$. A more theoretic way to do this is 
using the notion of Weil restriction. See Section \ref{sectionweilrest} for a brief introduction.
However, even when we view $\text{Rat}_d(\C)$ as a real algebraic variety, $\La_n(a)$ is not real algebraic in general (c.f. Theorem \ref{thmexamplenonalg}).
Here \emph{real algebraic} means Zariski closed when viewing $\text{Rat}_d(\C)$ as a real algebraic variety. See Section \ref{sectionweilrest} for the precise definition. 
This is one of the main difficulty in the proof of Theorem \ref{thmlength}.
To solve this problem, we introduce a class of closed subsets of $\text{Rat}_d(\C)$ that are images of algebraic subsets under \'etale morphisms. We will study such subsets in 
Section \ref{subsectionimageetale}.

\subsection{An example of a length level set which is not real algebraic}
The main result of this section is Theorem \ref{thmexamplenonalg}, in which we give an example to show that the subsets $\La_n(a)$ may not be real algebraic in $\text{Rat}_d(\C)$\footnote{In our example, we will take $d=2$ and $n=1$.}.

Except Definition \ref{defirealzt}, in which we give a precise definition of the notion \emph{real algebraic} using Weil restriction, this section will not be used in the rest of the paper.


\subsubsection{Weil restriction}\label{sectionweilrest}
We briefly recall the notion of Weil restriction.  See \cite[Section 4.6]{Poonen2017} and \cite[Section 7.6]{Bosch1990} for more information. 

\medskip

Denote by $Var_{/\C}$ (resp. $Var_{/\R}$) the category of varieties over $\C$ (resp. $\R$).
For every variety $X$ over $\C$, there is a unique variety $R(X)$ over $\R$ represents the functor $Var_{/\R}\to Sets$
sending $V\in Var_{/\R}$ to $\Hom(V\otimes_{\R}\C, X).$
It is called the \emph{Weil restriction of $X$}. The functor $X\mapsto R(X)$ is called the Weil restriction.
One has the canonical morphism 
$\tau_X: X(\C)\to R(X)(\R),$  which is a real analytic diffeomorphism. One may view $X(\C)$ as a real algebraic variety via $\tau_X.$
\begin{defi}\label{defirealzt}The \emph{real Zariski topology} on $X(\C)$ is the restriction of the Zariski topology on $R(X)$ via $\tau_X$.
A subset $Y$ of $X(\C)$ is \emph{real algebraic} if it is closed in the real Zariski topology.
\end{defi}
By (iii) of Proposition \ref{probasicweil} below, the  real Zariski topology is stronger than the Zariski topology on $X(\C).$

\medskip

Roughly speaking, the Weil restriction is just constructed  by
splitting a complex variable $z$ into two real variables $x,y$ via $z=x+iy$.
For the convenience of the reader, in the following example, we show the concrete construction of $R(X)$  when $X$ is affine. 

\begin{exe}
First assume that $X=\A^N_{\C}$. Then $R(X)=\A^{2N}_{\R}.$ The map $$\tau_{X}: \A^N_{\C}(\C)=\C^N\to \A^{2N}_{\R}(\R)=\R^{2N}$$ sends $(z_1, \dots, z_N)$ to $(x_1, y_1, x_2,y_2,\dots, x_N,y_N)$ where $z_j=x_j+iy_j$.

\medskip
Consider the algebra $\B:=\C[I]/(I^2+1)\simeq \C\oplus I\C$.
Every $f\in \C[z_1,\dots,z_N]$ defines an element $$F:=f(x_1+Iy_1, \dots, x_N+Iy_N)\in \B[x_1,y_1,\dots, x_N,y_N].$$
Since $$\B[x_1,y_1,\dots, x_N,y_N]=\C[x_1,y_1,\dots, x_N,y_N]\oplus I\C[x_1,y_1,\dots, x_N,y_N],$$ $F$ can be uniquely decomposed to $F=r(f)+Ii(f)$ where $r(f), i(f)\in \C[x_1,y_1,\dots,x_N,y_N].$

If $X$ is the closed subvariety of $\A^N_\C=\Spec\C[z_1,\dots, z_M]$ defined by the ideal $(f_1,\dots, f_s)$, then $R(X)$ is the closed subvariety of 
$R(\A^N_{\C})=\A^{2N}_{\R}=\Spec \R[x_1,y_1,\dots, x_N,y_N]$ defined by the ideal generated by $r(f_1), i(f_1),\dots, r(f_s), i(f_s)$.
\end{exe}

\medskip

We list some basic properties of Weil restriction without proof.
\begin{pro}\label{probasicweil}Let $X, Y\in Var_{/\C}$, then we have the following properties: 
\begin{points}
\item if $X$ is irreducible, then $R(X)$ is irreducible;
\item $\dim R(X)=2\dim X;$
\item if $f:Y\to X$ is a closed (resp. open) immersion, then the induced morphism $R(f):R(Y)\to R(X)$ is a closed (resp. open) immersion.
\end{points}
\end{pro}

Then we get the following easy consequence.
\begin{lem}\label{lemzarcloweil}Let $Y\in Var_{\C}$ and $X$ be a closed subset $Y$. Then $R(X)$ is the Zariski closure of $X(\C)=R(X)(\R)$ in $R(Y).$
\end{lem}
\proof
We may assume that $X$ and $Y$ are irreducible. It is clear that $R(X)(\R)\subseteq R(X)$. So $\overline{R(X)(\R)}^{\zar}\subseteq R(X).$ 
Since $$\dim_{\R}\overline{R(X)(\R)}^{\zar}\geq \dim_{\R} R(X)(\R)=2\dim X=\dim R(X)$$ and $R(X)$ is irreducible, we get $\overline{R(X)(\R)}^{\zar}= R(X).$
\endproof

\medskip

We denote by $\sigma\in \Gal(\C/\R)$ the complex conjugation $z\mapsto \overline{z}$. For every complex variety $X$, one denote by $X^{\sigma}$ the base change of $X$ by the field extension $\sigma: \C\to \C$. This induces a morphism of schemes (over $\Z$) $\sigma: X^{\sigma}\to X$. It is not a morphism of schemes over $\C$.
It is clear that $(X^{\sigma})^{\sigma}=X.$
\begin{exe}
If $X$ is the subvariety of $\A^N_{\C}=\Spec \C[z_1,\dots, z_N]$ defined by the equations 
$\sum_{I}a_{i,I}z^I=0,  i=1,\dots, s$
Then $X^{\sigma}$ is the subvariety of $\A^N_{\C}$ defined by $\sum_{I}\overline{a_{i,I}}z^I=0,  i=1,\dots, s$.
The map $\sigma: X=(X^{\sigma})^{\sigma}\to X^{\sigma}$ sends a point $(z_1,\dots,z_N)\in X(\C)$ to $(\overline{z_1},\dots,\overline{z_N})\in X^{\sigma}(\C)$.
\end{exe}

The following result due to Weil is useful for computing the Weil restriction.
\begin{pro}\cite[Exercise 4.7]{Poonen2017}\label{proweilconj}
We have a canonical isomorphism $$R(X)\otimes_{\R}\C\simeq X\times X^{\sigma}.$$ Under this isomorphism,  $$R(X)(\R)=\{(z_1,z_2)\in X(\C)\times X^{\sigma}(\C)|\,\, z_2=\sigma(z_1)\}$$ and
$\tau_X$ sends $z\in X(\C)$ to $(z,\sigma(z))\in R(X)(\R).$
\end{pro}

\subsubsection{The norm map}\label{subsubsectionnormmap} For $N\geq 1$, let $\nu_N: \C^N/S_N\to \R^N/S_N$ be the real analytic map sending $\{z_1,\dots,z_N\}$ to $\{|z_1|^2,\dots, |z_N|^2\}.$
We view $\C^N/S_N$ as a real algebraic variety via the identification 
$$\C^N/S_N=(\A^N_{\C}/S_N)(\C)=R(\A^N_{\C}/S_N)(\R)\subseteq R(\A^N_{\C}/S_N)(\C).$$

The following result is the aim of this section. We postpone its proof to the end of this section.
\begin{pro}\label{propreimagenvnotz}For $a:=\{a_1,\dots,a_N\}\in \R_{>0}^N/S_N$, $\nu_N^{-1}(a)$ is real Zariski closed if and only if $N=1$ or $N=2$ and $a_1\neq a_2.$
\end{pro}

Set $X:=R(\A^N_{\C}/S_N)\otimes_\R\C=(\A^N_{\C}/S_N)\times (\A^N_{\C}/S_N).$ (Since $\A^N_{\C}/S_N$ is defined over $\R$ we have $\A^N_{\C}/S_N=(\A^N_{\C}/S_N)^\sigma$.) Consider the quotient morphisms 
$q_1: \A_{\C}^N\twoheadrightarrow \A_{\C}^N/S_N$ defined by 
$$(z_1,\dots, z_N)\mapsto \{z_1,\dots, z_N\}$$
and 
$q_2: \A_{\C}^N\times \A_{\C}^N\twoheadrightarrow X$ defined by 
$$(u_1,\dots, u_N; v_1,\dots, v_N)\mapsto (\{u_1,\dots, u_N\},\{v_1,\dots, v_N\}).$$
Consider the morphism $\mu_N: \A_{\C}^N\times \A_{\C}^N\to \A_{\C}^N$ defined by 
$$(u_1,\dots, u_N; v_1,\dots, v_N)\mapsto (u_1v_1,\dots, u_Nv_N).$$
Let $\Gamma_{\mu_N}$ be the graph of $\mu_N$ in $(\A_{\C}^N\times \A_{\C}^N)\times \A_{\C}^N$.
Set $\Gamma_N=(q_2\times q_1)(\Gamma_{\mu_N})\subseteq X\times (\A^N_{\C}/S_N)$.
Since $q_2\times q_1$ is finite, $\Gamma_N$ is an irreducible closed subvariety of  $X\times (\A^N_{\C}/S_N)$.
We view it as a correspondence between $X$ and $\A^N_{\C}/S_N$.

Let $\pi_1: X\times (\A^N_{\C}/S_N)\to X$ and $\pi_2: X\times (\A^N_{\C}/S_N)\to (\A^N_{\C}/S_N)$ be the first and the second projection.
Then $\pi_1|_{\Gamma_N}$ is a finite morphism of degree $N!.$
For every $x\in X$, the image of $x$ under $\Gamma_N$ is $\Gamma_N(x):=\pi_2(\Gamma_N\cap \pi_1^{-1}(x))$.
For a general $x\in X(\C)$, $\Gamma_N(x)$ has $N!$ points.
 Similarly, for every $y\in \A^N_{\C}/S_N$, the preimage of $y$ under $\Gamma_N$ is $\Gamma_N^{-1}(y):=\pi_1(\Gamma_N\cap \pi_2^{-1}(y)).$

\begin{lem}\label{lemirredgaminv}For every $a=\{a_1,\dots, a_N\}\in (\A^N_{\C}/S_N)(\C)$ with $a_i\neq 0, i=1,\dots, N$,
$\Gamma_N^{-1}(a)$ is irreducible and of dimension $N$.
\end{lem}
\proof
Consider the actions of $g\in S_N$ on $\A_{\C}^N\times \A_{\C}^N$ by $$g.(u_1,\dots, u_N; v_1,\dots, v_N)=(u_{g(1)},\dots, u_{g(N)}; v_{g(1)},\dots, v_{g(N)})$$
and on $\A^N_{\C}$ by $g.(z_1,\dots, z_N)=(z_{g(1)},\dots, z_{g(N)}).$
Then we have $$q_1(g.x)=q_1(x), q_2(g.x)=q_2(x).$$
Since $$\Gamma_N^{-1}(a)=q_2(\mu_N^{-1}(q_1^{-1}(\{a_1,\dots, a_N\})))$$ and 
 $$q_1^{-1}(\{a_1,\dots, a_N\})=\{g. (a_1,\dots, a_N)|\,\, g\in S_N\},$$
 we get $\Gamma_N^{-1}(a)=q_2(\mu_N^{-1}((a_1,\dots,a_N))).$
Since $\mu_N^{-1}((a_1,\dots,a_N))$ is defined by $u_iv_i=a_i, i=1,\dots, N$, it is isomorphic to 
$(\A^1\setminus \{0\})^N$, which is irreducible.
 Since $q_2$ is finite, $\Gamma_N^{-1}(a)$ is irreducible of dimension $N.$
 \endproof

\medskip

For $a=\{a_1,\dots, a_N\}\in \R_{>0}^N/S_N\subseteq (\A^N_{\C}/S_N)(\R)$, we have 
$$\Gamma_N^{-1}(a)(\R)=\Gamma_N^{-1}(a)\cap X(\R)=\cup_{g\in S_N}V_{N,g}(a)$$
where $$V_{N,g}(a)=q_2(\{(u_1,\dots, u_N;\overline{u_1},\dots, \overline{u_N})\in \C^{2N}|\,\, u_i\overline{u_{g(i)}}=a_i,1\leq i\leq N\})$$
$$=\{(\{u_1,\dots, u_N\},\{\overline{u_1},\dots, \overline{u_N}\})\in R(X)(\R)|\,\, u_i\overline{u_{g(i)}}=a_i, 1\leq i\leq N\}$$

We note that, if $g_1,g_2\in S_N$ are conjugate,  then $V_{N,g_1}(a)=V_{N,g_2}(a)$.
For every $g\in S_N$,  it can be uniquely written as a product of disjoint cycles, i.e. 
there is a partition $\{1,\dots,N\}=\sqcup_{i=1}^s I_i$
such that 
$g=\sigma_1\cdots\sigma_s$ where $\sigma_i$ acts trivially outside $I_i$ and transitively on $I_i.$
Set $$Z_{N,g}(a):=\{(u_1,\dots, u_N;\overline{u_1},\dots, \overline{u_N})\in \C^{2N}|\,\, u_i\overline{u_{g(i)}}=a_i, i=1,\dots, N\},$$ then $V_{N,g}(a)=q_2(Z_{N,g}(a)).$

\medskip

For $i=1,\dots,s$, set $m_i:=\#I_i$ and write $I_i=\{j_1,\dots, j_{m_i}\}$ with $\sigma(j_n)=j_{n+1}$, here the index $n$ is viewed in $\Z/m_i\Z.$
We define $Z_i, i=1,\dots, s$ as follows:
\begin{points}
\item[$(E_0):$] If $m_i$ is even and $\sum_{n=1}^{m_i}(-1)^n\log a_{j_n}\neq 0,$ $Z_i:=\emptyset$.
\item[$(E_1):$] If $m_i$ is even and $\sum_{n=1}^{m_i}(-1)^n\log a_{j_n}= 0,$ then  
$Z_i$ is the set of points taking forms $(U,\overline{U})\in \C^{I_i}\times \C^{I_i}$ where 
$$U=(r_{j_1}e^{i\theta},a_1r^{-1}_{j_1}e^{i\theta}, a_2a_1^{-1}r_{j_1}e^{i\theta}, \dots, a_{j_{m_i-1}}a_{j_{m_i-2}}^{-1}\dots a_1r_{j_i}^{-1}e^{i\theta})$$ for some $r_{j_1}\in \R_{>0}$ and $\theta\in \R.$
Hence $Z_i\simeq \R_{> 0}\times (\R/\Z).$
\item[$(O):$] If $m_i$ is odd, then  $Z_i$ is the set of points taking forms $(U,\overline{U})\in \C^{I_i}\times \C^{I_i}$ where 
$$U=(r_{j_1}e^{i\theta},\dots, r_{j_{m_i}}e^{i\theta}),r_{j_n}=\left(\prod_{l=0}^{m_i-1}a_{j_{n+l}}^{(-1)^l}\right)^{1/2}$$
for some $\theta\in \R.$ Hence $Z_i\simeq \R/\Z.$
\end{points}
It is easy to show that $$Z_{N,g}(a)=\prod_{i=1}^sZ_i.$$
Let $e_0(g), e_1(g)$ and $o(g)$ be the numbers of of the index $i$ that falls into the cases $(E_0), (E_1)$ and $(O)$ respectively.  
Then $Z_{N,g}(a)=\emptyset$ if $e_0(g)>0$, otherwise $$Z_{N,g}(a)\simeq \R_{>0}^{e_1(g)}\times (\R/\Z)^{e_1(g)+o(g)}.$$

\begin{lem}\label{lemvnida}We have $V_{N,\id}(a)=\nu_N^{-1}(a)$ and it is Zariski dense in $\Gamma_N^{-1}(a).$
\end{lem}
\proof
It is clear that $V_{N,\id}(a)=\nu_N^{-1}(a)$. 
By Lemma \ref{lemirredgaminv}, $\Gamma_N^{-1}(a)$ is irreducible and of dimension $N$.
Since $Z_{N,\id}(a)\simeq (\R/\Z)^N$, $V_{N,\id}(a)=q_2(Z_{N,\id}(a))$
 is of dimension $N$. Then it is Zariski dense in $\Gamma_N^{-1}(a).$
\endproof

\proof[Proof of Proposition \ref{propreimagenvnotz}]
By Lemma \ref{lemvnida}, $\nu_N^{-1}(a)=V_{N,\id}(a)$ is Zariski closed if and only if $V_{N,g}(a)\subseteq V_{N,\id}(a)$ for every $g\in S_N.$

The case $N=1$ is trivial.
If $N=2$ and $a_1\neq a_2$, then $e_0(g)>0$ for $g\in S_2\setminus \{\id\}$. Hence $V_{N,\id}(a)$ is Zariski closed.
If there is $i\neq j$ with $a_i=a_j$, let $g:=(i,j)\in S^N.$ Then $$Z_{N,g}(a)\simeq \R_{>0}\times (\R/\Z)^{N-1}$$ which is not compact.
Since $q_2$ is finite, $q_2(Z_{N,g}(a))$ is closed but not compact. Hence it does not contained in $V_{N,\id}(a)$.

Now we may assume that $N\geq3$ and $a_i\neq a_j$ for every $i\neq j$. 
We may assume that $a_1>a_2>a_3$ and $a_1=\max\{a_i, i=1,\dots, N\}.$ 
Then for every $(\{u_1,\dots, u_N\}, \{\overline{u_1},\dots, \overline{u_N}\})\in V_{N,\id}(a)$
We have $$\max\{|u_i|, i=1,\dots, N\}=a_1^{1/2}.$$

Pick $g=(1,2,3)\in S_N.$ Then $Z_{N,\id}(a)\neq\emptyset$ and
for every point
$(u_1,\dots, u_N;\overline{u_1},\dots, \overline{u_N})\in Z_{N,g}(a)$, we have
$$\max\{|u_i|, i=1,\dots, N\}\geq |u_2|=(a_1a_2a_3^{-1})^{1/2}>a_1^{1/2}.$$ 
Since $V_{N,\id}(a)=q_2(Z_{N,\id}(a))$, $V_{N,g}(a)\cap V_{N,\id}(a)=\emptyset.$ Hence $V_{N,\id}(a)$ is not Zariski closed.
\endproof

\subsubsection{The example}
In this section, we focus on the first length spectrum map $L_1: {\rm Rat}_2(\C)\to \R_{>0}^3/S_3.$
We view ${\rm Rat}_2(\C)$ as a real algebraic variety via identifying ${\rm Rat}_2(\C)$ with $R({\rm Rat}_2)(\R)$
\begin{thm}\label{thmexamplenonalg}For $a\in (1,\sqrt{2})$, $L_1^{-1}(\{a,a,a\})$ is not real algebraic in ${\rm Rat}_2(\C).$
\end{thm}

\proof
We follow the notations in Section \ref{subsubsectionnormmap}.

\medskip

\par Recall the first multiplier spectrum  map $s_1: {\rm Rat}_2(\C)\to (\A^3/S_3)(\C).$
Then $L_1^{-1}(\{a,a,a\})=s_1^{-1}(\nu_3^{-1}(\{a^2,a^2,a^2\})).$ Set $b:=\{a^2,a^2,a^2\}.$
Since $s_1$ factors through the moduli space $\sM_2(\C)$, there is a morphism $[s_1]: \sM_2(\C)\to (\A^3/S_3)(\C)$ such that
$[s_1]\circ \Psi_2=s_1.$
It was proved by Milnor\cite{Milnor1993} that $[s_1]$ is an isomorphism to its image $M$ (see also \cite[Theorem 2.4.5]{Silverman2012}).
Moreover, by \cite[Theorem 2.4.5 and Lemma 2.4.6]{Silverman2012}, $M=q_1(Y_0)$ and $R(M)=q_2(R(Y_0))$, where 
$$Y_0=\{(z_1,z_2,z_3)\in \C^3|\,\, z_1z_2z_3=z_1+z_2+z_3-2, z_1z_2\neq 1\}\cup \{(1,1,z_3)\}.$$
\par  Set $Y:=\{(z_1,z_2,z_3)\in \C^3|\,\, z_1z_2z_3=z_1+z_2+z_3-2\}$ which is the Zariski closure of $Y_0.$
 The Zariski closure of $R(M)$ in $R(\A^3_{\C}/S_3)$ is $q_2(R(Y))$.

\begin{lem}\label{lemycapgirr}The intersection $q_2(R(Y))\cap \Gamma^{-1}_3(b)$ is irreducible of dimension $1.$
\end{lem}
\proof Observe that $(q_2(R(Y))\cap \Gamma^{-1}_3(b))\otimes_{\R}\C=q_2(Z)$ where $Z$ is the closed subset of $R(\A^3_{\C})\otimes_{\R} \C=\A_{\C}^{3}\times \A_{\C}^{3}=\Spec \C[u_1,u_2,u_3,v_1,v_2,v_3]$ 
 defined by the following equations:
\begin{points}
\item $u_1u_2u_3=u_1+u_2+u_3-2;$
\item $v_1v_2v_3=v_1+v_2+v_3-2;$
\item $u_1v_1=a$;
\item $u_2v_2=a$;
\item $u_3v_3=a$.
\end{points}
Using symmetric polynomials, one may write $$R(\A_{\C}^3/S_3)\otimes_{\R}\C=\A_{\C}^3/S_3\times \A_{\C}^3/S_3$$ as $$\A_{\C}^3\times \A_{\C}^3=\Spec \C[x,y,z,x',y',z']$$
and in this coordinate, $q_2$ is given by 
 $x\mapsto u_1+u_2+u_3$, $y\mapsto u_1u_2+u_1u_3+u_2u_3$, $z\mapsto u_1u_2u_3$, $x'\mapsto v_1+v_2+v_3$, $y'\mapsto v_1v_2+v_1v_3+v_2v_3$ and $z'\mapsto v_1v_2v_3.$
One may compute that $q_2(Z)$ is defined by the following equations:
\begin{points}
\item $z\neq 0;$
\item $x=z+2;$
\item $y=(2z+a^3)/a;$
\item $x'=a^3/z+2$;
\item $y'=a^2(z+2)/z$;
\item $z'=a^3/z$.
\end{points}
Then it is irreducible of dimension $1$ since it is parametrized by a single variable  $z$.
\endproof
Then $R(M)\cap \Gamma^{-1}_3(b)$ is irreducible, and if this intersection is nonempty,  it is of dimension $1.$
We note that $$\nu_3^{-1}(b)=M\cap q_2(Z_{3,\id}(b)).$$
Let $g=(1,2)\in S_3$.
We have $$M\cap (q_2(Z_{3,\id}(b))\cup q_2(Z_{3,g}(b)))\subseteq (R(M)\cap \Gamma^{-1}_3(b))(\R).$$
\begin{lem}\label{lemleveldimone}
	Both $M\cap q_2(Z_{3,\id}(b))$ and $M\cap q_2(Z_{3,g}(b))$ are infinite and $M\cap q_2(Z_{3,g}(b))\not\subseteq M\cap q_2(Z_{3,\id}(b)).$
\end{lem}
\proof
Since $q_2$ is finite, we only need to show that $Y_0\cap Z_{3,\id}(b)$ and $Y\cap Z_{3,g}(b)$ are infinite and $M\cap q_2(Z_{3,g}(b))\not\subseteq M\cap q_2(Z_{3,\id}(b)).$

\medskip

Since $a>1$, one may compute that $Y_0\cap Z_{3,\id}(b)=Y\cap Z_{3,\id}(b)$ and it is the set of points $(u_1,u_2,u_3)\in \C^3$
satisfying the following equations:
\begin{equation}\label{equvtid}
u_1u_2u_3=u_1+u_2+u_3-2 \text{ and }
 |u_1|=|u_2|=|u_3|=a.
\end{equation}
Consider the function $F: [0,\pi]^2\to [0,+\infty)$ given by 
$$F:(\theta_1,\theta_2)\mapsto \left|\frac{a(e^{i\theta_1}+e^{i\theta_2})-2}{a^3e^{i(\theta_1+\theta_2)}-a}\right|.$$
Since $a>1$, it is well-defined and continuous. 
We have
$$F(0,0)=|(2a-2)/(a^3-a)|=\frac{2}{a(a+1)}<1$$
and 
$$F(\pi,\pi)=|(-2a-2)/(a^3-a)|=\frac{2}{a(a-1)}>1.$$
There is $\beta\in (0, \pi/2)$ such that for every $\alpha\in [0,\beta]$, we have 
$$F(0,\alpha)<1 \text{ and } F(\pi-\alpha,\pi)>1.$$
Hence for every $\alpha\in [0,\beta]$, there is $\theta(\alpha)\in [0,\pi-\alpha]$ such that 
$$F(\theta(\alpha), \theta(\alpha)+\alpha)=1.$$
One may check that $$u_1=ae^{i\theta(\alpha)}, u_2=ae^{i\theta(\alpha)+\alpha}, u_3=a\frac{a(e^{i\theta(\alpha)}+e^{i\theta(\alpha)+\alpha})-2}{a^3e^{i(2\theta(\alpha)+\alpha)}-a}, \alpha\in [0,\beta]$$
are infinitely many distinct solutions of (\ref{equvtid}).
So $Y_0\cap Z_{3,\id}(b)$ is infinite. 

\medskip

Since $a>1$, one may compute that $Y_0\cap Z_{3,g}(b)=Y\cap Z_{3,g}(b)$ and it is the set of points $(u_1,u_2,u_3)\in \C^3$
satisfying the following equations:
\begin{equation}\label{equvtg}
u_1u_2u_3=u_1+u_2+u_3-2 \text{ and }
 u_1\overline{u_2}=|u_3|^2=a^2.
\end{equation}
Consider the function $G: \R_{>0}\times [0,\pi]\to [0,+\infty)$ given by 
$$G:(r,\theta)\mapsto \left|\frac{a(r+1/r)e^{i\theta}-2}{a^3e^{2i\theta}-a}\right|.$$
Since $a>1$, it is well-defined and continuous. 
We note that $G(1,\theta)=F(\theta,\theta)$ for $\theta \in [0,\pi].$
So $G(1,0)<1$ and $G(1,\pi)>1.$
There is $R>1$ such that for every $r\in [1,R]$,
$G(r,0)<1$ and $G(r,\pi)>1.$
Then for every $r\in [1,R]$, there is $\theta_r\in [0,\pi]$ such that 
$G(r,\theta_r)=1.$

One may check that $$u_1(r)=are^{i\theta_r}, u_2(r)=ar^{-1}e^{i\theta_r}, u_3(r)=a\frac{a(r+1/r)e^{i\theta_r}-2}{a^3e^{2i\theta_r}-a}, r\in [1,R]$$
are infinitely many distinct solutions of (\ref{equvtid}).
So $Y_0\cap Z_{3,g}(b)$ is infinite. Moreover, if $r>1$, then $\max\{|u_1(r)|,|u_2(r)|, |u_3(r)|\}=ar>a$, so $\{u_1(r), u_2(r),u_3(r)\}\in (M\cap q_2(Z_{3,g}(b)))\setminus (M\cap q_2(Z_{3,\id}(b))).$ This concludes the proof.
\endproof

Since $M\cap q_2(Z_{3,\id}(b))$ is infinite and $\dim R(M)\cap \Gamma^{-1}_3(b)=1$, the Zariski closure of $M\cap q_2(Z_{3,\id}(b))$ in $R(M)$ is $R(M)\cap \Gamma^{-1}_3(b)$ but 
$M\cap q_2(Z_{3,\id}(b))\subsetneq (R(M)\cap \Gamma^{-1}_3(b))(\R).$
So $L_1^{-1}(\{a,a,a\})=s_1^{-1}(M\cap q_2(Z_{3,\id}(b)))$ is Zariski dense in $R(s_1)^{-1}(R(M)\cap \Gamma^{-1}_3(b))$, where $R(s_1): R({\rm Rat_{2}})\to R(M)$ is induced by $s_1.$
Since $M\cap q_2(Z_{3,\id}(b))\subsetneq (R(M)\cap \Gamma^{-1}_3(b))(\R)$ and $M$ is the image of $s_1$, $L_1^{-1}(\{a,a,a\})\subsetneq R(s_1)^{-1}(R(M)\cap \Gamma^{-1}_3(b)).$
This concludes the proof.
\endproof


\subsection{Images of algebraic subsets under \'etale morphisms}\label{subsectionimageetale}
Let $X$ be a variety over $\R$.
A closed subset $V$ of $X(\R)$ is called \emph{admissible} if there is a morphism 
$f: Y\to X$ of real algebraic varieties and a Zariski closed subset $V'\subseteq Y$ such that 
$V=f(V'(\R))$ and $f$ is \'etale at every point in $V'(\R).$

Every algebraic subset of $X(\R)$ is admissible.

\begin{rem}
Denote by $J$ the non-\'etale locus for $f$ in $V$. We have $J\cap V(\R)=\emptyset.$
Since we may replace $V$ by $V\setminus J$,  in the above definition we may further assume that $f$ is \'etale.
\end{rem}

\begin{rem}
Let $Y$ be a Zariski closed subset of $X$.
Since \'etale morphisms are preserved under base changes, if $V$ is admissible as a subset of $X(\R)$, it is admissible as a subset of $Y(\R).$
\end{rem}

\begin{rem}
An admissible subset is semialgebraic. So it has finitely many connected components.
\end{rem}

\begin{pro}\label{probasicadm}Let $V_1,V_2$ be two  admissible closed subsets of $X(\R)$. Then $V_1\cap V_2$ is admissible.
\end{pro}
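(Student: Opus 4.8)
The plan is to unwind the definition of admissibility and combine the two witness data into a single one over the fiber product. Suppose $V_1 = f_1(V_1'(\R))$ with $f_1 \colon Y_1 \to X$ \'etale at every point of $V_1'(\R)$, and $V_2 = f_2(V_2'(\R))$ with $f_2 \colon Y_2 \to X$ \'etale at every point of $V_2'(\R)$, where $V_i' \subseteq Y_i$ is Zariski closed. Form the fiber product $Z := Y_1 \times_X Y_2$ with its two projections $p_i \colon Z \to Y_i$ and the structure map $g := f_1 \circ p_1 = f_2 \circ p_2 \colon Z \to X$. Inside $Z$ take the Zariski closed subset $W := p_1^{-1}(V_1') \cap p_2^{-1}(V_2') = V_1' \times_X V_2'$. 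The claim will be that $V_1 \cap V_2 = g(W(\R))$ and that $g$ is \'etale at every point of $W(\R)$.

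The two verifications are then routine. For the \'etale claim: since \'etale morphisms are stable under base change, $p_2 \colon Z \to Y_2$ is \'etale over the locus where $f_1$ is \'etale, hence \'etale along $p_1^{-1}(V_1'(\R)) \supseteq W(\R)$; composing with $f_2$, which is \'etale along $V_2'(\R) \supseteq p_2(W(\R))$, and using that a composition of \'etale morphisms is \'etale, we get that $g = f_2 \circ p_2$ is \'etale at every point of $W(\R)$. For the set-theoretic identity: if $z \in W(\R)$ then $g(z) = f_1(p_1(z)) \in f_1(V_1'(\R)) = V_1$ and likewise $g(z) \in V_2$, so $g(W(\R)) \subseteq V_1 \cap V_2$. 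Conversely, if $x \in V_1 \cap V_2$, pick $y_1 \in V_1'(\R)$ with $f_1(y_1) = x$ and $y_2 \in V_2'(\R)$ with $f_2(y_2) = x$; then $(y_1, y_2)$ defines an $\R$-point of $Z$ lying over $x$ (the fiber product of $\R$-schemes represents the fiber product of $\R$-point sets), and it lies in $W(\R)$ since $y_i \in V_i'(\R)$; thus $x = g(y_1,y_2) \in g(W(\R))$. Finally, $V_1 \cap V_2$ is closed in $X(\R)$, being the intersection of two closed subsets, so it is indeed an admissible closed subset of $X(\R)$.

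I do not anticipate a serious obstacle here: the only mild point of care is making sure the \'etale hypothesis is used correctly, namely that we only need $g$ to be \'etale \emph{along $W(\R)$} (not everywhere on $W$), which is exactly what the base-change-stability and composition-stability of the \'etale property along the relevant loci provide. One should also note, as in the remark preceding the proposition, that one may shrink $Y_i$ to the \'etale locus of $f_i$ from the start, so that $f_1, f_2$, and hence $p_2$ and $g$, are genuinely \'etale (not merely \'etale at the real points in question), which streamlines the citation of "composition of \'etale is \'etale" without having to track loci. With that normalization the argument is a one-line diagram chase plus the representability of fiber products on points.
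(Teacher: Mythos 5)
Your proof is correct and follows essentially the same route as the paper: form the fiber product $Y_1\times_X Y_2$, observe the structure map is \'etale (by base change and composition, after shrinking to the \'etale loci as in the preceding remark), and identify $V_1\cap V_2$ with the image of the real points of $V_1'\times_X V_2'$. The only difference is that you spell out the set-theoretic identity and the \'etale-along-the-locus bookkeeping, which the paper leaves implicit.
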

\proof
There is a morphism 
$f_i: Y_i\to X, i=1,2$ of algebraic varieties and a Zariski closed subset $V_i'\subseteq Y_i$ such that 
$V_i=f(V_i'(\R))$ and $f_i$ is \'etale.
Then the fiber product $f: Y_1\times_XY_2\to X$ is \'etale. Since
$$V_1\cap V_2=f_1(V_1'(\R))\cap f_2(V_2'(\R))=f((V_1'\times_XV_2')(\R)),$$
$V_1\cap V_2$ is admissible.
\endproof

The key result in this section is the following, which shows that admissible subsets satisfy the descending chain condition.
\begin{thm}\label{thmNoetherianad}Let $V_n, n\geq 0$ be a sequence of decreasing admissible subsets of $X(\R)$. Then there is $N\geq 0$ such that $V_n=V_N$ for all $n\geq N.$
\end{thm}

We need the following lemma.
\begin{lem}\label{lemconnect}
Let $V$ be an  admissible closed subset of $X(\R)$. 
Assume that $X$ and $\overline{V}^{\zar}$ are smooth. Then $V$ is a finite union of connected components of $\overline{V}^{\zar}(\R).$
\end{lem}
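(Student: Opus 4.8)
The plan is to exploit the fact that an \'etale morphism is a local analytic (indeed local real-analytic, even local algebraic in the \'etale topology) isomorphism, so that the image $V = f(V'(\R))$ is locally, near each of its points, an analytic submanifold of $X(\R)$ of the same dimension as $\overline{V}^{\zar}$. Write $Z := \overline{V}^{\zar}$, which we are assuming is smooth; set $e := \dim Z$. First I would reduce to the situation where $f\colon Y \to X$ is \'etale (not merely \'etale along $V'(\R)$), by discarding the non-\'etale locus, which by the first Remark after the definition of admissible does not meet $V'(\R)$. I would also replace $V'$ by the union of those irreducible components meeting $V'(\R)$, and then argue component by component, so that we may assume $V'$ is irreducible of some dimension; since $f$ is \'etale, $f|_{V'}$ is quasi-finite and $\overline{f(V'(\R))}^{\zar}$ has dimension $\le \dim V'$, while $Z$ being the Zariski closure of $V = f(V'(\R))$ forces $\dim V' \ge e$ on at least one component; a dominance/dimension count then pins down that the relevant components of $V'$ have dimension exactly $e$ and dominate $Z$.

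Next, the key point: at every point $p \in V$, choose $q \in V'(\R)$ with $f(q) = p$. Since $f$ is \'etale at $q$ and both $X$, and (after the reduction) the component of $V'$ through $q$, have the expected dimensions, the restriction $f$ carries a Euclidean neighborhood of $q$ in $V'(\R)$ homeomorphically (indeed real-bianalytically) onto a neighborhood of $p$ inside an $e$-dimensional real-analytic submanifold of $X(\R)$ that is contained in $Z(\R)$. Because $Z$ is smooth of dimension $e$, $Z(\R)$ is itself a real-analytic manifold of dimension $e$ (at its real points), so this local submanifold is open in $Z(\R)$. Hence $V$ is open in $Z(\R)$. On the other hand $V$ is closed in $X(\R)$ by hypothesis, hence closed in $Z(\R)$. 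Therefore $V$ is a nonempty clopen subset of the real-analytic manifold $Z(\R)$, and $Z(\R)$ has finitely many connected components (it is a real algebraic set), so $V$ is a finite union of connected components of $Z(\R) = \overline{V}^{\zar}(\R)$.

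The step I expect to be the main obstacle is making the dimension bookkeeping fully rigorous: one must ensure that $f$, being \'etale, does not contract $V'(\R)$ onto something lower-dimensional, and that $Z$ really does have pure dimension $e$ equal to the dimension of $V'(\R)$ along every component meeting $V'(\R)$ — otherwise the ``$V$ is open in $Z(\R)$'' conclusion could fail at points of $V$ lying over lower-dimensional strata. The cleanest route is: (i) $f$ \'etale $\Rightarrow$ $\dim_q Y = \dim_{f(q)} X$ for all $q$; (ii) for each irreducible component $V'_0$ of $V'$ meeting $V'(\R)$, $\dim V'_0 \le e$ since $f(V'_0(\R)) \subseteq V$ and $Z = \overline{V}^{\zar}$; (iii) conversely $Z = \overline{\bigcup_0 f(V'_0(\R))}^{\zar} = \bigcup_0 \overline{f(V'_0)}^{\zar}$ (finite union), so some component has $\dim V'_0 \ge e$, and since $Z$ is irreducible? — it need not be, so here one argues that each irreducible component $Z_j$ of $Z$ is the Zariski closure of $f(V'_0(\R))$ for some $V'_0$ with $\dim V'_0 = \dim Z_j$, and smoothness of $Z$ forces all $Z_j$ to have the same dimension $e$ (else $Z(\R)$ is not a manifold near a point where components of different dimensions meet — actually smoothness of the scheme $Z$ already forces equidimensionality of $Z$ at each point, hence globally if $Z$ is connected; in general handle each connected component of $Z$ separately). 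Once equidimensionality along $V'(\R)$ is secured, the local real-bianalyticity of \'etale maps on real points does the rest.
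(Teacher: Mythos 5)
Your strategy is the same as the paper's: use that an \'etale morphism is a local real-analytic isomorphism at real points to show that $V$ is open in $Z(\R)$, $Z=\overline{V}^{\zar}$, and then conclude from closedness of $V$ together with the finiteness of the number of connected components of $Z(\R)$. The gap is precisely at the step you yourself flag as the main obstacle, and your proposed remedy does not close it. To get openness of $V$ in $Z(\R)$ at a given $p\in V$ you need a preimage $q\in V'(\R)$ with $f(q)=p$ at which $V'(\R)$ has local dimension $e=\dim_p Z$ (and is a manifold), so that its local image fills a neighborhood of $p$ in $Z(\R)$. Equidimensionality of $Z$, or of the irreducible components of $V'$ meeting $V'(\R)$, does not produce such a $q$: the fibre of $f$ over $p$ inside $V'(\R)$ may consist only of isolated or low-dimensional points, while the top-dimensional part of $V'(\R)$ accumulates on $p$ only through its image, its preimages escaping to infinity in $Y$. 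Concretely, take $X=\A^1$, $Y=C\sqcup\A^1$ with $C=\{xy^2=1\}\subseteq\A^2$, let $f$ be $(x,y)\mapsto x$ on $C$ and the identity on the second component, and let $V'=C\sqcup\{0\}$. Then $f$ is \'etale at every point of $V'(\R)$ (indeed everywhere), $V=f(V'(\R))=[0,+\infty)$ is closed, and $\overline{V}^{\zar}=\A^1$ is smooth and irreducible; yet $V$ is not open in $\R$ at $0$, because the only real preimage of $0$ lies on the zero-dimensional component. So your bookkeeping (i)--(iii) can at best show that some component of $V'$ has dimension $e$ and dominates $Z$; it cannot guarantee a full-dimensional preimage over every point of $V$, which is what the openness argument requires. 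A smaller point: your bound $\dim V'_0\le e$ in (ii) already needs $V'_0(\R)$ to be Zariski dense in $V'_0$ (a component merely meeting $V'(\R)$ can be larger, e.g.\ $V(x^2+y^2)\subseteq\A^2$), so one should, as the paper does, replace $V'$ by $\overline{V'(\R)}^{\zar}$ rather than only discard components disjoint from $V'(\R)$.

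For comparison, the paper's proof runs along the same lines but simply asserts, for each $x\in V$, the existence of $y\in V'(\R)$ over $x$ at which $V'(\R)$ has dimension $d$, and then supplies the one local ingredient your sketch leaves vague: smoothness of $V'(\R)$ at such a $y$, obtained by showing $V'$ coincides with the smooth, pure $d$-dimensional $f^{-1}(\overline{V}^{\zar})$ near $y$. So your proposal mirrors the paper, including at its weakest point: the existence of such a $y$ over every $x\in V$ is not justified there either, and the example above (note that reducible $Y$ must be allowed, since Proposition \ref{probasicadm} uses fiber products) shows it is not a consequence of the stated definition of admissibility. Closing this gap is therefore not a matter of tightening the dimension count; it needs an additional input, for instance a properness-type condition on $f|_{V'}$ (which would force limits of full-dimensional preimages to exist over boundary points of the image) or specific information about the sets $\La^s_n(a)$ to which the lemma is applied.
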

\proof 
Since $\overline{V}^{\zar}$ is smooth, different irreducible components of $\overline{V}^{\zar}$ do not meet. So we may assume that $\overline{V}^{\zar}$ is irreducible of dimension $d.$ Hence $\overline{V}^{\zar}(\R)$ is smooth, it is of dimension $d$  everywhere.

There is a morphism 
$f: Y\to X$ of algebraic varieties and a Zariski closed subset $V'\subseteq Y$ such that 
$V=f(V'(\R))$ and $f$ is \'etale at every point in $V'(\R).$
After replacing $V'$ by $\overline{V'(\R)}^{\zar}$, we may assume that $V'(\R)$ is Zariski dense in $V'.$

For $x\in V$, there is $y\in V'(\R)$ such that $V'(\R)$ has dimension $d$ at $y$. Since $f$ is \'etale, $f^{-1}(\overline{V}^{\zar}(\R))$ is smooth and of dimension $d.$
Hence $V'$ coincides with $f^{-1}(\overline{V}^{\zar})$ in some Zariski open neighborhood of $y$. So $V'(\R)$ is smooth at $y$. It follows that  $f$ maps some Euclidean neighborhood of $y$ in $V'(\R)$ to some Euclidean neighborhood of $x$ in $\overline{V}^{\zar}(\R)$. This shows that $V$ is open in $\overline{V}^{\zar}(\R).$ Then $V$ is a finite union of connected components of $\overline{V}^{\zar}(\R).$
\endproof

\proof[Proof of Theorem \ref{thmNoetherianad}]
We do the proof by induction on $\dim X$. When $\dim X=0$, Theorem \ref{thmNoetherianad} is trivial. 

There is $N\geq 0$ such that $\overline{V_n}^{\zar}$ are the same for $n\geq N.$
After removing $V_n, n=1,\dots, N$, we may assume that $\overline{V_n}^{\zar}, n\geq 0$ are the same variety. After replacing $X$ by this variety, we may assume that 
$\overline{V_n}^{\zar}=X$ for all $n\geq 0.$ Let $X_0,X_1$ be the smooth and singular part of $X$.  We only need to show that both $V_n\cap X_0(\R), n\geq 0$ and $V_n\cap X_1(\R), n\geq 0$ are stable for $n$ large. Since $\dim X_1<\dim X$, $V_n\cap X_1(\R), n\geq 0$ is stable for $n$ large by the induction hypothesis. Since $X_0$ is smooth, by Lemma \ref{lemconnect}, every $V_n$ is a union of connected components of $X_0(\R).$ Since $X_0(\R)$ has at most finitely many connected components, we conclude the proof.
\endproof

\rem Theorem \ref{thmNoetherianad} does not hold for general semialgebraic subsets.
The following example shows that it does not hold even for images of algebraic subsets under finite morphisms.
For $n\geq 0$, set $Z_n:=[n,\infty)\subseteq \A^1(\R).$ They are the images of $\A^1(\R)$ under the finite morphisms $z\mapsto z^2+n, n\geq 0$. We have $Z_{n+1}\subset Z_n$ but $\cap_{n\geq 0} Z_n=\emptyset$. 
\endrem

Let $d\geq 2.$
We now view $\text{Rat}_d(\C)$ as a real variety and study the locus in it with given length spectrum.
For $n\geq 1$, $s=1,\dots, N_n$ and $a\in \R^s/S_s$, let $\La^s_n(a)$ be the subset of $t\in \text{Rat}_d(\C)$ such that 
$a\subseteq L_n(t)$ i.e. $f_t^n$ has a subset of fixed points counting with multiplicity,  such that the set of norms of multipliers of these fixed points equals to $a.$ It is a closed subset in $\text{Rat}_d(\C)$.

\rem
 This notion generalizes the notion $\La_n(a)$. When $s=N_n$, we get  $\La_n(a)=\La_n^{s}(a).$
\endrem

Pick $(a_1,\dots, a_s)\in \R^s$ representing $a\in [0,+\infty)^s/ S_s$, we have $$\La^s_n(a)=\phi^s_n(|\la_n^s|^{-1}(a_1,\dots,a_s)).$$

Even though $|\la^s_n|$ is not real algebraic, its square $|\la^s_n|^2$ is real algebraic. So
$|\la_n^s|^{-1}(a_1,\dots,a_s)=(|\la_n^s|^2)^{-1}(a_1^2,\dots,a_s^2)$ is real algebraic.
Hence $\La^s_n(a)$ is semialgebraic. 
Moreover, if $a_i\neq 1$ for every $i=1,\dots, s$, $$|\la_n^s|^{-1}(a_1,\dots,a_s)\subseteq (\la_n^s)^{-1}((\A^1\setminus \{1\})^s).$$
So $\phi^s_n$ is \'etale along $|\la_n^s|^{-1}(a_1,\dots,a_s).$ This shows the following fact.
\begin{pro}\label{prolenlcoadm}
For $a\in ([0,+\infty)\setminus \{1\})^s/S_s$, $\La^s_n(a)$ is admissible.
\end{pro}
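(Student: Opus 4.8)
The statement to prove is Proposition~\ref{prolenlcoadm}: for $a\in ([0,+\infty)\setminus\{1\})^s/S_s$, the set $\La^s_n(a)$ is admissible as a closed subset of $\text{Rat}_d(\C)$ (viewed as a real algebraic variety). The plan is to assemble the three ingredients that have already been set up in the paragraphs immediately preceding the statement, and verify that they fit the definition of \emph{admissible} given in Section~\ref{subsectionimageetale}. Recall that a closed subset $V\subseteq X(\R)$ is admissible if there is a morphism $f:Y\to X$ of real algebraic varieties and a Zariski closed subset $V'\subseteq Y$ with $V=f(V'(\R))$ and $f$ \'etale at every point of $V'(\R)$.

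First I would fix a representative $(a_1,\dots,a_s)\in ([0,+\infty)\setminus\{1\})^s$ of $a$ and recall the identity $\La^s_n(a)=\phi^s_n\bigl(|\la^s_n|^{-1}(a_1,\dots,a_s)\bigr)$ established just above the statement. So the candidate morphism is $f=\phi^s_n:\text{Rat}^s_d[n]\to \text{Rat}_d$ and the candidate subset is $V'=|\la^s_n|^{-1}(a_1,\dots,a_s)$. Two things need checking. (1) $V'$ is Zariski closed in $\text{Rat}^s_d[n]$ viewed as a real variety: although $|\la^s_n|$ itself is not a morphism of real varieties (it involves square roots), its coordinatewise square $|\la^s_n|^2$ is a morphism of real algebraic varieties (each $|df^n(x_j)|^2 = \Re(df^n(x_j))^2 + \Im(df^n(x_j))^2$ is a polynomial in the real and imaginary parts), and $|\la^s_n|^{-1}(a_1,\dots,a_s) = (|\la^s_n|^2)^{-1}(a_1^2,\dots,a_s^2)$, which is the preimage of a point under a morphism, hence Zariski closed. (2) $\phi^s_n$ is \'etale along $V'(\R)$: this uses the fact recorded earlier that $\phi^s_n$ is \'etale at every point of $(\la^s_n)^{-1}((\A^1\setminus\{1\})^s)$, together with the observation that since $a_i\ne 1$ for all $i$, we have $|a_i|\ne 1$ as complex numbers of modulus $a_i$... more precisely, any fixed point $x_j$ with $|df^n(x_j)|=a_i\ne 1$ satisfies $df^n(x_j)\ne 1$, so $V'\subseteq (\la^s_n)^{-1}((\A^1\setminus\{1\})^s)$ and hence $\phi^s_n$ is \'etale along all of $V'$, in particular along $V'(\R)$.

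Having checked (1) and (2), the definition of admissibility is satisfied with $X=\text{Rat}_d$, $Y=\text{Rat}^s_d[n]$, $V'=(|\la^s_n|^2)^{-1}(a_1^2,\dots,a_s^2)$, and $f=\phi^s_n$; since $\La^s_n(a)=f(V'(\R))$ is by construction a closed subset of $\text{Rat}_d(\C)$ (the image of a closed set under the finite, hence proper and closed, map $\phi^s_n$), we conclude that $\La^s_n(a)$ is admissible. I would remark that one should note $\phi^s_n$ is finite (being a composition/base change built from the degree-$(d^n+1)$ finite map $\phi_n$), which guarantees the image is genuinely closed in the Euclidean topology and matches the ``closed subset'' requirement in the definition.

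I do not expect any serious obstacle here: essentially all the work has been front-loaded into the construction of $\text{Rat}^s_d[n]$, the morphisms $\phi^s_n$, $\la^s_n$, $|\la^s_n|$, and the \'etaleness statement ``$\phi^s_n$ is \'etale at every point $x\in(\la^s_n)^{-1}((\A^1\setminus\{1\})^s)$.'' The only mildly delicate point is the bookkeeping that $|\la^s_n|^2$ (not $|\la^s_n|$) is what is algebraic over $\R$, and that passing from $a_i$ to $a_i^2$ is harmless because squaring is a bijection on $[0,+\infty)$ and sends $\{1\}$ to $\{1\}$, so the hypothesis $a_i\ne 1$ is preserved and still forces $df^n(x_j)\ne 1$ along $V'$. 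Once that is spelled out, the proof is a two-line verification against the definition, exactly as the surrounding text anticipates.
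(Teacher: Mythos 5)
Your proof is correct and follows essentially the same route as the paper: the paper's own argument is exactly the identity $\La^s_n(a)=\phi^s_n(|\la^s_n|^{-1}(a_1,\dots,a_s))$, the observation that $|\la^s_n|^2$ is real algebraic so the fiber $(|\la^s_n|^2)^{-1}(a_1^2,\dots,a_s^2)$ is Zariski closed, and the fact that $a_i\neq 1$ forces this fiber into $(\la^s_n)^{-1}((\A^1\setminus\{1\})^s)$, along which $\phi^s_n$ is \'etale. Your added remarks on closedness of the image and the harmlessness of squaring are fine and consistent with the paper's setup.
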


\subsection{Length spectrum}
Let $f$ be an endomorphism of $\P^1(\C)$ of degree $d\geq 2$.
Recall that the \emph{length spectrum} $L(f)=\{L(f)_n, n\geq 1\}$ of $f$ is a sequence of finite multisets, where $L(f)_n:=L_n(f)$ is the multiset of 
norms of multipliers of all fixed points of $f^n.$
In particular, $L(f)$ is a multiset of positive real numbers of cardinality $d^n+1$. 
For every $n\geq 0$, let $RL(f)_n$ be the sub-multiset of $L(f)_n$ consisting of all elements $>1.$
We call $RL(f):= \{RL(f)_n, n\geq 1\}$ the \emph{repelling length spectrum} of $f$ and 
$RL^*(f):= \{RL^*(f)_n:=RL(f)_{n!}, n\geq 1\}$ the \emph{main repelling length spectrum} of $f$.  We have $d^{n}+1\geq|RL(f)_n|\geq d^{n}+1-M$ for some $M\geq 0$.
It is clear that the difference  $d^{n!}+1-|RL^*(f)_n|$ is increasing and bounded.


Let $\Om$ be the set of sequences $A_n, n\geq 0$ of multisets consisting of real numbers of norm strictly larger than $1$  satisfying $|A_n|\leq d^{n!}+1$ 
and for every $a\in A_n$ with multiplicity $m$, $a^{n+1}\in A_{n+1}$ with multiplicity at least $m$.
For $A,B\in \Om$, we write $A\subseteq B$ if $A_n\subseteq B_n$ for every $n\geq 0.$
An element $A=(A_n)\in \Om$ is called \emph{big} if $d^{n!}+1-|A_n|$ is bounded. 
For every endomorphism $f$ of $\P^1(\C)$ of degree $d$, we have $RL^*(f)\in \Om$ and it is big.

\medskip

For $A\subseteq RL^*(f)$,  by induction, we can show that there is a sequence of sub-multisets $P_n\subseteq \Fix_{n!}(f), n\geq 1$ (here we view  $\Fix_{n!}(f)$ as a multiset of cardinal $d^{n!}+1$) such that $P_n\subseteq P_{n+1}$ and $A_n=\{|df^{n!}(x)||\,\,x\in P_n\}$.
Such $P:=(P_n)$ is called a \emph{realization of $A$}, which may not be unique.  
Further assume that $A$ is big, then for every realization of $A$, $|\Fix_{n!}(f)\setminus P_n|$ is bounded. It follows that $\Per(f)\setminus (\cup_{n\geq 0}P_n)$ is finite. 
\medskip

Let $A\in \Om.$ Define $\La(A):=\cap_{n\geq 1}\La^{|A_n|}_{n!}(A_n),$ which is the locus of $t\in \text{Rat}_d$ satisfying $A\subseteq RL^*(f_t)$.
It is clear that $\La^{|A_n|}_{n!}(A_n), n\geq 1$ is decreasing, and by Proposition \ref{prolenlcoadm}, each of them is admissible. Hence by Theorem \ref{thmNoetherianad} we get the following result.
\begin{pro}\label{prolengsadm}There is $N(A)\geq 0$ such that  $$\La(A)=\La^{|A_{N(A)}|}_{N(A)!}(A_{N(A)}),$$ which is admissible. 
\end{pro}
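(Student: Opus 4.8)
The plan is to read the statement off directly from Proposition \ref{prolenlcoadm} and Theorem \ref{thmnoetherianad}, so the proof is essentially a formality once those are in hand. First I would recall that, by definition, $\La(A)=\bigcap_{n\geq 1}\La^{|A_n|}_{n!}(A_n)$. Since every element of the multiset $A_n$ has norm strictly larger than $1$, in particular it is never equal to $1$, Proposition \ref{prolenlcoadm} applies to each term, so each $\La^{|A_n|}_{n!}(A_n)$ is an admissible closed subset of $\text{Rat}_d(\C)$, viewed as a real algebraic variety. Next I would use that the chain $\La^{|A_1|}_{1}(A_1)\supseteq\La^{|A_2|}_{2!}(A_2)\supseteq\cdots$ is decreasing, which is forced by the compatibility relations defining $\Om$ (an $n!$-periodic point of $f_t$ is also $(n+1)!$-periodic, with multiplier raised to the $(n+1)$-st power). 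If one prefers not to rely on this monotonicity, one can replace the terms by the partial intersections $W_n:=\bigcap_{k=1}^{n}\La^{|A_k|}_{k!}(A_k)$, which are admissible by Proposition \ref{probasicadm}, are trivially decreasing, and satisfy $\bigcap_n W_n=\La(A)$; the argument below then yields $\La(A)=W_{N(A)}$, and the single-term form follows once monotonicity is known.

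Then I would invoke Theorem \ref{thmnoetherianad}, the descending chain condition for admissible subsets, to obtain an integer $N(A)\geq 0$ with $\La^{|A_n|}_{n!}(A_n)=\La^{|A_{N(A)}|}_{N(A)!}(A_{N(A)})$ for all $n\geq N(A)$. Since the chain is decreasing, the infinite intersection collapses to a single term:
\[
\La(A)=\bigcap_{n\geq 1}\La^{|A_n|}_{n!}(A_n)=\La^{|A_{N(A)}|}_{N(A)!}(A_{N(A)}),
\]
and the right-hand side is admissible by Proposition \ref{prolenlcoadm}. This is exactly the assertion of the proposition.

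I do not expect any genuine obstacle here: the substance has already been carried out, on one hand in Proposition \ref{prolenlcoadm} (that each $\La^s_n(a)$ with $a$ avoiding the value $1$ is admissible, resting on the fact that $\phi^s_n$ is \'etale away from the locus where the multiplier equals $1$), and on the other hand in Theorem \ref{thmnoetherianad} (that admissible subsets satisfy the descending chain condition, proved by induction on $\dim X$ together with Lemma \ref{lemconnect}). The only point that needs a line of justification is the monotonicity of the chain $\big(\La^{|A_n|}_{n!}(A_n)\big)_{n\geq 1}$, and, as indicated, this can in any case be bypassed by passing to partial intersections.
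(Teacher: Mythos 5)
Your argument is essentially the paper's own proof: it gets admissibility of each $\La^{|A_n|}_{n!}(A_n)$ from Proposition \ref{prolenlcoadm} (all elements of $A_n$ exceed $1$, hence avoid the value $1$), treats the chain as decreasing, and applies the descending chain condition of Theorem \ref{thmnoetherianad} to collapse the intersection $\La(A)=\bigcap_{n\geq 1}\La^{|A_n|}_{n!}(A_n)$ to a single admissible term. The paper likewise simply asserts the monotonicity of the chain as clear, so your additional fallback via the partial intersections $W_n$ (admissible by Proposition \ref{probasicadm}) is a harmless, indeed prudent, supplement rather than a departure from the paper's route.
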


Let $\gamma\simeq [0,1]$ be a real analytic curve in $\text{Rat}_d(\C)$, we view $\gamma\times \P^1(\C)$ as a subset of $\text{Rat}_d(\C)\times \P^1(\C)$. 
Let $f_{\gamma}$ be the restriction of $f_{\text{Rat}_d(\C)}$ to $\gamma\times \P^1(\C)$.
For every $n$-periodic point $x=(t,y)\in \gamma\times \P^1(\C)$, let $\gamma^n_x$ be the connected component of 
$$(\gamma\times \P^1(\C))\cap \text{Rat}_d(\C)[n]=\phi_n^{-1}(\gamma)$$ containing $x.$
\begin{rem}\label{remreprx}
If $x$ is repelling for $f_t$, then $\phi_n$ is \'etale at $(t,x)$, hence it induces an isomorphism from some neighborhood of $(x,t)$ in $\gamma^n_x$ to its image in $\gamma.$

Moreover, if $|\la_n|(\gamma^n_x)\subseteq  (1,+\infty)$, then $\phi_n$ is \'etale along $\gamma^n_x$, in particular $\phi_n|_{\gamma^n_x}: \gamma^n_x\to \gamma$ is a covering map. Since $\gamma$ is simply connected, $\phi_n|_{\gamma^n_x}: \gamma^n_x\to \gamma$ is an isomorphism.  
If $n| m$, then $\gamma^n_x\subseteq \gamma^m_x$. On the other hand, for every $(u,y)\in \gamma^n_x$, the multiplicity of $y$ in $\Fix(f^m_u)$ is $1.$  
So $\gamma^m_x$ coincide with $\gamma^n_x$ in a neighborhood of $y.$ Hence $\gamma^m_x=\gamma^n_x$.
This implies that every $y\in \gamma_x$ has the same minimal period and for every period $l$ of $y$, $\gamma_y^l=\gamma_x^n.$
\end{rem}
\begin{lem}\label{lemcontfamily} 
Fix $A\in \Omega$. Assume that for every $t\in \gamma$, $A\subseteq RL^*(f_t)$.
Then there is a realization $P$ of $A$ for $f_0$, such that the following holds:
\begin{points}
\item For every $x\in \cup_{n\geq 0}P_n$, $\gamma^{m}_{(0,x)}$ does not depend on the choice of period $m$ of $x.$
We denote by $\gamma_x=\gamma^{m}_{(0,x)}$ for some (then every) period $m$ of $x$.
Then $\phi_m|_{\gamma_x}: \gamma_x\to \gamma$ is a homeomorphism and it is \'etale along $\gamma_x.$ 
In particular, for different points $x$, $\gamma_x$ are disjoint.
\item For every $x\in \cup_{n\geq 0}P_n$, with a period $m$, $|\la_m|$ is a constant on $\gamma_x.$ 
\end{points}
\end{lem}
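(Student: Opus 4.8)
The plan is to build the realization $P=(P_n)$ inductively, simultaneously controlling the connected components $\gamma_x$. First I would fix, for each $n$, a set $P_n^{(0)} \subseteq \Fix_{n!}(f_0)$ realizing $A_n$ for $f_0$ (its existence is the ``realization'' discussion just before the statement); the point is to choose these compatibly in $n$ and so that the associated curves behave well. Since $A_n \subseteq RL^*(f_t)$ for \emph{every} $t \in \gamma$, for each $x \in P_n^{(0)}$ the multiplier norm $|\la_{n!}|(0,x) > 1$, and by the assumption $A\subseteq RL^*(f_t)$ throughout $\gamma$ one shows the whole component $\gamma^{n!}_{(0,x)}$ must have $|\la_{n!}|$-values in $(1,+\infty)$: otherwise, along a path in $\gamma^{n!}_{(0,x)}$ the norm would cross $1$ or the fixed point would collide with another one (multiplier becoming $1$), contradicting that $A_n$ is realized at the image parameter. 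Actually more care is needed: the subtle point is that $A_n\subseteq RL^*(f_t)$ guarantees a realizing subset at each $t$, but a priori the realizing subset could jump discontinuously. The fix is that the \emph{number} of fixed points of $f_t^{n!}$ with multiplier norm $>1$ (counted with multiplicity) is, by the above, at least $|A_n|$ on all of $\gamma$, and these repelling periodic points move holomorphically (since $\phi_{n!}$ is étale wherever the multiplier is not $1$); one then uses a counting/degree argument on the covering $\phi_{n!}^{-1}(\gamma) \cap \{|\la_{n!}|>1\} \to \gamma$ to see that each such component maps isomorphically onto $\gamma$ (as $\gamma$ is simply connected, exactly as in Remark \ref{remreprx}) and that one can select $|A_n|$ of them whose multiplier-norm multiset is identically $A_n$ along $\gamma$.

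Granting that, for each $n$ set $P_n := \{\,x \in \Fix_{n!}(f_0) : x \text{ lies on one of the selected components}\,\}$. Then \textbf{(ii)} is immediate: along each selected component $\gamma_x^{n!}$ the function $|\la_{n!}|$ is constant (equal to the corresponding entry of $A_n$), because the selection was made precisely so that the multiplier-norm multiset is the constant $A_n$; and if $m$ is any period of $x$ then $m!$ is a multiple of $n!$ once $m\ge n$, so by the nesting $\gamma^{n!}_{(0,x)} = \gamma^{m!}_{(0,x)}$ (Remark \ref{remreprx}, since $\phi$ is a covering over $\gamma$ and $\gamma$ simply connected) the value of $|\la_{m!}|$ on $\gamma_x$ is likewise constant; replacing $m!$ by the minimal period $m$ itself only changes things by the factor accounting for $|\la_m|^{m!/m}$, which is still constant. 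For \textbf{(i)}: the nesting $\gamma^{n}_{(0,x)} \subseteq \gamma^{m}_{(0,x)}$ for $n\mid m$ together with the fact that each has $|\la|$-values $>1$ forces them to agree in a neighborhood of $x$ (the fixed point has multiplicity $1$ for every period, since its multiplier is $\ne 1$), hence to agree globally; so $\gamma_x := \gamma^m_{(0,x)}$ is independent of the period $m$, $\phi_m|_{\gamma_x}\colon \gamma_x \to \gamma$ is an isomorphism (a covering of a simply connected space, one sheet), and it is étale along $\gamma_x$ because the multiplier never equals $1$ there. Distinct $x$ give disjoint $\gamma_x$ since a point of $\gamma_x\cap\gamma_{x'}$ would be a single periodic point lying on both components, forcing $x=x'$ by following the isomorphism back to $0$.

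Finally I must check $P=(P_n)$ is genuinely a realization of $A$ in the sense defined before the lemma, i.e. $P_n\subseteq P_{n+1}$ and $A_n=\{|df^{n!}(x)| : x\in P_n\}$. The latter holds by construction. For the nesting: if $x\in P_n$, then $x$ is an $(n!)$-periodic point of $f_0$ with $|df_0^{n!}(x)|\in A_n$; since $A$ is a sequence in $\Om$, the entry $|df_0^{n!}(x)|^{\,n+1}$ appears in $A_{n+1}$ with at least the same multiplicity, and it equals $|df_0^{(n+1)!}(x)|$ because $(n+1)! = (n+1)\cdot n!$. So $x$ is a candidate for $P_{n+1}$; the remaining point is that the \emph{selection} at level $n+1$ can be made to contain all the level-$n$ selected points. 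This is where I would be most careful, and it is the main obstacle: one needs the inductive selection to be coherent, i.e. to choose the realizing subfamily at stage $n+1$ extending the one at stage $n$. This should follow because the components $\gamma_x$, $x\in P_n$, are already disjoint sections of $\phi_{(n+1)!}^{-1}(\gamma)\cap\{|\la_{(n+1)!}|>1\}\to\gamma$ with the correct constant multiplier norms, so they occupy $|A_n|$ of the slots of $A_{n+1}$ (with correct multiplicities, using the $\Om$-condition), and one simply adds further sections to reach the full multiset $A_{n+1}$; the count works because $A_{n+1}\subseteq RL^*(f_t)$ for all $t$ guarantees enough repelling sections are available over all of $\gamma$. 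Assembling these observations gives the lemma.
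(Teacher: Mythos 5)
There is a genuine gap, and it sits exactly at the heart of the lemma. Two of your steps are not justified. First, the claim that for $x$ in a realizing set at $t=0$ the whole component $\gamma^{n!}_{(0,x)}$ has $|\la_{n!}|$-values in $(1,+\infty)$ ``since otherwise the norm would cross $1$ \dots contradicting that $A_n$ is realized'' is a non sequitur: the hypothesis $A_n\subseteq RL^*(f_t)$ only says that at each parameter \emph{some} collection of $n!$-periodic points of $f_t$ realizes $A_n$; it does not say the continuation of the points you marked at $t=0$ does it, so nothing prevents the multiplier norm along a given component from drifting, crossing $1$, or the multiplier becoming $1$, while other periodic points take over the realizing role. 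Second, and more importantly, the statement that ``one can select $|A_n|$ of them whose multiplier-norm multiset is identically $A_n$ along $\gamma$'' is precisely what has to be proved, and you assert it rather than prove it; your verification of (ii) is then circular, since it appeals to this unproved selection. The difficulty is real: with only continuous (or even $C^\infty$) dependence the claim is false --- one can have two branches whose norms are non-constant but such that the value $a$ is carried by one branch on part of $\gamma$ and by the other on the rest, so that $A_n$ is realized at every $t$ while no single component has constant norm.

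The paper closes exactly this gap using the \emph{real analyticity} of $\gamma$, which your proposal never invokes. It defines $B_n$ intrinsically as the set of $x\in\Fix(f_0^n)$ such that $|\la_n|$ is constant $>1$ on the full component $\gamma^n_{(0,x)}$ (so (i) and (ii) hold for these points by Remark \ref{remreprx} with no further work), and then shows $B=(B_{n!})$ realizes an element $C\in\Om$ with $A\subseteq C$ by a pigeonhole argument: if for some $a\in A_n$ of multiplicity $l$ fewer than $l$ components carry $a$ identically, then at every $t$ some component outside this list meets $|\la_{n!}|^{-1}(a)$; since $\phi_{n!}^{-1}(\gamma)$ has finitely many components, one such component meets $|\la_{n!}|^{-1}(a)$ in an infinite set, and because both the component and $|\la_{n!}|^{-1}(a)$ are real analytic subsets of $\gamma\times\P^1(\C)$ this forces $|\la_{n!}|\equiv a$ on it, contradicting that it was excluded. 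Note also that the issue you single out as the main obstacle --- making the selections at levels $n$ and $n+1$ coherent --- disappears in this intrinsic formulation: $B$ automatically realizes a big element $C\supseteq A$, and any realization of $A$ extracted from $B$ inherits (i) and (ii). So your outline reproduces the easy parts of the argument (period-independence, \'etaleness, disjointness via Remark \ref{remreprx}) but omits the analytic-continuation step that makes the selection possible; without it the proof does not go through.
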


\proof
For every $n\geq 1$, let $B_n$ be the subset of $\Fix(f_0^{n})$ such that $|\la_n|$ is a constant $>1$ on $\gamma_{(0,x)}^n$.
If $x\in B_n$ for some $n\geq 1$, by Remark \ref{remreprx}, $x\in B_m$ for every period $m$ of $x$ and 
$\gamma_x:=\gamma^m_{(0,x)}$ does not depend on the choice of period $m.$ Moreover, $\phi_m|_{\gamma_x}: \gamma_x\to \gamma$ is a homeomorphism and it is \'etale along $\gamma_x.$ In particular for for different points $x$, $\gamma_x$ are disjoint.

It is clear that $B=(B_{n!})$ realizes an element $C\in \Om.$ We only need to show that $A\subseteq C.$
Let $a$ be an element in $A_n$ of multiplicity $l\geq 1.$
Then for every $t\in \gamma$,  since $|a|>1$, $|\la_{n!}|^{-1}(a)\cap \phi_{n!}^{-1}(t)$ contains at least $l$ distinct points. 
Let $x_1,\dots, x_s$ be the elements in $x\in B_{n!}$ with $\la_{n!}((0,x))=a.$ We only need to show that $s\geq l.$ 
For every $i=1,\dots,s$, $\gamma_{x_i}$ is a connected component of $\phi_{n!}^{-1}(\gamma).$
Set $Z:=\phi_{n!}^{-1}(\gamma)\setminus \cup_{i=1}^s\gamma_{x_i}.$ 
If $s<l$, then for every $t\in \gamma$, $Z\cap |\la_{n!}|^{-1}(a)\cap \phi_{n!}^{-1}(t)$ has at least one point. So there is $y\in Z$ such that 
$\gamma^{n!}_z\cap |\la_{n!}|^{-1}(a)$ is infinite. Since both $\gamma^{n!}_z$ and $|\la_{n!}|^{-1}(a)$ are real analytic in $\gamma\times \P^1(\C)$,
$\gamma^{n!}_z\subseteq |\la_{n!}|^{-1}(a).$
By Remark \ref{remreprx}, $\gamma^{n!}_z$ meets $\phi_{n!}^{-1}(0)$ at some point $(0,x)$ for some $x\in B_n.$ So $\gamma^{n!}_z=\gamma_x$, which is a contradiction. 
\endproof

\subsection{Length spectrum as moduli}
Let $\Psi: \text{Rat}_d(\C)\to \sM_d(\C)=\text{Rat}_d(\C)/\PGL_2(\C)$ be the quotient map. 
Let $FL_d(\C)\subseteq \text{Rat}_d(\C)$ be the locus of Latt\`es maps, which is Zariski closed in $\text{Rat}_d(\C)$.
We now prove Theorem \ref{thmlength} via proving the following stronger statement.
\begin{thm}\label{thmbigspecrig}
If $A\in \Om$ is big, 
then $\Phi(\La(A)\setminus FL_d(\C))\subseteq  \sM_d$ is finite.
\end{thm}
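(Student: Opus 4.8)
The plan is to argue by contradiction, combining the admissibility results of this section with the CER machinery of Section~\ref{sectioncer} and McMullen's theorem. Suppose $\Psi(\La(A)\setminus FL_d(\C))$ is infinite. By Proposition~\ref{prolengsadm}, $\La(A)$ is admissible, hence semialgebraic; removing the Zariski closed set $FL_d(\C)$ keeps it semialgebraic, and so does taking its image under the quotient morphism $\Psi$. An infinite semialgebraic set contains a non-constant real-analytic arc, and by curve selection we may lift it: there is a real-analytic embedding $\gamma\simeq[0,1]\hookrightarrow\La(A)\setminus FL_d(\C)$ with $\Psi(\gamma)$ not a point, hence infinite (a non-constant continuous image of a connected set). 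Since $\gamma\subseteq\La(A)$ we have $A\subseteq RL^*(f_t)$ for every $t\in\gamma$. Exceptional endomorphisms that are not flexible Latt\`es are isolated in $\sM_d(\C)$, so only finitely many $t\in\gamma$ have $f_t$ exceptional (recall $f_t\notin FL_d(\C)$); after shrinking $\gamma$ we may assume $f_t$ is non-exceptional for all $t\in\gamma$ while keeping $\Psi|_\gamma$ non-constant. The goal is to deduce that the multiplier spectrum $s(f_t)$ is constant on $\gamma$: since $f_0\notin FL_d(\C)$, this contradicts Theorem~\ref{thmmcmmulti}, which bounds the number of conjugacy classes sharing a given multiplier spectrum.

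Apply Lemma~\ref{lemcontfamily} to the family $(f_t)_{t\in\gamma}$ and to $A$. It produces a realization $P=(P_n)$ of $A$ for $f_0$ such that every periodic point $x\in\bigcup_nP_n$, of period $m$, continues along a connected component $\gamma_x\subseteq\phi_m^{-1}(\gamma)$ on which $\phi_m$ is an \'etale homeomorphism onto $\gamma$ and on which $|\la_m|$ is constant; write $\phi_x(t)$ for the point of $\gamma_x$ over $t$. Moreover, since $A$ is big, $\Per(f_0)\setminus\bigcup_nP_n$ is a \emph{finite} set $S$. The heart of the proof is to upgrade ``$|\la_m|$ constant on $\gamma_x$'' to ``$\la_m$ constant on $\gamma_x$'' for all but finitely many of the points $x$.

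For the key step, fix such an $x$, a repelling periodic point of $f_0$ of period $m$; passing to a suitable iterate we may take $x$ to be a repelling fixed point with $x\notin PC(f_0)$. Choosing finitely many homoclinic orbits of $f_0$ at $x$ and applying Lemma~\ref{cerhomoclinic} produces a horseshoe and, after a power of $f_0$, a CER $E_0$ of $f_0$ containing $x$; shrinking the linearization domain we arrange in addition that $E_0\cap S=\emptyset$. By Theorem~\ref{linearcer}, since $f_0$ is non-exceptional, $E_0$ is \emph{non-linear}. As $\gamma$ is real-analytic the family extends to a holomorphic family over a disk, so Lemma~\ref{motion} applied to the expanding set $E_0$ gives, for $t$ in a small interval around $0$, a CER $E_t$ of $f_t$ and a conjugacy $h_t\colon E_0\to E_t$ which on periodic points is the continuation $x\mapsto\phi_x(t)$. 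Every periodic point $y\in E_0$ lies in $\bigcup_nP_n$ (we arranged $E_0\cap S=\emptyset$), so $|df_0^k(y)|=|df_t^k(h_t(y))|$ for every periodic $y\in E_0$ of period $k$; by Sullivan's rigidity theorem~\ref{cerrigidity}, $h_t$ extends to a conformal map between neighborhoods of $E_0$ and $E_t$. Near $x$ this extension is holomorphic or antiholomorphic, so $\la_m(\phi_x(t))\in\{\la_m(x),\overline{\la_m(x)}\}$ for all small $t$; since $t\mapsto\la_m(\phi_x(t))$ is real-analytic on the connected arc $\gamma$ and takes at most two values near $0$, it is constant, equal to $\la_m(x)$.

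Finally, for each $n$ the points $\{\phi_x(t):x\in P_n\}$ are $|P_n|=|A_n|$ distinct fixed points of $f_t^{n!}$ whose multipliers form the constant multiset $\{\la_{n!}(x):x\in P_n\}$, and since $A$ is big the number of remaining fixed points of $f_t^{n!}$ is bounded uniformly in $n$; running the same CER argument on the finitely many repelling points of $S$ and directly analysing the boundedly many non-repelling periodic points shows that the multipliers of these remaining fixed points are also constant along $\gamma$. Hence $s_{n!}(f_t)$ is constant on $\gamma$ for every $n$, i.e. $s(f_t)\equiv s(f_0)$, and Theorem~\ref{thmmcmmulti} forces the conjugacy class of $f_t$ to be independent of $t\in\gamma$ --- contradicting that $\Psi(\gamma)$ is infinite. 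I expect the two substantive obstacles to be: (a) the CER deformation and rigidity step, in particular checking that the deformed set $E_t$ is genuinely a CER of $f_t$ and that Sullivan's hypothesis holds on \emph{all} of its periodic points (this is what forces the careful avoidance of $S$ and the matching of the Lemma~\ref{motion}-continuation with the $\gamma_x$); and (b) the final bookkeeping controlling the uniformly boundedly many periodic points lying outside the realization.
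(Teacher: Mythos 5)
Your argument follows the paper's strategy almost verbatim up to the last step: admissibility of $\La(A)$ (Proposition \ref{prolengsadm}) plus curve selection, Lemma \ref{lemcontfamily} to get the realization $P$ with finite complement $S$, a horseshoe/CER through a given point of $Q$ avoiding $S$, Lemma \ref{motion}, non-linearity via Theorem \ref{linearcer}, Sullivan's rigidity (Theorem \ref{cerrigidity}), and continuity plus real-analyticity to make the multiplier constant along $\gamma$. That part is sound and is essentially the paper's proof.

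The genuine gap is in your final bookkeeping, which is exactly the point the paper treats differently. You assert that ``running the same CER argument on the finitely many repelling points of $S$ and directly analysing the boundedly many non-repelling periodic points'' gives constancy of their multipliers too, so that the full spectrum $s(f_t)$ is constant and Theorem \ref{thmmcmmulti} applies. This does not work: for a repelling $x\in S$, any CER containing $x$ contains a periodic point (namely $x$ itself) for which Lemma \ref{lemcontfamily} provides no continuation with constant length, so the hypothesis of Theorem \ref{cerrigidity} --- equality of $|df^n|$ and $|dg^n|$ at \emph{all} periodic points of the CER --- cannot be verified, and Sullivan's theorem is not applicable; likewise nothing in the assumption $A\subseteq RL^*(f_t)$ constrains the multipliers of the points of $S$ or of the (at most $2d-2$) non-repelling cycles along $\gamma$. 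The paper avoids controlling these points altogether: it concludes constancy of multipliers only for periodic points of sufficiently high minimal period (those outside $S$), defines the algebraic set $V_N$ of maps sharing these high-period multipliers, observes that the irreducible component $X\supseteq\gamma$ of $V_N$ is a \emph{stable} (attracting periods bounded), non-isotrivial algebraic family, and then invokes Theorem \ref{thmmcmullen} --- not Theorem \ref{thmmcmmulti} --- to force $X\subseteq FL_d(\C)$, the desired contradiction. Without a substitute of this kind your last paragraph is unjustified. A secondary, fixable slip: you ``pass to an iterate'' to assume the prescribed point $x$ satisfies $x\notin PC(f_0)$; this is impossible in general, since replacing $f_0$ by $f_0^k$ does not change the post-critical set and $x$ is given (repelling post-critical periodic points do occur, e.g. for PCF maps). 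The hypothesis is also unnecessary: Lemma \ref{cerhomoclinic} only needs homoclinic orbits at $x$ avoiding $C(f_0)$ (condition (i) of Proposition \ref{proexihors}), or one can build the horseshoe through $x$ directly as in Example \ref{horseshoe}, which is what the paper does.
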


\proof
By Proposition \ref{prolengsadm}, $\La(A)$ is admissible in $\text{Rat}_d(\C)$.
Hence $\La(A)\setminus FL_d(\C)$ is admissible in $\text{Rat}_d(\C)\setminus FL_d(\C)$. 
In particular, $\La(A)\setminus FL_d(\C)$ and $\Phi(\La(A)\setminus FL_d(\C))$ are semialgebraic.

To get a contradiction, assume that $\Phi(\La(A)\setminus FL_d(\C))$ is not finite.
By Nash Curve Selection Lemma \cite[Proposition 8.1.13]{Bochnak1998}, there is a real analytic curve $\gamma\simeq [0,1]$ in $\La(A)\setminus FL_d(\C)$ whose image in $\sM_d$ is not a point. 
Since non-flexible Latt\`es exceptional endomorphisms  are isolated in the moduli space $\mathcal{M}_d$,  there is at least one $f_t$ that is not exceptional. Without loss of generality we assume $f_0$ is not exceptional. We now apply Lemma \ref{lemcontfamily} for $\gamma$ and $A$,  and follows the notation there. Set $Q:=\cup_{n\geq 0}P_n$. Then
 $S:=\Per(f_0)\setminus Q$ is finite. 

Pick any $z_0\in Q.$ By the discussion in Example \ref{horseshoe}, there exist a horseshoe $K$ of $f_0$ containing $z_0$ and $K\cap S=\emptyset$.  There is  $m\geq 0$ such that $f_0^m(K)=K$ and $f_0^m(z_0)=z_0$. By Lemma \ref{motion}, there exits $\ep>0$ and a continuous map $h:[0,\ep]\times K\to \P^1(\C)$ such that for each $t\in [0,\ep]$:
\begin{points}
	\item $K_t:=h(t,K)$ is an expanding set of $f_t^m$.
	\item the map $h_t:=h(t, \cdot):K\to K_t$ is a homeomorphism and $f_t^m\circ h_t=h_t\circ f_0^m$.
\end{points}

For every $t\in [0,\ep]$ and for every $w_0\in K$ satisfying $f_0^{nm}(w_0)=w_0$, we have $f_t^{nm}(h_t(w_0))=h_t(w_0)$. 
It follows that $h_t(w_0)=\gamma_{w_0}(t).$ Since $|\la_{nm}|$ is a constant on $\gamma_{w_0}$, we get $|df_0^{nm}(w_0)|=|df_t^{mn}(h_t(w_0))|$. We claim that $K_t$ is a CER of $f_t$.  We check $(f_t,K_t)$ satisfies Definition \ref{defcer}: since $K_t$ is expanding by Lemma \ref{motion}, (ii) holds; 
since topological exactness and openness preserved by topological conjugacy, by Remark \ref{openmap}), (i) and (iii) hold.

Since $f_0$ is not exceptional, by Theorem \ref{linearcer}, $K$ is a non-linear CER for $f_0$.  By Theorem \ref{cerrigidity}, for every fixed $t\in [0,\ep]$, the conjugacy $h_t$ can be extended to a conformal map $h_t:U\to V$ where $U$ is a neighborhood of $K$ and $V$ is a neighborhood of $K_t$.   This implies  $df_0^m(z_0)=df_t^m(\gamma_{z_0}(t))(=df_t^m(h_t(z_0)))$ or $df_0^m(z_0)=\overline{df_t^m(\gamma_{z_0}(t))}$. Since $df_t^m(\gamma_{z_0}(t))$ depends continuously on $t$, we must have  $df_0^m(z_0)=df_t^m(\gamma_{z_0}(t))$ when $t\in [0,\ep]$. Since $\gamma_{z_0}$ is real analytic, the map $t\mapsto df_t^m(\gamma_{z_0}(t))$ is real analytic on $\gamma=[0,1]$.  It is a constant on $[0,\ep]$, hence it is a constant on $\gamma.$
Let $n$ be any period of $z_0$, the above argument shows that $(\la_n|_{\gamma_{z_0}})^m$ is a constant, hence $\la_n|_{\gamma_{z_0}}$ is a constant.
\medskip

Since our choice of $z_0\in Q$ is arbitrary, for every $z_0\in Q$, of period $n$, the map $t\mapsto df_t^n(\phi(t))$ is a constant on $[0,1]$.  Since $S$ is finite, all $f_t$ have the same multiplier spectrum for periodic points of sufficiently high period. 
\medskip

The set of all endomorphisms in $\text{Rat}_d(\C)$ with the same multiplier spectrum of $f_0$ for periodic points with period at least $N\geq 1$ is an algebraic variety. We denote it by $V_N$.  there exist $N\geq 1$ such that $\gamma\subseteq V_N.$ Further more there exist an irreducible component $X$ of $V_N$ which contains $\gamma.$  The irreducible variety $X$ forms a stable family (see \cite[Chapter 4]{mcmullen2016complex}), since the period of attracting cycles are bounded in $V_N$. 
The variety $X$  is not isotrivial since $\Psi(\gamma)$ is not a point. By Theorem \ref{thmmcmullen}, $\gamma\subseteq X$ is contained in the flexible Latt\`es family, which is a contradiction. 
\endproof
\medskip
\subsection*{Conflicts of interest} None.
\medskip
\subsection*{Financial support}  The second named author  is supported by the project ``Fatou'' ANR-17-CE40-0002-01.

\newpage

\end{document}